\documentclass[a4paper,10pt]{amsart}

\usepackage[utf8]{inputenc}
\usepackage{url}

\textwidth=14.5cm
\oddsidemargin=1cm
\evensidemargin=1cm

\usepackage[expansion=false]{microtype}
\usepackage{amssymb,amsmath,amsopn,amsxtra,amsthm,amsfonts}
\usepackage{xr}
\usepackage[mathcal]{euscript}
\usepackage{mathalfa}

\usepackage[english, status=final, nomargin, inline]{fixme}
\fxusetheme{color}

\usepackage[pdfusetitle,unicode,hidelinks]{hyperref}

\usepackage{enumitem}
\setlist[enumerate,1]{label={(\arabic*)},itemsep=\parskip} 
\setlist[itemize,1]{itemsep=\parskip} 
\newlist{thmlist}{enumerate}{2}
\setlist[thmlist,1]{label={\em(\roman*)},ref={(\roman*)},%
  itemsep=\parskip,leftmargin=*,align=left}
\setlist[thmlist,2]{label={\em(\alph*)},ref={(\alph*)},%
  itemsep=\parskip,leftmargin=*,align=left,topsep=0.1cm}
\newlist{remlist}{enumerate}{2}
\setlist[remlist,1]{label={(\roman*)},ref={(\roman*)},itemsep=\parskip,%
  leftmargin=*,align=left}
\setlist[remlist,2]{label={(\alph*)},ref={(\alph*)},itemsep=\parskip,%
  leftmargin=*,align=left,topsep=0.1cm}


\makeatletter
\let\c@equation\c@subsubsection

\makeatother

\newtheorem{cor}[subsubsection]{Corollary}
\newtheorem{lem}[subsubsection]{Lemma}
\newtheorem{prop}[subsubsection]{Proposition}

\newtheorem{thm}[subsubsection]{Theorem}

\newtheorem*{claim*}{Claim}

\newtheorem*{thmA}{Theorem A}
\newtheorem*{thmB}{Theorem B}
\newtheorem*{thmC}{Theorem C}
\newtheorem*{thmD}{Theorem D}


\theoremstyle{definition}
\newtheorem{defn}[subsubsection]{Definition}

\newtheorem{rem}[subsubsection]{Remark}

\newtheorem{exam}[subsubsection]{Example}

\renewcommand{\eqref}[1]{(\ref{#1})}

\usepackage{tikz}
\usetikzlibrary{matrix}
\usepackage{tikz-cd}


\newcommand{\changelocaltocdepth}[1]{%
  \addtocontents{toc}{\protect\setcounter{tocdepth}{#1}}%
  \setcounter{tocdepth}{#1}}

\usepackage{xspace}

\usepackage{xifthen}

\usepackage{xparse}


\newcommand{\nc}{\newcommand}
\nc{\renc}{\renewcommand}
\nc{\ssec}{\subsection}
\nc{\sssec}{\subsubsection}
\nc{\on}{\operatorname}
\nc{\term}[1]{#1\xspace}

\newcommand{\m}{\mathfrak{m}}
\newcommand{\p}{\mathfrak{p}}
\newcommand{\q}{\mathfrak{q}}

\DeclareMathSymbol{A}{\mathalpha}{operators}{`A}
\DeclareMathSymbol{B}{\mathalpha}{operators}{`B}
\DeclareMathSymbol{C}{\mathalpha}{operators}{`C}
\DeclareMathSymbol{D}{\mathalpha}{operators}{`D}
\DeclareMathSymbol{E}{\mathalpha}{operators}{`E}
\DeclareMathSymbol{F}{\mathalpha}{operators}{`F}
\DeclareMathSymbol{G}{\mathalpha}{operators}{`G}
\DeclareMathSymbol{H}{\mathalpha}{operators}{`H}
\DeclareMathSymbol{I}{\mathalpha}{operators}{`I}
\DeclareMathSymbol{J}{\mathalpha}{operators}{`J}
\DeclareMathSymbol{K}{\mathalpha}{operators}{`K}
\DeclareMathSymbol{L}{\mathalpha}{operators}{`L}
\DeclareMathSymbol{M}{\mathalpha}{operators}{`M}
\DeclareMathSymbol{N}{\mathalpha}{operators}{`N}
\DeclareMathSymbol{O}{\mathalpha}{operators}{`O}
\DeclareMathSymbol{P}{\mathalpha}{operators}{`P}
\DeclareMathSymbol{Q}{\mathalpha}{operators}{`Q}
\DeclareMathSymbol{R}{\mathalpha}{operators}{`R}
\DeclareMathSymbol{S}{\mathalpha}{operators}{`S}
\DeclareMathSymbol{T}{\mathalpha}{operators}{`T}
\DeclareMathSymbol{U}{\mathalpha}{operators}{`U}
\DeclareMathSymbol{V}{\mathalpha}{operators}{`V}
\DeclareMathSymbol{W}{\mathalpha}{operators}{`W}
\DeclareMathSymbol{X}{\mathalpha}{operators}{`X}
\DeclareMathSymbol{Y}{\mathalpha}{operators}{`Y}
\DeclareMathSymbol{Z}{\mathalpha}{operators}{`Z}

\nc{\sA}{\ensuremath{\mathcal{A}}\xspace}
\nc{\sB}{\ensuremath{\mathcal{B}}\xspace}
\nc{\sC}{\ensuremath{\mathcal{C}}\xspace}
\nc{\sD}{\ensuremath{\mathcal{D}}\xspace}
\nc{\sE}{\ensuremath{\mathcal{E}}\xspace}
\nc{\sF}{\ensuremath{\mathcal{F}}\xspace}
\nc{\sG}{\ensuremath{\mathcal{G}}\xspace}
\nc{\sH}{\ensuremath{\mathcal{H}}\xspace}
\nc{\sI}{\ensuremath{\mathcal{I}}\xspace}
\nc{\sJ}{\ensuremath{\mathcal{J}}\xspace}
\nc{\sK}{\ensuremath{\mathcal{K}}\xspace}
\nc{\sL}{\ensuremath{\mathcal{L}}\xspace}
\nc{\sM}{\ensuremath{\mathcal{M}}\xspace}
\nc{\sN}{\ensuremath{\mathcal{N}}\xspace}
\nc{\sO}{\ensuremath{\mathcal{O}}\xspace}
\nc{\sP}{\ensuremath{\mathcal{P}}\xspace}
\nc{\sQ}{\ensuremath{\mathcal{Q}}\xspace}
\nc{\sR}{\ensuremath{\mathcal{R}}\xspace}
\nc{\sS}{\ensuremath{\mathcal{S}}\xspace}
\nc{\sT}{\ensuremath{\mathcal{T}}\xspace}
\nc{\sU}{\ensuremath{\mathcal{U}}\xspace}
\nc{\sV}{\ensuremath{\mathcal{V}}\xspace}
\nc{\sW}{\ensuremath{\mathcal{W}}\xspace}
\nc{\sX}{\ensuremath{\mathcal{X}}\xspace}
\nc{\sY}{\ensuremath{\mathcal{Y}}\xspace}
\nc{\sZ}{\ensuremath{\mathcal{Z}}\xspace}

\nc{\bA}{\ensuremath{\mathbf{A}}\xspace}
\nc{\bB}{\ensuremath{\mathbf{B}}\xspace}
\nc{\bC}{\ensuremath{\mathbf{C}}\xspace}
\nc{\bD}{\ensuremath{\mathbf{D}}\xspace}
\nc{\bE}{\ensuremath{\mathbf{E}}\xspace}
\nc{\bF}{\ensuremath{\mathbf{F}}\xspace}
\nc{\bG}{\ensuremath{\mathbf{G}}\xspace}
\nc{\bH}{\ensuremath{\mathbf{H}}\xspace}
\nc{\bI}{\ensuremath{\mathbf{I}}\xspace}
\nc{\bJ}{\ensuremath{\mathbf{J}}\xspace}
\nc{\bK}{\ensuremath{\mathbf{K}}\xspace}
\nc{\bL}{\ensuremath{\mathbf{L}}\xspace}
\nc{\bM}{\ensuremath{\mathbf{M}}\xspace}
\nc{\bN}{\ensuremath{\mathbf{N}}\xspace}
\nc{\bO}{\ensuremath{\mathbf{O}}\xspace}
\nc{\bP}{\ensuremath{\mathbf{P}}\xspace}
\nc{\bQ}{\ensuremath{\mathbf{Q}}\xspace}
\nc{\bR}{\ensuremath{\mathbf{R}}\xspace}
\nc{\bS}{\ensuremath{\mathbf{S}}\xspace}
\nc{\bT}{\ensuremath{\mathbf{T}}\xspace}
\nc{\bU}{\ensuremath{\mathbf{U}}\xspace}
\nc{\bV}{\ensuremath{\mathbf{V}}\xspace}
\nc{\bW}{\ensuremath{\mathbf{W}}\xspace}
\nc{\bX}{\ensuremath{\mathbf{X}}\xspace}
\nc{\bY}{\ensuremath{\mathbf{Y}}\xspace}
\nc{\bZ}{\ensuremath{\mathbf{Z}}\xspace}

\nc{\dA}{\ensuremath{\mathds{A}}\xspace}
\nc{\dB}{\ensuremath{\mathds{B}}\xspace}
\nc{\dC}{\ensuremath{\mathds{C}}\xspace}
\nc{\dD}{\ensuremath{\mathds{D}}\xspace}
\nc{\dE}{\ensuremath{\mathds{E}}\xspace}
\nc{\dF}{\ensuremath{\mathds{F}}\xspace}
\nc{\dG}{\ensuremath{\mathds{G}}\xspace}
\nc{\dH}{\ensuremath{\mathds{H}}\xspace}
\nc{\dI}{\ensuremath{\mathds{I}}\xspace}
\nc{\dJ}{\ensuremath{\mathds{J}}\xspace}
\nc{\dK}{\ensuremath{\mathds{K}}\xspace}
\nc{\dL}{\ensuremath{\mathds{L}}\xspace}
\nc{\dM}{\ensuremath{\mathds{M}}\xspace}
\nc{\dN}{\ensuremath{\mathds{N}}\xspace}
\nc{\dO}{\ensuremath{\mathds{O}}\xspace}
\nc{\dP}{\ensuremath{\mathds{P}}\xspace}
\nc{\dQ}{\ensuremath{\mathds{Q}}\xspace}
\nc{\dR}{\ensuremath{\mathds{R}}\xspace}
\nc{\dS}{\ensuremath{\mathds{S}}\xspace}
\nc{\dT}{\ensuremath{\mathds{T}}\xspace}
\nc{\dU}{\ensuremath{\mathds{U}}\xspace}
\nc{\dV}{\ensuremath{\mathds{V}}\xspace}
\nc{\dW}{\ensuremath{\mathds{W}}\xspace}
\nc{\dX}{\ensuremath{\mathds{X}}\xspace}
\nc{\dY}{\ensuremath{\mathds{Y}}\xspace}
\nc{\dZ}{\ensuremath{\mathds{Z}}\xspace}

\nc{\bbA}{\ensuremath{\mathbb{A}}\xspace}
\nc{\bbB}{\ensuremath{\mathbb{B}}\xspace}
\nc{\bbC}{\ensuremath{\mathbb{C}}\xspace}
\nc{\bbD}{\ensuremath{\mathbb{D}}\xspace}
\nc{\bbE}{\ensuremath{\mathbb{E}}\xspace}
\nc{\bbF}{\ensuremath{\mathbb{F}}\xspace}
\nc{\bbG}{\ensuremath{\mathbb{G}}\xspace}
\nc{\bbH}{\ensuremath{\mathbb{H}}\xspace}
\nc{\bbI}{\ensuremath{\mathbb{I}}\xspace}
\nc{\bbJ}{\ensuremath{\mathbb{J}}\xspace}
\nc{\bbK}{\ensuremath{\mathbb{K}}\xspace}
\nc{\bbL}{\ensuremath{\mathbb{L}}\xspace}
\nc{\bbM}{\ensuremath{\mathbb{M}}\xspace}
\nc{\bbN}{\ensuremath{\mathbb{N}}\xspace}
\nc{\bbO}{\ensuremath{\mathbb{O}}\xspace}
\nc{\bbP}{\ensuremath{\mathbb{P}}\xspace}
\nc{\bbQ}{\ensuremath{\mathbb{Q}}\xspace}
\nc{\bbR}{\ensuremath{\mathbb{R}}\xspace}
\nc{\bbS}{\ensuremath{\mathbb{S}}\xspace}
\nc{\bbT}{\ensuremath{\mathbb{T}}\xspace}
\nc{\bbU}{\ensuremath{\mathbb{U}}\xspace}
\nc{\bbV}{\ensuremath{\mathbb{V}}\xspace}
\nc{\bbW}{\ensuremath{\mathbb{W}}\xspace}
\nc{\bbX}{\ensuremath{\mathbb{X}}\xspace}
\nc{\bbY}{\ensuremath{\mathbb{Y}}\xspace}
\nc{\bbZ}{\ensuremath{\mathbb{Z}}\xspace}

\nc{\mrm}[1]{\ensuremath{\mathrm{#1}}\xspace}
\nc{\mbf}[1]{\ensuremath{\mathbf{#1}}\xspace}
\nc{\mcal}[1]{\ensuremath{\mathcal{#1}}\xspace}
\nc{\msc}[1]{\ensuremath{\mathscr{#1}}\xspace}

\renc{\bar}[1]{\overline{#1}}

\let\sectsign\S
\let\S\relax

\nc{\sub}{\subset}
\nc{\too}{\longrightarrow}
\nc{\hook}{\hookrightarrow}
\nc*{\hooklongrightarrow}{\ensuremath{\lhook\joinrel\relbar\joinrel\rightarrow}}
\nc{\hooklong}{\hooklongrightarrow}
\nc{\twoheadlongrightarrow}{\relbar\joinrel\twoheadrightarrow}
\nc{\shiso}{\approx}
\nc{\isoto}{\xrightarrow{\sim}}
\nc{\isofrom}{\xleftarrow{\sim}}
\renc{\ge}{\geqslant}
\renc{\le}{\leqslant}
\renc{\geq}{\geqslant}
\renc{\leq}{\leqslant}

\nc{\id}{\mathrm{id}}

\DeclareMathOperator{\rk}{\mathrm{rk}}
\DeclareMathOperator{\Hom}{\mathrm{Hom}}
\nc{\uHom}{\underline{\smash{\Hom}}}
\DeclareMathOperator{\Maps}{\mathrm{Maps}}

\DeclareMathOperator{\End}{\mathrm{End}}

\nc{\Pre}{\mathrm{PSh}{}}
\nc{\Shv}{\mathrm{Shv}{}}
\nc{\uEnd}{\underline{\smash{\End}}}

\renc{\lim}{\operatorname*{lim}}
\nc{\colim}{\operatorname*{colim}}
\nc{\Cofib}{\on{Cofib}}
\nc{\Fib}{\on{Fib}}
\nc{\initial}{\varnothing}
\nc{\op}{\mathrm{op}}


\let\bigcoprod=\coprod
\renc{\coprod}{\sqcup}

\nc{\bDelta}{\mbf{\Delta}}
\nc{\DM}{\mbf{DM}}
\nc{\eff}{\mathrm{eff}}
\nc{\veff}{\mathrm{veff}}
\nc{\cyc}{{\mrm{cyc}}}
\nc{\corr}{{\on{corr}}}
\nc{\ft}{\mrm{ft}}
\nc{\flf}{\mrm{flf}}
\nc{\fet}{{\mrm{f\acute et}}}
\nc{\fsyn}{{\mrm{fsyn}}}
\nc{\syn}{{\mrm{syn}}}
\nc{\lci}{{\mrm{lci}}}
\nc{\Perf}{\mbf{Perf}}
\nc{\perf}{\mrm{perf}}
\nc{\oblv}{\mrm{oblv}}
\nc{\exact}{\on{exact}}

\nc{\F}{{\on{F}}}
\nc{\clopen}{{\mrm{clopen}}}
\nc{\B}{\mrm{B}}
\nc{\D}{\mrm{D}}
\nc{\Fin}{\on{Fin}}
\nc{\fin}{\mrm{fin}}
\nc{\Cut}{\on{Cut}}
\nc{\Cart}{\on{Cart}}
\nc{\pairs}{\mathsf{pairs}}
\nc{\Pairs}{\mathrm{Pair}}
\nc{\Trip}{\mathrm{Trip}}
\nc{\Lab}{\mathrm{Lab}}
\nc{\SL}{\mathrm{SL}}
\nc{\coCart}{\mathrm{coCart}}
\nc{\RKE}{\mathrm{RKE}}
\nc{\strict}{\mathrm{strict}}
\nc{\Emb}{\mathrm{Emb}}
\nc{\Split}{\mathrm{Split}}
\nc{\Set}{\mathrm{Set}}
\nc{\sSets}{\mathrm{sSets}}
\nc{\pb}{\mathrm{pb}}
\nc{\fib}{\mathrm{fib}}
\nc{\diff}{\mrm{diff}}
\nc{\gp}{\mrm{gp}}
\nc{\map}{\mrm{map}}

\nc{\mgp}{\mrm{mot-gp}}
\nc{\FSyn}{\mrm{FSyn}}
\nc{\FEt}{\mrm{FEt}}
\nc{\Spc}{\mrm{Spc}}
\nc{\Ob}{\mrm{Ob}}
\nc{\Spt}{\mrm{Spt}}
\nc{\T}{\bT}
\nc{\suspinf}{\Sigma^\infty}
\nc{\h}{\mrm{h}}
\nc{\uhom}{\underline{\mathrm{Hom}}}
\nc{\umap}{\underline{\mathrm{Maps}}}
\renc{\H}{\bH}
\nc{\Einfty}{{\sE_\infty}}
\nc{\Eone}{{\sE_1}}
\nc{\Stab}{\mrm{Stab}}
\nc{\lax}{{\mrm{lax}}}
\nc{\cocart}{{\mrm{cocart}}}
\nc{\Sch}{\mrm{Sch}}
\nc{\Fr}{\on{Fr}}
\nc{\A}{\mathbf{A}}
\nc{\N}{\mathbf{N}}
\nc{\Z}{\mathbf{Z}}
\nc{\Q}{\mathbf{Q}}
\nc{\Oo}{\mathcal{O}} 
\nc{\Fscr}{\mathcal{F}}
\nc{\Gscr}{\mathcal{G}}
\nc{\Ll}{\mathcal{L}} 
\nc{\Mm}{\mathcal{M}} 
\nc{\mm}{\mathrm{m}} 
\nc{\K}{\mrm{K}} 
\nc{\W}{\mrm{W}} 
\nc{\red}{{\on{red}}}
\nc{\Voev}{{\on{Voev}}}
\nc{\Corr}{\mrm{Corr}}
\nc{\Span}{\mathbf{Corr}}
\nc{\Gap}{\mrm{Gap}}
\nc{\Corrfr}{\Corr^{\fr}}
\nc{\Corrvfr}{\Corr^{\Vfr}}
\nc{\Spec}{\on{Spec}}
\nc{\Sm}{\on{Sm}}
\nc{\Gm}{\mathbf{G}_{\on{m}}}
\renc{\P}{\bP}
\nc{\nis}{\mathrm{nis}}
\nc{\zar}{\mathrm{zar}}
\nc{\et}{\mathrm{\acute et}}
\nc{\all}{\mathrm{all}}
\nc{\fold}{\mathrm{fold}}
\nc{\Fun}{\mathrm{Fun}}
\nc{\Ho}{\mathrm{Ho}}
\nc{\Segal}{\mathrm{Segal}}
\nc{\Mon}{\mrm{Mon}{}}
\nc{\Ab}{\mrm{Ab}}
\nc{\Sh}{\on{Sh}}
\nc{\M}{\mrm{M}}
\nc{\Lhtp}{L_{\A^1}}
\nc{\Lmot}{L_{\mrm{mot}}}
\nc{\mot}{\mrm{mot}}
\nc{\SH}{\mbf{SH}}
\nc{\RR}{\mbf{R}}
\nc{\CC}{\mbf{C}}
\nc{\Mod}{\mbf{Mod}}
\nc{\QCoh}{\mbf{QCoh}}
\nc{\MonUnit}{\mbf{1}}
\nc{\1}{\mbf{1}}
\nc{\tr}{\on{tr}}
\nc{\cotr}{\mrm{cotr}}
\nc{\vop}{\mrm{vop}}
\nc{\fr}{{\on{fr}}}
\nc{\Ar}{\mrm{Ar}}
\nc{\Vfr}{\on{Vfr}}
\nc{\frdiff}{{\on{frdiff}}}
\nc{\frGys}{\on{frGys}}
\nc{\SHfr}{\SH^{\fr}}
\nc{\SHfrdiff}{\SH^{\frdiff}}
\nc{\SHfrGys}{\SH^{\frGys}}
\nc{\InftyCat}{(\mathrm{\infty,1)\textnormal{-}Cat}}
\nc{\TriCat}{\mathrm{TriCat}}
\nc{\oneCat}{\mathrm{1\textnormal{-}Cat}}
\nc{\Cat}{\mathrm{Cat}}
\nc{\Th}{\on{Th}}

\nc{\CMon}{\mrm{CMon}{}}
\nc{\CAlg}{\mrm{CAlg}{}}
\nc{\MGL}{\mrm{MGL}}
\nc{\Seg}{\mrm{Seg}{}}
\nc{\GW}{\mrm{GW}{}}
\nc{\Tw}{\mrm{Tw}}
\nc{\sslash}{/\mkern-6mu/}
\nc{\PrL}{\mrm{Pr}^\mrm{L}}
\nc{\PrR}{\mrm{Pr}^\mrm{R}}
\nc{\pr}{\mrm{pr}}
\let\phi\varphi

\nc\efr{\mrm{efr}}
\nc\nfr{\mrm{nfr}}
\nc\dfr{\mrm{fr}}
\nc\tfr{\mrm{tfr}}
\nc\Vect{\mrm{Vect}}
\nc\sVect{\mrm{sVect}}
\nc{\fix}{\mrm{fix}}
\nc{\ho}{\mrm{h}}
\nc\Mfd{\mrm{Mfd}}
\nc{\PSh}{\mrm{PSh}}
\nc{\hzmw}{H \tilde\Z{}}
\nc{\Cor}{\mrm{Cor}{}}
\nc{\cormw}{\mrm{\widetilde{Cor}}{}}
\nc{\Chw}{\mrm{\widetilde{CH}}{}}
\nc{\Ex}{\mrm{Ex}}
\nc{\BM}{\mrm{BM}}

\nc{\Pic}{\mrm{Pic}}
\nc{\pur}{\mathfrak p}
\nc{\angles}[1]{\langle #1\rangle}
\nc{\inv}[1]{[\tfrac{1}{#1}]}
\nc{\pinv}{\inv{p}}
\nc{\cinv}{\inv{p}}
\nc{\Sph}{\on{Sph}}
\nc{\KGL}{\mrm{KGL}}
\nc{\KH}{\mrm{KH}}
\nc{\Flag}{\mrm{Flag}}
\nc{\Pro}{\mrm{Pro}}
\nc{\Frac}{\mrm{Frac}}
\newcommand{\fp}{\mathrm{fp}}

\nc{\arc}{\mrm{arc}}
\nc{\rarc}{\mrm{rarc}}
\nc{\cdarc}{\mrm{cdarc}}
\nc{\vv}{\mrm{v}}
\nc{\rv}{\mrm{rv}}
\nc{\cdv}{\mrm{cdv}}
\nc{\hh}{\mrm{h}}
\nc{\cdh}{\mrm{cdh}}
\nc{\rh}{\mathrm{rh}}
\nc{\Et}{\mathrm{Et}}
\nc{\Nis}{\mathrm{Nis}}
\nc{\Zar}{\mathrm{Zar}}
\nc{\cdp}{\mathrm{cdp}}

\nc{\RZ}{\mathrm{RZ}}
\nc{\qcqs}{\mathrm{qcqs}}
\nc{\aff}{\mathrm{aff}}
\nc{\cl}{\mathrm{cl}}
\nc{\Val}{\mathrm{Val}}
\nc{\GFin}{\mathrm{GFin}{}}
\nc{\Proj}{\mathrm{Proj}}


\nc{\inftyCat}{\term{$\infty$-category}}
\nc{\inftyCats}{\term{$\infty$-categories}}

\nc{\inftyOneCat}{\term{$(\infty,1)$-category}}
\nc{\inftyOneCats}{\term{$(\infty,1)$-categories}}

\nc{\inftyGrpd}{\term{$\infty$-groupoid}}
\nc{\inftyGrpds}{\term{$\infty$-groupoids}}

\nc{\inftyTop}{\term{$\infty$-topos}}
\nc{\inftyTops}{\term{$\infty$-toposes}}

\nc{\inftyTwoCat}{\term{$(\infty,2)$-category}}
\nc{\inftyTwoCats}{\term{$(\infty,2)$-categories}}

\title{Cdh descent, cdarc descent, and Milnor excision}

\author[E. Elmanto]{Elden Elmanto}
\address{Department of Mathematics\\
Harvard University\\
1 Oxford St.\\
Cambridge, MA 02138\\
USA}
\email{\href{mailto:elmanto@math.harvard.edu}{elmanto@math.harvard.edu}}
\urladdr{\url{https://www.eldenelmanto.com/}}

\author[M. Hoyois]{Marc Hoyois}
\address{Fakultät für Mathematik\\
Universität Regensburg\\
Universitätsstr. 31\\
93040 Regensburg\\
Germany}
\email{\href{mailto:marc.hoyois@ur.de}{marc.hoyois@ur.de}}
\urladdr{\url{http://www.mathematik.ur.de/hoyois/}}

\author[R. Iwasa]{Ryomei Iwasa}
\address{K\o benhavns Universitet\\
Institut for Matematiske Fag\\
Universitetsparken 5\\
2100 K\o benhavn\\
Denmark}
\email{\href{mailto:ryomei@math.ku.dk}{ryomei@math.ku.dk}}
\urladdr{http://ryomei.com/}

\author[S. Kelly]{Shane Kelly}
\address{Department of Mathematics\\
Tokyo Institute of Technology\\
2-12-1 Ookayama, Meguro-ku\\
Tokyo 152-8551, Japan}
\email{\href{mailto:shanekelly@math.titech.ac.jp}{shanekelly@math.titech.ac.jp}}
\urladdr{http://www.math.titech.ac.jp/~shanekelly/}

\date{\today}

\begin{document}

\begin{abstract} 
	We give necessary and sufficient conditions for a cdh sheaf to satisfy Milnor excision, following ideas of Bhatt and Mathew. Along the way, we show that the cdh $\infty$-topos of a quasi-compact quasi-separated scheme of finite valuative dimension is hypercomplete, extending a theorem of Voevodsky to nonnoetherian schemes. As an application, we show that if $E$ is a motivic spectrum over a field $k$ which is $n$-torsion for some $n$ invertible in $k$, then the cohomology theory on $k$-schemes defined by $E$ satisfies Milnor excision.
\end{abstract}

\maketitle

\parskip 0pt
\tableofcontents

\parskip 0.2cm

\section{Introduction}

A \emph{Milnor square} is a cartesian square of rings of the form
\begin{equation*} \label{eq:milnor-intro}
\begin{tikzcd}
A \ar{r} \ar{d} & B \ar{d} \\
A/I \ar{r}  & B/J\rlap.
\end{tikzcd}
\end{equation*}
This class of squares was introduced by Milnor in \cite[\sectsign 2]{milnor-ktheory}. In modern language, he proved that such a square induces a cartesian square of categories
\[
\begin{tikzcd}
\Proj(A) \ar{r} \ar{d} & \Proj(B) \ar{d} \\
\Proj(A/I)\ar{r} & \Proj(B/J)\rlap,
\end{tikzcd}
\]
where $\Proj(A)$ denotes the category of finitely generated projective left $A$-modules.
We will say that a functor of rings satisfies \emph{Milnor excision} if it sends Milnor squares to cartesian squares. Thus, $\Proj(-)$ satisfies Milnor excision. A closely related invariant that satisfies Milnor excision is Weibel's homotopy $K$-theory spectrum $\KH(-)$ \cite[Theorem 2.1]{Weibel}. Algebraic $K$-theory itself does not satisfy Milnor excision, though it satisfies a weaker ``pro-excision'' property \cite{Morrow}; the failure of Milnor excision for algebraic $K$-theory is discussed in more details in \cite{land-tamme}.

Milnor excision for invariants of \emph{commutative} rings was further explored by Bhatt and Mathew in \cite{arc}.
They introduce a refinement of Voevodsky's h topology on the category of schemes, called the \emph{arc topology}, with the property that every arc sheaf satisfies Milnor excision. Moreover, they show that the difference between h descent and arc descent is a certain excision property involving absolutely integrally closed valuation rings, which is easy to check in some cases \cite[Theorem 1.7]{arc}. Examples of arc sheaves include étale cohomology with torsion coefficients and perfect complexes on the perfection of $\bF_p$-schemes.

It turns out that Milnor squares are in fact covers for a ``completely decomposed'' version of the arc topology.
A qcqs morphism of schemes $Y\to X$ is called a \emph{cdarc cover} if the induced map $\Maps(\Spec V,Y)\to \Maps(\Spec V, X)$ is surjective for every henselian valuation ring $V$ of rank $\leq 1$. Together with open covers, cdarc covers define a topology on the category of schemes called the \emph{cdarc topology}. The cdarc topology is to the cdh and Nisnevich topologies as the arc topology is to the h and fppf topologies. 
Our main result is the following theorem, which is a completely decomposed analogue of \cite[Theorem 4.1]{arc}:

\begin{thmA}[see Theorem~\ref{thm:main}] \label{thmA}
	Let $S$ be a scheme, $\sC$ a compactly generated $\infty$-category, and $\sF\colon \Sch_S^{\op}\to \sC$ a finitary cdh sheaf (see \sectsign\ref{sub:cdp}). Consider the following assertions:
	\begin{enumerate}
		\item $\sF$ satisfies henselian v-excision, i.e., for every henselian valuation ring $V$ over $S$ and every prime ideal $\p\subset V$, the following square is cartesian:
		\[
		\begin{tikzcd}
			\sF(V) \ar{r} \ar{d} & \sF(V_\p) \ar{d} \\
			\sF(V/\p) \ar{r} & \sF(\kappa(\p))\rlap;
		\end{tikzcd}
		\]
		\item $\sF$ satisfies Milnor excision;
		\item $\sF$ satisfies cdarc descent.
	\end{enumerate}
	In general, $(3)\Rightarrow(2)\Rightarrow(1)$. If $S$ has finite valuative dimension (see \sectsign\ref{sub:dim_v}), then $(1)\Rightarrow (2)$. If every compact object of $\sC$ is cotruncated, then $(1)\Rightarrow (3)$.
\end{thmA}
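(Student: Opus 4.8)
The implications $(3)\Rightarrow(2)$ and $(2)\Rightarrow(1)$ are the soft ones. For $(3)\Rightarrow(2)$, recall that a Milnor square is a cdarc-covering square: the map $\Spec B\sqcup\Spec A/I\to\Spec A$ is a cdarc cover. Since $\sF$ is in particular additive (a $\cdh$ sheaf carries $\initial$ to the terminal object and finite coproducts to products) and, by the Milnor-square condition, $\Spec B\to\Spec A$ restricts to an isomorphism over the open complement of $V(I)$, cdarc descent along this cover --- analyzed exactly as $\cdh$ descent along an abstract blow-up square, with ``proper'' weakened --- carries the Milnor square to a pullback. For $(2)\Rightarrow(1)$ it is enough to observe that the square occurring in henselian v-excision is itself a Milnor square: for a valuation ring $V$ and a prime $\p\subset V$ one has $\p V_\p=\p$ (given $a\in\p$ and $s\in V\setminus\p$, the element $a/s$ lies in $V$, for otherwise $s/a\in V$ gives $s\in(a)\subseteq\p$; then $a/s\in\p$ since $\p$ is prime), whence $V=V_\p\times_{\kappa(\p)}V/\p$ with $\ker(V\to V/\p)=\p\isoto\p V_\p=\ker(V_\p\to\kappa(\p))$; thus Milnor excision for $\sF$ forces this square to be cartesian.

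The content of the theorem is $(1)\Rightarrow(2)$. The plan is to descend a general Milnor square to the squares of henselian v-excision by passing to Riemann--Zariski spaces, using that the $\cdh$ $\infty$-topos of a scheme of finite valuative dimension is hypercomplete --- the extension of Voevodsky's theorem proved in this paper --- so that the finitary $\cdh$ sheaf $\sF$ satisfies $\cdh$-hyperdescent. First, we may assume --- reducing via the finitariness of $\sF$ if necessary --- that $A$, and hence $B$, $A/I$, and $B/J$, has finite valuative dimension. Second, writing $\Spec A=\Spec B\sqcup_{\Spec B/J}\Spec A/I$ and combining $\cdh$-hyperdescent with the comparison between the $\cdh$ and $\rh$ topologies and the structure of Riemann--Zariski spaces, one expresses $\sF$ on each of the four corners as a limit over the valuation rings lying over that corner; since such a valuation ring $V\to\Spec A$ either factors through the closed subscheme $\Spec A/I$ or meets the open $\Spec A\setminus V(I)=\Spec B\setminus V(J)$ and therefore lifts to $\Spec B$, these presentations are compatible, and they reduce Milnor excision for $\sF$ on $(A,I,B,J)$ to Milnor excision on Milnor squares of valuation rings of finite rank. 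Third, such a square is assembled, after Nisnevich-local (henselization) considerations --- legitimate since $\sF$ is a Nisnevich sheaf --- from finitely many iterations of the basic squares $W=W_\p\times_{\kappa(\p)}W/\p$ along a finite chain of primes, each step being supplied by henselian v-excision; finiteness of the rank is what makes the iteration terminate.

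The same circle of ideas gives $(1)\Rightarrow(3)$ with ``finite valuative dimension'' traded for ``every compact object of $\sC$ is cotruncated''. A cotruncated object is a limit of truncated objects, hence hypercomplete, so a sheaf valued in such objects is automatically a hypersheaf; consequently $\sF$ is a cdarc sheaf if and only if it satisfies cdarc-\emph{hyper}descent, regardless of the valuative dimension of $S$. The cdarc topology refines the $\cdh$ topology (for which $\sF$ is already a sheaf) by covers that, after $\cdh$-refinement, are detected by henselian valuation rings of rank $\le 1$; running the reduction-to-valuation-rings of the previous paragraph --- with hypercompleteness now furnished by the target $\sC$ rather than by the site --- shows that the extra hyperdescent required of $\sF$ along these covers is exactly the rank-$\le 1$ instance of the henselian v-excision assumed in $(1)$, together with the $\cdh$ descent that $\sF$ already enjoys.

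The main obstacle, in both $(1)\Rightarrow(2)$ and $(1)\Rightarrow(3)$, is the reduction to valuation rings: one must show that a finitary $\cdh$ sheaf is determined by its restriction to valuation rings \emph{compatibly with Milnor squares}. This is precisely where hypercompleteness --- of the $\cdh$ topos (hence the finite-valuative-dimension hypothesis) or, in its stead, of the objects of $\sC$ (the cotruncatedness hypothesis) --- is indispensable, just as in Voevodsky's original computation and in the passage from h-descent to arc-descent in \cite{arc}. The delicate technical point inside this reduction is to control the behaviour of Milnor squares under the non-flat $\rh$- and $\cdp$-covers that arise, which is why the argument is routed through the geometry of Riemann--Zariski spaces rather than through a naive base change.
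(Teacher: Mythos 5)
Your treatment of the easy implications matches the paper: the v-excision square is a Milnor square, and a Milnor square yields a cover $\Spec B\sqcup\Spec A/I\to\Spec A$ for the (r/cd)arc topology, so that $(3)\Rightarrow(2)\Rightarrow(1)$ goes through essentially as you say (the paper makes the descent step precise via Voevodsky's criterion applied to the cd-structure of \emph{base changes} of Milnor squares, since Milnor squares themselves are not stable under base change). The hard implications, however, contain a genuine gap. The pivotal false step is your claim that a valuation ring $V\to\Spec A$ ``either factors through $\Spec A/I$ or meets the open $\Spec A\setminus V(I)$ and therefore lifts to $\Spec B$.'' This lifting holds only for valuation rings of rank $\leq 1$ (Proposition~\ref{prop:Milnor-rarc}); for rank $\geq 2$ a point of $\Spec V$ can land in $Z$ while the generic point does not, and no lift to $Y\sqcup Z$ need exist --- this failure is exactly why Milnor squares are rarc covers but not rv covers, and why hypothesis (1) is needed at all. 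Relatedly, hypercompleteness does not let you ``express $\sF$ on each corner as a limit over the valuation rings lying over it'' (a sheaf is not the limit of its stalks); what it gives, via Deligne completeness and Corollary~\ref{cor:stalks-vdim}, is that a \emph{map} of finitary cdh sheaves is an equivalence iff it is one on henselian valuation rings of finite rank. So you must package Milnor excision as the comparison map $\phi\colon\sF(-)\to\sF(-\times_XY)\times_{\sF(-\times_XW)}\sF(-\times_XZ)$ of cdh sheaves on $\Sch_X$ and check it on points $\Spec V$; crucially, the resulting square over $V$ is a Milnor square \emph{over} $V$ whose other corners are arbitrary affine $V$-schemes, not a ``Milnor square of valuation rings,'' so henselian v-excision as hypothesized does not directly supply your iteration along a chain of primes.

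Two further ingredients, absent from your sketch, are what make the induction on the rank of $V$ work in the paper. First, the rank $\leq 1$ base case: here the lift to $Y\sqcup Z$ does exist, and when it lands in $Y$ one needs Lemma~\ref{lem:milnor-section} (a section plus the closed cover of $Y$ by the section and $W$, approximated by finitely presented closed immersions and handled by cdp/rh descent) --- ``Nisnevich-local considerations'' do not substitute for this. Second, the inductive step needs that $\sF(X'\otimes_V-)$ sends the v-excision square of $(V,\p)$ to a cartesian square for \emph{every} $V$-scheme $X'$ (Lemma~\ref{lem:more-squares}); this is not formal from v-excision at $V$ and requires first propagating v-excision to all henselian valuation rings over $V$ (Lemma~\ref{lem:extend}, using nilinvariance and composed squares) and then a second application of the conservativity theorem (or of cotruncatedness via Proposition~\ref{prop:stalks}). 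For $(1)\Rightarrow(3)$ your description is also off target: cotruncatedness is needed not just for hypercompleteness but to commute filtered colimits with cosimplicial limits (Lemma~\ref{lem:cotruncated}), reducing to finitely presented cdarc covers of $\Spec V$; and the descent there is not ``exactly the rank-$\leq 1$ instance'' of v-excision --- rank $\leq 1$ only produces sections over short intervals of $\Spec V$, while splicing intervals in the Zorn-type argument uses full henselian v-excision through Lemma~\ref{lem:more-squares}. As it stands, the proposal assumes away precisely the higher-rank phenomenon that the theorem is about.
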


The v-excision property featured in this theorem was first considered by Huber and the fourth author in \cite{huber-kelly}, where it is shown that the presheaf $\Omega^n(-)$ of differential $n$-forms satisfies v-excision for equicharacteristic valuation rings \cite[Lemma 3.14]{huber-kelly}. Theorem~A applies to this example and shows that $\Omega^n_\cdh$ satisfies cdarc descent (see Corollary~\ref{cor:Omega}).

The assumption that every compact object of $\sC$ is cotruncated holds for example if $\sC=D(\Lambda)_{\leq 0}$ for some ring $\Lambda$ or if $\sC$ is the $\infty$-category of coconnective spectra.
Our theorem improves upon \cite[Theorem 4.1]{arc} in that this cotruncatedness assumption is not needed to deduce Milnor excision from henselian v-excision (the assumption that $S$ has finite valuative dimension is practically vacuous since $S$ is typically the spectrum of a field or Dedekind domain).
A key ingredient in the proof is the hypercompleteness of the cdh topology on nonnoetherian schemes, which is of independent interest:

\begin{thmB}[see Corollary~\ref{cor:cdh-hypercomplete}]
	Let $X$ be a qcqs scheme of finite valuative dimension. Then every sheaf in $\Shv_{\cdh}(\Sch_X^\fp)$ is the limit of its Postnikov tower. In particular, the $\infty$-topos $\Shv_{\cdh}(\Sch_X^\fp)$ is hypercomplete.
\end{thmB}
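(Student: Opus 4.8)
The plan is to deduce Theorem~B from a bound on the \emph{homotopy dimension} of cdh $\infty$-topoi together with Lurie's convergence criterion. Recall that an $\infty$-topos of homotopy dimension $\le n$ has convergent Postnikov towers, hence is hypercomplete; and that the same conclusion holds for any $\infty$-topos admitting a family of generators, under colimits, whose slice $\infty$-topoi have finite homotopy dimension — the reduction to the uniform case being the formal remark that a map of sheaves is an equivalence once it becomes one after pullback to each generator, and that $n$-truncation commutes with these étale pullbacks. Now the sheaves represented by objects $Y$ of $\Sch_X^\fp$ generate $\Shv_\cdh(\Sch_X^\fp)$ under colimits, with $\Shv_\cdh(\Sch_X^\fp)_{/h_Y}\simeq\Shv_\cdh(\Sch_Y^\fp)$, and any $Y$ of finite presentation over $X$ again has finite valuative dimension (\sectsign\ref{sub:dim_v}). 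So the theorem reduces to proving, by induction on $d$,
\[
(\star_d)\colon\qquad \dim_{\mathrm v}W\le d \ \Longrightarrow\ \Shv_\cdh(\Sch_W^\fp)\text{ has homotopy dimension }\le d
\]
for qcqs schemes $W$; equivalently, that every $d$-connective cdh sheaf $\sF$ on $\Sch_W^\fp$ has $\sF(W)\ne\varnothing$.

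For $(\star_d)$ I would exploit that the cdh topology on $\Sch_W^\fp$ is generated by a cd-structure with two families of squares — the Nisnevich (elementary distinguished) squares and the abstract blow-up squares — and run Voevodsky's descent machinery for such cd-structures, the key point being that the ``length'' needed for a reduction sequence of this cd-structure over $W$ is bounded by $\dim_{\mathrm v}W$. Concretely, given a $d$-connective cdh sheaf $\sF$: first, by the usual induction on the dimension of supports in the Nisnevich topos (whose points are henselian local rings) — using the inductive hypothesis on the positive-codimension strata and the triviality of the Nisnevich topos of a henselian local scheme — one reduces to $W=\Spec R$ with $R$ henselian local; then one builds a point of $\sF(R)$ from an abstract blow-up square, choosing a cdp (``rh'') cover $p\colon W'\to\Spec R$ that is an isomorphism over a dense open $U=\Spec R\setminus Z$, so that cdh descent presents $\sF(R)$ as the limit of $\sF(W')$, $\sF(Z)$ and $\sF(Z\times_{\Spec R}W')$; since $Z$ and $Z\times_{\Spec R}W'$ have valuative dimension $<d$, the last two spaces are nonempty by induction, while $p$ can be arranged to make $W'$ ``closer to normal''. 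Iterating — resolving the generic behaviour of $W$ by successive modifications whose exceptional loci strictly drop in valuative dimension — yields the section after finitely many steps.

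The hard part will be exactly the termination of this iteration outside the noetherian setting, together with its upgrade from cohomological to homotopy dimension. In Voevodsky's proof one uses Krull dimension and noetherian induction on closed subschemes, and one may work with complexes of sheaves; here one must instead control the valuative dimension of the blow-ups and exceptional divisors produced along the way, knowing they stay bounded and eventually force the ``generic'' locus into dimension $<d$ — which is what the results of \sectsign\ref{sub:dim_v} (permanence of valuative dimension under polynomial extensions, its behaviour under blow-ups, in the spirit of Raynaud--Gruson platification and Temkin's stable-modification theorem) are there to supply — and the whole induction must be carried out on connective objects of the $\infty$-topos rather than on chain complexes. I expect this dimension-theoretic bookkeeping, rather than any formal $\infty$-categorical input, to be where the real work lies.

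Should this direct induction prove unwieldy, a fallback is noetherian approximation: if one can write $X$ as a cofiltered limit, along affine transition maps, of noetherian schemes $X_\alpha$ with $\dim X_\alpha\le d$ (a geometric characterization of finite valuative dimension that would itself need to be established, cf.\ \sectsign\ref{sub:dim_v}), then $\Sch_X^\fp=\colim_\alpha\Sch_{X_\alpha}^\fp$ compatibly with the cdh topologies and $\Shv_\cdh(\Sch_X^\fp)=\lim_\alpha\Shv_\cdh(\Sch_{X_\alpha}^\fp)$, so Voevodsky's theorem for each $X_\alpha$ would apply; but this merely relocates the difficulty to transporting a homotopy-dimension bound through a cofiltered limit of $\infty$-topoi, which relies on the uniformity of the bound $d$ over all $X_\alpha$ and on $n$-connectivity and global sections interacting well with the limit.
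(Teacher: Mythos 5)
Your first reduction is the same as the paper's: generators $h_Y$ for $Y\in\Sch_X^\fp$, the slice identification, Corollary~\ref{cor:dim_v}, and Lurie's criterion (Proposition~\ref{prop:hodim}) reduce Theorem~B to the statement $(\star_d)$ that $\Shv_\cdh(\Sch_W^\fp)$ has homotopy dimension $\le d$ when $\dim_v W\le d$, which is Theorem~\ref{thm:hodim-cdh}. But the core of your argument for $(\star_d)$ has a genuine gap, in two places. First, the reduction ``to $W=\Spec R$ with $R$ henselian local'' is unjustified: homotopy dimension is a statement about \emph{global} sections over $W$, and the fact that the cdh points are henselian valuation rings does not let you check existence of a global section stalkwise (that local-to-global passage is exactly the content of a homotopy-dimension bound, not something you may assume). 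Second, and more seriously, your iteration ``resolving the generic behaviour of $W$ by successive modifications \dots yields the section after finitely many steps'' has no termination mechanism: an abstract blow-up square does give $\sF(W)\simeq \sF(W')\times_{\sF(Z\times_W W')}\sF(Z)$ with the narrow loci of valuative dimension $<d$ (Proposition~\ref{prop:dim_v}(4)), but the problem is to produce a point of $\sF(W')$ for \emph{some} modification $W'$, and iterating modifications does not obviously stop outside resolution-of-singularities settings. The paper's solution is precisely to pass to the limit of \emph{all} modifications at once: one constructs a colimit- and finite-limit-preserving (hence connectivity-preserving) functor $\cl_\cdh\colon \Shv_\cdh(\Sch_W^\fp)\to\Shv(\RZ(W)_\Nis)$ via clean presheaves on generically finite $W$-schemes (\sectsign\ref{sub:RZ}, Propositions~\ref{prop:key} and~\ref{prop:geometric}), and then invokes the Clausen--Mathew bound (Lemma~\ref{lem:CM}, Proposition~\ref{prop:hodimRZ}): the Nisnevich $\infty$-topos of the pro-scheme $\RZ(W)$, a cofiltered limit of modifications each of Krull dimension $\le d$, has homotopy dimension $\le d$. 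A global section of $\cl_\cdh(\sF)$ then descends, by the colimit formula of Lemma~\ref{lem:cl-technical}, to a point of $\sF(Y)$ for a \emph{single} modification $Y\to W$, after which one abstract blow-up square and the inductive hypothesis finish the quasi-integral case. This Riemann--Zariski/Clausen--Mathew input is the missing idea in your sketch; the ``dimension-theoretic bookkeeping'' you defer to \sectsign\ref{sub:dim_v} cannot by itself supply it.

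Two further points. In your blow-up step you only claim $\sF(Z)$ and $\sF(Z\times_W W')$ are \emph{nonempty} by induction; that is not enough, since a fiber product of nonempty spaces over a nonempty base can be empty. One needs the induction hypothesis in the stronger form of Proposition~\ref{prop:hodim}(1): homotopy dimension $\le d-1$ applied to the $d$-connective sheaf $\sF$ makes these section spaces \emph{connected}, which is what the paper uses. (The paper also needs a separate Zorn-type argument on $X^\mathrm{gen}$ to handle schemes that are not quasi-integral, which your sketch does not address.) Finally, your fallback via noetherian approximation with $\dim X_\alpha\le d$ is a dead end: finite valuative dimension does not yield such a presentation in general (already for a rank-one valuation ring whose fraction field has infinite transcendence degree, the finitely generated approximations have unbounded Krull dimension), so the difficulty cannot be relocated that way.
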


An example of an invariant satisfying arc descent is étale cohomology \[C^*_\et(-,A)\colon \Sch^\op \to D(\Z)\] with coefficients in a torsion abelian group $A$. For this invariant, henselian v-excision is trivial. Our criterion allows us to prove an analogous result for motivic cohomology of schemes over a field whose characteristic does not divide the torsion in $A$:

\begin{thmC}[see Corollary~\ref{cor:motivic-cohomology}]
	Let $k$ be a field and $A$ a torsion abelian group such that the exponential characteristic of $k$ acts invertibly on $A$. For any $q \in\Z$, the presheaf 
	\[
	C^*_\mot(-,A(q))\colon \Sch_k^{\op} \rightarrow \D(\Z)
	\] 
	satisfies cdarc descent. In particular, it satisfies Milnor excision.
\end{thmC}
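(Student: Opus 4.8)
The plan is to apply Theorem~A (\thmref{thm:main}) with $\sC=\D(\Z)$ and $S=\Spec k$, so that everything reduces to verifying henselian v-excision for $C^*_\mot(-,A(q))$.

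First I would record the standing hypotheses. The presheaf $C^*_\mot(-,A(q))\colon\Sch_k^{\op}\to\D(\Z)$ is a finitary cdh sheaf: cdh descent is part of the construction of motivic cohomology of (possibly singular, possibly non--finite-type) $k$-schemes, and finitariness holds because motivic cohomology commutes with filtered colimits of rings. The target $\D(\Z)$ is compactly generated with compact objects the perfect complexes, which are bounded and in particular cotruncated, and $\Spec k$ has finite valuative dimension since it is zero-dimensional. Hence Theorem~A says that $C^*_\mot(-,A(q))$ satisfies cdarc descent (a fortiori Milnor excision) as soon as it satisfies henselian v-excision, i.e.\ as soon as for every henselian valuation ring $V$ over $k$ and every prime $\p\subset V$ the square relating $C^*_\mot$ of $V$, $V_\p$, $V/\p$ and $\kappa(\p)$ is cartesian. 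Since $A\mapsto C^*_\mot(-,A(q))$ commutes with filtered colimits and cartesianness of a fixed square is stable under filtered colimits in $\D(\Z)$, writing $A$ as the filtered union of its finite subgroups --- each of order prime to the exponential characteristic $p$ of $k$ --- reduces the problem to $A=\Z/\ell^m$ with $\ell\neq p$ prime.

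Now fix $A=\Z/\ell^m$, a henselian valuation ring $V$ over $k$, and a prime $\p\subset V$. I would combine three ingredients. (a) \emph{Valuation theory}: $V_\p$ is again a henselian valuation ring (a coarsening of a henselian valuation is henselian), $V/\p$ is a henselian valuation ring with the same residue field as $V$, and both $(V,\p)$ and $(V_\p,\p V_\p)$ are henselian pairs. (b) \emph{Comparison with étale cohomology}: by the Beilinson--Lichtenbaum property --- a consequence of the norm residue theorem of Voevodsky and Rost --- together with the hypercompleteness of the cdh $\infty$-topos of $\Sch_k^{\fp}$ (Theorem~B, \corref{cor:cdh-hypercomplete}), one obtains an identification $C^*_\mot(-,\Z/\ell^m(q))\simeq\tau^{\le q}E$ of cdh sheaves on $\Sch_k^{\fp}$, where $E\colon X\mapsto R\Gamma_{\et}(X,\mu_{\ell^m}^{\otimes q})$ is a cdh sheaf by \cite{arc} and $\tau^{\le q}$ is cohomological truncation; since a henselian valuation ring $W$ over $k$ is a point of the cdh topos --- it is henselian local, and being Bézout it admits no nontrivial proper birational modification, so cdh and Nisnevich agree on it --- evaluating at $W$ yields $C^*_\mot(W,\Z/\ell^m(q))\simeq\tau^{\le q}R\Gamma_{\et}(\Spec W,\mu_{\ell^m}^{\otimes q})$ for each $W\in\{V,V_\p,V/\p,\kappa(\p)\}$. (c) \emph{Gabber rigidity}: for a henselian pair $(R,I)$ with $\ell$ invertible in $R$, the map $R\Gamma_{\et}(\Spec R,\mu_{\ell^m}^{\otimes q})\to R\Gamma_{\et}(\Spec R/I,\mu_{\ell^m}^{\otimes q})$ is an equivalence. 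Applying (c) to the two henselian pairs from (a), both vertical maps of the étale v-excision square are equivalences; by (b) the same holds after $\tau^{\le q}$, so both vertical maps of the v-excision square for $C^*_\mot(-,\Z/\ell^m(q))$ are equivalences, and a square with two parallel sides equivalences is cartesian. This gives henselian v-excision, and the reductions above then deliver the theorem.

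The step I expect to be the main obstacle is ingredient (b): upgrading Beilinson--Lichtenbaum from smooth $k$-schemes, where it is the theorem of Voevodsky--Rost--Weibel, to an equivalence of cdh sheaves on all of $\Sch_k^{\fp}$, which requires controlling the interaction between cdh-sheafification and the truncation $\tau^{\le q}$ --- precisely the point at which Theorem~B is used. By contrast, the valuation-theoretic facts in (a) and the rigidity statement (c) are standard.
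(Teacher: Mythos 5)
Your reduction to henselian v-excision via Theorem~\ref{thm:main} has a genuine gap at the point where you claim that the compact objects of $\D(\Z)$ (perfect complexes) are cotruncated. Cotruncated means that $\Maps_{\D(\Z)}(c,-)$ takes truncated values, and in a nonzero stable $\infty$-category no nonzero object has this property: for instance $\Maps_{\D(\Z)}\bigl(\Z,\prod_{n\geq 0}\Z[n]\bigr)$ has nonzero homotopy in every degree. This is why the paper's examples are $\D(\Lambda)_{\leq 0}$ and coconnective spectra, not $\D(\Lambda)$ itself. Consequently, as written your appeal to Theorem~\ref{thm:main} only yields the implication $(1)\Rightarrow(2)$ (Milnor excision, using that $\Spec k$ has valuative dimension $0$), and not cdarc descent, which is the actual content of Theorem~C. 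To repair this you must show that $C^*_\mot(-,A(q))$ factors through a fixed subcategory compactly generated by cotruncated objects, e.g.\ coconnective complexes, \emph{on all $k$-schemes}; this is exactly what the paper does (after discarding $q<0$, where the presheaf vanishes) via Corollary~\ref{cor:truncative}: $HA(q)$ is $0$-truncated in the effective homotopy $t$-structure by Beilinson--Lichtenbaum vanishing on smooth schemes, and the truncatedness is propagated to arbitrary $k$-schemes using rh descent, invariance under universal homeomorphisms, and Temkin's theorem. Your ingredient (b) could in principle supply this coconnectivity, but you neither notice the need nor address it.

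Apart from this, your v-excision mechanism (both vertical maps of the square are quotients along henselian pairs, so a rigidity statement makes them equivalences) is the same as the paper's Theorem~\ref{thm:rigidity}; the difference is where rigidity comes from. The paper proves rigidity for henselian valuation rings for arbitrary $\Phi$-nilpotent motivic spectra (Ananyevskiy--Druzhinin rigidity for ind-smooth henselian local rings, Temkin's desingularization of perfect valuation rings, and perfection invariance), and obtains Theorem~C as a special case of Theorem~D. You instead specialize to motivic cohomology and use Gabber's rigidity for étale cohomology through a Beilinson--Lichtenbaum identification $C^*_\mot(-,\Z/\ell^m(q))\simeq\tau^{\leq q}R\Gamma_\et(-,\mu_{\ell^m}^{\otimes q})$ of cdh sheaves on $\Sch_k^\fp$. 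That is a legitimate alternative route for Theorem~C, but be aware that in positive characteristic this identification on all finite-type $k$-schemes is heavier than ``controlling $\tau^{\leq q}$ versus cdh sheafification'': finite-type schemes are not cdh-locally smooth without resolution of singularities, so one needs the $\ell$dh/alteration comparison results of Kelly and Cisinski--Déglise (or, on henselian valuation rings, Temkin plus perfection invariance --- i.e.\ the paper's own ingredients). You correctly flag (b) as the main obstacle, but the cotruncatedness error above is the step that actually breaks the stated conclusion.
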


More generally, we prove Milnor excision for cohomology theories defined by motivic spectra with suitable torsion. For a motivic spectrum $E\in\SH(S)$ and an $S$-scheme $X$, we denote by $E(X)$ the mapping spectrum from $\1_X$ to $E_X$ in $\SH(X)$.
It is well known that $E(-)\colon \Sch_S^\op\to \Spt$ is a finitary cdh sheaf of spectra.

\begin{thmD}[see Theorem~\ref{thm:main-excision}]
	Let $k$ be a field and $E \in \SH(k)$. Suppose that $E$ is $\phi$-torsion for some $\phi\in\GW(k)$ such that $\rk(\phi)$ is invertible in $k$. Then the presheaf of spectra $E(-)\colon\Sch_k^{\op} \rightarrow \Spt$ satisfies Milnor excision.
\end{thmD}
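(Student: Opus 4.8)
The plan is to deduce the result from Theorem~A (see Theorem~\ref{thm:main}), applied to the finitary cdh sheaf of spectra $E(-)$. Since $\Spt$ is compactly generated and $\Spec k$ has finite valuative dimension, the implication $(1)\Rightarrow(2)$ of Theorem~A applies --- and it is precisely here that we benefit from this implication not requiring cotruncatedness, which compact spectra do not have. So it suffices to show that $E(-)$ satisfies henselian v-excision, i.e.\ that for every henselian valuation ring $V$ over $k$ and every prime $\p\subset V$ the square
\[
\begin{tikzcd}
E(V) \ar{r}\ar{d} & E(V_\p)\ar{d}\\
E(V/\p)\ar{r} & E(\kappa(\p))
\end{tikzcd}
\]
is cartesian.

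I would reduce this to the rigidity statement: \emph{for every henselian local $k$-algebra $A$, the map $E(A)\to E(A/\mathfrak m_A)$ is an equivalence.} Granting it, first note that $V_\p$ is itself a \emph{henselian} valuation ring: it is the valuation ring of a coarsening of the valuation of $V$, and coarsenings of henselian valuations are henselian. Likewise $V/\p$ is henselian local, with the same residue field as $V$. Applying the rigidity statement to $V$ and to $V/\p$, and using that the composite $E(V)\to E(V/\p)\to E(\kappa(\mathfrak m))$ agrees with the rigidity equivalence for $V$, the two-out-of-three property shows that the left vertical map $E(V)\to E(V/\p)$ is an equivalence; applying the rigidity statement to $V_\p$ shows that the right vertical map $E(V_\p)\to E(\kappa(\p))$ is an equivalence. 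A square both of whose vertical maps are equivalences is cartesian, so henselian v-excision holds.

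It remains to prove the rigidity statement when $E$ is $\phi$-torsion with $n:=\rk(\phi)$ invertible in $k$; this is where the torsion hypothesis is used. Since $\phi\cdot\id_E=0$, the spectrum $E$ is a retract of $E\otimes\1_k/\phi$, so it suffices to treat motivic spectra of the form $F\otimes\1_k/\phi$. The point is that $\1_k/\phi$ decomposes --- after inverting $2$, with $\chr k=2$ treated separately, where the picture simplifies --- into an ``oriented'' summand on which $\phi$ acts as the integer $n$, hence which is $n$-torsion with $n$ invertible in $k$, and a ``Witt'' summand which is $\eta$-periodic (equivalently $\rho$-inverted); here one uses that, writing $\phi=n+\psi$ with $\psi$ in the fundamental ideal $I(k)=\eta\cdot\pi_{-1,-1}(\1_k)$, the class $\phi$ becomes $n$ on the orientable part. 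For the $n$-torsion summand, rigidity along henselian local rings follows from Bachmann's rigidity theorem in étale motivic stable homotopy theory. For the $\eta$-periodic summand, Bachmann's identification of the $\rho$-inverted motivic category with the real étale one reduces the claim to invariance of real étale cohomology under henselization, a theorem of Scheiderer. Reassembling the summands gives the rigidity statement for $E(-)$.

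The main obstacle is this last paragraph: one must make the decomposition of $\1_k/\phi$ precise --- including the passage from ``$\phi$-torsion with $\rk(\phi)$ invertible'' to genuine $n$-torsion on the orientable part, the dependence on inverting $2$, and the separate treatment of characteristic $2$ --- and one must check that the cited rigidity theorems apply to the bare cohomology presheaves of the two summands and that the decomposition is compatible with the presheaf $X\mapsto\Maps_{\SH(X)}(\1_X,E_X)$ after pullback to an arbitrary henselian local $k$-algebra (so that limits and the real étale comparison can be run pointwise). The valuation-theoretic facts invoked --- coarsenings of henselian valuations are henselian, quotients of henselian local rings are henselian, localizations of valuation rings at primes are valuation rings --- are standard, and the reduction in the second paragraph is formal.
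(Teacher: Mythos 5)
Your outer reduction is exactly the paper's: $E(-)$ is a finitary cdh sheaf (Lemma~\ref{lem:E(-)}), $\Spec k$ has valuative dimension $0$, so by the implication $(1)\Rightarrow(2)$ of Theorem~\ref{thm:main} it suffices to prove henselian v-excision, and your two-out-of-three argument deducing v-excision from rigidity (using that $V/\p$ and $V_\p$ are again henselian valuation rings, Lemma~\ref{lem:hens-loc}) is precisely the proof of Theorem~\ref{thm:rigidity}. The gap is in the rigidity input itself. First, you claim rigidity for \emph{all} henselian local $k$-algebras, which is far more than needed (only henselian valuation rings are required, cf.\ Corollary~\ref{cor:v-rigidity}) and is not available as an input at this stage. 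Second, and fatally, your proposed proof of rigidity does not work: for the ``oriented'' summand you invoke Bachmann's rigidity theorem in \emph{étale} motivic stable homotopy theory, but that theorem concerns the étale-localized (and suitably completed) category, and an $n$-torsion object of $\SH(k)$ is in general not étale-local --- mod-$n$ motivic cohomology $H\Z/n$ is the standard example of an $n$-torsion motivic spectrum whose cohomology presheaf does not satisfy étale (hyper)descent, so Gabber/Bachmann-style étale rigidity says nothing about $E(V)\to E(\kappa)$ for such $E$. The rigidity theorem that actually applies here is the genuinely motivic one of Ananyevskiy--Druzhinin (Theorem~\ref{thm:smooth-rigidity}), and it only covers \emph{ind-smooth} henselian local rings; nothing in your sketch bridges the gap from ind-smooth rings to henselian valuation rings in positive characteristic, which in the paper requires invariance under universal homeomorphisms (to pass to perfections) together with Temkin's inseparable local desingularization theorem (Theorem~\ref{thm:temkin}), showing that a perfect valuation ring over a perfect field is ind-smooth. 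In characteristic zero this step is Zariski's theorem; in characteristic $p$ it is the essential new input, and your argument omits it entirely.

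There are further unaddressed issues in the proposed decomposition of $\1/\phi$: inverting $2$ to split into plus/minus parts loses the $2$-primary part of $E$ (which is very much allowed by the hypotheses, e.g.\ $E=\1/2$ with $\phi=2\langle 1\rangle$ in odd characteristic), so one would need a separate treatment of the $2$-complete part, where the plus/minus splitting is unavailable and the comparison between $\eta$-inversion, $\rho$-inversion, and the real étale/étale pictures is delicate; and the issue with the hypothesis is not ``$\chr k=2$'' but the presence of $2$-torsion in $E$. By contrast, the paper needs no such fracture: Lemma~\ref{lem:GW-nilpotence} shows directly that $\phi$-torsion with $\rk(\phi)$ invertible forces the exponential characteristic to act invertibly, after which the Ananyevskiy--Druzhinin theorem applies to $E$ as a whole. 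So while your reduction to v-excision is correct and matches the paper, the proof of the rigidity statement --- the heart of the theorem --- is missing.
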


One of the impetus for trying to prove Milnor excision for a motivic spectrum $E$ over a perfect field is that it implies condition (G2) from \cite{shane-better} for the homotopy presheaves of $E$ \cite[Lemma 3.5(ii)]{kelly-morrow}.\footnote{The condition (G1) (resp.\ (G2)) asks that the horizontal (resp.\ vertical) morphisms in the square of Theorem~A be injective (resp.\ surjective).}
In forthcoming work \cite{sh-excision}, we will show that the torsion assumptions in Theorems~C and~D are in fact not necessary, and hence prove condition (G2) for any motivic spectrum.

\changelocaltocdepth{1}
\subsection*{Notation}

We denote by $\Spc$ the $\infty$-category of spaces and by $\Spt$ that of spectra.

For an $\infty$-category $\sC$, $\PSh(\sC)$ is the $\infty$-category of presheaves of spaces on $\sC$ and $\PSh_\Sigma(\sC)\subset \PSh(\sC)$ is the full subcategory of presheaves that transform finite coproducts into finite products (also called $\Sigma$-presheaves). If $\tau$ is a topology on $\sC$, $\Shv_\tau(\sC)\subset\PSh(\sC)$ is the full subcategory of $\tau$-sheaves.

If $X$ is a scheme, $\Sch_X$ is the category of $X$-schemes, $\Sch_X^\qcqs$ is the category of quasi-compact quasi-separated (qcqs) $X$-schemes, $\Sch_X^\mathrm{lfp}$ is the category of $X$-schemes locally of finite presentation, and $\Sch_X^\fp$ is the category of $X$-schemes of finite presentation.

\subsection*{Acknowledgments}

We would like to thank Akhil Mathew and Bhargav Bhatt for some useful discussions about the results of \cite{arc} and Benjamin Antieau for communicating Theorem~\ref{thm:temkin}.

This work was partially supported by the National Science Foundation under grant DMS-1440140, while the first two authors were in residence at the Mathematical Sciences Research Institute in Berkeley, California, during the ``Derived Algebraic Geometry'' program in spring 2019.

\changelocaltocdepth{2}
\section{Complements on the cdh topology}

In this section, we prove that the homotopy dimension of the cdh $\infty$-topos of a qcqs scheme $X$ is bounded by the valuative dimension of $X$, and we deduce that this $\infty$-topos is hypercomplete when the valuative dimension of $X$ is finite. 
For $X$ noetherian, the valuative dimension coincides with the Krull dimension and these results were previously proved by Voevodsky (\cite[Proposition 2.11]{unstable-voe} and \cite[Theorem 2.27, Proposition 3.8(3)]{cd-voe}).

We start by reviewing the definition of the cdh topology in \sectsign\ref{sub:cdp}.
In \sectsign\ref{sub:RZ}, we recall the definition of Riemann–Zariski spaces and establish some basic facts about them.
In \sectsign\ref{sub:dim_v}, we define the valuative dimension of a scheme, following Jaffard \cite{Jaffard}; the valuative dimension of an integral scheme turns out to be the Krull dimension of its Riemann–Zariski space.
Finally, in \sectsign\ref{sub:hodim}, we obtain the desired bound on the homotopy dimension of the cdh $\infty$-topos.

\subsection{The cdp, rh, and cdh topologies}
\label{sub:cdp}

Recall that a family of morphisms of schemes $\{Y_i\to X\}$ is \emph{completely decomposed} if, for every $x\in X$, there exists $i$ and $y\in Y_i$ such that $\kappa(x)\to\kappa(y)$ is an isomorphism.

The \emph{cdp topology} on the category of qcqs schemes is 
defined as follows: a sieve on $X$ is a cdp covering sieve if and only if it contains a completely decomposed family $\{Y_i\to X\}$ where each $Y_i\to X$ is proper and finitely presented. 
The \emph{rh topology} (resp.\ the \emph{cdh topology}) is the topology generated by the cdp topology and the Zariski topology (resp.\ the Nisnevich topology). The rh and cdh topologies are defined on all schemes.

\begin{rem}
	The cdh topology was introduced by Suslin and Voevodsky in \cite{sv-cycleschow}, and the rh topology by Goodwillie and Lichtenbaum in \cite{goodwillie-lichtenbaum}. Our definitions differ from theirs for nonnoetherian schemes, as we require coverings to be finitely presented.
\end{rem}

If $X$ is a scheme and $\tau\in\{\rh,\cdh\}$, the $\infty$-category $\Shv_\tau(\Sch_X^\mathrm{lfp})$ is an $\infty$-topos, which is equivalent to $\Shv_\tau(\Sch_X^\fp)$ when $X$ is qcqs (see \cite[Proposition C.5]{HoyoisGLV}). 
By \cite[Theorem 2.3]{GabberKelly} and \cite[Lemma 6.4.5.6]{HTT}, the points of this $\infty$-topos can be identified with the local schemes over $X$ having no nontrivial covering sieves. 
For the rh topology, these are precisely the valuation rings \cite[Proposition 2.1]{goodwillie-lichtenbaum}, and for the cdh topology these are the henselian valuation rings \cite[Theorem 2.6]{GabberKelly}.

 A \emph{splitting sequence} for a morphism $f\colon Y\to X$ is a sequence of closed subschemes
\[
\varnothing = Z_n\subset Z_{n-1}\subset \dotsb\subset Z_1\subset Z_0=X
\]
such that $f$ admits a section over the subscheme $Z_{i}-Z_{i+1}$ for all $i$.

\begin{lem}\label{lem:splitting}
	Let $X$ be a qcqs scheme and $f\colon Y\to X$ a morphism locally of finite presentation. Then $f$ is completely decomposed if and only if it admits a splitting sequence by finitely presented closed subschemes.
\end{lem}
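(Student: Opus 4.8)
The implication $(\Leftarrow)$ is immediate. Given a splitting sequence $\varnothing=Z_n\subset\cdots\subset Z_0=X$ and a point $x\in X$, let $i$ be the largest index with $x\in Z_i$, so that $x$ lies in the locally closed subscheme $Z_i-Z_{i+1}$, and let $s$ be the corresponding section of $f$. Since $f\circ s$ is the inclusion of $Z_i-Z_{i+1}$ into $X$, and a locally closed immersion induces an isomorphism on residue fields, the composite $\kappa(x)\to\kappa(s(x))\to\kappa(x)$ of the maps induced by $f$ and by $s$ is an isomorphism; as the first map is injective it is itself an isomorphism, and $f$ is completely decomposed. For $(\Rightarrow)$ the plan is to isolate a local structure statement, use it to reduce to the case where $f$ is of finite presentation, settle that case over noetherian $X$ by noetherian induction, and bootstrap to arbitrary qcqs $X$ by noetherian approximation.

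The local input I would use is: if $y\in Y$ satisfies $\kappa(f(y))\isoto\kappa(y)$, then $y$ admits a finitely presented locally closed neighbourhood $W\subseteq Y$ such that $f|_W$ is an isomorphism onto a finitely presented locally closed subscheme of $X$; in particular $f(W)$ is a constructible subset of $X$. One proves this in affine charts $\Spec B\to\Spec A$ by lifting the images in $\kappa(\q)=\kappa(\p)$ of the generators of the $A$-algebra $B$ (here $\q$ is the prime of $B$ corresponding to $y$ and $\p$ its contraction) to a localization $A_g$ with $g\notin\p$, and letting $W$ be the closed subscheme of $\Spec A_g$ cut out by the resulting finitely many relations. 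Write $Y^{(1)}$ for the set of such points $y$; then $f$ is completely decomposed if and only if $f(Y^{(1)})=X$. Covering $Y^{(1)}$ by the subschemes $W$ above produces constructible subschemes $C\subseteq X$ carrying a section of $f$ and covering $X$. Since $X$ is quasi-compact in the constructible topology, finitely many of them, say $C_1,\dots,C_k$ with corresponding $W_1,\dots,W_k$, already cover $X$. Then $\bigsqcup_i W_i\to X$ is a completely decomposed morphism of finite presentation, and any splitting sequence for it is one for $f$ (compose the sections with $\bigsqcup_i W_i\hookrightarrow Y$). So we may assume $f$ of finite presentation.

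When $X$ is noetherian I would run a noetherian induction on closed subschemes: it suffices to produce a nonempty open $U\subseteq X$ over which $f$ has a section, for then $f$ restricted to the complementary reduced closed subscheme $Z\subsetneq X$ has a splitting sequence by induction, and $\varnothing=Z_n\subset\cdots\subset Z_1=Z\subset Z_0=X$ works. Replacing $X$ by $X_{\red}$ and choosing a generic point $\eta$, we have $\Oo_{X,\eta}=\kappa(\eta)$, and complete decomposition at $\eta$ gives a morphism $\Spec\kappa(\eta)\to Y$ over $X$; as $\Spec\kappa(\eta)=\lim U$ over the basic affine open neighbourhoods of $\eta$ and $f$ is of finite presentation, this extends to a section of $f$ over some such $U$. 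For general qcqs $X$, write $X=\lim_\alpha X_\alpha$ as a cofiltered limit of schemes of finite type over $\Z$ with affine transition maps and descend $f$ to a finitely presented $f_\alpha\colon Y_\alpha\to X_\alpha$. Let $T_\alpha\subseteq X_\alpha$ be the locus where $f_\alpha$ is not completely decomposed. Applying the local input to $f_\alpha$, the complement $X_\alpha\setminus T_\alpha=f_\alpha(Y_\alpha^{(1)})$ is a union of constructible sets, so $T_\alpha$ is pro-constructible, hence compact for the patch topology; the transition maps restrict to maps $T_\beta\to T_\alpha$, since complete decomposition is stable under base change to a residue field; and $\lim_\alpha T_\alpha=\varnothing$, because a compatible system $(x_\alpha)$ would determine a point $x\in X$ admitting no point of $Y$ above it with trivial residue field extension — such a point would, by finite presentation, descend to one of $Y_\alpha$ above $x_\alpha$ — contradicting that $f$ is completely decomposed. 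A cofiltered limit of nonempty compact Hausdorff spaces is nonempty, so $T_\alpha=\varnothing$ for some $\alpha$, i.e.\ $f_\alpha$ is completely decomposed. The noetherian case yields a splitting sequence for $f_\alpha$, and pulling it back along $X\to X_\alpha$ — finitely presented closed immersions and sections being stable under base change — gives one for $f$.

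The crux is the final step: over a nonnoetherian base there is no bound on the length of a splitting sequence and no noetherian induction available, so the ``peel off one stratum'' argument need not terminate. What makes the reduction to a noetherian approximation work is that the non-completely-decomposed locus of a finitely presented morphism is pro-constructible; this makes the patch topology compact, so that the vanishing of the cofiltered limit $\lim_\alpha T_\alpha$ forces some $T_\alpha$ to vanish.
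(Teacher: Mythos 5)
Your proof is correct, but it takes a genuinely different route from the paper's. The paper works directly over the nonnoetherian base: it applies Zorn's lemma to the poset of closed subschemes $Z\subseteq X$ over which $f$ admits no finitely presented splitting sequence (closed under cofiltered intersections because $f$ is locally of finite presentation), and rules out a minimal element by splitting $f$ over a quasi-compact open neighbourhood of a maximal point of $Z$, spreading the section to a finitely presented nilthickening $Z_1\subseteq Z$, and passing to a finitely presented closed complement. You instead reduce to finitely presented $f$ by covering $X$ with constructible subschemes carrying sections (compactness of the constructible topology), then transfer to a noetherian model: the observation that the non-completely-decomposed locus of a finitely presented morphism is pro-constructible, hence patch-compact, so that complete decomposedness descends to a finite stage of a noetherian approximation, is the real novelty of your route and does not appear in the paper; the noetherian case is then a routine noetherian induction, and pulled-back splitting sequences stay finitely presented. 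Two spots need a word of care, though neither is a genuine gap. First, in the noetherian induction, after replacing $X$ by $X_\red$ your section lives over $U_\red$, not over the open subscheme $U\subseteq X$; this is repaired by running the induction on closed subschemes (ascending chain condition for ideal sheaves) and inserting $X_\red\subseteq X$ as an extra term of the chain with empty stratum $X\setminus X_\red$ --- precisely the role played by the finitely presented nilimmersion in the paper's argument. Second, in the limit step the correct statement is that the $\kappa(x)$-point of the fibre $Y_x$ factors through a $\kappa(x_\alpha)$-point of $(Y_\alpha)_{x_\alpha}$ for some $\alpha$, using $\Spec\kappa(x)=\lim_\alpha\Spec\kappa(x_\alpha)$ and finite presentation; it is not the image of $y$ in $Y_\alpha$ whose residue extension becomes trivial (that can fail), but the spreading-out argument you invoke does give exactly the needed point. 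Overall the paper's proof is shorter; yours isolates the constructibility mechanism that makes the finite-presentation hypothesis effective and reduces everything to the familiar noetherian statement.
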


\begin{proof}
	It is clear that $f$ is completely decomposed if it admits a splitting sequence.
	Conversely, suppose that $f$ is completely decomposed.
	Let $\sE$ be the poset of closed subschemes $Z\subset X$ such that the induced morphism $Y\times_XZ\to Z$ does not admit a finitely presented splitting sequence. Since $f$ is locally of finite presentation, $\sE$ is closed under cofiltered intersections. By Zorn's lemma, it remains to show that $\sE$ does not have a minimal element. Any $Z \in \sE$ is nonempty and hence has a maximal point $z \in Z$. Since $f$ is completely decomposed, it splits over $z$. Since $f$ is locally of finite presentation, it splits over $U_\red$ for some quasi-compact open neighborhood $U$ of $z$ in $Z$. Writing $Z_\red$ as a limit of finitely presented subschemes, we find a finitely presented nilimmersion $Z_1\subset Z$ such that $f$ splits over $U\times_ZZ_1$. Let $Z_2\subset Z_1$ be a finitely presented closed complement of $U\times_ZZ_1$. Then $Z_2\in\sE$, which shows that $Z$ is not minimal.
\end{proof}

\begin{rem}
	A completely decomposed morphism of finite type need not have a splitting sequence. For example, let $X=\N\cup\{\infty\}$ be the one-point compactification of $\N$ regarded as a profinite set, and let $Y\subset X\times\{\pm 1\}$ be the closed subset consisting of the points $(n,(-1)^n)$ and $(\infty,\pm 1)$. Viewing profinite sets as zero-dimensional affine $k$-schemes for some field $k$, the map $Y\to X$ is finite and completely decomposed, but it does not admit a section over any open neighborhood of $\infty\in X$.
\end{rem}

In the following proposition, $\sqcup$ denotes the topology generated by finite coproduct decompositions.

\begin{prop}\label{prop:coherent}
	Let $X$ be a scheme.
	\begin{enumerate}
		\item If $X$ is qcqs and $\tau\in\{\mathord\sqcup,\Zar,\Nis,\cdp,\rh,\cdh\}$, the $\infty$-topos $\Shv_\tau(\Sch_X^\mathrm{fp})$ is coherent and locally coherent.
		\item For $\tau\in\{\Zar,\Nis,\rh,\cdh\}$, the $\infty$-topos $\Shv_\tau(\Sch_X^\mathrm{lfp})$ is locally coherent.
	\end{enumerate}
\end{prop}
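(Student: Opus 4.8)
The plan is to reduce the whole statement to a single criterion of Lurie: if $\sC$ is a \emph{small} $\infty$-category admitting finite limits and $\tau$ is a \emph{finitary} Grothendieck topology on $\sC$ — meaning that every $\tau$-covering sieve contains a finite family of morphisms which itself generates a covering sieve — then $\Shv_\tau(\sC)$ is a coherent $\infty$-topos; and, since the same criterion applies to each slice $\sC_{/C}$ with its induced topology and $\Shv_\tau(\sC)_{/Lh_C}\simeq\Shv_\tau(\sC_{/C})$ (here $L$ denotes $\tau$-sheafification and $h_C\in\PSh(\sC)$ is the presheaf represented by $C$), the $\infty$-topos $\Shv_\tau(\sC)$ is moreover locally coherent \cite{SAG}. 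With this in hand, the proof of (1) amounts to checking the two hypotheses, and the proof of (2) requires one further argument because $\Sch_X^{\mathrm{lfp}}$ fails to be small.

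For (1), first observe that when $X$ is qcqs the category $\Sch_X^\fp$ is essentially small, has terminal object $X$, and is closed under fiber products (the class of finitely presented morphisms being stable under composition, base change, and the usual cancellation); the same computation gives finite limits in $\Sch_X^{\mathrm{lfp}}$ for arbitrary $X$. That $\tau$ is finitary is immediate for $\tau\in\{\mathord\sqcup,\Zar,\Nis\}$, since a finitely presented $X$-scheme is quasi-compact, so every coproduct decomposition, Zariski cover, or Nisnevich cover of it admits a finite refinement. For $\tau=\cdp$ I would argue as follows: a $\cdp$-covering sieve $S$ contains a completely decomposed family $\{Y_i\to X\}_{i\in I}$ of proper finitely presented morphisms, and the morphism $\bigcoprod_{i\in I}Y_i\to X$ is locally of finite presentation and completely decomposed, so by \lemref{lem:splitting} it admits a finitely presented splitting sequence $\varnothing=Z_n\subset\dotsb\subset Z_0=X$; each stratum $Z_j\setminus Z_{j+1}$ is quasi-compact, being the open complement of a finitely presented closed subscheme of the qcqs scheme $Z_j$, so the section of $\bigcoprod_i Y_i$ over $Z_j\setminus Z_{j+1}$ factors through $\bigcoprod_{i\in I_j}Y_i$ for a finite $I_j\subset I$; then $I':=\bigcup_j I_j$ is finite, $\bigcoprod_{i\in I'}Y_i\to X$ still carries this splitting sequence, hence is completely decomposed by \lemref{lem:splitting}, and so the finite family $\{Y_i\to X\}_{i\in I'}\subset S$ generates a $\cdp$-covering sieve. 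Finally $\rh$ and $\cdh$ are finitary because the topology generated by finitary topologies is finitary (a straightforward induction on the construction of the generated topology). The criterion now yields that $\Shv_\tau(\Sch_X^\fp)$ is coherent and locally coherent, the slice statement using $(\Sch_X^\fp)_{/Y}=\Sch_Y^\fp$ together with the fact that $Y$ is qcqs when $X$ is.

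For (2), the topologies $\tau\in\{\Zar,\Nis,\rh,\cdh\}$ all refine the Zariski topology, and I would produce a small generating family of $\Shv_\tau(\Sch_X^{\mathrm{lfp}})$ with coherent slices. Let $\sG$ be the essentially small set of $W\in\Sch_X^{\mathrm{lfp}}$ such that $W$ is affine and $W\to X$ factors through an affine open $V\subset X$ with $W\to V$ finitely presented; each such $W$ is qcqs, so $\Shv_\tau(\Sch_X^{\mathrm{lfp}})_{/Lh_W}\simeq\Shv_\tau(\Sch_W^{\mathrm{lfp}})\simeq\Shv_\tau(\Sch_W^\fp)$ by \cite[Proposition C.5]{HoyoisGLV}, which is coherent by (1). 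It remains to check that $\{Lh_W\}_{W\in\sG}$ generates $\Shv_\tau(\Sch_X^{\mathrm{lfp}})$ under colimits; as the representable sheaves do, it suffices that $Lh_Y$ lie in the colimit-closure $\sD$ of this family for every $Y\in\Sch_X^{\mathrm{lfp}}$. If $W=\Spec B\in\sG$ and $W'\subset W$ is open, then $W'$ is covered by distinguished opens $D(g)$, which again lie in $\sG$ because a localization of a finitely presented algebra is finitely presented, and the \v{C}ech nerve of $\bigcoprod_k D(g_k)\to W'$ — a Zariski, hence $\tau$-, cover — has all terms coproducts of such opens $D(g_{k_0}\cdots g_{k_n})$, so $Lh_{W'}\in\sD$ by descent. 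Then, for arbitrary $Y$, covering $X$ by affine opens $V_\beta$ and each preimage $f^{-1}(V_\beta)\subset Y$ by affine opens produces a Zariski cover $\bigcoprod Y_\mu\to Y$ with every $Y_\mu\in\sG$, whose \v{C}ech nerve has all terms coproducts of open subschemes of members of $\sG$; by the previous step and descent, $Lh_Y\in\sD$. Hence $\sD=\Shv_\tau(\Sch_X^{\mathrm{lfp}})$, and this $\infty$-topos is locally coherent.

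I expect the one genuine obstacle to be the finitariness of $\cdp$ (and through it of $\rh$ and $\cdh$): this is precisely where \lemref{lem:splitting} does real work, and it is the only point where requiring coverings to be finitely presented is essential. Everything else is formal, granted Lurie's criterion and a little care with the size of $\Sch_X^{\mathrm{lfp}}$.
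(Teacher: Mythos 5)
Your proposal is correct and follows essentially the same route as the paper: the ``criterion of Lurie'' you invoke is precisely \cite[Proposition A.3.1.3]{SAG}, and the heart of the matter---extracting a finite completely decomposed subfamily via the splitting sequence of Lemma~\ref{lem:splitting}, so that the cdp (hence rh and cdh) topology on $\Sch_X^\fp$ is finitary---is exactly the paper's argument, which you merely spell out in more detail. For assertion (2) the paper only says it ``follows easily from (1)''; your reduction to affine objects over affine opens of $X$, using the slice equivalence and Zariski descent, is a correct filling-in of that step.
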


\begin{proof}
	By Lemma~\ref{lem:splitting}, for every completely decomposed family $\{Y_i\to X\}_{i\in I}$ with $X$ qcqs and $Y_i\to X$ finitely presented, there exists a finite subset $J\subset I$ such that the family $\{Y_i\to X\}_{i\in J}$ is completely decomposed.
	It follows that the given topologies on $\Sch_X^\fp$ are finitary, so (1) follows from \cite[Proposition A.3.1.3]{SAG}. Assertion (2) follows easily from (1).
\end{proof}

An \emph{abstract blowup square} is a cartesian square
\[
\begin{tikzcd}
	E \ar{r} \ar{d} & Y \ar{d}{p} \\
	Z\ar{r}{i} & X
\end{tikzcd}
\]
where $i$ is a closed immersion, $p$ is proper, and $p$ is an isomorphism over $X-i(Z)$. It is called \emph{finitely presented} if moreover $i$ and $p$ are finitely presented.

Recall that a \emph{cd-structure} on a category $\mathcal C$ is a collection of commutative squares in $\mathcal C$ \cite{cd-voe}. The associated Grothendieck topology on $\mathcal C$ is the minimal topology containing the sieves generated by these squares, as well as the empty sieve over the initial object. For example, the Zariski and Nisnevich topologies on the category of qcqs schemes are associated with standard cd-structures (for a discussion of the Nisnevich case with no noetherian hypotheses, see \cite[Appendix A]{norms}).

\begin{prop}\label{prop:cdp}
	Let $X$ be a qcqs scheme.
	\begin{enumerate}
		\item The cdp topology on $\Sch_X^\fp$ is associated with the cd-structure consisting of finitely presented abstract blowup squares.
		\item A presheaf $\sF$ on $\Sch_X^\fp$ is a cdp sheaf if and only if $\sF(\varnothing)=*$ and $\sF$ sends finitely presented abstract blowup squares to cartesian squares.
	\end{enumerate}
\end{prop}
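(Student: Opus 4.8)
The plan is to prove part~(1) — that the cdp topology coincides with the topology $t_P$ generated by the cd-structure $P$ of finitely presented abstract blowup squares — and then deduce part~(2) formally. For the latter, the point is that $P$ is stable under base change: pulling back a finitely presented abstract blowup square along any $X'\to X$ in $\Sch_X^\fp$ again yields one, since finitely presented closed immersions and proper finitely presented maps are stable under base change and $p$ being an isomorphism over $X\setminus i(Z)$ forces $p'$ to be an isomorphism over $X'\setminus i'(Z')$. Hence $P$ is complete in Voevodsky's sense, and the standard characterization of sheaves for a complete cd-structure (\cite{cd-voe}, and \cite[Appendix~A]{norms} for the $\infty$-categorical statement with no noetherian hypotheses) says that a presheaf $\sF$ on $\Sch_X^\fp$ is a $t_P$-sheaf if and only if $\sF(\varnothing)\simeq *$ and $\sF$ carries finitely presented abstract blowup squares to cartesian squares. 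Given part~(1), this is exactly part~(2).

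For the inclusion $t_P\subseteq \cdp$: a finitely presented abstract blowup square $(E,Y,Z,X)$ with proper $p\colon Y\to X$ and closed immersion $i\colon Z\to X$ produces the family $\{Y\to X,\,Z\to X\}$, which is completely decomposed — a point in $i(Z)$ is hit by $Z$ with isomorphic residue field since $i$ is a closed immersion, a point outside $i(Z)$ is hit by $Y$ with isomorphic residue field since $p$ is an isomorphism there — and both maps are proper and finitely presented; so the sieve generated by the square is a cdp covering sieve. Since $\cdp$ also contains the empty sieve on $\varnothing$, it contains all generators of $t_P$, whence $t_P\subseteq\cdp$.

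For $\cdp\subseteq t_P$, recall that cdp covering sieves are the ones containing a completely decomposed family of finitely presented proper morphisms; using the finiteness established in the proof of Proposition~\ref{prop:coherent} and replacing a finite such family by the coproduct of its members, it suffices to show that the sieve $R_f$ generated by a single completely decomposed finitely presented proper morphism $f\colon Y\to X$ is $t_P$-covering. By Lemma~\ref{lem:splitting}, $f$ admits a finitely presented splitting sequence $\varnothing=Z_n\subset\dotsb\subset Z_1\subset Z_0=X$, and we induct on $n$. If $n\le 1$ then $f$ has a section over all of $X$, so $R_f$ is the maximal sieve. For the inductive step, the section of $f$ over $U:=X\setminus Z_1$ is a closed immersion $U\hookrightarrow Y\times_X U$ with finite type ideal; extending that ideal to a finite type quasi-coherent ideal on the qcqs scheme $Y$ gives a finitely presented closed subscheme $Y'\subseteq Y$ with $Y'\times_X U\xrightarrow{\sim}U$, so that
\[
\begin{tikzcd}
Y'\times_X Z_1 \ar{r}\ar{d} & Y' \ar{d}{p'} \\
Z_1 \ar{r}{i_1} & X
\end{tikzcd}
\]
is a finitely presented abstract blowup square, whose generated sieve $\rho$ is $t_P$-covering. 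Now $R_f$ restricted to $Y'$ is maximal, because $Y'\to X$ factors through $f$; and $R_f$ restricted to $Z_1$ contains the sieve generated by $f\times_X Z_1\colon Y\times_X Z_1\to Z_1$, which is again completely decomposed, finitely presented and proper and inherits from $f$ the finitely presented splitting sequence $\varnothing=Z_n\subset\dotsb\subset Z_1$ of shorter length, hence has $t_P$-covering generated sieve by induction. Every morphism in $\rho$ factors through $Y'\to X$ or through $Z_1\to X$, so the restriction of $R_f$ along any morphism in $\rho$ is $t_P$-covering; transitivity of $t_P$ then yields that $R_f$ is $t_P$-covering.

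The only nonformal point is the finite‑presentation bookkeeping in the inductive step: realizing the section over $U$ by a genuinely finitely presented closed subscheme of $Y$. This rests on the fact that finite type quasi-coherent ideals on a qcqs scheme extend from a quasi-compact open, together with the standard cancellation properties for finitely presented morphisms (so that $Y'\hookrightarrow Y$ and $Z_i\hookrightarrow Z_1$ are finitely presented and $Y'\to X$ is proper and finitely presented). Everything else — the completeness of $P$, the two inclusions, and the translation into the sheaf statement of part~(2) — is formal.
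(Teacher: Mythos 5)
Your argument is correct and has the same skeleton as the paper's: part (2) is reduced to part (1) by the cd-structure sheaf criterion, and part (1) is proved by induction on the length of a finitely presented splitting sequence supplied by Lemma~\ref{lem:splitting}, at each stage manufacturing a finitely presented abstract blowup square $(Y',Y'\times_XZ_1)\to(X,Z_1)$ over whose two legs the original cover either splits or acquires a shorter splitting sequence, and concluding by locality. The only genuinely different ingredient is how $Y'$ is produced: the paper takes the schematic image of the section over $X\setminus Z_1$ and approximates it by finitely presented closed subschemes of $Y$ via \cite[Tag 0CNG]{stacks}, whereas you observe (by cancellation for finite presentation) that the section is a finitely presented closed immersion into $f^{-1}(X\setminus Z_1)$ and extend its finite-type ideal from this quasi-compact open to all of $Y$. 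Both constructions are valid; yours is somewhat more hands-on, the paper's is the standard limit argument. You also spell out the easy inclusion (abstract blowup squares generate cdp covering sieves), which the paper leaves implicit.

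Two points deserve tightening. First, your justification of part (2) claims more than completeness delivers: base-change stability of the cd-structure gives completeness, which yields only the ``excision $\Rightarrow$ sheaf'' direction; the converse direction requires regularity-type hypotheses, i.e.\ the actual hypotheses of Voevodsky's criterion in the form \cite[Theorem 3.2.5]{AHW} (the squares are cartesian, the closed-immersion legs are monomorphisms, and the relevant derived squares involving $Y\times_XY$ are again handled by distinguished squares --- here this works because the diagonal of a separated finitely presented morphism is a finitely presented closed immersion). These checks are routine for finitely presented abstract blowup squares, but they should be made or cited rather than inferred from completeness alone; the citation \cite[Appendix A]{norms} concerns the Nisnevich cd-structure and is not the statement you need. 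Second, the reduction from a finite completely decomposed family $\{Y_i\to X\}_{i\in J}$ to the single morphism $\coprod_{i\in J}Y_i\to X$ needs one extra line: the sieve generated by the coproduct contains, but is not contained in, the sieve generated by the family, so after covering $X$ by the coproduct you must still refine each $T\to\coprod_JY_i$ by its finite clopen decomposition; this is fine because a finite clopen decomposition is itself covered by degenerate finitely presented abstract blowup squares (with empty $E$), combined with the locality axiom. Neither point is a genuine obstruction --- the paper is equally terse on both --- but as written your deduction of (2) asserts a false general principle about complete cd-structures, so it should be rephrased to invoke the correct criterion.
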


\begin{proof}
	The second statement follows from the first and \cite[Theorem 3.2.5]{AHW}. 
	Let $f\colon Y\to X$ be completely decomposed, proper, and finitely presented. We must show that $f$ is covering for the topology $\tau$ associated with the cd-structure given by abstract blowup squares. By Lemma~\ref{lem:splitting}, $f$ admits a finitely presented splitting sequence $\varnothing=Z_n\subset\dotsb\subset Z_0=X$, and we prove the claim by induction on $n$. If $n=0$, then $X$ is empty and the claim is trivial. Suppose $n\geq 1$. Then $f$ admits a section over $X-Z_1$. Writing the schematic image of this section as a cofiltered limit of finitely presented closed subschemes of $Y$, we find such a closed subscheme $Y_1\subset Y$ such that $Y_1\to X$ is an isomorphism over $X-Z_1$ \cite[Tag 0CNG]{stacks}. Thus, $Y_1$ and $Z_1$ form an abstract blowup square over $X$. By construction, the base change of $f$ to $Y_1$ has a section and in particular is a $\tau$-cover.
	The base change of $f$ to $Z_1$ has a finitely presented splitting sequence of length $n-1$, and hence it is a $\tau$-cover by the induction hypothesis. It follows that $f$ is a $\tau$-cover, as desired.
\end{proof}

We shall say that a presheaf on some category $\sC$ of schemes is \emph{finitary} if it sends cofiltered limits of \emph{qcqs} schemes\footnote{By a \emph{cofiltered limit} of schemes, we will always mean the limit of a cofiltered diagram of schemes with affine transition morphisms.} to colimits. We denote by $\PSh^\fin(\sC)\subset \PSh(\sC)$ the full subcategory of finitary presheaves.

If $X$ is qcqs, recall that $\Sch_X^\qcqs$ can be identified with the full subcategory of $\Pro(\Sch_X^\fp)$ spanned by limits of cofiltered diagrams with affine transition morphisms \cite[Tags 01ZC and 09MV]{stacks}. This implies that there is an equivalence of $\infty$-categories
\[
\PSh(\Sch_X^\fp) \simeq \PSh^\fin(\Sch_X^\qcqs).
\]

\begin{prop}\label{prop:finitary-sheaves}
	Let $S$ be a scheme.
	\begin{enumerate}
		\item If $S$ is qcqs and $\tau\in\{\mathord\sqcup,\Zar,\Nis,\cdp,\rh,\cdh\}$, there is an equivalence of $\infty$-categories
		\[
		\Shv_\tau(\Sch_S^\fp) \simeq\Shv_\tau^\fin(\Sch_S^\qcqs).
		\]
		\item For $\tau\in\{\Zar,\Nis,\rh,\cdh\}$, there is an equivalence of $\infty$-categories
		\[
		\Shv_\tau(\Sch_S^\mathrm{lfp}) \simeq \Shv_\tau^\fin(\Sch_S).
		\]
	\end{enumerate}
\end{prop}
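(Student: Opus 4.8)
The plan is to exploit the equivalence $\PSh(\Sch_S^\fp)\simeq\PSh^\fin(\Sch_S^\qcqs)$ recalled just before the statement, and to show that under it the $\tau$-sheaves on $\Sch_S^\fp$ correspond exactly to the finitary $\tau$-sheaves on $\Sch_S^\qcqs$; this is statement~(1). Statement~(2) then follows: since $\tau\supseteq\Zar$ there, every object of $\Sch_S^\mathrm{lfp}$ (resp.\ of $\Sch_S$) admits a Zariski cover by objects of $\Sch_S^\fp$ (resp.\ of $\Sch_S^\qcqs$), so by the comparison lemma (compare \cite[Proposition~C.5]{HoyoisGLV}) restriction induces equivalences $\Shv_\tau(\Sch_S^\mathrm{lfp})\simeq\Shv_\tau(\Sch_S^\fp)$ and $\Shv_\tau(\Sch_S)\simeq\Shv_\tau(\Sch_S^\qcqs)$, compatibly with finitariness on both sides, and (1) applies. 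Inside (1), the cases $\tau\in\{\rh,\cdh\}$ reduce to $\tau\in\{\cdp,\Zar,\Nis\}$: on each of the four sites a presheaf is an $\rh$- (resp.\ $\cdh$-) sheaf if and only if it is both a $\cdp$-sheaf and a $\Zar$- (resp.\ $\Nis$-) sheaf, because $\rh$ (resp.\ $\cdh$) is generated by those two topologies, and the class of finitary presheaves is closed under this intersection. So we are reduced to $\tau\in\{\mathord\sqcup,\Zar,\Nis,\cdp\}$.

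For these four topologies the point is that $\tau$ is generated by a cd-structure — finite coproduct decompositions, Zariski squares, Nisnevich squares, and finitely presented abstract blowup squares, respectively — all of whose distinguished squares consist of finitely presented morphisms, and that for such a $\tau$ a presheaf $\sF$ is a $\tau$-sheaf if and only if $\sF(\varnothing)=*$ and $\sF$ sends distinguished squares to cartesian squares. On $\Sch_S^\fp$ this is \propref{prop:cdp} for $\cdp$ and the classical cd-structure descriptions otherwise (see \cite[Appendix~A]{norms} for the Nisnevich topology); the same arguments — using that \lemref{lem:splitting} holds over an arbitrary qcqs base — give the corresponding statement on $\Sch_S^\qcqs$. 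The decisive feature is that ``$\sF$ sends a given distinguished square to a cartesian square'' is a \emph{finite} limit condition; this is exactly what the \v{C}ech formulation of descent lacks (its totalizations do not commute with the filtered colimits that appear below), and what makes the comparison possible.

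Write $\sF\in\PSh(\Sch_S^\fp)$ for the presheaf corresponding to $\sG\in\PSh^\fin(\Sch_S^\qcqs)$, so $\sF=\sG|_{\Sch_S^\fp}$ and $\sF(\varnothing)=\sG(\varnothing)$. If $\sG$ is a $\tau$-sheaf, then any finitely presented distinguished square over an object of $\Sch_S^\fp$ is also distinguished over $\Sch_S^\qcqs$, so $\sF$ sends it to a cartesian square and $\sF(\varnothing)=*$; hence $\sF$ is a $\tau$-sheaf. Conversely, assume $\sF$ is a $\tau$-sheaf and let $Q$ be a distinguished $\tau$-square over some $X\in\Sch_S^\qcqs$; write $X=\lim_\alpha X_\alpha$ as a cofiltered limit of finitely presented $S$-schemes with affine transition maps. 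Since the morphisms in $Q$ are finitely presented and the conditions defining a distinguished square — open immersion, étale, closed immersion, proper, isomorphism over the complement, and, for $\cdp$, the existence of a finitely presented splitting sequence via \lemref{lem:splitting} — descend to a finite level and are stable under further base change, after passing to a cofinal subsystem we may write $Q=\lim_\alpha Q_\alpha$ with each $Q_\alpha$ a distinguished $\tau$-square over $X_\alpha\in\Sch_S^\fp$. As $\sG$ is finitary, $\sG(Q)=\colim_\alpha\sF(Q_\alpha)$ corner by corner; each $\sF(Q_\alpha)$ is cartesian because $\sF$ is a $\tau$-sheaf, and a filtered colimit of cartesian squares of spaces is cartesian. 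Hence $\sG$ sends $Q$ to a cartesian square, and $\sG(\varnothing)=*$, so $\sG$ is a $\tau$-sheaf. This proves (1), and with it (2).

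The main obstacle is the $\cdp$ case over $\Sch_S^\qcqs$: one has to carry the cd-structure description of \propref{prop:cdp} (and the ``complete cd-structure'' input it rests on, \cite[Theorem~3.2.5]{AHW}) over to the non-noetherian qcqs setting, and to verify carefully that finitely presented abstract blowup squares spread out from a finite level and are preserved by base change — here the splitting-sequence reformulation of \lemref{lem:splitting} is what makes both steps work. The $\mathord\sqcup$, $\Zar$ and $\Nis$ cases, the reduction of $\rh$ and $\cdh$ to them, and the passage from (1) to (2) are all formal.
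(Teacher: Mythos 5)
Your argument for part (1) is essentially the paper's own proof: characterize $\tau$-sheaves on $\Sch_S^\fp$ and on $\Sch_S^\qcqs$ by the cd-structure condition (\propref{prop:cdp} together with \cite[Theorem 3.2.5]{AHW}, the extension to an arbitrary qcqs base resting on \lemref{lem:splitting}), spread each distinguished square over $X=\lim_\alpha X_\alpha$ out to a finite stage, and conclude from finitariness because filtered colimits of spaces commute with the finite limits defining the square condition. Your explicit reduction of $\rh$ and $\cdh$ to $\cdp$ plus $\Zar$/$\Nis$ is only a cosmetic difference from the paper's use of the combined cd-structures, and your observation that the square condition (unlike a \v Cech totalization) is a finite-limit condition is exactly the point.

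The deduction of part (2), however, has a gap as written: you run the comparison-lemma reductions over the original base $S$ and then invoke (1), but (1) is only available for $S$ qcqs (its proof uses the identification of $\Sch_S^\qcqs$ with a subcategory of $\Pro(\Sch_S^\fp)$, which fails otherwise), and your claim that every object of $\Sch_S^\mathrm{lfp}$ admits a Zariski cover by objects of $\Sch_S^\fp$ is false when $S$ is not quasi-separated. For instance, glue two copies of $\Spec k[x_1,x_2,\dotsc]$ along the complement of the closed point $\m=(x_1,x_2,\dotsc)$: the resulting $S$ is not quasi-separated, one chart $U$ lies in $\Sch_S^\mathrm{lfp}$, but no open neighborhood $W$ of $\m$ in $U$ is quasi-compact over $S$ — the preimage of the other chart would be a quasi-compact open $W\setminus\{\m\}$, forcing $\{\m\}=W\cap V(g_1,\dotsc,g_n)$ for finitely many $g_i\in\m$, which is impossible since the prime generated by the finitely many variables occurring in the $g_i$ is a generization of $\m$ and hence also lies in $W\cap V(g_1,\dotsc,g_n)$. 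So $U$ has no Zariski cover in $\Sch_S^\fp$. The repair is exactly the paper's first step in its proof of (2): both $\Shv_\tau(\Sch_S^\mathrm{lfp})$ and $\Shv_\tau^\fin(\Sch_S)$ are Zariski sheaves in the variable $S$, so one may assume $S$ qcqs from the outset; after that reduction your restriction/comparison argument (which is then the paper's) goes through.
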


\begin{proof}
	(1) By Proposition~\ref{prop:cdp}, each of these topologies $\tau$ is associated with an obvious cd-structure, whose squares will be called $\tau$-squares. By \cite[Theorem 3.2.5]{AHW}, a presheaf on $\Sch_S^\qcqs$ or $\Sch_S^\fp$ is a $\tau$-sheaf if and only if it sends the empty scheme to a contractible space and every $\tau$-square to a cartesian square.
	If $X$ is a cofiltered limit of finitely presented $S$-schemes $X_\alpha$, then every $\tau$-square over $X$ is the pullback of a $\tau$-square over $X_\alpha$ for some $\alpha$ \cite[Tags 07RP and 081F]{stacks}. It follows that a finitary presheaf on $\Sch_S^\qcqs$ is a $\tau$-sheaf if and only if its restriction to $\Sch_S^\fp$ is. 
	
	(2) Note that both sides are Zariski sheaves in $S$, so we may assume that $S$ is qcqs. In this case it is clear that the restriction functors to the $\infty$-categories from (1) are equivalences.
\end{proof}

\begin{rem}
	The conclusion of Proposition~\ref{prop:finitary-sheaves} does not hold for $\tau\in\{\et,\mathrm{fppf},\mathrm{eh},\h\}$. This is one of the main technical advantages of completely decomposed topologies. Another one is the hypercompleteness property that we will prove in \sectsign\ref{sub:hodim}.
\end{rem}

\begin{rem}\label{rem:abu}
	If $X$ is a qcqs scheme, every abstract blowup square $(Y,E)\to (X,Z)$ is a cofiltered limit of finitely presented abstract blowup squares $(Y_\alpha,E_\alpha)\to (X,Z_\alpha)$ \cite[Tags 09ZP and 09ZR]{stacks}. As a result, a finitary cdp sheaf on $\Sch_X^\qcqs$ takes every abstract blowup square to a cartesian square. However, a finitary cdp sheaf need not be a sheaf for the topology generated by completely decomposed proper coverings, since Lemma~\ref{lem:splitting} does not apply to morphisms of finite type.
\end{rem}

\subsection{Riemann–Zariski spaces}
\label{sub:RZ}

By a \emph{generic point} of a scheme $X$ we mean a generic point of an irreducible component of $X$. We denote by $X^\mathrm{gen}\subset X$ the set of generic points with the induced topology. A morphism of schemes $f\colon Y\to X$ is \emph{narrow} if $f(Y)\cap X^\mathrm{gen}=\varnothing$.

We shall say that a scheme is \emph{quasi-integral} if it is reduced and locally has finitely many generic points. If $X$ is quasi-integral, then $X^\mathrm{gen}$ is a discrete space.

\begin{defn}
	Let $X$ be a quasi-integral scheme.
	A \emph{modification} of $X$ is a proper morphism $f\colon Y\to X$ such that $Y$ is reduced, $f$ induces a bijection $Y^\mathrm{gen}\simeq X^\mathrm{gen}$, and for every $\eta\in Y^\mathrm{gen}$, the residual field extension $\kappa(f(\eta))\to \kappa(\eta)$ is an isomorphism.
\end{defn}

If $f\colon Y\to X$ is a modification, there exists a dense open subscheme $U\subset X$ such that $f^{-1}(U)\to U$ is an isomorphism \cite[Tag 02NV]{stacks}. We denote by $\sM_X\subset \Sch_X$ the full subcategory of modifications of $X$.
Note that $\sM_X$ is a poset, since a morphism from a reduced scheme to a separated scheme is determined by its restriction to the generic points.

\begin{defn}\label{def:RZ}
	Let $X$ be a quasi-integral scheme.
	The \emph{Riemann–Zariski space} of $X$ is the limit of all modifications of $X$ in the category of ringed spaces:
	\[
	\RZ(X) = \lim_{Y\in\sM_X} Y.
	\]
\end{defn}

\begin{rem}
	In \cite[\sectsign 2.1]{temkin-rz}, Temkin defines the relative Riemann–Zariski space $\RZ_Y(X)$ for any quasi-compact separated morphism of schemes $Y\to X$ as the limit of all factorizations $Y \to X_i \to X$ into a schematically dominant morphism $Y \to X_i$ followed by a proper morphism $X_i \to X$. The ringed space $\RZ(X)$ of Definition~\ref{def:RZ} is exactly $\RZ_{X^\mathrm{gen}}(X)$.
\end{rem}

In the definition of $\RZ(X)$, it is possible to replace $\sM_X$ by several other categories without changing the limit. 
Let $\sM_X^\mathrm{bl}\subset\sM_X$ be the subcategory of blowups\footnote{By a \emph{blowup} we will always mean a blowup with finitely presented center, so that blowups are proper.} of $X$ with narrow centers.
Let $\sN_X$ be the category of proper $X$-schemes $Y$ such that the map $(Y_\eta)_\red\to \eta$ is an isomorphism for every $\eta\in X^\mathrm{gen}$, and let $\sN_X^\fp= \sN_X\cap\Sch_X^\fp$. Note that $\sM_X\subset \sN_X$.

\begin{lem}\label{lem:RZcategories}
	Let $X$ be a quasi-integral scheme.
	\begin{enumerate}
		\item The categories $\sN_X$, $\sN_X^\fp$, $\sM_X$, and $\sM_X^\mathrm{bl}$ are cofiltered.
		\item The inclusion $\sM_X\subset \sN_X$ is coinitial. If $X$ is qcqs, the inclusion $\sM_X^\mathrm{bl}\subset\sM_X$ is coinitial.
		\item Suppose $X$ qcqs and let $F\colon \sN_X\to \sC$ be a functor such that, for any cofiltered diagram $\{Y_i\}$ in $\sN_X^\fp$ whose transition maps are closed immersions, $F(\lim_i Y_i)\simeq \lim_i F(Y_i)$. Then $F$ is right Kan extended from $\sN_X^\fp$.
	\end{enumerate}
\end{lem}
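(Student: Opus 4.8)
\emph{Plan.}
The three assertions will be handled in turn; the first two are direct, and the third reduces to an approximation statement for proper morphisms which I expect to be the main obstacle.

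\emph{(1).} Since $\sM_X$ and $\sM_X^\mathrm{bl}$ are posets — a morphism from a reduced scheme to a separated $X$-scheme is determined by its restriction to the generic points — cofilteredness there amounts to nonemptiness and downward directedness. Both contain $\id_X$ (recall $X$ is reduced, being quasi-integral), and common lower bounds are provided by the reduced closure of the image of $X^\mathrm{gen}$ in $Y_1\times_X Y_2$ for $\sM_X$, and by $\mathrm{Bl}_{I_1I_2}X$, whose center $Z(I_1)\cup Z(I_2)$ is narrow, for $\sM_X^\mathrm{bl}$. The categories $\sN_X$ and $\sN_X^\fp$ are not posets, so instead I would exhibit finite products and equalizers: the fiber of $Y_1\times_X Y_2$ over a generic point $\eta$ is $\Spec$ of a tensor product over $\kappa(\eta)$ of local rings with nil maximal ideal and residue field $\kappa(\eta)$, and such a tensor product has the same form, so $Y_1\times_X Y_2\in\sN_X$; and the equalizer of $f,g\colon Y\rightrightarrows Y'$ is the closed subscheme $Y\times_{Y'\times_X Y'}Y'$ of $Y$ (closed since $Y'$ is separated over $X$), which is proper over $X$ and whose fiber over $\eta$ is nonempty because the two maps it is built from induce the same map on residue fields of the generic fibers, hence has reduction $\eta$. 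Both constructions preserve finite presentation, so $\sN_X^\fp$ is cofiltered as well.

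\emph{(2).} Coinitiality of a full subcategory is weak contractibility of the relevant comma categories, which here will be cofiltered, so nonemptiness and downward directedness suffice. For $\sM_X\subset\sN_X$ and $Y\in\sN_X$, the reduced closure $M$ of the image of the canonical map $X^\mathrm{gen}=\bigcoprod_\eta(Y_\eta)_\red\to Y$ is a modification of $X$ admitting a (necessarily unique) morphism to $Y$ over $X$, and directedness follows from (1) together with uniqueness of $X$-morphisms out of reduced schemes. For $\sM_X^\mathrm{bl}\subset\sM_X$ with $X$ qcqs, a modification $Y\to X$ is an isomorphism over a quasi-compact dense open $U\supseteq X^\mathrm{gen}$ (shrink the dense open given by \cite[Tag 02NV]{stacks} using that $X^\mathrm{gen}$ is finite), so by flattening by blowing up there is a blowup $\mathrm{Bl}_ZX$ with finitely presented center supported in $X\setminus U$ — hence narrow — that dominates $Y$; since the Rees algebra of a reduced ring is reduced, $\mathrm{Bl}_ZX$ is a modification, so lies in $\sM_X^\mathrm{bl}$, and directedness again follows from (1).

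\emph{(3).} Here I would show that $\sN_X$ behaves as the pro-completion of $\sN_X^\fp$ along closed immersions, so that a functor sending such cofiltered limits to limits is automatically right Kan extended from $\sN_X^\fp$. Concretely, writing $\mathcal{A}_Y:=\sN_X^\fp\times_{\sN_X}(\sN_X)_{Y/}$, one must prove that for every $Y\in\sN_X$ the canonical map $F(Y)\to\lim_{\mathcal{A}_Y}(F\circ\text{forget})$ is an equivalence, and this rests on two inputs: (a) every $Y\in\sN_X$ is a cofiltered limit $Y=\lim_k W_k$ with $W_k\in\sN_X^\fp$ and \emph{closed-immersion} transition maps; and (b) each $W\in\sN_X^\fp$ is co-compact for such limits, i.e. $\Hom_{\sN_X}(\lim_k W_k,W)=\colim_k\Hom_{\sN_X}(W_k,W)$, which is inherited from the identification of $\Sch_X^\qcqs$ with cofiltered-affine pro-objects of $\Sch_X^\fp$ recalled in \sectsign\ref{sub:cdp} (closed immersions are affine). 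Granting (a), input (b) shows — via the standard fact that a map out of a cofiltered limit factors through a stage and two such maps agreeing on the limit already agree at a stage — that the diagram $\{W_k\}\to\mathcal{A}_Y$ has cofiltered, hence weakly contractible, comma categories, that is, it is coinitial; therefore $\lim_{\mathcal{A}_Y}(F\circ\text{forget})=\lim_k F(W_k)=F(\lim_k W_k)=F(Y)$, the middle equality being the hypothesis on $F$ applied to the cofiltered diagram $\{W_k\}$ in $\sN_X^\fp$ with closed-immersion transitions.

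The crux is input (a). I would write $Y=\lim_\alpha Y_\alpha$ with $Y_\alpha\in\Sch_X^\fp$ and affine transition maps; for $\alpha$ large $Y_\alpha\to X$ is proper, so $Y\to Y_\alpha$ is proper (cancellation) and affine (a limit of affine maps), hence finite, and $\mathcal{A}_\alpha:=(Y\to Y_\alpha)_*\mathcal{O}_Y$ is a finite-type $\mathcal{O}_{Y_\alpha}$-algebra, say of the form $\mathcal{O}_{Y_\alpha}[\underline t]/I$. Writing $I$ as the filtered union of finitely generated quasi-coherent ideals $I_k$ and setting $W_k:=\Spec_{Y_\alpha}\bigl(\mathcal{O}_{Y_\alpha}[\underline t]/I_k\bigr)$ yields $Y=\lim_k W_k$ with closed-immersion transition maps; moreover $W_k\to Y_\alpha$ is finite for $k$ large (the monic relations witnessing finiteness of $\mathcal{A}_\alpha$ lie in some $I_k$) and finitely presented, so $W_k\to X$ is proper and finitely presented. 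Finally, for each of the finitely many generic points $\eta$, base change along the flat map $\Spec\kappa(\eta)\to X$ gives $Y_\eta=\lim_k W_{k,\eta}$ with closed-immersion transitions among the Noetherian schemes $W_{k,\eta}$; since $|Y_\eta|$ is a single point, Noetherianity forces the descending chain $|W_{k,\eta}|$ to stabilize to a point, and then surjectivity of the structure-sheaf map of the pro-closed immersion $Y_\eta\to W_{k,\eta}$ on the residue field at that point, combined with $W_{k,\eta}$ being a $\kappa(\eta)$-scheme, pins that residue field to $\kappa(\eta)$, so $(W_{k,\eta})_\red=\eta$ and hence $W_k\in\sN_X^\fp$ for $k$ large. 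I expect the only genuinely fiddly points to be making the presentation $\mathcal{A}_\alpha=\mathcal{O}_{Y_\alpha}[\underline t]/I$ and the approximation of $I$ global over the qcqs scheme $Y_\alpha$, and the bookkeeping in this last fiber computation.
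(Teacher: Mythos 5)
Your treatment of (1) and (2) is correct and runs along the same lines as the paper: cofilteredness of $\sN_X$ and $\sN_X^\fp$ via closure under finite products and equalizers in $\Sch_X$, meets in the poset $\sM_X$ (resp.\ $\sM_X^\mathrm{bl}$) via the reduced closure of the generic points in a fibre product (resp.\ the blowup of a product of ideals, which is \cite[Tag 080A]{stacks}), coinitiality of $\sM_X\subset\sN_X$ by discarding narrow components, and coinitiality of $\sM_X^\mathrm{bl}\subset\sM_X$ by flattening/refinement by $U$-admissible blowups. Likewise, the formal skeleton of your (3) --- reduce the right Kan extension property to (a) every $Y\in\sN_X$ is a cofiltered limit of objects of $\sN_X^\fp$ with closed-immersion transition maps, plus (b) co-compactness of finitely presented $X$-schemes against such limits, and then conclude by coinitiality of the diagram $\{W_k\}$ in the comma category --- is exactly the paper's argument (the paper phrases it as: $\sN_X$ is the full subcategory of $\Pro(\sN_X^\fp)$ spanned by limits of cofiltered diagrams of closed immersions), and your deduction from (a)+(b) is fine.

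The genuine gap is in your proof of (a). You write $Y=\lim_\alpha Y_\alpha$ with $Y_\alpha\in\Sch_X^\fp$ and affine transition maps and assert that $Y_\alpha\to X$ is proper for $\alpha$ large. Properness of the limit morphism does not descend to any finite stage of such a presentation: for instance, take $X=\Spec k$ and the tower $\dotsb\to\bbA^1_k\to\bbA^1_k$ in which every transition map is the constant map $x\mapsto 0$ (on rings, $k[x]\to k[x]$, $x\mapsto 0$); the transitions are affine, every stage is finitely presented over $k$, the limit is $\Spec k$, which is proper over $k$, yet no stage is proper. Everything downstream in your construction of the $W_k$ --- the cancellation argument making $Y\to Y_\alpha$ finite, and the conclusion that the $W_k$, being finite and finitely presented over $Y_\alpha$, are proper over $X$ and hence lie in $\sN_X^\fp$ --- depends on this false step. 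Nor is the step repaired just by choosing a presentation with closed-immersion transitions (as in \cite[Tag 09ZQ]{stacks}): producing a presentation of a proper, non-finitely-presented $X$-scheme whose stages are themselves \emph{proper} and finitely presented is precisely the nontrivial approximation result the paper invokes, namely \cite[Tag 09ZR]{stacks}, whose proof goes through relative compactification and schematic images rather than the bare limit formalism you appeal to. So input (a) should be obtained by citing that lemma (after which the generic-fibre stabilization argument you give, using noetherianity of $(Y_i)_\eta$ and finiteness of $X^\mathrm{gen}$, is the same as the paper's). A secondary caveat, which you partly flagged yourself: on a non-affine qcqs $Y_\alpha$ one cannot in general present a finite quasi-coherent algebra as a global quotient $\sO_{Y_\alpha}[\underline t]/I$ (global algebra generators may not exist), so even that intermediate device would need to be replaced by the standard approximation of integral/finite morphisms by finite finitely presented ones; but this is minor compared to the properness issue.
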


\begin{proof}
	(1,2) The categories $\sN_X$ and $\sN_X^\fp$ are cofiltered because they are closed under finite limits in $\Sch_X$. The inclusion $\sM_X\subset \sN_X$ has a right adjoint that sends $Y$ to the closure of $X^\mathrm{gen}$ in $Y$ with reduced scheme structure (in other words, it discards the narrow components of $Y$). Hence, $\sM_X$ is also cofiltered and $\sM_X\subset \sN_X$ is coinitial.
	The poset $\sM_X^\mathrm{bl}$ is cofiltered by \cite[Tag 080A]{stacks}. If $X$ is qcqs, any modification of $X$ is an isomorphism over a dense quasi-compact open $U\subset X$, and by \cite[Corollaire 5.7.12]{GrusonRaynaud} it can be refined by a $U$-admissible blowup of $X$. This shows that the inclusion $\sM_X^\mathrm{bl}\subset \sM_X$ is coinitial.
	
	(3) Recall that the category of qcqs $X$-schemes is equivalent to the full subcategory of $\Pro(\Sch_X^\fp)$ spanned by limits of cofiltered diagrams  with affine transition maps \cite[Tags 01ZC and 09MV]{stacks}.
	By \cite[Tag 09ZR]{stacks}, any proper $X$-scheme $Y$ is the limit of a cofiltered diagram of finitely presented proper $X$-schemes $Y_i$ whose transition maps are closed immersions.
	Moreover, for every $\eta\in X^\mathrm{gen}$, the scheme $(Y_i)_\eta$ is noetherian, so the closed immersion $Y_\eta\to (Y_i)_\eta$ is eventually an isomorphism. In particular, if $Y\in\sN_X$, then $Y_i$ is eventually in $\sN_X^\fp$. This shows that $\sN_X$ is equivalent to the full subcategory of $\Pro(\sN_X^\fp)$ spanned by limits of cofiltered diagrams of closed immersions, which implies the claim.
\end{proof}

\begin{cor}\label{cor:RZcategories}
	Let $X$ be a quasi-integral scheme. Then $\RZ(X)\simeq \lim_{Y\in \sN_X}Y$.
	If $X$ is qcqs, then $\RZ(X)\simeq \lim_{Y\in\sC}Y$ for $\sC$ either $\sM_X^\mathrm{bl}$ or $\sN_X^\fp$.
\end{cor}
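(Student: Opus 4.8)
The plan is to deduce the corollary formally from Lemma~\ref{lem:RZcategories}. Two standard facts will be used throughout, applied to the tautological functors $Y\mapsto Y$ with values in the (complete) category of ringed spaces: first, that a coinitial functor $i\colon\sI\to\sJ$ induces an equivalence $\lim_{\sJ}F\isoto\lim_{\sI}(F\circ i)$; and second, that the composite of two right Kan extensions is the right Kan extension of the composite, so that if $F\colon\sJ\to\sC$ is right Kan extended from a full subcategory $\sI\subset\sJ$ then, taking the further Kan extension along $\sJ\to\ast$, one gets $\lim_{\sJ}F\simeq\lim_{\sI}(F|_{\sI})$.

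For the first assertion, valid for any quasi-integral $X$: by Lemma~\ref{lem:RZcategories}(2) the inclusion $\sM_X\subset\sN_X$ is coinitial, so $\lim_{Y\in\sN_X}Y\simeq\lim_{Y\in\sM_X}Y=\RZ(X)$. Now assume $X$ qcqs. The case $\sC=\sM_X^{\mathrm{bl}}$ is the same argument once more: $\sM_X^{\mathrm{bl}}\subset\sM_X$ is coinitial by Lemma~\ref{lem:RZcategories}(2), hence $\lim_{Y\in\sM_X^{\mathrm{bl}}}Y\simeq\lim_{Y\in\sM_X}Y=\RZ(X)$.

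The case $\sC=\sN_X^\fp$ is the one where coinitiality fails, and here I would invoke Lemma~\ref{lem:RZcategories}(3) for the tautological functor $T$ on $\sN_X$ valued in ringed spaces. Its hypothesis asks that $T$ carry a cofiltered limit $\lim_i Y_i$ of objects of $\sN_X^\fp$ along closed immersions to the limit of the ringed spaces $T(Y_i)$; this holds because such a cofiltered limit of schemes along affine transition morphisms is computed at the level of underlying topological spaces and structure sheaves, i.e.\ already in ringed spaces (cf.\ \cite[Tag~01YX]{stacks}). So $T$ is right Kan extended from $\sN_X^\fp$, whence $\lim_{Y\in\sN_X}Y\simeq\lim_{Y\in\sN_X^\fp}Y$, and this is $\RZ(X)$ by the first assertion.

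Essentially all of the work sits in Lemma~\ref{lem:RZcategories}; the only non-formal point in the present argument is checking the hypothesis of part~(3) for $T$ — the compatibility of cofiltered scheme-theoretic limits along affine transition maps with limits of ringed spaces, which is the one place it matters that $\RZ$ was defined via ringed rather than locally ringed spaces. I expect that to be the main (and minor) obstacle; the remainder is bookkeeping with coinitial functors and Kan extensions.
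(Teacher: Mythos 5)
Your proof is correct and is essentially the argument the paper intends: the corollary is stated as an immediate consequence of Lemma~\ref{lem:RZcategories}, with the $\sM_X^{\mathrm{bl}}$ and $\sN_X$ cases following from coinitiality in part (2) and the $\sN_X^\fp$ case from the right Kan extension statement in part (3), whose hypothesis holds for the tautological functor precisely because cofiltered limits of qcqs schemes along affine (here closed immersion) transition maps are computed on underlying spaces and structure sheaves, i.e.\ in ringed spaces. Your identification of that last compatibility as the only non-formal step is exactly right.
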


\begin{rem}
	The analogue of Lemma~\ref{lem:RZcategories}(3) is not true for the category $\sM_X$, because every cofiltered diagram of closed immersions in $\sM_X$ is constant. 
	It is thus necessary to pass to the larger category $\sN_X$ to express $\RZ(X)$ as a limit of finitely presented $X$-schemes.
\end{rem}

\begin{rem}
	If $X'\to X$ is a morphism of quasi-integral schemes that sends generic points to generic points and $Y\in\sN_X$, then $Y\times_XX'\in\sN_{X'}$. It thus follows from Corollary~\ref{cor:RZcategories} that $X\mapsto\RZ(X)$ is functorial with respect to such morphisms.
\end{rem}

\begin{prop}\label{prop:RZ-properties}
	Let $X$ be a quasi-integral scheme.
	\begin{enumerate}
		\item $\RZ(X)$ is a locally ringed space whose underlying topological space is locally spectral; it is spectral if $X$ is qcqs.
		\item If $Y$ is a modification of $X$, there is a canonical isomorphism $\RZ(Y)\simeq \RZ(X)$.
		\item If $\{X_i\}$ are the irreducible components of $X$, then $\RZ(X)\simeq \bigcoprod_i \RZ(X_i)$.
		\item If $U\subset X$ is open, the canonical map $\RZ(U)\to \RZ(X)\times_XU$ is an isomorphism.
	\end{enumerate}
\end{prop}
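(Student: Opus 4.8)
The plan is to realize each assertion as a statement about cofiltered limits of categories of modifications and to prove it by exhibiting a \emph{coinitial} functor between two such categories; the four parts are logically independent except that the non-qcqs case of~(1) uses~(4) and the reduction in~(4) uses~(3), so I would prove them in the order (3), (2), (4), (1). Throughout, the key formal input is that a coinitial functor $\sC\to\sD$ induces an equivalence $\lim_{\sD}F\simeq\lim_{\sC}F|_{\sC}$, and that for poset (or cofiltered) diagrams coinitiality just means the relevant comma categories are nonempty and cofiltered.

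For~(3): writing $\{X_i\}$ for the irreducible components of $X$ (each integral with its reduced structure), the assignment $(Z_i)_i\mapsto\bigcoprod_i Z_i$ is a functor $\prod_i\sM_{X_i}\to\sM_X$, since the disjoint union is reduced, its generic points biject with $X^{\mathrm{gen}}$ with isomorphisms on residue fields, and properness over $X$ can be checked Zariski-locally on $X$, where only finitely many $X_i$ occur. It is coinitial because any $Y\in\sM_X$ decomposes as $Y=\bigcup_i Y_i$ with $Y_i\in\sM_{X_i}$ the reduced component dominating $X_i$, yielding a map $\bigcoprod_i Y_i\to Y$ in $\sM_X$, and because the comma categories are cofiltered posets. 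Since a cofiltered category is connected, cofiltered limits commute with coproducts, and one concludes $\RZ(X)\simeq\bigcoprod_i\RZ(X_i)$. For~(2): composition with $Y\to X$ defines a functor $\sM_Y\to\sM_X$ (composites of proper maps are proper, and the generic-point and residue-field conditions are inherited), and it is coinitial because for any $W\in\sM_X$ the reduced closure of the generic points of $Y\times_X W$ is an object of $\sM_Y$ mapping to $W$ over $X$; hence $\RZ(Y)\simeq\RZ(X)$.

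For~(4): using~(3) one reduces to $X$ integral and $U\subset X$ a dense open (the summands indexed by components missing $U$ contribute nothing after base change to $U$). Both $W\mapsto\RZ(W)$ and $W\mapsto\RZ(X)\times_X W$ are Zariski-local in the open $W\subset X$ --- the latter because $\RZ(X)$ is a ringed space over $X$, the former by a gluing argument bootstrapped from the affine case --- so one may assume $X$ qcqs. By Corollary~\ref{cor:RZcategories}, $\RZ(X)=\lim_{\sM_X^{\mathrm{bl}}}Y$ and $\RZ(U)=\lim_{\sM_U^{\mathrm{bl}}}Y$, and base change along $U\hookrightarrow X$ sends the blowup of $X$ along a narrow finitely presented center $W$ to the blowup of $U$ along $W\cap U$, defining $\sM_X^{\mathrm{bl}}\to\sM_U^{\mathrm{bl}}$. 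This functor is coinitial: a narrow finitely presented center on $U$ extends to a finitely presented closed subscheme of $X$, automatically narrow since $X$ is integral and the generic point lies in $U$. Therefore $\RZ(U)=\lim_{\sM_U^{\mathrm{bl}}}\simeq\lim_{\sM_X^{\mathrm{bl}}}(-\times_X U)\simeq\RZ(X)\times_X U$.

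For~(1): when $X$ is qcqs, $\RZ(X)=\lim_{\sN_X^{\fp}}Y$ by Corollary~\ref{cor:RZcategories} is a cofiltered limit of spectral spaces along spectral maps, hence spectral, and this limit is computed in topological spaces; for general $X$ one covers by affine opens $U$ and uses~(4) to identify the preimage of $U$ in $\RZ(X)$ with the spectral space $\RZ(U)$, so $\RZ(X)$ is locally spectral. The locally ringed property holds because for $x\in\RZ(X)$ the stalk $\mathcal{O}_{\RZ(X),x}=\colim_{Y\in\sM_X}\mathcal{O}_{Y,x_Y}$ is a filtered colimit of local rings along local homomorphisms. The main obstacle is~(4): the extension of admissible blowups needed to build the coinitial functor $\sM_X^{\mathrm{bl}}\to\sM_U^{\mathrm{bl}}$, and the descent from arbitrary quasi-integral $X$ to the qcqs (even affine) case --- i.e.\ the Zariski-locality of $\RZ$ --- is the delicate point; alternatively one can appeal directly to Temkin's results on relative Riemann--Zariski spaces, since $\RZ(X)=\RZ_{X^{\mathrm{gen}}}(X)$.
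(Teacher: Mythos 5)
Parts (2) and (3) of your outline are essentially sound, modulo a small slip in (2): you should take the closure of the points of $Y\times_X W$ lying over $X^{\mathrm{gen}}$ (equivalently, the union of the components dominating components of $X$), not of all generic points of $Y\times_X W$, since the fibre product of two modifications can acquire extra irreducible components over narrow loci (e.g.\ two blowups of $\bA^2$ at the origin). This is easily repaired, and is close in spirit to the paper, which simply identifies $\sM_Y$ with $(\sM_X)_{/Y}$ and deduces (3) from (2) applied to $\bigcoprod_i X_i\to X$. The genuine gap is in (4), and it propagates to your proof of the non-qcqs case of (1), which you derive from (4). Your core argument compares $\sM_X^{\mathrm{bl}}$ with $\sM_U^{\mathrm{bl}}$ and extends a finite-type center from $U$ to $X$; all of this is only available when $X$ is qcqs (and $U$ quasi-compact): \corref{cor:RZcategories} and the coinitiality of $\sM^{\mathrm{bl}}\subset\sM$ from \lemref{lem:RZcategories}(2) need qcqs, and extending a finite-type quasi-coherent ideal from a quasi-compact open is itself a qcqs statement. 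But the proposition concerns arbitrary quasi-integral $X$, and your reduction to the qcqs case is circular: Zariski-locality of $W\mapsto\RZ(W)$ in $W$, i.e.\ identifying the preimage of an open $V\subset X$ in $\RZ(X)$ with $\RZ(V)$, \emph{is} statement (4), so it cannot be invoked (``a gluing argument bootstrapped from the affine case'') to shrink $X$. Locality in $U$ is fine and legitimately reduces to $U$ affine, since one only applies the affine case to inclusions $U_i\subset U$ and $U_i\subset X$; but it gives no handle on $X$ itself. You flag this as ``the delicate point'' and fall back on citing Temkin, which is not an argument within the proposal.

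The paper closes exactly this point without ever assuming $X$ qcqs: after reducing to $U$ affine, it proves coinitiality of the pullback functor $\sM_X\to\sM_U$ by refining a given modification of $U$ by a projective one (a blowup, using \lemref{lem:RZcategories}(2) for the qcqs scheme $U$), embedding it into $\P^n_U$, taking its closure $\bar Y$ in $\P^n_X$ with reduced structure---a closed subscheme of $\P^n_X$, hence automatically proper over $X$, with no extension of ideal sheaves required---and adding the union $X'$ of the components of $X$ disjoint from $U$, so that $\bar Y\sqcup X'$ is a modification of $X$ whose restriction to $U$ is the given one. If you replace your extension-of-centers step by this closure trick, the rest of your outline goes through, including your (legitimate, though different from the paper's) derivation of local spectrality in (1) from (4) and an affine cover.
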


\begin{proof}
	(1) This follows from Lemma~\ref{lem:RZcategories}(1) and the fact that the forgetful functors from locally ringed spaces to ringed spaces and from (locally) spectral spaces to topological spaces preserve cofiltered limits (the former is easy and the latter is \cite[Tag 0A2Z]{stacks}).
	
	(2) We have $\sM_Y=(\sM_X)_{/Y}$ and the forgetful functor $(\sM_X)_{/Y}\to\sM_X$ is coinitial since $\sM_X$ is cofiltered, by Lemma~\ref{lem:RZcategories}(1).
	
	(3) This follows from (2) since $\bigcoprod_i X_i\to X$ is a modification of $X$.
	
	(4) It suffices to prove the result when $U$ is affine. In that case, we will show that the pullback functor $\sM_X\to\sM_U$ is coinitial.
	Since $\sM_X$ and $\sM_U$ are cofiltered posets by Lemma~\ref{lem:RZcategories}(1), it will suffice to show that every modification of $U$ is refined by the pullback of a modification of $X$.
	 By Lemma~\ref{lem:RZcategories}(2), any modification of $U$ is refined by a projective modification $Y\to U$. Since $U$ is affine, we can embed $Y$ in a projective space $\P^n_U$. Let $\bar Y$ be the closure of $Y$ in $\P^n_X$ and let $X'\subset X$ be the union of the irreducible components disjoint from $U$. Then $\bar Y\sqcup X'$ is a modification of $X$ lifting $Y$.
\end{proof}

Our next goal is to generalize Proposition~\ref{prop:RZ-properties}(4) to étale morphisms. Note that if $X$ is quasi-integral and $V\to X$ is an étale morphism, then $V$ is also quasi-integral.\footnote{However, if $X$ is integral, then $V$ need not be locally integral (unless $X$ is geometrically unibranch). This is the reason for considering quasi-integral schemes.}

\begin{lem}\label{lem:etaleblowup}
	Let $X$ be a qcqs scheme with finitely many irreducible components and $V\to X$ a quasi-compact separated étale morphism. For every blowup $V'\to V$ with narrow center, there exists a blowup $X'\to X$ with narrow center such that $X'\times_XV \to V$ factors through $V'$.
\end{lem}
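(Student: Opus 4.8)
The plan is to reduce to the affine case, use the structure of blowups with narrow center, and then spread the center of $V' \to V$ out to a center on $X$ by taking a scheme-theoretic closure. First I would observe that both hypotheses and conclusion are Zariski-local on $X$: since $X$ is qcqs with finitely many irreducible components, we may cover $X$ by finitely many affine opens and, working one at a time and re-intersecting, reduce to the case where $X = \Spec A$ is affine (a blowup with narrow center over each piece can be glued/refined to a single blowup over $X$, using that a finite intersection of admissible blowups is refined by one). With $X$ affine, $V \to X$ is a quasi-compact separated étale morphism, hence a scheme of finite type over $A$; in particular $V$ has only finitely many irreducible components and each is narrow over $X$ exactly when it maps into the corresponding narrow locus.

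Next I would analyze the blowup $V' \to V$. By definition a blowup with narrow center is $\mathrm{Bl}_{\sI}(V) \to V$ for a finitely presented quasi-coherent ideal sheaf $\sI \subset \Oo_V$ whose vanishing locus $Z = V(\sI)$ is narrow in $V$, i.e.\ misses $V^\mathrm{gen}$. The key point is to produce from $\sI$ a finitely presented ideal sheaf $\sJ$ on $X$, with narrow vanishing locus, such that the blowup $X' = \mathrm{Bl}_{\sJ}(X)$ pulls back over $V$ to something dominating $V'$. Here I would use the universal property of blowups: $X' \times_X V \to V$ factors through $\mathrm{Bl}_{\sI}(V)$ if and only if the pullback $\sI\cdot(\Oo_{X'\times_X V})$ — i.e.\ the ideal generated by the image of $\sJ|_V$ in $\Oo_{X'\times_X V}$ — contains (is contained in the invertibility locus of) the pulled-back $\sI$. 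Concretely it suffices to arrange $\sJ \cdot \Oo_V \subset \sI$ up to refining, so that $\sI$ becomes invertible after further blowing up. The construction of $\sJ$: since $V \to X$ is of finite presentation, the finitely presented subscheme $Z \subset V$ has a scheme-theoretic image, or rather I would take a finitely presented closed subscheme $W \subset X$ whose preimage $W \times_X V$ contains $Z$ and is still narrow over $X$ — e.g.\ enlarge $Z$ to the closure of its image in $X$, which is disjoint from $X^\mathrm{gen}$ because $Z$ was disjoint from $V^\mathrm{gen}$ and $V \to X$ sends $V^\mathrm{gen}$ onto $X^\mathrm{gen}$ (as $V \to X$ is étale, hence open and generalizing). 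Finitely presented-ness of this closure follows by writing it as a cofiltered limit of finitely presented closed subschemes and noting $V \times_X -$ eventually contains the finitely presented $Z$.

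The main obstacle I anticipate is the bookkeeping needed to guarantee that the center $\sJ$ on $X$, after pulling back to $V$, actually lies inside $\sI$ rather than merely having the same vanishing locus — two ideals with the same support need not be comparable, and the universal property of blowups is sensitive to the ideal, not just its radical. The fix is to be flexible about which blowup of $V$ we land over: it is enough that $X' \times_X V \to V$ factors through \emph{some} blowup refining $V'$, and then use Lemma~\ref{lem:RZcategories}(2) (coinitiality of $\sM_V^\mathrm{bl}$, or rather that admissible blowups of $V$ are cofiltered) to replace $V'$ by a convenient refinement whose center is the pullback of a center from $X$. Alternatively, one invokes \cite[Corollaire 5.7.12]{GrusonRaynaud} (flattening by blowup) directly: the center of $V' \to V$ can be taken to be $V$-admissible, i.e.\ an ideal that is trivial on a dense open, and such ideals pull back well; then $\mathrm{RZ}$-style coinitiality lets us assume the center came from $X$. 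A secondary point to check is that narrowness is preserved under the closure operation, which holds precisely because $V^\mathrm{gen} \to X^\mathrm{gen}$ is surjective for $V \to X$ étale surjective onto its image — and if $V \to X$ is not surjective we simply add the missing irreducible components of $X$ to the center, which keeps it narrow and finitely presented.
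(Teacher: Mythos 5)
There is a genuine gap, and you have in fact pinpointed it yourself: everything hinges on producing a finitely generated ideal $\sJ\subset\sO_X$ with narrow vanishing locus such that $\sJ\cdot\sO_V$ is \emph{divisible} by (not merely supported on) the center $\sI$ of $V'\to V$, so that $\mathrm{Bl}_{\sJ}(X)\times_XV\to V$ factors through $V'$ by the universal property. Taking $\sJ$ to be the ideal of the closure of the image of $V(\sI)$ only controls the radical, and neither of your proposed fixes repairs this. The appeal to Lemma~\ref{lem:RZcategories}(2) (or to cofilteredness of blowups of $V$) is circular: those statements let you refine a modification of $V$ by a blowup \emph{of $V$}, whereas what you need is a refinement of $V'$ dominated by the pullback of a blowup \emph{of $X$} --- which is exactly the content of the lemma you are proving (it is what feeds Proposition~\ref{prop:coinitial}). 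Likewise, invoking Gruson--Raynaud on ``the center of $V'\to V$'' does nothing: admissibility of the center is already part of the hypothesis, and the problem is pushing an ideal forward from $V$ to $X$, not pulling one back. A smaller but real error: if $V\to X$ is not surjective you propose to ``add the missing irreducible components of $X$ to the center''; a center containing a generic point of $X$ is by definition not narrow. (This patch is also unnecessary: since étale morphisms are flat and quasi-finite, any point of $V$ over a generic point of $X$ is generic in $V$, so the closed image of a narrow closed subset of $V$ under the qcqs map $V\to X$ is automatically narrow, with no surjectivity needed.)

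The paper's proof supplies precisely the missing push-forward mechanism. After shrinking to a dense quasi-compact open $U\subset X$ over which $V$ is finite, Zariski's main theorem gives a dense open immersion $V\subset\bar V$ with $p\colon\bar V\to X$ finite and finitely presented; Gruson--Raynaud flatification \cite[Tag 0815]{stacks} (applied to $p$, after a first $U$-admissible blowup of $X$ which may be absorbed since compositions of blowups are blowups) reduces to the case where $p$ is finite locally free. One then extends $\sI$ to a finitely generated ideal $\sI\subset\sO_{\bar V}$ and takes $\sJ\subset\sO_X$ to be the $0$th Fitting ideal of $\sO_{\bar V}/\sI$. This $\sJ$ has narrow zero locus (it is the image under $p$ of $V(\sI)$), and --- this is the key divisibility you could not get from a closure --- one has $\sJ_V=\sI_V\sI'$ for some finitely generated ideal $\sI'\subset\sO_V$, as in \cite[Tag 0812]{stacks}; then \cite[Tag 080A]{stacks} shows the blowup of $\sJ$ does the job. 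Without some such norm/Fitting-ideal construction (or an equivalent device for descending the ideal with control on divisibility), the argument does not go through.
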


\begin{proof}
	Since $X$ has finitely many generic points, there exists a dense quasi-compact open $U\subset X$ such that $V\times_XU\to U$ is finite \cite[Tag 02NW]{stacks}.
	By Zariski's main theorem, we can find a factorization $V\to \bar V\to X$ where $V\to\bar V$ is a dense open immersion, $p\colon \bar V\to X$ is finite and finitely presented, and $p$ is étale over $U$ \cite[Tags 0F2N and 0AXP]{stacks}. By Gruson–Raynaud flatification \cite[Tag 0815]{stacks}, there exists a $U$-admissible blowup of $X$ such that the strict transform of $p$ is flat and finitely presented, hence finite locally free. Since a composition of blowups is a blowup \cite[Tag 080B]{stacks}, we are reduced to the case where $p$ is finite locally free. Let $\sI\subset\sO_{\bar V}$ be a finitely generated ideal whose restriction to $V$ cuts out the center of the given blowup $V'\to V$, and let $\sJ\subset\sO_X$ be the $0$th Fitting ideal of $\sO_{\bar V}/\sI$. Then $\sJ$ is a finitely generated ideal cutting out a narrow subscheme of $X$; indeed, the zero locus of $\sJ$ is the image by $p$ of the zero locus of $\sI$, which is narrow since $V$ is dense in $\bar V$.
	 As in the proof of \cite[Tag 0812]{stacks}, there exists a finitely generated ideal $\sI'\subset \sO_V$ such that $\sJ_V = \sI_V\sI'$. Hence, by \cite[Tag 080A]{stacks}, the blowup of $\sJ$ in $X$ has the desired property.
\end{proof}

\begin{prop}\label{prop:coinitial}
	Let $X$ be a quasi-integral qcqs scheme and $V\to X$ a quasi-compact separated étale morphism. Then the pullback functor $\sM_X\to \sM_V$ is coinitial.
\end{prop}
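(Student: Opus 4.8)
The plan is to reduce coinitiality of the pullback functor $G\colon\sM_X\to\sM_V$, $Y\mapsto Y\times_XV$, to an existence statement and then invoke \lemref{lem:etaleblowup}. Observe first that $V$ is qcqs (since $V\to X$ is quasi-compact separated and $X$ is qcqs) and quasi-integral (being étale over the quasi-integral scheme $X$), so that $\sM_V$ is defined and, having finitely many generic points, $V$ has finitely many irreducible components.

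I would begin by checking that $G$ is well defined, i.e.\ that $Y\times_XV$ is a modification of $V$ whenever $Y$ is a modification of $X$. Indeed $Y\times_XV\to V$ is proper, the scheme $Y\times_XV$ is reduced (it is étale over the reduced scheme $Y$), and if $U\subset X$ is a dense open over which $Y\to X$ is an isomorphism, then $Y\times_XV\to V$ is an isomorphism over $U\times_XV$. Since $U$, being dense open in the quasi-integral scheme $X$, contains $X^\mathrm{gen}$, and the étale map $V\to X$ carries generic points to generic points, $U\times_XV$ is a dense open of $V$ containing $V^\mathrm{gen}$; hence $Y\times_XV$ is a modification of $V$. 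The functor $G$ is visibly order preserving, so $G\colon\sM_X\to\sM_V$ is a map of cofiltered posets (\lemref{lem:RZcategories}(1)). As in the proof of \propref{prop:RZ-properties}(4), a functor between cofiltered posets is coinitial as soon as every object of the target is refined by an object of the image; thus it remains to show that every modification of $V$ is refined by $G(Y)$ for some $Y\in\sM_X$.

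I would then reduce to blowups. By \lemref{lem:RZcategories}(2) (applicable since $V$ is qcqs), the inclusion $\sM_V^\mathrm{bl}\subset\sM_V$ is coinitial, so we may assume the given modification of $V$ is a blowup $V'\to V$ with narrow center. Since $X$ is qcqs and quasi-integral it has finitely many irreducible components, so \lemref{lem:etaleblowup} applies and yields a blowup $X'\to X$ with narrow center such that $X'\times_XV\to V$ factors through $V'$. It remains only to see that $X'$ is a modification of $X$, and then $Y=X'$ does the job. Because the center $Z$ is narrow, $X'\to X$ is an isomorphism over the dense open $X\setminus Z\supseteq X^\mathrm{gen}$; and $X'$ is reduced, since it is $\Proj$ of the Rees algebra $\bigoplus_{n\ge0}\sI^n$ of $Z$ (with $\sI$ the ideal sheaf of $Z$), which is a graded $\sO_X$-subalgebra of $\sO_X[t]$ and therefore reduced because $X$ is. Consequently the exceptional divisor of $X'\to X$ is an effective Cartier divisor in the reduced scheme $X'$, so it contains no generic point; every generic point of $X'$ thus lies over $X\setminus Z$, and $X'$ has the same generic points as $X$, with trivial residue field extensions. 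Hence $X'\in\sM_X$, and $G(X')=X'\times_XV$ refines $V'$ by construction.

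The genuinely substantive input is \lemref{lem:etaleblowup}, already established above, which pushes a blowup of $V$ forward to one of $X$ via a Gruson--Raynaud flatification and a Fitting-ideal construction; the work proper to this proposition is the formal reduction sketched above. The only delicate point, and the one I would be most careful about, is the bookkeeping showing that $Y\times_XV$ and $X'$ are honest modifications — in particular the claims that $U\times_XV$ and $X\setminus Z$ are dense and contain all the relevant generic points, which rely on $X$ and $V$ being quasi-integral with finitely many components.
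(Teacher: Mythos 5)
Your proof is correct and follows the same route as the paper: since $\sM_X$ and $\sM_V$ are cofiltered posets, coinitiality reduces to finding, for each $V'\in\sM_V$, some $Y\in\sM_X$ whose pullback refines $V'$, which is supplied by Lemma~\ref{lem:RZcategories}(2) (reduction to blowups with narrow center) together with Lemma~\ref{lem:etaleblowup}. The extra verifications you include (that the pullback of a modification is a modification, and that the blowup $X'$ lies in $\sM_X$) are correct and are left implicit in the paper.
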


\begin{proof}
	Since the categories $\sM_X$ and $\sM_V$ are cofiltered posets, it suffices to show that the poset $\sM_X\times_{\sM_V}(\sM_V)_{/V'}$ is nonempty for every $V'\in\sM_V$. This follows from Lemma~\ref{lem:RZcategories}(2) and Lemma~\ref{lem:etaleblowup}.
\end{proof}

\begin{cor}\label{cor:etale-cosheaf}
	Let $X$ be a quasi-integral scheme and $V\to X$ an étale morphism. Then the canonical map of locally ringed spaces $\RZ(V)\to \RZ(X)\times_XV$ is an isomorphism.
\end{cor}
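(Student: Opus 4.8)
The plan is to reduce to the hypotheses of Proposition~\ref{prop:coinitial} and then pass to limits of locally ringed spaces. First I would check that the assertion is Zariski-local on both $V$ and $X$: if $U\subseteq V$ is open, then $\RZ(U)\cong\RZ(V)\times_V U$ by Proposition~\ref{prop:RZ-properties}(4) and $(\RZ(X)\times_X V)\times_V U=\RZ(X)\times_X U$, compatibly with the canonical maps, and the same applies to open subschemes of $X$ (an open subscheme of a quasi-integral scheme is again quasi-integral). Since an isomorphism of locally ringed spaces can be detected on a Zariski cover of the target, I may assume $X$ affine and then $V$ affine. Now $V\to X$ is a separated étale morphism of quasi-compact quasi-separated schemes, so Proposition~\ref{prop:coinitial} applies: base change along $V\to X$ carries a modification of $X$ to a modification of $V$ (an étale morphism preserves local dimensions, hence sends generic points to generic points in both directions, which yields the required bijection on generic points with trivial residue field extensions; reducedness is preserved since $Y\times_X V\to Y$ is étale), and the resulting pullback functor $\sM_X\to\sM_V$ is coinitial.

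It then remains to take the limit. In the category of locally ringed spaces, the fixed fiber product $(-)\times_X V$ commutes with cofiltered limits, so
\[
\RZ(X)\times_X V=\Bigl(\lim_{Y\in\sM_X}Y\Bigr)\times_X V\cong\lim_{Y\in\sM_X}\bigl(Y\times_X V\bigr)\cong\lim_{V'\in\sM_V}V'=\RZ(V),
\]
where the last isomorphism is coinitiality of $\sM_X\to\sM_V$ from Proposition~\ref{prop:coinitial} applied to the tautological diagram $(V'\mapsto V')$ on $\sM_V$. Unwinding the definitions, and using that $V\to X$ sends generic points to generic points so that $\RZ$ is functorial along it (cf.\ the remark after Corollary~\ref{cor:RZcategories}), this composite is inverse to the canonical map, which finishes the proof.

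I expect the only point requiring real care to be the claim, already implicit in Proposition~\ref{prop:coinitial}, that an étale base change of a modification is again a modification --- in particular that no new \emph{narrow} components appear over $V$ --- together with checking that the displayed limit manipulations are legitimate in the category of locally ringed spaces.
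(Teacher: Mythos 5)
Your proof is correct and follows essentially the same route as the paper: reduce to the quasi-compact separated affine case via Proposition~\ref{prop:RZ-properties}(4), then invoke the coinitiality of the pullback functor $\sM_X\to\sM_V$ from Proposition~\ref{prop:coinitial} and pass to limits. The limit manipulations and the check that an étale base change of a modification is again a modification, which you spell out, are exactly what the paper leaves implicit in its one-line deduction.
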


\begin{proof}
	The assertion is local on $V$ and $X$ by Proposition~\ref{prop:RZ-properties}(4), so we can assume $X$ qcqs and $V\to X$ quasi-compact and separated. Then the result follows from Proposition~\ref{prop:coinitial}.
\end{proof}

\begin{rem}
Corollary~\ref{cor:etale-cosheaf} implies that $X\mapsto \RZ(X)$ is an étale cosheaf of locally ringed spaces (hence of topological spaces) on quasi-integral schemes.
\end{rem}

We conclude this subsection by recalling the valuative description of Riemann–Zariski spaces, following \cite[\sectsign 3.2]{temkin-curves}. 
If $X$ is an integral scheme with function field $K$, we denote by $\Val(X)$ the set of valuation rings $R\subset K$ centered on $X$, i.e., such that $R$ contains $\sO_{X,x}$ for some $x\in X$. Given a finite subset $S\subset K^\times$ and an open subset $U\subset X$, let $\Val(U,S)\subset \Val(X)$ be the subset of valuation rings centered on $U$ and containing $S$. The sets $\Val(U,S)$ form a basis for a topology on $\Val(X)$ (which is closed under finite intersections). In this topology, $R_0$ is a specialization of $R_1$ if and only if $R_0\subset R_1$.

Given $R\in\Val(X)$, the morphism $\Spec R\to X$ lifts uniquely to every modification of $X$ (by the valuative criterion of properness). Taking the images of the closed point of $\Spec R$ in every modification of $X$ defines a map
\[
\phi\colon \Val(X)\to\RZ(X).
\]

\begin{prop}\label{prop:RZ=Val}
	Let $X$ be an integral scheme. Then $\phi\colon \Val(X)\to\RZ(X)$ is a homeomorphism.
\end{prop}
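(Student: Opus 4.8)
The plan is to verify that $\phi$ is a continuous bijection and that it is open onto its image (which is all of $\RZ(X)$), exploiting the fact that both sides are limits of the same cofiltered system of \emph{topological} spaces. Concretely, write $K$ for the function field of $X$; for each modification $Y\to X$ we have $\Val(Y)=\Val(X)$ as a \emph{set} (every valuation ring of $K$ centered on $X$ is centered on $Y$ by the valuative criterion of properness, and conversely), and the valuative criterion gives a map $\Val(Y)\to Y$, $R\mapsto$ (center of $R$ on $Y$). Passing to the limit over $Y\in\sM_X$ recovers $\phi$, so it suffices to understand these finite-level maps together with how the topology on $\Val(X)$ is assembled from them.

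First I would show \emph{injectivity}. If $R_0,R_1\in\Val(X)$ are distinct, we may assume $R_0\not\subset R_1$, so there is $t\in R_0\setminus R_1$; then $t^{-1}$ lies in the maximal ideal of $R_1$. Choose a modification $Y\to X$ on which the ideal sheaf generated by $1$ and $t$ (equivalently, a suitable blowup separating the loci $\{t\in\sO\}$ and $\{t^{-1}\in\sO\}$, cf.\ \cite[Tag 0812]{stacks}) becomes invertible; then the centers of $R_0$ and $R_1$ on $Y$ land in disjoint open subsets (one where $t$ is regular, one where $t^{-1}$ is in the maximal ideal). Hence $\phi(R_0)\ne\phi(R_1)$.

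Next, \emph{surjectivity}: given a point $\xi=(\xi_Y)_{Y\in\sM_X}\in\RZ(X)$, the local rings $\sO_{Y,\xi_Y}$ form a filtered system of local subrings of $K$ (each a localization of the previous along the transition maps, which are birational), and one checks their union $R_\xi=\varinjlim_Y \sO_{Y,\xi_Y}$ is a valuation ring of $K$: for any $t\in K^\times$, some blowup with narrow center makes the ideal $(1,t)$ invertible, and at the corresponding component of $\xi$ either $t$ or $t^{-1}$ becomes a unit times the local generator, so $t\in R_\xi$ or $t^{-1}\in R_\xi$. By construction $R_\xi$ is centered on $X$ and $\phi(R_\xi)=\xi$. (Here I am using Corollary~\ref{cor:RZcategories} to reduce to $\sM_X^\mathrm{bl}$, so that "blowup with narrow center" coverings are cofinal.) Finally, \emph{continuity and openness}: the basic open $\Val(U,S)$ is, for $S=\{t_1,\dots,t_m\}$, the preimage under $\phi$ of an open subset of $\RZ(X)$ — pull back to a single modification $Y$ where all the ideals $(1,t_j)$ are invertible, on which "$t_j\in\sO$'' and "center in the preimage of $U$'' are open conditions; conversely every basic open of $\RZ(X)$ comes from a quasi-compact open of some $Y\in\sM_X$, whose $\phi$-preimage is of the form $\Val(U',S')$ after passing to a further blowup. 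Since $\RZ(X)$ is spectral (Proposition~\ref{prop:RZ-properties}(1)) and $\phi$ is a continuous bijection that is open on a basis, it is a homeomorphism.

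The main obstacle is the openness/continuity bookkeeping: one must match the $\Val(U,S)$-basis with the basis of $\RZ(X)$ coming from quasi-compact opens of individual modifications, which requires knowing that the relevant ideals can be simultaneously principalized on a single blowup with narrow center and that taking images under proper maps interacts well with these opens. This is essentially Temkin's computation in \cite[\sectsign 3.2]{temkin-curves}, and I would follow that argument, using Lemma~\ref{lem:RZcategories}(2) and Corollary~\ref{cor:RZcategories} to keep everything within the cofinal system $\sM_X^\mathrm{bl}$ of narrow-center blowups.
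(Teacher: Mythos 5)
Your proposal is correct and follows essentially the same route as the paper: the inverse is the colimit $\sO_\xi=\colim_Y\sO_{Y,\xi_Y}$, shown to be a valuation ring by principalizing the fractional ideal $(1,t)$ on a modification (the paper uses the graph closures $X\langle S\rangle$ and their opens $X[S]$ where you use blowups), and continuity/openness is proved by matching the $\Val(U,S)$-basis with opens of modifications exactly as in the paper. The only cosmetic deviations are your direct separation argument for injectivity (the paper instead notes that $\sO_{\phi(R)}$ is a valuation ring dominated by $R$, hence equal to it) and the harmless imprecision that the locus ``$t^{-1}$ lies in the maximal ideal'' is not open---what your argument actually gives is that one center lies in the open $\{t\in\sO\}$ and the other cannot.
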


\begin{proof}
	Let $K$ be the function field of $X$.
	For $x\in\RZ(X)$, let $\sO_x = \colim_{Y\in\sM_X}\sO_{Y,x_Y}\subset K$ be the stalk of the structure sheaf at $x$. We claim that $\sO_x$ is a valuation ring.
	For $S\subset K^\times$ a finite subset, let $X\langle S\rangle$ be the schematic image of the morphism $\Spec K\to (\P^1)^{\lvert S\rvert}\times X$ induced by $S$, and let $X[S]= X\langle S\rangle \cap (\A^{\lvert S\rvert}\times X)$. Then $X\langle S\rangle$ is a modification of $X$ and $S\subset \sO(X[S])$.
	Now if $f\in K^\times$, then $X[f]$ and $X[f^{-1}]$ form an open covering of $X\langle f\rangle$, so $f$ or $f^{-1}$ belongs to $\sO_x$.
	
	For any $x\in\RZ(X)$, we have $\phi(\sO_x)=x$ by the uniqueness part of the valuative criterion. On the other hand, if $R\in\Val(X)$, $\sO_{\phi(R)}$ is a valuation ring dominated by $R$, hence is equal to $R$. Thus, $\phi$ is bijective.

It remains to show that $\phi$ is open and continuous. Given a finite subset $S\subset K^\times$ and an open $U\subset X$, $\phi(\Val(U,S))$ is the preimage of $U[S]$ in $\RZ(U\langle S\rangle)=\RZ(U)\subset \RZ(X)$, hence it is open in $\RZ(X)$. A basis of the topology of $\RZ(X)$ is given by $\RZ(V)$ where $V$ is an affine open in some $Y\in\sM_X$ lying over some affine open $U\subset X$. If $U=\Spec A$ and $V=\Spec B$, then $B=A[S]$ for some finite subset $S\subset K^\times$ and $\phi^{-1}(\RZ(V))=\Val(U,S)$.
\end{proof}

\subsection{Valuative dimension}
\label{sub:dim_v}

The valuative dimension of a commutative ring was introduced by Jaffard in \cite[Chapitre IV]{Jaffard}. We start by recalling the definition.

If $A$ is an integral domain with fraction field $K$, its valuative dimension $\dim_v(A)$ is the supremum of $n$ over all chains
\[
A\subset R_0\subset R_1\subset \dotsb\subset R_n \subset K
\]
where each $R_i$ is a valuation ring and $R_i\neq R_{i+1}$. Equivalently, $\dim_v(A)$ is the supremum of the ranks of the valuation rings $R$ such that $A\subset R\subset K$.

If $A$ is any commutative ring, Jaffard defines
\[
\dim_v(A) = \sup_{\mathfrak p\in\Spec A} \dim_v(A/\mathfrak p).
\]
This recovers the previous definition when $A$ is integral, by \cite[p.\ 55, Lemme~1]{Jaffard}.

\begin{lem}\label{lem:dim_v}
	Let $f_1,\dotsc,f_n\in A$ generate the unit ideal. Then
	\[
	\dim_v(A) = \max_{1\leq i\leq n}\dim_v(A_{f_i}).
	\]
\end{lem}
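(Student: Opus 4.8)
The plan is to prove the two inequalities $\max_i\dim_v(A_{f_i})\le\dim_v(A)$ and $\dim_v(A)\le\max_i\dim_v(A_{f_i})$ separately. The first is essentially formal; the second rests on one small observation about the center of a valuation ring, which is exactly where the hypothesis that the $f_i$ generate the unit ideal enters.

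For the inequality $\le$, fix an index $i$. Every prime ideal of $A_{f_i}$ has the form $\mathfrak p A_{f_i}$ for a unique prime $\mathfrak p\subset A$ with $f_i\notin\mathfrak p$, and then $A_{f_i}/\mathfrak p A_{f_i}\cong(A/\mathfrak p)_{\bar f_i}$ is a localization of the integral domain $A/\mathfrak p$ with the same fraction field $K:=\Frac(A/\mathfrak p)$. Since any valuation ring $R$ with $(A/\mathfrak p)_{\bar f_i}\subseteq R\subseteq K$ in particular contains $A/\mathfrak p$, every chain of valuation rings over $(A/\mathfrak p)_{\bar f_i}$ is a chain over $A/\mathfrak p$, so $\dim_v\big((A/\mathfrak p)_{\bar f_i}\big)\le\dim_v(A/\mathfrak p)\le\dim_v(A)$. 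Taking the supremum over such $\mathfrak p$ gives $\dim_v(A_{f_i})\le\dim_v(A)$, and then the maximum over $i$.

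For the inequality $\ge$, start from a prime $\mathfrak p\subset A$ and a chain of valuation rings $A/\mathfrak p\subseteq R_0\subsetneq R_1\subsetneq\dots\subsetneq R_n$ inside $K$. The crucial step is the choice of $i$: let $\mathfrak q=\mathfrak m_{R_0}\cap(A/\mathfrak p)$ be the center of $R_0$ on $A/\mathfrak p$, and let $\tilde{\mathfrak q}\subset A$ be its preimage, a prime containing $\mathfrak p$. Because $f_1,\dots,f_n$ generate the unit ideal, $\tilde{\mathfrak q}$ does not contain all of them; pick $i$ with $f_i\notin\tilde{\mathfrak q}$. Then $\bar f_i\in(A/\mathfrak p)\setminus\mathfrak q$, so $\bar f_i$ is a unit in the local ring $R_0$, whence $(A/\mathfrak p)_{\bar f_i}\subseteq R_0$. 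Now the truncated chain $R_0\subsetneq\dots\subsetneq R_n$ consists of valuation rings over $A_{f_i}/\mathfrak p A_{f_i}\cong(A/\mathfrak p)_{\bar f_i}$ with fraction field $K$, so $\dim_v(A_{f_i})\ge\dim_v\big(A_{f_i}/\mathfrak p A_{f_i}\big)\ge n$. Taking the supremum over $\mathfrak p$ and over all such chains gives $\dim_v(A)\le\max_i\dim_v(A_{f_i})$; if $\dim_v(A)=\infty$, the same conclusion follows since arbitrarily long chains must, by pigeonhole over the finitely many indices, force some $\dim_v(A_{f_i})$ to be infinite.

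The only genuinely non-formal ingredient — and the one I expect to be the main obstacle — is precisely this choice of $i$ in the last paragraph: it is not enough that some $f_i$ avoids $\mathfrak p$; one must choose $f_i$ outside the center of the \emph{smallest} valuation ring $R_0$ of the chain, and it is exactly the unit-ideal hypothesis that guarantees such an $i$ exists. Everything else amounts to routine manipulation of localizations of domains and their fraction fields.
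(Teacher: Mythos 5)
Your proof is correct and follows essentially the same route as the paper: both directions come down to the observation that any valuation ring containing (the image of) $A$ already contains (the image of) $A_{f_i}$ for some $i$, applied to the smallest ring $R_0$ of a chain. The only cosmetic difference is how that index is found — the paper picks $f_i$ dividing all the $f_j$ in $R_0$ (so $f_i$ divides $1$), while you pick $f_i$ outside the center of $R_0$; both just say that some $f_i$ is a unit in $R_0$ because the $f_j$ generate the unit ideal.
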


\begin{proof}
	We can assume that $A$ is an integral domain. In this case it is obvious that $\dim_v(A_f)\leq \dim_v(A)$ for any $f\in A$. The other inequality comes from the following observation: any valuation ring $R$ containing $A$ must contain $A_{f_i}$ for some $i$. Indeed, there exists $i$ such that $f_i$ divides $f_j$ in $R$ for all $j$, hence divides $1$.
\end{proof}

If $X$ is a scheme, we define
\[
\dim_v(X) = \sup_{U\subset X} \dim_v(\sO(U))
\]
where $U$ ranges over the affine open subschemes of $X$. If $X=\Spec A$, it follows from Lemma~\ref{lem:dim_v} that $\dim_v(X)=\dim_v(A)$.

\begin{prop}\label{prop:dim_v}
	Let $X$ be a scheme.
	\begin{enumerate}
		\item If $\{U_i\}$ is an open cover of $X$, then $\dim_v(X)=\sup_i\dim_v(U_i)$.
		\item If $\{Z_i\}$ is a closed cover of $X$, then $\dim_v(X)=\sup_i\dim_v(Z_i)$.
		\item For any subscheme $Y\subset X$, $\dim_v(Y)\leq \dim_v(X)$.
		\item For any narrow subscheme $Y\subset X$, $\dim_v(Y)+1\leq\dim_v(X)$.
		\item If $Y\to X$ is a dominant integral morphism, then $\dim_v(Y)=\dim_v(X)$.
		\item If $X$ is quasi-integral and $Y\to X$ is a modification, then $\dim_v(Y)=\dim_v(X)$.
		\item For any $n\geq 0$, $\dim_v(\A^n\times X)=\dim_v(X)+n$.
		\item $\dim(X)\leq \dim_v(X)$.
		\item If $X$ is locally noetherian, $\dim_v(X)=\dim(X)$.
	\end{enumerate}
\end{prop}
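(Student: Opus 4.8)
The plan is to prove the nine parts in the order (1), (3), (2), then the ring-theoretic parts (5), (7), (8), (9), then (4), and finally (6); later parts use earlier ones. Parts (1)--(3) are formal consequences of the definition $\dim_v(X)=\sup_U\dim_v(\sO(U))$ (supremum over affine opens of $X$), the identity $\dim_v(\Spec A)=\dim_v(A)$ from \lemref{lem:dim_v}, and the obvious inequality $\dim_v(A/I)=\sup_{\p\supseteq I}\dim_v(A/\p)\le\dim_v(A)$. An affine open of a locally closed subscheme $Y\subseteq X$ is, after refining by an affine open cover, a closed subscheme of an affine open of $X$, which gives (3). For (2), every prime of every affine open of $X$ lies over the ideal of some $Z_i$, so $\dim_v(X)\le\sup_i\dim_v(Z_i)$, and the reverse inequality is a case of (3). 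For (1), an affine open of $X$ is covered by finitely many basic opens each subordinate to the cover, so the claim follows from \lemref{lem:dim_v}.

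The parts (5), (7), (8), (9) reduce to the case $X=\Spec A$ using (1) and the invariance of $\dim_v$ under passing to $X_\red$. For (7), $\dim_v(A[t_1,\dots,t_n])=\dim_v(A)+n$ is a theorem of Jaffard \cite[Chapitre~IV]{Jaffard} (induct on $n$); for (9), the inequality $\dim_v(A)\le\dim(A)$ for Noetherian $A$ is classical (Jaffard), and combined with (8) gives $\dim_v(A)=\dim(A)$. For (5), one reduces to a dominant integral morphism $\Spec B\to\Spec A$, which is then surjective since integral morphisms are closed; applying $\dim_v(-)=\sup_\p\dim_v(-/\p)$ together with going-up reduces further to an integral extension of domains $A\subseteq B$ with $\Frac(B)/\Frac(A)$ algebraic. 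There, a valuation ring of $\Frac(B)$ containing $B$ restricts to a valuation ring of $\Frac(A)$ containing $A$ of the same rank (an algebraic extension changes the value group only by a torsion group), and conversely any valuation ring of $\Frac(A)$ containing $A$ extends to one of $\Frac(B)$ of the same rank, automatically containing $B$; hence $\dim_v(A)=\dim_v(B)$. Part (8) is handled at the end of the next paragraph.

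The main obstacle is (4); its heart is the following ring-theoretic lemma: \emph{if $A$ is a domain with fraction field $K$ and $\p\subsetneq A$ a nonzero prime, then $\dim_v(A)\ge\dim_v(A/\p)+1$.} To prove it, let $\bar R\supseteq A/\p$ be a valuation ring of $\kappa(\p)$ of rank $\ge m$, choose a valuation ring $W$ of $K$ dominating the local ring $A_\p$ (so $\mathfrak m_W\cap A=\p\ne 0$, whence $\rk(W)\ge 1$ and there is an induced embedding $\kappa(\p)\hookrightarrow k_W:=W/\mathfrak m_W$), extend $\bar R$ to a valuation ring $\tilde R$ of $k_W$, and form the composite valuation ring $R:=\pi_W^{-1}(\tilde R)$ of $K$, where $\pi_W\colon W\to k_W$ is the residue map; then $A\subseteq R$ and $\rk(R)=\rk(W)+\rk(\tilde R)\ge m+1$. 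Part (4) follows: by (1) one may assume an affine open of a narrow subscheme $Y\subseteq X$ is a closed subscheme $\Spec(A/I)$ of an affine open $\Spec A$ of $X$, and narrowness forces every prime $\p\supseteq I$ to be non-minimal in $A$; choosing a minimal prime $\p_0\subsetneq\p$, the lemma gives $\dim_v(A/\p)+1\le\dim_v(A/\p_0)\le\dim_v(X)$, and taking suprema over $\p$ and over the affine opens of $Y$ yields $\dim_v(Y)+1\le\dim_v(X)$. The same lemma also yields (8) by an easy induction on $\dim(A)$: for a chain of primes $0=\p_0\subsetneq\p_1\subsetneq\dots\subsetneq\p_n$ one has $\dim_v(A)\ge\dim_v(A/\p_1)+1\ge\dim(A/\p_1)+1\ge n$.

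Finally, (6) passes through Riemann--Zariski spaces and is the other non-routine point. By part (2), passing to irreducible components changes neither $\dim_v(X)$ nor $\dim_v(Y)$, and a modification $Y\to X$ restricts to a modification on each component; so we may assume $X$ and $Y$ integral. For an integral scheme $X$ one has $\dim_v(X)=\dim\RZ(X)$: by \propref{prop:RZ=Val} it suffices to compute $\dim\Val(X)$, where the specialization order is inclusion of valuation rings; since a valuation ring of rank $r$ has exactly $r$ proper overrings inside the function field, forming a chain up to the function field, the maximal length of a specialization chain in $\Val(\Spec A)$ is $\sup\{\rk(R):A\subseteq R\subseteq\Frac(A)\}=\dim_v(A)$, and since the opens $\Val(U)$ for $U\subseteq X$ affine cover the spectral space $\Val(X)$, this globalizes to $\dim\Val(X)=\dim_v(X)$. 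Since a modification $Y\to X$ induces an isomorphism $\RZ(Y)\simeq\RZ(X)$ by \propref{prop:RZ-properties}(2), we conclude $\dim_v(Y)=\dim\RZ(Y)=\dim\RZ(X)=\dim_v(X)$. I expect steps (4) and (6) — the composite-valuation lemma with its reduction, and the identification $\dim_v=\dim\RZ$ for integral schemes — to be the only parts requiring real work.
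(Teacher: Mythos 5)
Your proposal is correct, but it departs from the paper's proof in several places, mostly by replacing citations with self-contained arguments and by rerouting (6). For (4) and (8) the paper simply quotes Jaffard (the Remarque on p.~55 for the inequality $\dim_v(A/\p)+1\le\dim_v(A/\p')$ and Th\'eor\`eme~1 on p.~56 for $\dim\le\dim_v$), whereas you reprove the key inequality $\dim_v(A)\ge\dim_v(A/\p)+1$ by composing a valuation of $K$ dominating $A_\p$ with a valuation of the residue field extending a given $\bar R\supseteq A/\p$; this composite-valuation argument is sound (the ranks add, $A\subseteq\pi_W^{-1}(\tilde R)$, and $\rk\tilde R\ge\rk\bar R$ since convex subgroups of the value group restrict surjectively), and your reductions (every affine open of a narrow $Y$ is $\Spec(A/I)$ with $V(I)$ missing the minimal primes of $A$) match the paper's in substance. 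Likewise for (5) the paper cites Jaffard's Proposition~4, while you argue directly that rank is preserved under algebraic extensions of fraction fields and that an extension of a valuation ring containing $A$ automatically contains the integral closure, hence $B$; this is fine (dominant $+$ closed gives surjectivity, so every prime of $A$ is hit). The genuinely different route is (6): the paper uses the valuative criterion of properness (every valuation ring of $K$ centered on $X$ lifts to $Y$, hence contains an affine chart of $Y$), whereas you first prove $\dim_v(X)=\dim\RZ(X)$ for integral $X$ by counting specialization chains in $\Val(X)$ via Proposition~\ref{prop:RZ=Val}, and then invoke $\RZ(Y)\simeq\RZ(X)$ from Proposition~\ref{prop:RZ-properties}(2). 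Note that this inverts the paper's logical order, since the paper proves $\dim_v=\dim\RZ$ \emph{after} Proposition~\ref{prop:dim_v}, using items (6) and (8); your version is nevertheless non-circular, because the RZ facts you use precede the proposition and your chain-counting computation of $\dim\Val(X)$ does not rely on any part of it. The trade-off: the paper's (6) is shorter and avoids RZ spaces entirely, while your route is longer but proves the later $\dim_v=\dim\RZ$ statement along the way and makes (4), (5), (8) independent of Jaffard's text.
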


\begin{proof}
	(1) This follows easily from the definitions and Lemma~\ref{lem:dim_v}.
	
	(2) By (1), we can assume $X=\Spec A$ and $Z_i=\Spec A/I_i$. Then $\{Z_i\}$ being a closed cover means that every prime ideal $\p \subset A$ contains $I_i$ for some $i$. Hence,
	\[
	\dim_v(X)=\sup_{\p} \dim_v(A/\p) =\sup_{i}\sup_{\p\supset I_i}\dim_v(A/\p)=\sup_i \dim_v(Z_i).
	\]
	
	(3) This is obvious for an open subscheme, so assume that $Y$ is closed. By (1), we can assume $X=\Spec A$ and $Y=\Spec A/I$. Then
	\[
	\dim_v(Y) = \sup_{\mathfrak p\supset I} \dim_v(A/\mathfrak p) \leq \sup_{\mathfrak p} \dim_v(A/\mathfrak p)=\dim_v(X).
	\]
	
	(4) There is a largest open subscheme $U\subset X$ in which $Y$ is closed. Every generic point of $U$ is a generic point of $X$, so $Y$ is narrow in $U$. Thus we may assume that $Y$ is closed in $X$, and by (1) we are reduced to the case $X=\Spec A$ and $Y=\Spec A/I$. Since $Y$ is narrow, every $\mathfrak p\supset I$ contains a strictly smaller prime ideal $\mathfrak p'$, hence
	\[
	\dim_v(Y)+1=\sup_{\mathfrak p\supset I} \dim_v(A/\mathfrak p)+1 \leq \sup_{\mathfrak p\supset I} \dim_v(A/\mathfrak p') \leq \dim_v(X),
	\]
	where the nontrivial inequality is \cite[p.\ 55, Remarque]{Jaffard}.
	
	(5) By (2), we can assume $Y\to X$ schematically dominant. Then the result follows from (1) and \cite[p.\ 58 Proposition 4]{Jaffard}.
	
	(6) By (1,2), we can assume that $X=\Spec A$ where $A$ is an integral domain with fraction field $K$. If $\Spec B\subset Y$ is an affine open, then $A\subset B\subset K$, so the inequality $\dim_v(Y)\leq \dim_v(X)$ follows from (1). Since $Y$ is proper, any valuation ring $R\subset K$ containing $A$ lifts to a morphism $\Spec R \to Y$ by the valuative criterion; hence $R$ contains $B$ for some affine open $\Spec B\subset Y$. It then follows from (3) that $\dim_v(Y) \geq \dim_v(X)$.
	
	(7–9) These statements follow from (1) and \cite[p.\ 60, Théorème 2]{Jaffard}, \cite[p.\ 56, Théorème 1]{Jaffard}, and \cite[p.\ 67, Corollaire 2]{Jaffard}, respectively.
\end{proof}

\begin{cor}\label{cor:dim_v}
	Let $X$ be a quasi-compact scheme of finite valuative dimension. Then any $X$-scheme of finite type has finite valuative dimension.
\end{cor}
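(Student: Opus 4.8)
The plan is to reduce to the case of a finitely generated algebra over an affine open of $X$, realize it as a closed subscheme of an affine space, and then invoke parts (1), (3) and (7) of Proposition~\ref{prop:dim_v}. Since a finite-type morphism is quasi-compact, covering $X$ by finitely many affine opens and covering each of the (finitely many) preimages in $Y$ by finitely many affine opens produces a finite affine open cover $\{V_j\}_{j\in J}$ of $Y$ such that each $V_j$ maps into some affine open $U_j\subset X$, with $V_j\to U_j$ of finite type. By Proposition~\ref{prop:dim_v}(1) we have $\dim_v(Y)=\max_{j\in J}\dim_v(V_j)$, and $\dim_v(U_j)\leq\dim_v(X)$ directly from the definition of valuative dimension, so it suffices to bound each $\dim_v(V_j)$.

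Fix $j$ and write $U_j=\Spec A$ and $V_j=\Spec B$ with $B$ a finitely generated $A$-algebra. A choice of algebra generators of $B$ over $A$ presents $V_j$ as a closed subscheme of $\A^{n}\times U_j$ for some $n\geq 0$. By Proposition~\ref{prop:dim_v}(7), $\dim_v(\A^{n}\times U_j)=\dim_v(U_j)+n\leq\dim_v(X)+n<\infty$, and by Proposition~\ref{prop:dim_v}(3), $\dim_v(V_j)\leq\dim_v(\A^{n}\times U_j)$. Taking the maximum over the finite index set $J$ then gives $\dim_v(Y)\leq\dim_v(X)+\max_{j}n_j<\infty$.

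I do not expect a genuine obstacle: the statement is a formal consequence of the locality of $\dim_v$ and of its behavior under closed immersions and polynomial extensions, all already recorded in Proposition~\ref{prop:dim_v}. The only point requiring mild care is the reduction to a finite affine cover, which rests on the quasi-compactness of $X$ together with the fact that finite-type morphisms are quasi-compact.
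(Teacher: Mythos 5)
Your proof is correct and is essentially the paper's argument: the paper's proof simply cites Proposition~\ref{prop:dim_v}(1,7,3), and your write-up spells out exactly that reduction (finite affine cover via quasi-compactness, closed embedding into $\A^n\times U$, then parts (7) and (3)). No issues.
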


\begin{proof}
	This follows from Proposition~\ref{prop:dim_v}(1,7,3).
\end{proof}

\begin{cor}
	Let $X$ be an integral scheme with function field $K$. Then $\dim_v(X)$ is the supremum of the ranks of the valuation rings of $K$ centered on $X$.
\end{cor}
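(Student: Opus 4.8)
The plan is to reconcile the two descriptions of ``the valuation rings of $K$ attached to $X$'' and to observe that taking ranks then matches the two quantities. I will use that, by definition, $\dim_v(X)=\sup_U\dim_v(\sO(U))$ with $U$ ranging over the affine opens of $X$; that $\dim_v(A)=\sup\{\rk(R)\colon A\subset R\subset K\}$ for a domain $A$ with fraction field $K$; and that a valuation ring $R\subset K$ is centered on $X$ precisely when $R\supset\sO_{X,x}$ for some $x\in X$.

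First I would prove $\dim_v(X)\leq\sup\{\rk(R)\colon R\text{ centered on }X\}$. Fix an affine open $U=\Spec A$ of $X$ and a valuation ring $A\subset R\subset K$; it suffices to show that $R$ is centered on $U$, hence on $X$. Setting $\p=\m_R\cap A$, every $s\in A\setminus\p$ lies outside the maximal ideal $\m_R$, hence is a unit of the local ring $R$, so that $s^{-1}\in R$ and therefore $A_\p=\sO_{X,\p}\subset R$. Thus every valuation ring counted by $\dim_v(A)$ is centered on $X$, whence $\dim_v(A)\leq\sup\{\rk(R)\colon R\text{ centered on }X\}$; taking the supremum over all affine opens $U$ gives the inequality.

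For the reverse inequality, let $R\subset K$ be a valuation ring centered on $X$, say $R\supset\sO_{X,x}$, and choose an affine open $U=\Spec A$ of $X$ containing $x$. Then $A\subset\sO_{X,x}\subset R\subset K$, so $\rk(R)\leq\dim_v(A)=\dim_v(\sO(U))\leq\dim_v(X)$. Taking the supremum over all such $R$ completes the proof.

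I do not expect any genuine obstacle here: the argument is essentially a matter of unwinding the definition of $\dim_v(X)$ (a supremum over affine opens) against that of $\Val(X)$ (valuation rings dominating some local ring of $X$). The only point needing a (trivial) argument is the elementary observation that a valuation ring containing a domain $A$ automatically contains the localization $A_\p$ at the prime $\p$ it induces on $\Spec A$, which is exactly what lets one pass freely between ``$R$ contains $\sO(U)$ for some affine open $U\subset X$'' and ``$R$ is centered on $X$''.
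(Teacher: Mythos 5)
Your proof is correct and follows essentially the same route as the paper: reduce to affine opens via the definition $\dim_v(X)=\sup_U\dim_v(\sO(U))$ and the fact that a valuation ring centered on $X$ is centered on (equivalently, contains the coordinate ring of) some affine open. The paper simply treats the affine case as definitional, whereas you spell out the easy equivalence between ``$R\supset A$'' and ``$R$ is centered on $\Spec A$''.
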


\begin{proof}
	If $X$ is affine, this is the definition of $\dim_v(X)$. The general case follows from Proposition~\ref{prop:dim_v}(1), since any valuation ring centered on $X$ is centered on some affine open subscheme of $X$.
\end{proof}

\begin{prop}\label{prop:residue}
	Let $f\colon Y\to X$ be a morphism of schemes such that, for every generic point $\eta\in Y^\mathrm{gen}$, the transcendence degree of $\kappa(\eta)$ over $\kappa(f(\eta))$ is $\leq d$. Then $\dim_v(Y)\leq \dim_v(X)+d$.
\end{prop}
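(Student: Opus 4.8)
The plan is to reduce to an affine, integral situation and then combine the formula $\dim_v(\A^e\times(-))=\dim_v(-)+e$ from \propref{prop:dim_v}(7) with the fact that the rank of a valuation does not change under an algebraic extension of its fraction field.

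First I would pass to the affine case. Since $\dim_v$ is the supremum of $\dim_v(\Oo(U))$ over affine open subschemes $U$ and is compatible with open covers (\propref{prop:dim_v}(1)), while the generic points of an open subscheme $V\subseteq Y$ are precisely the generic points of $Y$ lying in $V$ and $\kappa(f(\eta))$ is unchanged upon replacing $X$ by an open neighbourhood of $f(\eta)$, we may assume $Y=\Spec B$, $X=\Spec A$ with $f$ induced by a ring homomorphism $A\to B$. Next, from $\dim_v(B)=\sup_{\q\in\Spec B}\dim_v(B/\q)$ and $\dim_v(B/\q)\le\dim_v(B/\q_0)$ for a minimal prime $\q_0\subseteq\q$ (\propref{prop:dim_v}(3)), it suffices to bound $\dim_v(B/\q_0)$ for each minimal prime $\q_0$ of $B$. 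Replacing $B$ by $B/\q_0$ and $A$ by $A/(\q_0\cap A)$ — which can only decrease $\dim_v(A)$, again by \propref{prop:dim_v}(3) — we are reduced to the case in which $A\subseteq B$ are domains with fraction fields $K\subseteq L$ and $\operatorname{trdeg}(L/K)\le d$, and we must show $\dim_v(B)\le\dim_v(A)+d$.

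For the main step, observe that $L$ is generated as a field over $K$ by the image of $B$, so there is a transcendence basis $t_1,\dots,t_e$ of $L/K$ with $t_i\in B$ and $e\le d$; then $A[t_1,\dots,t_e]\subseteq B$ and $L$ is algebraic over $K(t_1,\dots,t_e)$. Let $W$ be any valuation ring with $B\subseteq W\subseteq L$, so that $\dim_v(B)=\sup_W\operatorname{rank}(W)$. Put $W_0=W\cap K(t_1,\dots,t_e)$, a valuation ring of $K(t_1,\dots,t_e)$ containing $A[t_1,\dots,t_e]$, whence $\operatorname{rank}(W_0)\le\dim_v(A[t_1,\dots,t_e])$. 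Since $L/K(t_1,\dots,t_e)$ is algebraic, the value group of $W_0$ is a subgroup of that of $W$ with torsion quotient, and for such an extension of ordered abelian groups $\Delta\mapsto\Delta\cap\Gamma_{W_0}$ is an inclusion-preserving bijection between the convex subgroups of $\Gamma_W$ and those of $\Gamma_{W_0}$; hence $\operatorname{rank}(W)=\operatorname{rank}(W_0)$. As $A[t_1,\dots,t_e]$ is a polynomial ring over $A$ in $e$ variables, \propref{prop:dim_v}(7) gives $\dim_v(A[t_1,\dots,t_e])=\dim_v(A)+e\le\dim_v(A)+d$, and taking the supremum over $W$ finishes the proof.

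The only input that is not a formal consequence of the results already established is the invariance of the rank of a valuation under an algebraic extension of the fraction field; I would either cite it (it is standard; see for instance Bourbaki's \emph{Commutative Algebra}, Ch.~VI, or Engler--Prestel's \emph{Valued Fields}) or deduce it in a couple of lines from the claim about convex subgroups of torsion extensions of ordered abelian groups stated above. I expect the principal nuisance to be the bookkeeping in the first reduction — in particular verifying that restricting to affine opens of $Y$, and to affine opens of $X$ containing their images, leaves the transcendence-degree hypothesis intact — rather than anything in the core estimate.
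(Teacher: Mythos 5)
Your argument is correct, but its key step takes a genuinely different route from the paper's. After the same reduction to an extension of domains $A\subset B$ with fraction fields $K\subset L$ of transcendence degree $\leq d$, the paper simply restricts an arbitrary valuation ring $S$ with $B\subset S\subset L$ to $R=S\cap K$ and quotes the rank inequality $\rk(S)\leq \rk(R)+d$ from \cite[VI \sectsign 10.3, Corollaire 2]{BourbakiCAlg}. You instead interpolate the polynomial subring $A[t_1,\dots,t_e]\subset B$ on a transcendence basis chosen inside $B$, restrict $W$ only down to $K(t_1,\dots,t_e)$ --- where the extension is algebraic, so the rank is preserved by the torsion/convex-subgroup argument you sketch (this is standard; Bourbaki VI \sectsign 8 suffices as a reference) --- and then absorb the transcendence degree via Proposition~\ref{prop:dim_v}(7), i.e.\ $\dim_v(A[t_1,\dots,t_e])=\dim_v(A)+e$. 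The two routes are close cousins (Bourbaki's Corollaire is proved by essentially the same splitting into a purely transcendental and an algebraic part), but yours reuses a result already established in the paper (itself quoted from Jaffard) and needs only the easier rank-invariance fact, at the cost of a longer argument and the extra checks that the $t_i$ can be taken in $B$ (they can: $B$ generates $L$ over $K$) and that $A[t_1,\dots,t_e]$ is a genuine polynomial ring with fraction field $K(t_1,\dots,t_e)$ (algebraic independence over $K=\Frac(A)$ gives independence over $A$). One small caution in your reduction: the image of an affine open of $Y$ need not lie in any affine open of $X$, so cover $X$ by affine opens first and then cover their preimages by affine opens of $Y$; with that ordering, the bookkeeping you flag goes through exactly as you expect, since generic points of an open subscheme of $Y$ are generic points of $Y$ and residue fields are unchanged.
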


\begin{proof}
	By Proposition~\ref{prop:dim_v}(1,2) we can assume that $f$ is induced by an extension of integral domains $A\subset B$ with fraction fields $K\subset L$. Let $S\subset L$ be a valuation ring containing $B$, and let $R=S\cap K$. Then $R$ is a valuation ring containing $A$. The result now follows from the inequality $\rk(S) \leq \rk(R) + d$ \cite[VI \sectsign 10.3, Corollaire 2]{BourbakiCAlg}.
\end{proof}

\begin{rem}
	As a special case of Proposition~\ref{prop:residue}, if $Y\to X$ is a birational morphism of schemes in the sense of \cite[I.2.3.4]{EGA1new} (i.e., it induces a bijection $Y^\mathrm{gen}\simeq X^\mathrm{gen}$ and isomorphisms $\sO_{X,f(\eta)}\simeq \sO_{Y,\eta}$ for all $\eta\in Y^\mathrm{gen}$), then $\dim_v(Y)\leq \dim_v(X)$.
\end{rem}

\begin{lem}\label{lem:dimlim}
	Let $\{X_i\}$ be a cofiltered diagram in the category of topological spaces with limit $X$. Then $\dim(X) \leq \sup_i\dim(X_i)$.
\end{lem}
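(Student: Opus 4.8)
The plan is to show that every finite chain of nonempty irreducible closed subsets of $X$ can be pushed forward to a chain of the same length in some $X_i$. Since $\dim$ denotes the Krull dimension, i.e.\ the supremum of lengths $n$ of chains $Z_0\subsetneq Z_1\subsetneq\dots\subsetneq Z_n$ of nonempty irreducible closed subsets, this immediately yields $\dim(X)\le\sup_i\dim(X_i)$ (the suprema read in $\N\cup\{\infty\}$).

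First I would record the shape of the topology on a cofiltered limit: writing $\pr_i\colon X\to X_i$ for the projections, every basic open of $X=\lim_i X_i$ is of the form $\pr_i^{-1}(W)$ for a single index $i$ and an open $W\subseteq X_i$. Indeed, a finite intersection $\bigcap_{m}\pr_{i_m}^{-1}(W_m)$ of subbasic opens can be rewritten, after choosing an index $k$ equipped with morphisms $k\to i_m$ in the diagram and letting $t_m\colon X_k\to X_{i_m}$ be the associated transition maps, as $\pr_k^{-1}\big(\bigcap_m t_m^{-1}(W_m)\big)$, which is of the claimed form.

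Next, fix a chain $Z_0\subsetneq Z_1\subsetneq\dots\subsetneq Z_n$ of nonempty irreducible closed subsets of $X$ and choose, for each $m\in\{1,\dots,n\}$, a point $x_m\in Z_m\setminus Z_{m-1}$. Since $Z_{m-1}$ is closed, the previous paragraph furnishes an index $j_m$ and an open $V_m\subseteq X_{j_m}$ with $x_m\in\pr_{j_m}^{-1}(V_m)$ and $\pr_{j_m}^{-1}(V_m)\cap Z_{m-1}=\varnothing$. Using that the diagram is cofiltered, pick a single index $i$ equipped with morphisms to all of $j_1,\dots,j_n$, and let $W_m\subseteq X_i$ be the preimage of $V_m$ under the corresponding transition map, so that $\pr_i^{-1}(W_m)=\pr_{j_m}^{-1}(V_m)$. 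Then $W_m\cap \pr_i(Z_{m-1})=\varnothing$, hence $W_m$ is disjoint from the closure $\overline{\pr_i(Z_{m-1})}$ as well; on the other hand $\pr_i(x_m)\in W_m\cap\overline{\pr_i(Z_m)}$. Since the closure of the image of an irreducible set under a continuous map is again irreducible and closed, and since $Z_{m-1}\subseteq Z_m$ gives $\overline{\pr_i(Z_{m-1})}\subseteq\overline{\pr_i(Z_m)}$, it follows that $\overline{\pr_i(Z_0)}\subsetneq\overline{\pr_i(Z_1)}\subsetneq\dots\subsetneq\overline{\pr_i(Z_n)}$ is a chain of nonempty irreducible closed subsets of $X_i$, whence $\dim(X_i)\ge n$. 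As the chain was arbitrary, we are done.

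I do not expect a serious obstacle here. The one point requiring care is the passage to a single index $i$: cofilteredness is used essentially (the inequality fails for arbitrary limits), and it is precisely what lets a basic open of the limit be described using only one $X_i$ and what lets finitely many such neighborhoods be realized simultaneously over one index. The remaining ingredients are the elementary facts that an open set disjoint from a subset is disjoint from its closure, and that a continuous image of an irreducible set has irreducible closure.
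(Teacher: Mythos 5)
Your proof is correct and is essentially the paper's argument spelled out: the paper's entire proof is the observation that any strict chain of specializations in $X$ already induces a strict chain in some single $X_i$, which rests on the same two ingredients you use (a strict containment is detected by an open pulled back from one index, and cofilteredness merges finitely many indices into one). The only real difference is that you push forward chains of irreducible closed subsets via closures of images (so your version applies verbatim to the Krull dimension of arbitrary, possibly non-sober, spaces), whereas the paper phrases the argument in terms of specialization chains, which is equivalent for the spectral spaces to which the lemma is applied.
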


\begin{proof}
	Any strict chain of specializations in $X$ induces a strict chain in $X_i$ for some $i$.
\end{proof}

\begin{prop}
	Let $X$ be a quasi-integral scheme. Then $\dim_v(X) = \dim(\RZ(X))$.
\end{prop}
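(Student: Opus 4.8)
The plan is to reduce at once to the case where $X$ is integral and then to transport both sides of the equality to the valuative model of $\RZ(X)$ given by \propref{prop:RZ=Val}. For the reduction, let $\{X_i\}$ be the irreducible components of $X$ with their reduced (hence integral) scheme structures. By \propref{prop:RZ-properties}(3) we have $\RZ(X)\simeq\bigcoprod_i\RZ(X_i)$, and since a chain of specializations in a topological space lies in a single summand of a coproduct, $\dim(\RZ(X))=\sup_i\dim(\RZ(X_i))$. On the other hand $\{X_i\}$ is a closed cover of $X$, so $\dim_v(X)=\sup_i\dim_v(X_i)$ by \propref{prop:dim_v}(2). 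Hence it suffices to treat $X$ integral.

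So assume $X$ is integral with function field $K$. By \propref{prop:RZ=Val}, $\RZ(X)$ is homeomorphic to $\Val(X)$, the set of valuation rings $R\subseteq K$ centered on $X$, in which $R_0$ is a specialization of $R_1$ exactly when $R_0\subseteq R_1$. I would then show $\dim(\Val(X))=\sup_{R\in\Val(X)}\rk(R)$; since the right-hand side is $\dim_v(X)$ (immediate from the definition when $X$ is affine, and reduced to that case by \propref{prop:dim_v}(1) since any valuation ring centered on $X$ is centered on an affine open subscheme), this finishes the proof. For ``$\ge$'', given $R\in\Val(X)$ with $\rk(R)=n$, pick a chain $0=\p_0\subsetneq\dotsb\subsetneq\p_n=\m_R$ in $\Spec R$; localizing gives a strictly increasing chain of valuation rings $R=R_{\p_n}\subsetneq\dotsb\subsetneq R_{\p_0}=K$, each containing $R$ and hence centered on $X$, i.e.\ a strict chain of specializations in $\Val(X)$ of length $n$. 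Taking the supremum over $R$ yields $\dim(\Val(X))\ge\dim_v(X)$. For ``$\le$'', a strict chain of specializations of length $n$ in $\Val(X)$ is a chain $R_0\supsetneq\dotsb\supsetneq R_n$ of valuation rings centered on $X$; each $R_i$ with $i\ge1$ is an overring of $R_n$ inside $K$, hence of the form $(R_n)_{\p_i}$ for a unique $\p_i\in\Spec R_n$, and the strict inclusions become (with $\p_n=\m_{R_n}$) a strict chain $\p_0\subsetneq\dotsb\subsetneq\p_n$ in $\Spec R_n$, so $\rk(R_n)\ge n$ and therefore $n\le\dim_v(X)$.

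Every step is routine once Propositions~\ref{prop:RZ-properties}, \ref{prop:RZ=Val} and \ref{prop:dim_v} are available; the only point needing a little care is the dictionary in the second paragraph between strict chains of specializations in $\Val(X)$ and chains of prime ideals in a valuation ring, which is standard valuation theory (the overrings of a valuation ring $R\subseteq K$ are precisely the localizations $R_\p$, totally ordered by reverse inclusion of the $\p$'s, and $\rk(R)=\dim(R)$). I do not anticipate a genuine obstacle; note in particular that the argument is insensitive to whether $\dim_v(X)$ is finite, so the case $\dim_v(X)=\infty$ needs no separate treatment.
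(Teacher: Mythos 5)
Your proof is correct, and the reduction to $X$ integral (via \propref{prop:RZ-properties}(3) and \propref{prop:dim_v}(2)) as well as the inequality $\dim_v(X)\leq\dim(\RZ(X))$ — chains of centered valuation rings giving strict specialization chains — coincide with what the paper does. Where you genuinely diverge is the reverse inequality. The paper bounds $\dim(\RZ(X))$ by $\sup_{Y\in\sM_X}\dim(Y)$ using Lemma~\ref{lem:dimlim} (dimension does not increase under cofiltered limits of topological spaces) and then invokes \propref{prop:dim_v}(6,8) (modification invariance of $\dim_v$ and $\dim\leq\dim_v$), so it never leaves the limit description of $\RZ(X)$ and only needs the injectivity of $\phi$ from \propref{prop:RZ=Val}. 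You instead transport everything to $\Val(X)$ and argue purely valuation-theoretically: a strict specialization chain is a strict chain of overrings of its smallest member $R_n$, overrings are localizations at primes, so the chain length is bounded by $\rk(R_n)\leq\dim_v(X)$ (re-proving along the way the corollary, already in the paper, that $\dim_v(X)=\sup_{R\in\Val(X)}\rk(R)$). Your route is self-contained and avoids Lemma~\ref{lem:dimlim} and the Jaffard inputs \propref{prop:dim_v}(6,8), but it leans on the full homeomorphism of \propref{prop:RZ=Val} together with the description of specialization in $\Val(X)$ as inclusion of valuation rings — a fact the paper states in its recollection of Temkin's setup without proof (it is easy: if $R_0$ specializes $R_1$, then for $f\in R_0$ the basic open $\Val(U,\{f\})$ containing $R_0$ must contain $R_1$, so $R_0\subset R_1$; the converse is immediate), whereas the paper's argument only uses the easy direction that inclusions of centered valuation rings induce specializations in $\RZ(X)$. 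Both arguments are insensitive to finiteness of $\dim_v(X)$, as you note.
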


\begin{proof}
	By Propositions \ref{prop:RZ-properties}(3) and \ref{prop:dim_v}(2), we can assume $X$ integral.
	Let $K$ be the function field of $X$. If $R_0\subset R_1$ are valuation rings of $K$ centered on $X$, then $\phi(R_0)$ is a specialization of $\phi(R_1)$ in $\RZ(X)$. Since $\phi$ is injective by Proposition~\ref{prop:RZ=Val}, we deduce that $\dim_v(X)\leq \dim(\RZ(X))$. On the other hand, by Lemma~\ref{lem:dimlim} and Proposition~\ref{prop:dim_v}(6,8),
	\[
	\dim(\RZ(X)) \leq \sup_{Y\in\sM_X}\dim (Y) \leq\sup_{Y\in\sM_X} \dim_v(Y) =\dim_v(X).\qedhere
	\]
\end{proof}

The following lemma is a generalization of \cite[Lemma A.2]{shane-better}.

\begin{lem}\label{lem:finite-rank}
	Let $A$ be a commutative ring of finite valuative dimension. Then any morphism $A\to R$ where $R$ is a (henselian) valuation ring is a filtered colimit of morphisms $A\to R_\alpha$ where $R_\alpha$ is a (henselian) valuation ring of finite rank.
\end{lem}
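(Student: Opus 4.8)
The plan is to realize $R$ as the filtered union of small valuation subrings obtained by saturating its finitely generated $A$-subalgebras, using the finiteness of $\dim_v(A)$ together with \propref{prop:dim_v} to bound the ranks; the henselian case is then obtained by applying the henselization functor.

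First I would reduce to a domain inclusion: if $\p=\ker(A\to R)$ then $\dim_v(A/\p)\le\dim_v(A)$, so after replacing $A$ by its image in $R$ we may assume $A\subseteq R$ is a domain, and we set $K=\Frac(R)$. For each finitely generated $A$-subalgebra $B\subseteq R$, put $R_B=R\cap\Frac(B)$, the intersection taken inside $K$. Since $R$ is a valuation ring of $K$, for every $x\in\Frac(B)^\times$ one of $x,x^{-1}$ lies in $R$ and hence in $R_B$; thus $R_B$ is a valuation ring with fraction field $\Frac(B)$ and $B\subseteq R_B\subseteq\Frac(B)$. If $B$ is generated over $A$ by $n$ elements, then $\Spec B$ is a closed subscheme of $\A^n_A$, so \propref{prop:dim_v}(3) and (7) give $\dim_v(B)\le\dim_v(\A^n_A)=\dim_v(A)+n<\infty$; and since $R_B$ is a valuation ring with $B\subseteq R_B\subseteq\Frac(B)$, its rank is at most $\dim_v(B)$, hence finite.

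The finitely generated $A$-subalgebras of $R$ form a filtered poset under inclusion, and $B\subseteq B'$ induces $R_B\subseteq R_{B'}$; moreover $R=\bigcup_B B\subseteq\bigcup_B R_B\subseteq R$. Hence $R=\colim_B R_B$ as $A$-algebras, a filtered colimit of valuation rings of finite rank, which settles the non-henselian case. For the henselian case, note that all the transition maps $R_B\to R_{B'}$ and the coprojections $R_B\to R$ are local. Since $R$ is henselian, $R=R^h$, and as henselization preserves filtered colimits along local homomorphisms, $R=R^h=\colim_B R_B^h$. Each $R_B^h$ is a henselian valuation ring receiving a map from $A$, and its rank equals that of $R_B$: indeed $\Frac(R_B^h)$ is algebraic over $\Frac(R_B)$, and the rank of a valuation is unchanged under algebraic extension of the valued field.

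The only inputs that go beyond routine manipulation of valuation rings and the properties of $\dim_v$ from \propref{prop:dim_v} are the two standard facts used in the last step: that the henselization of a valuation ring is again a valuation ring, and that it has the same rank. I expect locating clean references for these to be the only real point of care; everything else is an elementary rearrangement, and the argument is modeled on \cite[Lemma A.2]{shane-better} with Krull dimension replaced by valuative dimension.
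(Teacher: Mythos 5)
Your proposal is correct and follows essentially the same route as the paper's proof: write $R$ as the filtered union of its finitely generated sub-$A$-algebras $B$, take $R_B=R\cap\Frac(B)$, bound its rank by $\dim_v(B)<\infty$ (the paper invokes \corref{cor:dim_v}), and henselize in the henselian case, where the paper likewise cites the fact that the henselization of a valuation ring is a valuation ring of the same rank. Your extra details (locality of the inclusions, henselization commuting with filtered colimits) just make explicit what the paper leaves implicit.
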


\begin{proof}
	Write $R$ as the union of its finitely generated sub-$A$-algebras $A_\alpha$. Let $K_\alpha$ be the fraction field of $A_\alpha$ and let $R_\alpha = R\cap K_\alpha$. Since $A_\alpha$ has finite valuative dimension by Corollary~\ref{cor:dim_v}, $R_\alpha$ is a valuation ring of finite rank. If $R$ is henselian, we can replace each $R_\alpha$ by its henselization, which is a valuation ring of the same rank as $R_\alpha$ \cite[Tag 0ASK]{stacks}.
\end{proof}

\subsection{The homotopy dimension of the cdh $\infty$-topos}
\label{sub:hodim}

Recall that an $\infty$-topos $\sX$ has \emph{homotopy dimension $\leq d$} if every $d$-connective object of $\sX$ admits a global section, and it has \emph{finite homotopy dimension} if it has homotopy dimension $\leq d$ for some $d$ \cite[Definition 7.2.1.1]{HTT}. We say that $\sX$ is \emph{locally of finite homotopy dimension} if it is generated under colimits by objects $U$ such that $\sX_{/U}$ has finite homotopy dimension.

If $\sX$ has homotopy dimension $\leq d$, then it has cohomological dimension $\leq d$ in the sense that $H^n(\sX;\sA)=0$ for any $n>d$ and any sheaf of groups $\sA$ on $\sX$. Indeed, if $a\colon *\to K(\sA,n)$ represents a cohomology class in $H^n(\sX;\sA)$, then nullhomotopies of $a$ are equivalent to global sections of the pullback of $a$ along $0\colon *\to K(\sA,n)$, which is an $(n-1)$-connective object.

\begin{prop}[Lurie]
	\label{prop:hodim}
	Let $\sX$ be an $\infty$-topos.
	\begin{enumerate}
		\item If $\sX$ has homotopy dimension $\leq d$ and $\sF$ is a $n$-connective object of $\sX$, then $\Maps(*,\sF)$ is $(n-d)$-connective.
		\item If $\sX$ is locally of finite homotopy dimension, then every object of $\sX$ is the limit of its Postnikov tower. In particular, $\sX$ is hypercomplete.
	\end{enumerate}
\end{prop}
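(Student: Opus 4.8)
The plan is to prove a relative form of (1) by induction and then bootstrap to (2). Write $\Gamma=\Maps(*,-)$, and recall three standard facts: $\Gamma$ preserves all limits; a morphism in an $\infty$-topos is $k$-connective if and only if all of its fibers are; and for $k\geq 0$ a morphism $h$ is $(k+1)$-connective if and only if it is surjective on $\pi_0$ and its relative diagonal $\delta_h$ is $k$-connective. The relative statement I would prove is: if $\sX$ has homotopy dimension $\leq d$ and $g\colon\sF\to\sG$ is a $k$-connective morphism with $k\geq d$, then $\Gamma(g)$ is a $(k-d)$-connective map of spaces. Assertion (1) is the special case $\sG=*$ (when $n<d$ there is nothing to prove).

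I would prove this by induction on $k\geq d$. For $k=d$: the fibers of $g$ are $d$-connective, and since $\Gamma$ preserves pullbacks, the fiber of $\Gamma(g)$ over any point of $\Gamma(\sG)$ is $\Gamma$ applied to a fiber of $g$, hence nonempty because $\sX$ has homotopy dimension $\leq d$; so $\Gamma(g)$ is surjective on $\pi_0$, i.e.\ $0$-connective. For the inductive step, let $g$ be $(k+1)$-connective. Its fibers are then in particular $d$-connective, so $\Gamma(g)$ is surjective on $\pi_0$ exactly as before. Moreover the relative diagonal $\delta_g\colon\sF\to\sF\times_\sG\sF$ is $k$-connective, and $\Gamma(\delta_g)=\delta_{\Gamma(g)}$ since $\Gamma$ preserves pullbacks; by the inductive hypothesis this map is $(k-d)$-connective. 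Hence $\Gamma(g)$ is $(k+1-d)$-connective, completing the induction.

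For (2) I would first reduce to the case where $\sX$ itself has finite homotopy dimension. By hypothesis $\sX$ is generated under colimits by objects $U$ with $\sX_{/U}$ of finite homotopy dimension, and $\Maps(V,-)$ carries colimits in $V$ to limits; so it suffices to show that $\sF\to\lim_n\tau_{\leq n}\sF$ induces an equivalence on $\Maps(U,-)$ for every such $U$. The étale pullback $(-)\times U\colon\sX\to\sX_{/U}$ is a right adjoint, hence preserves the Postnikov limit, and it also preserves truncations; via the identification $\Maps(U,-)=\Gamma_{\sX_{/U}}((-)\times U)$, we are reduced to proving that $\Gamma(\sG)\to\lim_n\Gamma(\tau_{\leq n}\sG)$ is an equivalence whenever $\sY$ is an $\infty$-topos of homotopy dimension $\leq d$ and $\sG\in\sY$ (using that $\Gamma$ preserves the Postnikov limit). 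Now the truncation map $\sG\to\tau_{\leq n}\sG$ and each transition map $\tau_{\leq n+1}\sG\to\tau_{\leq n}\sG$ is $(n+1)$-connective, so by the relative form of (1) the induced maps $\Gamma(\sG)\to\Gamma(\tau_{\leq n}\sG)$ and $\Gamma(\tau_{\leq n+1}\sG)\to\Gamma(\tau_{\leq n}\sG)$ of spaces are $(n+1-d)$-connective for $n\geq d$. Thus $\{\Gamma(\tau_{\leq n}\sG)\}_n$ is a tower of spaces whose transition maps have connectivity tending to $\infty$, to which $\Gamma(\sG)$ maps with connectivity tending to $\infty$, so a routine argument with the Milnor exact sequence identifies $\Gamma(\sG)$ with $\lim_n\Gamma(\tau_{\leq n}\sG)$. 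This shows every object of $\sX$ is the limit of its Postnikov tower; hypercompleteness is then formal, since an $\infty$-connective morphism induces equivalences on all truncations and hence, after passing to Postnikov limits, is an equivalence.

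The only real obstacle is organizing the connectivity bookkeeping; the decisive point is to prove (1) in the relative form above, which makes the inductive step go through without any delicate basepoint arguments. The remaining ingredients — compatibility of truncation with étale localization, the reduction through a generating family, the cited facts about $k$-connective morphisms, and the Milnor-sequence argument — are standard. Everything here is due to Lurie; one may alternatively cite \cite[\sectsign 6.5, \sectsign 7.2.1]{HTT}.
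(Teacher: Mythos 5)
Your proposal is correct: the relative induction on connectivity via the diagonal is precisely Lurie's proof of \cite[Lemma 7.2.1.7]{HTT}, and deducing Postnikov convergence from connectivity estimates on the towers of global sections (after reducing to generators $U$ with $\sX_{/U}$ of finite homotopy dimension) is the standard argument behind the second assertion. The paper gives no argument of its own beyond citing \cite[Lemma 7.2.1.7]{HTT} and \cite[Proposition 4.1.5]{LurieTopoi}, so your write-up is simply a correct unpacking of those citations.
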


\begin{proof}
	\cite[Lemma 7.2.1.7]{HTT} and \cite[Proposition 4.1.5]{LurieTopoi}.
\end{proof}

	Let $\{X_i\}$ be a cofiltered diagram of qcqs algebraic spaces defining a pro-algebraic space $X$. We define the small Zariski and Nisnevich sites $X_\Zar$ and $X_\Nis$ to be the colimits of the small sites $(X_i)_\Zar$ and $(X_i)_\Nis$, so that
	\[
	\Shv(X_\Zar) \simeq \lim_i \Shv((X_i)_\Zar)\quad\text{and}\quad\Shv(X_\Nis) \simeq \lim_i \Shv((X_i)_\Nis).
	\]

\begin{lem}[Clausen–Mathew]\label{lem:CM}
	Let $X$ be a pro-algebraic space limit of a cofiltered diagram of qcqs algebraic spaces of Krull dimension $\leq d$. Then the $\infty$-topoi $\Shv(X_\Zar)$ and $\Shv(X_\Nis)$ have homotopy dimension $\leq d$.
\end{lem}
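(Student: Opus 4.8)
The plan is to treat the two topologies differently: the Zariski case reduces to a statement about spectral spaces, while the Nisnevich case is controlled through its cohomological dimension.

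\textbf{Zariski case.} Here I would bypass the pro-object: $\Shv(X_\Zar)=\lim_i\Shv((X_i)_\Zar)\simeq\Shv(T)$, where $T:=\lim_i\lvert X_i\rvert$ is the underlying pro-topological space, a spectral space \cite[Tag 0A2Z]{stacks} of Krull dimension $\leq\sup_i\dim\lvert X_i\rvert\leq d$ by \lemref{lem:dimlim}. So it suffices to show that $\Shv(T)$ has homotopy dimension $\leq d$ for any spectral space $T$ of Krull dimension $\leq d$. By Hochster's theorem $T$ is a cofiltered limit of finite posets, which---after refining the presentation---may be taken of height $\leq d$, and $\Shv(T)$ is the corresponding limit of the coherent $\infty$-topoi $\Shv(P_k)\simeq\Fun(P_k^{\op},\Spc)$. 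For a finite poset $P$ of height $\leq d$, a $d$-connective sheaf $\sG$ is pointwise $d$-connective, and the obstructions to lifting a global section through the Postnikov tower of $\sG$ lie in $H^{n+1}(P;\pi_n\sG)$; these vanish for $n<d$ (as $\pi_n\sG=0$) and for $n\geq d$ (the cohomology of $P$ being that of its order complex, of dimension $\leq\on{height}(P)\leq d$). Hence $\Shv(P_k)$ has homotopy dimension $\leq d$, and one concludes by the standard fact that a cofiltered limit of coherent $\infty$-topoi of homotopy dimension $\leq d$, along coherent geometric morphisms, again has homotopy dimension $\leq d$: a $d$-connective sheaf on the limit has $d$-connective pushforward to some finite stage (its truncations being compatibly pulled back from finite stages), whence its space of global sections, computed there, is inhabited.

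\textbf{Nisnevich case.} Now $\Shv(X_\Nis)$ is not the topos of a space. Using the same limit-of-coherent-topoi reduction---the small Nisnevich $\infty$-topoi are coherent and the base-change morphisms are coherent---I would reduce to a single qcqs algebraic space $Y$ of Krull dimension $\leq d$, and then argue in two steps. First, bound the \emph{cohomological} dimension of $\Shv(Y_\Nis)$ by $d$: for $Y$ noetherian this is Voevodsky's bound, a formal consequence of the completeness, regularity, and boundedness of the Nisnevich cd-structure \cite{cd-voe} (which in fact gives the homotopy-dimension bound directly in the noetherian case), and for general qcqs $Y$ one runs the same cd-structure argument with the density structure on $\Sch^{\qcqs}_Y$ furnished by Krull dimension. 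Second, bootstrap: $\Shv(Y_\Nis)$ is coherent and $1$-localic, in such a topos of cohomological dimension $\leq d$ every sheaf is the limit of its Postnikov tower, and then the obstruction argument of the Zariski case---the obstructions to lifting through the Postnikov tower of a $d$-connective sheaf vanishing by the cohomological bound---yields homotopy dimension $\leq d$.

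\textbf{Main difficulty.} I expect the essential work to be the non-noetherian Nisnevich case: there is no topological model, and noetherian approximation does not help, since a qcqs algebraic space of Krull dimension $\leq d$ is typically a cofiltered limit of noetherian ones of much larger dimension, so the noetherian bound does not descend with the right constant. One must therefore run Voevodsky's cd-structure/density-structure machinery over genuinely non-noetherian bases, and separately establish Postnikov convergence in a coherent $1$-localic topos of finite cohomological dimension to cross from the cohomological to the homotopy bound. The remaining ingredients---the spectral-space reduction, the finite-poset computation, and the behaviour of homotopy dimension under cofiltered limits of coherent topoi---are routine or standard.
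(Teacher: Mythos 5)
The paper does not prove this lemma at all: its ``proof'' is the citation \cite[Corollary 3.11, Theorem 3.12, Theorem 3.17]{clausen-mathew}, and those three results are exactly the three things your sketch tries to establish (the bound for a single spectral space, the bound for the Nisnevich topos of a single qcqs algebraic space, and the passage to cofiltered limits). So your proposal has to be judged as a reconstruction of Clausen--Mathew's theorems, and at each of the places where their theorems do real work your sketch substitutes an assertion. (a) In the Zariski case, the claim that the Hochster presentation ``may be taken of height $\leq d$ after refining'' is a substantive unproved statement: a spectral surjection onto a finite poset can strictly increase Krull dimension (the two-point discrete space maps onto the Sierpi\'nski space, and in the standard presentation by finite quotients individual stages really can have longer chains of ``types'' than $T$ has chains of specializations), so bounding the height of a cofinal system is not automatic, and you give no argument. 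Neither Lurie's noetherian result \cite[Theorem 7.2.4.17]{HTT} nor Clausen--Mathew's extension to general spectral spaces proceeds by finite approximation; both induct on the dimension directly (as does this paper's own Theorem~\ref{thm:hodim-cdh} for the cdh analogue). A smaller point: the order complex computes the cohomology of a finite poset only for constant coefficients; for the obstruction argument you need arbitrary sheaves of coefficients, where you should invoke Grothendieck's bound for noetherian spaces instead.

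(b) The ``standard fact'' that a cofiltered limit of coherent $\infty$-topoi of homotopy dimension $\leq d$ again has homotopy dimension $\leq d$ is not standard -- it is essentially \cite[Corollary 3.11]{clausen-mathew} -- and your one-line justification is wrong: a $d$-connective sheaf on the limit topos is not pulled back from a finite stage, pushforward to a finite stage does not preserve connectivity (losing connectivity under pushforward is the very phenomenon homotopy dimension controls), and its global sections are not ``computed there''. A correct argument needs the identification of truncated sheaves on the limit with the filtered colimit of truncated sheaves at finite stages, the resulting colimit formula for cohomology, and a Postnikov-completeness input before the obstruction theory can be run. (c) For the Nisnevich case the entire non-noetherian content is deferred to the sentence ``one runs the same cd-structure argument with the density structure furnished by Krull dimension''. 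That is not an argument: Voevodsky's bounded-cd-structure machinery \cite{cd-voe} requires constructing a reducing density structure, and the standard one is noetherian in nature; the absence of such an extension is precisely why \cite{clausen-mathew} prove Theorem 3.17 by a different route, and why \sectsign\ref{sub:hodim} of this paper has to develop Riemann--Zariski techniques to get the cdh analogue rather than quoting Voevodsky. (Also, deducing Postnikov completeness from coherence plus a cohomological-dimension bound requires a uniform local bound, not just a bound for the terminal object, though that is fixable here since all objects of the small site have Krull dimension $\leq d$.) In short, the skeleton of your reduction is reasonable, but each of the three load-bearing steps is either unjustified or justified incorrectly, and these are exactly the results the paper imports from Clausen--Mathew.
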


\begin{proof}
	\cite[Corollary 3.11, Theorem 3.12, and Theorem 3.17]{clausen-mathew}.
\end{proof}

If $X$ is a quasi-integral scheme, we can regard $\RZ(X)$ as a pro-scheme by taking the limit of Definition~\ref{def:RZ} in the category of pro-schemes.

\begin{prop}\label{prop:hodimRZ}
	Let $X$ be a quasi-integral qcqs scheme of finite valuative dimension $d$. Then the $\infty$-topoi $\Shv(\RZ(X)_\Zar)$ and $\Shv(\RZ(X)_\Nis)$ have homotopy dimension $\leq d$.
\end{prop}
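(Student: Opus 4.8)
The plan is to realize $\RZ(X)$ as a cofiltered limit of qcqs schemes whose Krull dimensions are uniformly bounded by $d$, and then to quote the Clausen–Mathew estimate of Lemma~\ref{lem:CM}. All the real content has in fact already been set up: the two facts I would lean on are that modifications of a quasi-integral scheme preserve the valuative dimension (Proposition~\ref{prop:dim_v}(6)), and that Krull dimension is bounded by valuative dimension (Proposition~\ref{prop:dim_v}(8)).

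Concretely, by Definition~\ref{def:RZ} we have $\RZ(X) = \lim_{Y \in \sM_X} Y$, and by Lemma~\ref{lem:RZcategories}(1) the poset $\sM_X$ of modifications of $X$ is cofiltered. Since $X$ is qcqs and every modification $Y \to X$ is proper, each $Y \in \sM_X$ is a qcqs scheme; hence the pro-scheme $\RZ(X)$ of Definition~\ref{def:RZ} is the limit of a cofiltered diagram of qcqs schemes (indeed of qcqs algebraic spaces), and unwinding the definition of the small Zariski and Nisnevich sites of a pro-(algebraic space) gives $\Shv(\RZ(X)_\Zar) \simeq \lim_{Y \in \sM_X}\Shv(Y_\Zar)$ and $\Shv(\RZ(X)_\Nis) \simeq \lim_{Y \in \sM_X}\Shv(Y_\Nis)$.

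It then remains to bound $\dim(Y)$ for $Y \in \sM_X$. By Proposition~\ref{prop:dim_v}(6,8), $\dim(Y) \le \dim_v(Y) = \dim_v(X) = d$. Thus $\{Y\}_{Y\in\sM_X}$ is a cofiltered diagram of qcqs schemes of Krull dimension $\le d$, so Lemma~\ref{lem:CM} applies directly and yields that $\Shv(\RZ(X)_\Zar)$ and $\Shv(\RZ(X)_\Nis)$ have homotopy dimension $\le d$, as desired.

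Given the earlier results, this argument is short, and I do not expect a substantial obstacle; the only care needed is bookkeeping — checking that $\sM_X$ is essentially small (so that Lemma~\ref{lem:CM}, stated for honest cofiltered diagrams, applies) and that the pro-scheme structure used to define the small sites of $\RZ(X)$ is precisely the one indexed by $\sM_X$. If one prefers to work with a diagram of finitely presented schemes, one may instead take the diagram $\sN_X^\fp$, which computes the same ringed space by Corollary~\ref{cor:RZcategories}; each $Y \in \sN_X^\fp$ is, modulo nilpotents, the union of a modification of $X$ with finitely many narrow closed subschemes, so $\dim(Y) \le \dim_v(Y) \le d$ by Proposition~\ref{prop:dim_v}(2,4,6,8), and the same conclusion follows.
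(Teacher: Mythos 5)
Your main argument is correct and is essentially the paper's proof: the paper likewise notes that every modification $Y\in\sM_X$ has Krull dimension $\leq d$ by Proposition~\ref{prop:dim_v}(6,8) and then applies Lemma~\ref{lem:CM} to the pro-scheme $\RZ(X)=\lim_{Y\in\sM_X}Y$. One caveat on your optional aside only: the bound $\dim(Y)\leq d$ fails for general $Y\in\sN_X^\fp$, since narrow components need not be finite over their (narrow) images in $X$ --- e.g.\ for $X=\Spec V$ with $V$ a discrete valuation ring with closed point $z$, the scheme $X\sqcup\P^5_{\kappa(z)}$ lies in $\sN_X^\fp$ and has dimension $5>\dim_v(X)=1$ --- but this does not affect your main argument, which uses $\sM_X$.
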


\begin{proof}
	Every modification of $X$ has Krull dimension $\leq d$ by Proposition~\ref{prop:dim_v}(6,8). Hence, the result follows from Lemma~\ref{lem:CM}.
\end{proof}

Let $X$ be a quasi-integral qcqs scheme. For most of this subsection we will work with an auxiliary category of generically finite $X$-schemes: let $\GFin_X\subset \Sch_X$ be the full subcategory consisting of quasi-separated $X$-schemes of finite type whose fibers over the generic points of $X$ are finite, and let $\GFin_X^\fp=\GFin_X\cap\Sch_X^\fp$. Note that $\GFin_X$ and $\GFin_X^\fp$ are closed under finite sums and finite limits in $\Sch_X$.
 We define the Zariski, Nisnevich, cdp, rh, and cdh topologies on $\GFin_X^\fp$ using the usual cd-structures. 

\begin{lem}\label{lem:proBX}
	$\GFin_X$ is equivalent to the full subcategory of $\Pro(\GFin_X^\fp)$ spanned by limits of cofiltered diagram of closed immersions.
\end{lem}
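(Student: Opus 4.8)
Here is the plan.

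\medskip

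The plan is to argue exactly as in the proof of \lemref{lem:RZcategories}(3), with the approximation of proper morphisms by finitely presented ones (there supplied by \cite[Tag 09ZR]{stacks}) replaced by the analogous statement for finite type quasi-separated morphisms. First I would record that every object of $\GFin_X$ is qcqs --- it is quasi-separated by definition and quasi-compact because it is of finite type over the quasi-compact scheme $X$ --- so that $\GFin_X\subset\Sch_X^\qcqs$ and the fully faithful embedding $\iota\colon\Sch_X^\qcqs\hook\Pro(\Sch_X^\fp)$ of \cite[Tags 01ZC and 09MV]{stacks} is available. Since $\GFin_X^\fp\hook\Sch_X^\fp$ is fully faithful, so is the induced functor $\Pro(\GFin_X^\fp)\to\Pro(\Sch_X^\fp)$, and it therefore suffices to prove: (a) for every $Y\in\GFin_X$, the pro-object $\iota(Y)$ is the limit of a cofiltered diagram of closed immersions in $\GFin_X^\fp$; and (b) conversely, the limit in $\Sch_X$ of any cofiltered diagram of closed immersions in $\GFin_X^\fp$ again belongs to $\GFin_X$. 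Granting these, (a) shows that $\iota$ factors through $\Pro(\GFin_X^\fp)$, yielding a functor $\GFin_X\to\Pro(\GFin_X^\fp)$ which is fully faithful (because both displayed embeddings are) with essential image contained in the subcategory of limits of cofiltered diagrams of closed immersions; and (b), combined with the identity $\Maps_{\Sch_X}(\lim_\mu W_\mu,W)\simeq\colim_\mu\Maps_{\Sch_X}(W_\mu,W)$ for $W\in\Sch_X^\fp$ \cite[Tag 01ZC]{stacks}, shows that every such limit is in that essential image. Hence the functor is the desired equivalence.

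For (a), I would write $Y=\lim_\alpha Y_\alpha$ as a cofiltered limit of finitely presented $X$-schemes with affine transition maps \cite[Tags 01ZC and 09MV]{stacks}. Since $Y\to X$, hence $Y\to Y_\alpha$, is of finite type, a standard argument (lift the finitely many algebra generators occurring on a finite affine cover of $Y_\alpha$) shows that $Y\to Y_{\alpha_0}$ is a closed immersion for some $\alpha_0$. Writing the quasi-coherent ideal of $Y$ in $Y_{\alpha_0}$ as the filtered union of its finite-type quasi-coherent subideals \cite[Tag 01PG]{stacks}, the associated closed subschemes $Z_\lambda\subset Y_{\alpha_0}$ are of finite presentation over $X$, satisfy $Y=\lim_\lambda Z_\lambda$, and have closed immersions as transition maps. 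Finally, since $X$ is quasi-integral and quasi-compact, $X^\mathrm{gen}$ is finite, and for each of the finitely many $\eta\in X^\mathrm{gen}$ the scheme $(Y_{\alpha_0})_\eta$ is noetherian while $Y_\eta=\lim_\lambda(Z_\lambda)_\eta$ is finite over $\kappa(\eta)$; hence $(Z_\lambda)_\eta$ is finite for $\lambda$ in a coinitial sub-poset of the indexing poset, over which every $Z_\lambda$ lies in $\GFin_X^\fp$. This proves (a). For (b), given a cofiltered diagram $\{W_\mu\}_{\mu\in M}$ of closed immersions in $\GFin_X^\fp$, I would form $W_\infty=\lim_M W_\mu$ in $\Sch_X$ (it exists because the transition maps are affine), fix $\mu_0\in M$, and use that $M_{/\mu_0}\to M$ is coinitial and that over $M_{/\mu_0}$ the diagram consists of closed subschemes of $W_{\mu_0}$ to conclude that $W_\infty\to W_{\mu_0}$ is a closed immersion. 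Then $W_\infty\to X$ is of finite type, $W_\infty$ is quasi-separated, and $(W_\infty)_\eta$ --- a closed subscheme of the finite $\kappa(\eta)$-scheme $(W_{\mu_0})_\eta$ --- is finite for every $\eta\in X^\mathrm{gen}$, so $W_\infty\in\GFin_X$.

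I expect the only delicate point to be the claim in (a) that $Y\to Y_{\alpha_0}$ is eventually a closed immersion: this genuinely uses that $Y\to X$ is of finite type (not merely that $Y$ is qcqs), and it must be combined with the coinitiality bookkeeping so that the resulting diagram simultaneously has closed-immersion transition maps and all terms in $\GFin_X^\fp$. Everything else is a formal consequence of the pro-category formalism and the standard limit arguments already used in the proof of \lemref{lem:RZcategories}(3).
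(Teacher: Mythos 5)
Your proof is correct and follows essentially the same route as the paper: the substance—writing a quasi-separated finite type $X$-scheme as a cofiltered limit of finitely presented $X$-schemes with closed immersion transition maps, then using noetherianity of the fibers over the (finitely many) generic points to conclude the terms eventually lie in $\GFin_X^\fp$—is exactly the paper's argument, except the paper simply cites \cite[Tag 09ZQ]{stacks} for the approximation step that you re-derive by hand. The converse containment and the pro-category/full-faithfulness bookkeeping you make explicit are left implicit in the paper's one-line proof.
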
 

\begin{proof}
	We can write any $Y\in\GFin_X$ as the limit of such a diagram $\{Y_i\}$ in $\Sch_X^\fp$ (since $Y$ is quasi-separated of finite type \cite[Tag 09ZQ]{stacks}). Since $(Y_i)_\eta$ is noetherian for every $\eta\in X^\mathrm{gen}$, the closed immersion $Y_\eta\to (Y_i)_\eta$ is eventually an isomorphism, and hence $Y_i\in \GFin_X^\fp$.
\end{proof}

We can extend any presheaf $\sF$ on $\GFin_X^\fp$ to $\Pro(\GFin_X^\fp)$ in the usual way (i.e., by left Kan extension), and in particular, by Lemma~\ref{lem:proBX}, to $\GFin_X$. We shall still denote by $\sF$ this extension. If $\sF$ is a sheaf for the Zariski or Nisnevich topology, its extension to $\GFin_X$ is as well. More interestingly, if $\sF$ is a sheaf for the cdp topology, its extension to $\GFin_X$ satisfies excision for arbitrary abstract blowup squares (see Remark~\ref{rem:abu}).
 
 \begin{lem}\label{lem:cocontinuous}
 	Let $\tau\in\{\mathord\sqcup,\Zar,\Nis,\cdp,\rh,\cdh\}$. Then the restriction functor $\PSh(\Sch_X^\fp) \to \PSh(\GFin_X^\fp)$ commutes with $\tau$-sheafification.
 \end{lem}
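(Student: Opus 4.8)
The goal is to show that restriction $\PSh(\Sch_X^\fp)\to\PSh(\GFin_X^\fp)$ commutes with $\tau$-sheafification for $\tau\in\{\mathord\sqcup,\Zar,\Nis,\cdp,\rh,\cdh\}$. The plan is to recognize this as an instance of the standard criterion for a functor between sites to induce a geometric morphism in the ``correct'' direction: if $u\colon\sC\to\sD$ is a \emph{continuous and cocontinuous} functor of sites (in the sense that $u$ preserves covering sieves and, for every $Y\in\sC$, every covering sieve of $u(Y)$ in $\sD$ is generated by the image of a covering sieve of $Y$), then the restriction $u^*\colon\PSh(\sD)\to\PSh(\sC)$ sends $\tau$-sheaves to $\tau$-sheaves and commutes with sheafification. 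So I would first reduce the lemma to verifying that the inclusion $\GFin_X^\fp\hookrightarrow\Sch_X^\fp$ has these two properties with respect to the cd-structures defining $\tau$ on each side.

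Since all six topologies are generated by cd-structures (Proposition~\ref{prop:cdp} and the usual Zariski/Nisnevich/$\sqcup$ cd-structures), and since $\GFin_X^\fp$ is closed under finite limits and finite sums in $\Sch_X^\fp$, continuity is essentially formal: each $\tau$-square with entries in $\GFin_X^\fp$ is again a $\tau$-square in $\Sch_X^\fp$, because being a finitely presented abstract blowup square, or a Nisnevich/Zariski distinguished square, or a finite coproduct decomposition, is an absolute notion not depending on the ambient category. For cocontinuity, I need: given $Y\in\GFin_X^\fp$ and a $\tau$-square in $\Sch_X^\fp$ with bottom-right corner $Y$, the other three corners again lie in $\GFin_X^\fp$. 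The key point is that $\GFin_X^\fp$ is defined by a condition on the fibers over the (finitely many) generic points of $X$ — finiteness of those fibers — and every morphism appearing in a Zariski, Nisnevich, or cdp square over $Y$ is of finite type, indeed the ``upper-left'' corners are either open subschemes of $Y$, étale over $Y$, or proper over (a closed subscheme of) $Y$; in all cases the fiber over a generic point $\eta\in X^{\mathrm{gen}}$ remains finite (an open or étale piece of a finite scheme is finite, and for an abstract blowup square $E\to Y_1$ with $Y_1$ proper over $Y$, the fibers over $\eta$ are finite because they are proper over the finite scheme $Y_\eta$ and quasi-finite — here one uses that the center of the blowup lies in a closed subscheme, keeping things quasi-finite over $Y$). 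The empty-sieve condition is immediate since $\varnothing\in\GFin_X^\fp$. I would spell this fiber-wise check out square-type by square-type; it is short.

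With continuity and cocontinuity in hand, the formal mechanism gives a commutative square of adjunctions in which $u^*$ preserves sheaves, and the ``Beck–Chevalley''-type identity $u^*\circ L_\tau^{\Sch}\simeq L_\tau^{\GFin}\circ u^*$ follows by the standard argument: $u^*$ preserves $\tau$-local equivalences (because $u$ is continuous, the unit maps of covering sieves go to covering sieves, hence to local equivalences) and preserves sheaves (by cocontinuity), so it descends to the localizations and the two composites agree on the dense subcategory of representables, hence everywhere. In the $\infty$-categorical setting I would invoke the analogous statement for sheaves of spaces — e.g. the fact that a cocontinuous functor of sites induces a morphism of the associated $\infty$-topoi whose pullback is restriction, so restriction automatically commutes with sheafification; this is exactly the kind of statement recorded in the references already used in the paper (the cd-structure/descent framework of \cite{AHW} together with \cite[Appendix A]{norms}), and one only needs it for finitary topologies. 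The only genuine obstacle is the cocontinuity verification for the cdp cd-structure — making sure that the ``exceptional divisor'' corner $E$ of a finitely presented abstract blowup square over $Y\in\GFin_X^\fp$ stays generically finite over $X$ — and this is handled by the quasi-finiteness observation above; everything else is bookkeeping.
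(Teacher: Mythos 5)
Your overall framework (the inclusion $\GFin_X^\fp\hook\Sch_X^\fp$ is continuous and cocontinuous for the cd-structure topologies, hence restriction commutes with sheafification) is the right one, and it is essentially what the paper does: restriction obviously preserves sheaves, and the real content is that for $Y\in\GFin_X^\fp$ the restriction to $\GFin_X^\fp$ of a covering sieve generated by a $\tau$-square is again covering. Your treatment of the $\sqcup$, Zariski and Nisnevich squares is fine, since open, clopen and étale morphisms are quasi-finite, so those squares literally live in $\GFin_X^\fp$.

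The gap is in the cdp case, which is the only nontrivial point. It is simply false that the upper corners of a finitely presented abstract blowup square over $Y\in\GFin_X^\fp$ stay in $\GFin_X^\fp$: an abstract blowup square is not a blowup, and the only requirements on $Y'\to Y$ are that it be proper (and finitely presented) and an isomorphism over $Y- Z$; over $Z$, which may very well meet the fibers $Y_\eta$ for $\eta\in X^\mathrm{gen}$, it can have positive-dimensional fibers, so your ``quasi-finiteness'' justification has no basis. Concretely, take $Y'=Y\sqcup\P^1_Z$ with $E=Z\sqcup\P^1_Z$: this is a finitely presented abstract blowup square over $Y$, yet $Y'\notin\GFin_X^\fp$ as soon as $Z_\eta\neq\varnothing$ for some generic point $\eta$ of $X$. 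What is true, and what the paper proves, is weaker but sufficient: the restricted sieve is still covering because the square can be \emph{refined} inside $\GFin_X^\fp$. Namely, let $Y_0'$ be the schematic closure of $Y'\times_Y(Y-Z)$ in $Y'$; its fibers over the generic points of $X$ are finite (the closure of a point with finite residue extension over $\kappa(\eta)$ in a finite-type $\kappa(\eta)$-scheme is finite), and by Lemma~\ref{lem:proBX} one finds a finitely presented closed subscheme $Y_1'\subset Y'$ containing $Y_0'$ with $Y_1'\in\GFin_X^\fp$. Then $Z\hook Y\leftarrow Y_1'$ is again an abstract blowup square, now in $\GFin_X^\fp$, and the sieve it generates is contained in the restriction of the original sieve. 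Without this refinement step your cocontinuity verification, and hence the proof, does not go through.
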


\begin{proof}
	It is obvious that the restriction preserves sheaves, so it remains to prove the following: for any $Y\in\GFin_X^\fp$ and any $\tau$-covering sieve $R$ on $Y$ in $\Sch_X^\fp$ induced by a $\tau$-square, the restriction of $R$ to $\GFin_X^\fp$ is $\tau$-covering. Since $\GFin_X^\fp$ is closed under étale extensions, it suffices to consider an abstract blowup square $Z\hook Y\leftarrow Y'$. Let $Y_0'$ be the schematic closure of $Y'\times_Y(Y-Z)$ in $Y'$. By Lemma~\ref{lem:proBX}, there exists a closed subscheme $Y_1'\subset Y'$ containing $Y_0'$ and belonging to $\GFin_X^\fp$. Then $Z\hook Y\leftarrow Y_1'$ is an abstract blowup square in $\GFin_X^\fp$ refining the given square.
\end{proof}

The following definition is a variation on the notion of ``clean sheaf'' introduced by Goodwillie and Lichtenbaum in \cite[Definition 5.1]{goodwillie-lichtenbaum}.

\begin{defn}
	A morphism $Y'\to Y$ in $\GFin_X$ is \emph{clean} if it is proper and induces an isomorphism $(Y'_\eta)_\red\simeq (Y_\eta)_\red$ for every $\eta\in X^\mathrm{gen}$.
	A presheaf on $\GFin_X^\fp$ is \emph{clean} if it sends clean morphisms to equivalences.
\end{defn}

	We denote by $\PSh^\cl(\GFin_X^\fp)\subset \PSh(\GFin_X^\fp)$ the full subcategory of clean presheaves. Note that every clean morphism in $\GFin_X$ is a cofiltered limit of clean morphisms in $\GFin_X^\fp$, so the extension of a clean presheaf to $\GFin_X$ sends all clean morphisms to equivalences.

\begin{rem}
	If $Y\in\GFin_X$ has no narrow components, the category of clean $Y$-schemes is precisely the category $\sN_Y$ defined in \sectsign\ref{sub:RZ}.
\end{rem}

\begin{rem}
	Nilimmersions are clean. Hence, clean presheaves are nilinvariant.
\end{rem}

\begin{rem}\label{rem:cleanSigma}
	If $\sF\in\PSh_\Sigma(\GFin_X^\fp)$ is clean and $Y\in\GFin_X$, then
	\[
	\sF(Y) \simeq \sF(Y_1)\times\dotsb\times\sF(Y_n)
	\]
	where $Y_1,\dotsc,Y_n$ are the irreducible components of $Y$ that dominate an irreducible component of $X$ (with reduced scheme structure).
	Indeed, the morphism $Y_1\sqcup\dotsb\sqcup Y_n\to Y$ is clean.
	In particular, if $Y$ is narrow, then $\sF(Y)$ is contractible.
\end{rem}

\begin{prop}\label{prop:clean=>cdp}
	Let $\sF$ be a clean $\Sigma$-presheaf on $\GFin_X^\fp$. Then $\sF$ is a cdp sheaf.
\end{prop}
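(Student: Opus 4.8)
The plan is to check the cd-structure criterion for cdp sheaves. As in the proof of Proposition~\ref{prop:cdp}, the cdp topology on $\GFin_X^\fp$ is associated to the cd-structure of finitely presented abstract blowup squares, so by \cite[Theorem 3.2.5]{AHW} it is enough to show that $\sF(\varnothing)\simeq\ast$ — which is immediate because $\sF$ is a $\Sigma$-presheaf — and that $\sF$ carries every finitely presented abstract blowup square
\[
\begin{tikzcd}
E \ar{r}\ar{d} & Y' \ar{d}{p} \\
Z \ar{r}{i} & Y
\end{tikzcd}
\]
in $\GFin_X^\fp$ to a cartesian square. Since clean presheaves are nilinvariant, I may assume $Y$, $Y'$ and $Z$ reduced.

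Call an irreducible component of an object $W\in\GFin_X$ \emph{dominant} if it dominates an irreducible component of $X$, and let $W^\natural$ be the disjoint union of the dominant components of $W$, each with its reduced structure. There are only finitely many such components (the fibres of $W$ over the generic points of $X$ are finite), the canonical morphism $W^\natural\to W$ is clean by Remark~\ref{rem:cleanSigma}, and hence $\sF(W)\isoto\sF(W^\natural)$. The crux of the argument is the following rigidity property of $\GFin_X$: if $W\in\GFin_X$ is integral and dominates the irreducible component $\overline{\{\eta\}}$ of $X$, then the fibre $W_\eta$ is the spectrum of the function field of $W$, in particular a single point; consequently no proper closed subscheme of $W$ can dominate an irreducible component of $X$.

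Granting this, the bulk of the work is the following bookkeeping of dominant components. Since $Z\subseteq Y$ is closed, rigidity shows that the dominant components of $Z$ are exactly the dominant components of $Y$ contained in $i(Z)$; call this set $B$ and let $A$ be the complementary set of dominant components of $Y$, so that $Y^\natural\cong A^\natural\sqcup Z^\natural$ with $Z^\natural$ a clopen summand. For $C\in A$, the reduced closure ${C'}$ of $p^{-1}(C\setminus i(Z))$ is a component of $Y'$ and $p$ restricts to a proper morphism ${C'}\to C$ that is an isomorphism over the dense open $C\setminus i(Z)$; since this open contains the generic point of $C$ and $C_\eta$ is a single point, ${C'}\to C$ is clean, so $\sF(C)\isoto\sF({C'})$. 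As $p$ is an isomorphism over $Y\setminus i(Z)$, every component of $Y'$ is either such a ${C'}$ or lies set-theoretically in $E=p^{-1}(Z)$; combined with rigidity applied to the components of $Y'$ and of $E$, this yields that the dominant components of $Y'$ are exactly $\{{C'}:C\in A\}$ together with the dominant components of $E$, and that every dominant component of $E$ is a component of $Y'$ contained in $E$. Hence ${Y'}^\natural\cong {A'}^\natural\sqcup E^\natural$ with ${A'}^\natural:=\bigsqcup_{C\in A}{C'}$; moreover ${Y'}^\natural\to Y^\natural$ sends ${A'}^\natural$ to $A^\natural$ through the clean morphisms ${C'}\to C$ and sends $E^\natural$ into the clopen $Z^\natural$, and $E^\natural\cong Z^\natural\times_{Y^\natural}{Y'}^\natural$ compatibly with the projection $E\to Z$.

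These data fit into a commutative cube whose bottom face is the given square $\bigl(E\to Y'\bigr)$ over $\bigl(Z\to Y\bigr)$, whose top face is the square $\bigl(E^\natural\to {Y'}^\natural\bigr)$ over $\bigl(Z^\natural\to Y^\natural\bigr)$, and whose four connecting edges are the clean morphisms $W^\natural\to W$. Applying $\sF$ turns those edges into equivalences, so it suffices to show that $\sF$ sends the top square to a cartesian square. Restricting over the clopen decomposition $Y^\natural=A^\natural\sqcup Z^\natural$, the top square splits as a disjoint union of the square $\bigl(\varnothing\to {A'}^\natural\bigr)$ over $\bigl(\varnothing\to A^\natural\bigr)$ and the square $\bigl(E^\natural= E^\natural\bigr)$ over $\bigl(Z^\natural= Z^\natural\bigr)$ whose two horizontal maps are identities. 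As $\sF$ is a $\Sigma$-presheaf, $\sF$ of the top square is the product of $\sF$ of these two squares: the first is cartesian because the induced map $\sF(A^\natural)\to\sF({A'}^\natural)$ is a product of the equivalences $\sF(C)\isoto\sF({C'})$ (and $\sF(\varnothing)=\ast$), and the second is cartesian because it has two parallel identity legs. Therefore $\sF$ sends the top square, and with it the given abstract blowup square, to a cartesian square. The principal obstacle is the component bookkeeping of the third paragraph, which rests entirely on the rigidity property of $\GFin_X$; once that is established the rest of the argument is formal.
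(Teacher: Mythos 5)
Your proof is correct in substance, and it runs on the same two ingredients as the paper's own argument: the cleanness of $W^\natural\to W$ from Remark~\ref{rem:cleanSigma} together with the rigidity observation that an integral $W\in\GFin_X$ dominating the component $\overline{\{\eta\}}$ of $X$ has $W_\eta=\Spec K(W)$, so that the points of $W_\eta$ for any $W\in\GFin_X$ are exactly the generic points of the dominant components. The difference is one of economy: the paper invokes Remark~\ref{rem:cleanSigma} once to reduce to the case where $Y$ is integral and dominant (pulling the square back to each dominant component, using that clean morphisms are stable under base change), after which the dichotomy is two lines --- if $Z$ is dominant it contains the generic point of $Y$, so both horizontal maps are isomorphisms; otherwise $Z$ is narrow and both vertical maps are clean, so $\sF$ sends them to equivalences. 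You instead keep the square over $Y$ and do the component bookkeeping globally via the cube and the clopen splitting of $Y^\natural$; this avoids base-changing the abstract blowup square but is considerably longer and proves nothing more. (Also, the initial reduction to $Y,Y',Z$ reduced is not actually needed, since every construction you make uses reduced structures on components; if you do reduce, you should say that the fourth corner becomes $Z_\red\times_{Y_\red}Y'_\red$, which differs from $E$ only by a nilimmersion.)

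One sentence is false as stated: ``every component of $Y'$ is either such a $C'$ or lies set-theoretically in $E$'' fails whenever $Y$ has narrow components not contained in $i(Z)$. For instance, take $X$ integral, $Y=Y'=X\sqcup\Spec\kappa(x)$ for a closed point $x\in X$, $p=\id$, and $Z$ a closed point of the summand $X$: the narrow summand $\Spec\kappa(x)$ of $Y'$ is neither a $C'$ with $C\in A$ nor contained in $E$. This does not damage your proof, because the statement you actually use concerns only dominant components, and that statement is true by the same rigidity argument applied to them: the generic point of a dominant component of $Y'$ lies in $Y'_\eta$ for some $\eta\in X^\mathrm{gen}$; if it lies in $E$, the component is contained in the closed set $E$ and is a dominant component of $E$; otherwise it lies in $p^{-1}(Y\setminus i(Z))$, hence maps to a maximal point of $Y$ belonging to $Y_\eta$, i.e.\ to the generic point of some $C\in A$, and the component is $C'$. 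Restrict the claim to dominant components and the rest of your argument goes through unchanged.
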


\begin{proof}
	We must show that $\sF$ takes every abstract blowup square
	\[
	\begin{tikzcd}
		Z' \ar[hook]{r} \ar{d} & Y' \ar{d} \\
		Z \ar[hook]{r} & Y
	\end{tikzcd}
	\]
	to a cartesian square.
	By Remark~\ref{rem:cleanSigma}, we can assume that $Y$ is integral and dominant (over a component of $X$). If $Z$ is also dominant, then the horizontal maps are isomorphisms. Otherwise, $Z$ is narrow and both vertical morphisms are clean, so $\sF$ takes them to equivalences.
\end{proof}

The class of clean morphisms is stable under base change and composition. 
It follows that the inclusion $\PSh^\cl(\GFin_X^\fp)\subset \PSh(\GFin_X^\fp)$ admits a left adjoint $\cl$ given by
\[
\cl(\sF)(Y) = \colim_{Y'\to Y} \sF(Y'),
\]
where $Y'\to Y$ ranges over clean finitely presented $Y$-schemes \cite[Proposition 3.4(1)]{hoyois-sixops}. Note that this indexing category has finite limits and hence is cofiltered, so that $\cl$ is a left exact localization.

\begin{lem}\label{lem:cl-technical}
	Let $\sF\in\PSh(\GFin_X^\fp)$ and $Y\in \GFin_X$. Let $Z\subset Y$ be the union of the irreducible components of $Y$ that dominate an irreducible component of $X$ (with reduced scheme structure). Then
	\[
	\cl(\sF)(Y)=\colim_{Z'\in\sM_Z} \sF(Z').
	\]
\end{lem}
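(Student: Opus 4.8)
The plan is to realize the right-hand side as a cofinal part of the colimit that defines $\cl(\sF)(Y)$, the comparison being governed by the operation of discarding narrow components.

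\emph{Extending the colimit formula.} First I would upgrade the defining formula for $\cl$: for any $Y\in\GFin_X$ one has $\cl(\sF)(Y)=\colim_{Y'}\sF(Y')$ with $Y'$ ranging over \emph{all} clean $Y$-schemes in $\GFin_X$, not only the finitely presented ones. Indeed, every clean $Y$-scheme is a cofiltered limit of finitely presented clean $Y$-schemes along closed immersions (as in the proof of \lemref{lem:RZcategories}(3)), and $\sF$ (extended to $\GFin_X$) carries such limits to colimits; hence the restriction of $\sF$ to the category of clean $Y$-schemes is left Kan extended from its finitely presented part, and a colimit of a left Kan extension reproduces the original colimit, which is $\cl(\sF)(Y)$ by construction. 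When $Y$ is not of finite presentation one reduces to the previous case by writing $Y$ as a cofiltered limit of finitely presented $Y$-schemes; this is routine limit-of-schemes bookkeeping.

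\emph{Geometric input.} The crucial observation is that the closed immersion $Z\hook Y$ is clean, and more generally that forming the non-narrow part turns clean $Y$-schemes into modifications of $Z$. For $\eta\in X^\mathrm{gen}$ the fibre $Y_\eta$ is finite over the field $\kappa(\eta)$, hence a finite discrete scheme; since $\eta$ is generic in $X$, no point of $Y_\eta$ generizes within $Y_\eta$, so each point of $Y_\eta$ is a generic point of $Y$ — precisely the generic point of one of the irreducible components making up $Z$, with the same residue field — whence $(Z_\eta)_\red\isoto(Y_\eta)_\red$ and $Z\hook Y$ is clean. Applying the same analysis to a clean $Y$-scheme $Y'\to Y$ and using the isomorphisms $(Y'_\eta)_\red\isoto(Y_\eta)_\red$, the reduced union $Z_{Y'}$ of the irreducible components of $Y'$ dominating a component of $X$ factors through $Z$ and is a modification of $Z$ (it is reduced, proper over $Z$, and a bijection on generic points with trivial residual extensions), and $Y'\mapsto Z_{Y'}$ is functorial because $\sM_Z$ is a poset. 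Conversely, a modification $Z'\to Z$ becomes a clean $Y$-scheme upon composing with $Z\hook Y$, since a modification restricts to an isomorphism on the discrete fibres $Z'_\eta\cong Z_\eta$ over $X^\mathrm{gen}$.

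\emph{Cofinality.} The constructions above give an inclusion $\iota\colon\sM_Z\hook\{\text{clean }Y\text{-schemes in }\GFin_X\}$, $Z'\mapsto(Z'\to Z\hook Y)$, and the non-narrow-part functor $Y'\mapsto Z_{Y'}$ in the other direction. For a reduced modification $Z'$ of $Z$ and a clean $Y$-scheme $Y'$, any $Y$-morphism $Z'\to Y'$ is determined by its restriction to generic points, lands set-theoretically in the non-narrow components of $Y'$, and hence — as $Z'$ and $Z_{Y'}$ are reduced — factors uniquely through $Z_{Y'}$ over $Z$; that is, $\Hom_Y(Z',Y')=\Hom_Z(Z',Z_{Y'})$ naturally, so $Y'\mapsto Z_{Y'}$ is right adjoint to $\iota$. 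Consequently $\iota^{\op}$ admits a left adjoint and is therefore cofinal, and combining with the first step,
\[
\cl(\sF)(Y)=\colim_{Y'\text{ clean}}\sF(Y')=\colim_{Z'\in\sM_Z}\sF(Z').
\]

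The main obstacle is the geometric input of the second step — checking that the non-narrow part of a clean $Y$-scheme is a genuine modification of $Z$, which rests entirely on the finiteness of the fibres over $X^\mathrm{gen}$ — together with the (routine but fiddly) limit arguments of the first step needed to enlarge the colimit from finitely presented clean schemes to modifications and to handle a non-finitely-presented $Y$; once these are in place the cofinality of the third step is purely formal.
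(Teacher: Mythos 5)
Your proof is correct, but it is organized differently from the paper's. The paper first replaces $Y$ by a finitely presented $Y'\in\GFin_X^\fp$ admitting a clean closed immersion from $Y$, then invokes \lemref{lem:RZcategories}(2,3) to rewrite $\colim_{Z'\in\sM_Z}\sF(Z')$ as a colimit over $\sN_Z^\fp$, writes $Z$ as a cofiltered limit of finitely presented closed subschemes $Z_i\subset Y$, identifies $\sN_Z^\fp$ with the filtered colimit of the categories of clean finitely presented $Z_i$-schemes, and concludes because each $Z_i\hook Y$ is clean, so $\cl(\sF)(Z_i)\simeq\cl(\sF)(Y)$. You work on the other side of the equation: you enlarge the defining colimit for $\cl(\sF)(Y)$ from finitely presented clean $Y$-schemes to all clean $Y$-schemes in $\GFin_X$ (a left Kan extension argument along the pro-presentations of \lemref{lem:proBX}), and then contract this larger index category onto $\sM_Z$ via the adjunction whose right adjoint discards narrow components, $Y'\mapsto Z_{Y'}$; this is the same adjunction that drives \lemref{lem:RZcategories}(2), but taken over $Y$ rather than over $Z$, which makes the cofinality immediate and lets you bypass both the intermediate category $\sN_Z^\fp$ and the filtered-colimit-of-categories step. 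Your geometric verifications (that $Z\hook Y$ is clean, that $Z_{Y'}\to Z$ is a modification, that $\Hom_Y(Z',Y')=\Hom_Z(Z',Z_{Y'})$ because both sides are subsingletons forced on generic points) are sound. The trade-off is that you must justify the enlarged colimit formula and the case of non-finitely-presented $Y$, which you defer as routine: the paper's one-line device (a single clean closed immersion $Y\hook Y'$ with $Y'\in\GFin_X^\fp$, together with the facts that $\cl(\sF)$ inverts clean morphisms of $\GFin_X$ and that $Y$ and $Y'$ have the same $Z$) disposes of the latter, and the former needs only the standard cofinality of the pro-presentation inside the relevant comma categories; these are minor gaps of detail, not flaws in the argument.
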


\begin{proof}
	There exists a clean closed immersion $Y\hook Y'$ with $Y'\in\GFin_X^\fp$, so we may assume that $Y$ is finitely presented.
	By Lemma~\ref{lem:RZcategories}(2,3), we have
	\[
	\colim_{Z'\in \sM_{Z}} \sF(Z') \simeq \colim_{Z'\in \sN_Z^\fp} \sF(Z').
	\]
	Write $Z$ as a cofiltered limit of finitely presented closed subschemes $Z_i\subset Y$. Then $\sN_Z^\fp$ is the filtered colimit of the categories of clean finitely presented $Z_i$-schemes, hence \cite[Corollary 4.2.3.10]{HTT}
	\[
	\colim_{Z'\in \sN_Z^\fp} \sF(Z') = \colim_i \colim_{Z_i'\to Z_i} \sF(Z_i') = \colim_i \cl(\sF)(Z_i),
	\]
	where $Z_i'$ ranges over clean finitely presented $Z_i$-schemes. Since $Z_i\to Y$ is clean, the last colimit is constant and equal to $\cl(\sF)(Y)$.
\end{proof}

\begin{prop}\label{prop:key}
	The functor $\cl\colon \PSh(\GFin_X^\fp)\to\PSh^\cl(\GFin_X^\fp)$ sends $\Sigma$-presheaves to cdp sheaves, Zariski sheaves to rh sheaves, and Nisnevich sheaves to cdh sheaves.
\end{prop}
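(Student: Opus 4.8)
The plan is to reduce the three assertions to two structural properties of the left exact localization $\cl$: that it preserves $\Sigma$-presheaves, and that it preserves Zariski sheaves and Nisnevich sheaves. Granting these, the first assertion is immediate: if $\sF$ is a $\Sigma$-presheaf then $\cl(\sF)$ is a clean $\Sigma$-presheaf, hence a cdp sheaf by Proposition~\ref{prop:clean=>cdp}. For the other two, recall from Proposition~\ref{prop:cdp} and the combination theorem for cd-structures (\cite[Theorem 3.2.5]{AHW}) that the rh (resp.\ cdh) cd-structure on $\GFin_X^\fp$ is the union of the Zariski (resp.\ Nisnevich) cd-structure with the cd-structure of finitely presented abstract blowup squares, so that a presheaf is an rh (resp.\ cdh) sheaf precisely when it is both a Zariski (resp.\ Nisnevich) sheaf and a cdp sheaf. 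Since a Zariski or Nisnevich sheaf is in particular a $\Sigma$-presheaf, the two properties together with Proposition~\ref{prop:clean=>cdp} show that $\cl$ of such a sheaf is again a Zariski (resp.\ Nisnevich) sheaf and is a cdp sheaf, hence an rh (resp.\ cdh) sheaf.

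For the preservation of $\Sigma$-presheaves I would use the formula of Lemma~\ref{lem:cl-technical}, $\cl(\sF)(Y) = \colim_{Z'\in\sM_{Z_Y}}\sF(Z')$, where $Z_Y\subset Y$ is the reduced union of the irreducible components dominating a component of $X$. One has $Z_{Y_1\sqcup Y_2} = Z_{Y_1}\sqcup Z_{Y_2}$ and $\sM_{Z_{Y_1}\sqcup Z_{Y_2}}\simeq\sM_{Z_{Y_1}}\times\sM_{Z_{Y_2}}$, and the colimit above is taken over the filtered opposite of a cofiltered category, hence commutes with finite products; so if $\sF$ is a $\Sigma$-presheaf then $\cl(\sF)(Y_1\sqcup Y_2)\simeq\cl(\sF)(Y_1)\times\cl(\sF)(Y_2)$.

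The substance is the preservation of Zariski and Nisnevich sheaves; I describe the Nisnevich case, the Zariski one being the same with Proposition~\ref{prop:RZ-properties}(4) in place of Proposition~\ref{prop:coinitial}. Let $\sF$ be a Nisnevich sheaf on $\GFin_X^\fp$; by the characterization of Nisnevich sheaves in terms of the cd-structure it suffices to show that $\cl(\sF)$ carries every Nisnevich square in $\GFin_X^\fp$ — an open immersion $U\hookrightarrow Y$, an étale morphism $V\to Y$ restricting to an isomorphism over the reduced complement of $U$, and $W = U\times_Y V$ — to a cartesian square (together with $\cl(\sF)(\varnothing)=\ast$, which is clear). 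The first step is a reduction to the case where $Y$ is \emph{quasi-integral}: the closed immersion $Z_Y\hookrightarrow Y$ is clean, because its fibre over any $\eta\in X^\mathrm{gen}$ has the same underlying space as $Y_\eta$ — a point of $Y$ over $\eta$ is a generic point, lying in a dominant component — and likewise $Z_Y\times_Y U\hookrightarrow U$, $Z_Y\times_Y V\hookrightarrow V$, $Z_Y\times_Y W\hookrightarrow W$ are clean; since $\cl(\sF)$ sends clean morphisms to equivalences, the original square may be replaced by its (still Nisnevich) base change to $Z_Y$, which is quasi-integral with no narrow components. Assuming now $Y$ quasi-integral with $Z_Y = Y$ (hence $Z_U=U$, $Z_V=V$, $Z_W=W$), Lemma~\ref{lem:cl-technical} identifies $\cl(\sF)(Y)$, $\cl(\sF)(U)$, $\cl(\sF)(V)$, $\cl(\sF)(W)$ with $\colim_{\sM_Y}\sF$, $\colim_{\sM_U}\sF$, $\colim_{\sM_V}\sF$, $\colim_{\sM_W}\sF$, and the pullback functors $\sM_Y\to\sM_U$, $\sM_Y\to\sM_V$, $\sM_Y\to\sM_W$ are coinitial by Proposition~\ref{prop:RZ-properties}(4) for the open leg and Proposition~\ref{prop:coinitial} for the étale leg. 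For each modification $M$ of $Y$ — finitely presented, by Lemmas~\ref{lem:RZcategories} and~\ref{lem:cl-technical} — base changing the Nisnevich square along $M\to Y$ yields a Nisnevich square with corners in $\GFin_X^\fp$, which $\sF$ sends to a cartesian square $\sF(M)\simeq\sF(M\times_Y U)\times_{\sF(M\times_Y W)}\sF(M\times_Y V)$. Passing to the colimit over $M$ — a filtered colimit, which commutes with this finite limit since $\cl$ is left exact — gives $\cl(\sF)(Y)\simeq\cl(\sF)(U)\times_{\cl(\sF)(W)}\cl(\sF)(V)$, as desired.

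I expect the main obstacle to be the reduction to a quasi-integral base, together with the attendant bookkeeping: one must keep track of non-reduced, narrow, and non-finitely-presented components in order to compare the colimits defining $\cl$ over categories of clean finitely presented schemes with the colimits over the modification categories $\sM_{(-)}$ of quasi-integral schemes, on which the coinitiality results of \ssecref{sub:RZ} — ultimately the fact that $\RZ$ is a Zariski and étale cosheaf — are available. Lemmas~\ref{lem:cl-technical} and~\ref{lem:RZcategories} are precisely the tools for this passage, and checking carefully that the base changes of a Nisnevich square along the relevant (proper, generally non-étale) maps stay inside $\GFin_X^\fp$ and remain Nisnevich squares is where the proof spends its effort.
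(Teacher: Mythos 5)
Your proposal is correct and follows essentially the same route as the paper: the paper also deduces the cdp statement from $\Sigma$-preservation plus Proposition~\ref{prop:clean=>cdp}, and then checks that $\cl(\sF)$ sends Zariski/Nisnevich squares to cartesian squares by rewriting each corner via Lemma~\ref{lem:cl-technical}, re-indexing with the coinitiality of Proposition~\ref{prop:coinitial}, and commuting the filtered colimit with the finite limit; your preliminary base change to $Z_Y$ is just an explicit unwinding of what Lemma~\ref{lem:cl-technical} already encodes.

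Two small inaccuracies are worth fixing. First, modifications $M$ of $Z_Y$ are in general \emph{not} finitely presented over $X$, so the base-changed squares do not have corners in $\GFin_X^\fp$; one should either evaluate the extension of $\sF$ to $\GFin_X$ on them (the paper notes this extension is still a Zariski/Nisnevich sheaf) or re-index the colimit over $\sN^\fp$ using Lemma~\ref{lem:RZcategories}(2,3) -- you flag this bookkeeping at the end, but the parenthetical claim of finite presentation as stated is false. Second, Proposition~\ref{prop:coinitial} requires the étale map to be separated, so before invoking it one should reduce to Nisnevich (and Zariski) squares with $V\to Y$ separated, which is harmless since this restricted cd-structure generates the same topology; the paper inserts exactly this reduction.
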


\begin{proof}
	It is clear that $\cl$ preserves $\Sigma$-presheaves, so the first statement follows from Proposition~\ref{prop:clean=>cdp}.
	Let $\sF$ be a Zariski or Nisnevich sheaf on $\GFin_X^\fp$. It remains to show that $\cl(\sF)$ takes every Zariski or Nisnevich square
	\[
	\begin{tikzcd}
		W \ar[hook]{r} \ar{d} & V \ar{d} \\
		U \ar[hook]{r} & Y
	\end{tikzcd}
	\]
	to a cartesian square. We can assume that $V\to Y$ is separated (since this restricted cd-structure generates the same topology). 
	Then the result follows immediately from Lemma~\ref{lem:cl-technical} and Proposition~\ref{prop:coinitial}.
\end{proof}

For $X$ a quasi-integral qcqs scheme, we define functors
\begin{align*}
	\cl_\cdp &\colon \Shv_\cdp(\Sch_X^\fp) \to \Fun(X^\mathrm{gen},\Spc),\\
	\cl_\rh &\colon \Shv_\rh(\Sch_X^\fp) \to \Shv(\RZ(X)_\Zar),\\
	\cl_\cdh &\colon \Shv_\cdh(\Sch_X^\fp) \to \Shv(\RZ(X)_\Nis).
\end{align*}
By Lemma~\ref{lem:RZcategories}(2,3), $\Shv(\RZ(X)_\Nis)$ is the limit of the $\infty$-topoi $\Shv(Y_\Nis)$ as $Y$ ranges over the category $\sN_X^\fp$ of clean finitely presented $X$-schemes. 
We denote by $\Shv^\lax(\RZ(X)_\Nis)$ the right-lax limit of this diagram, i.e., the $\infty$-category of sections of the corresponding cartesian fibration over $(\sN_X^\fp)^\op$, so that $\Shv(\RZ(X)_\Nis)\subset \Shv^\lax(\RZ(X)_\Nis)$ is the full subcategory of cartesian sections. There is a functor
\[
\rho^\lax\colon \Shv_\Nis(\GFin_X^\fp)\to  \Shv^\lax(\RZ(X)_\Nis), \quad \sF\mapsto (\sF_Y)_{Y\in\sN_X^\fp},
\]
where $\sF_Y$ is the restriction of $\sF$ to the small Nisnevich site of $Y$. If $\sF$ is clean, then the section $\rho^\lax(\sF)$ is cartesian, so $\rho^\lax$ restricts to a functor
\[
\rho\colon \Shv_\Nis^\cl(\GFin_X^\fp) \to  \Shv(\RZ(X)_\Nis).
\]
 The functor $\cl_\cdh$ is then the composition
\[
\Shv_\cdh(\Sch_X^\fp) \to \Shv_\cdh(\GFin_X^\fp) \xrightarrow{\cl} \Shv_\cdh^\cl(\GFin_X^\fp) = \Shv_\Nis^\cl(\GFin_X^\fp) \xrightarrow{\rho} \Shv(\RZ(X)_\Nis),
\]
where the first functor is restriction, $\cl$ preserves cdh sheaves by Proposition~\ref{prop:key}, and the equality is Proposition~\ref{prop:clean=>cdp}.
The functors $\cl_\rh$ and $\cl_\cdp$ are defined similarly, replacing Nisnevich sheaves by Zariski sheaves and $\Sigma$-presheaves, respectively.

\begin{prop}\label{prop:geometric}
	The functors $\cl_\cdp$, $\cl_\rh$, and $\cl_\cdh$ preserve colimits and finite limits.
\end{prop}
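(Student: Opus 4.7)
Proof plan. I focus on $\cl_\cdh$; the arguments for $\cl_\rh$ and $\cl_\cdp$ are identical, with Nisnevich sheaves replaced by Zariski sheaves or by $\Sigma$-presheaves. By construction, $\cl_\cdh$ is the composition
\[
\Shv_\cdh(\Sch_X^\fp) \xrightarrow{r} \Shv_\cdh(\GFin_X^\fp) \xrightarrow{\cl} \Shv_\cdh^\cl(\GFin_X^\fp) \xrightarrow{\rho} \Shv(\RZ(X)_\Nis),
\]
and I plan to show each factor preserves colimits and finite limits. The restriction $r$ is handled by Lemma~\ref{lem:cocontinuous}: presheaf restriction along $\GFin_X^\fp \hookrightarrow \Sch_X^\fp$ commutes with cdh-sheafification, so $r$ coincides with presheaf restriction, which has both Kan extensions as adjoints and hence preserves all limits and colimits. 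For $\cl$, the text preceding Lemma~\ref{lem:cl-technical} already observes that the indexing category is cofiltered and has finite limits; its opposite is filtered, so the defining colimit is a filtered colimit and hence left exact. Thus $\cl$ is a left exact localization, preserving colimits (as a left adjoint) and finite limits.

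The key step is the third factor $\rho$. The target $\Shv(\RZ(X)_\Nis) = \lim_{Y \in \sN_X^\fp} \Shv(Y_\Nis)$ is a limit in $\PrL$ (the transitions being inverse images), so the projections preserve colimits (being left adjoints) and finite limits (being projections from a limit). It thus suffices to check that each component $\rho_Y\colon \sF \mapsto \sF|_{Y_\Nis}$ preserves colimits and finite limits. Finite limits are easy: $\rho_Y$ factors as the inclusion $\Shv_\cdh^\cl \hookrightarrow \Shv_\cdh$ (right adjoint to $\cl$, preserves limits) followed by the restriction to the small Nisnevich site of $Y$, which is the inverse image of a geometric morphism and hence left exact. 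For colimits, recall that colimits in $\Shv_\cdh^\cl$ are computed by applying $\cl$ to colimits in $\Shv_\cdh$; the key observation is that any $V$ in the small Nisnevich site of $Y \in \sN_X^\fp$ has no narrow component (since $V \to Y \to X$ sends generic points to generic points), so Lemma~\ref{lem:cl-technical} yields $\cl(\sG)(V) = \colim_{V' \in \sM_V} \sG(V')$, a filtered colimit. Since filtered colimits in $\Spc$ commute both with cdh-sheafification (the cdh site being finitary) and with finite limits, one can interchange $\cl$ past colimits and past the restriction to $Y_\Nis$ to obtain the desired commutation.

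The main obstacle is this last colimit-preservation for $\rho_Y$. The source $\Shv_\cdh^\cl$ is a reflective subcategory of $\Shv_\cdh$, so its colimit structure is twisted by $\cl$, and reconciling this with the restriction to a small Nisnevich site requires careful manipulation of Lemma~\ref{lem:cl-technical} and of the commutation of finitary sheafification with filtered colimits. All other steps reduce cleanly to adjointness and general left exactness considerations.
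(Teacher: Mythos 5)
Your treatment of the first two factors matches the paper (restriction preserves limits trivially and colimits by Lemma~\ref{lem:cocontinuous}; $\cl$ is a left exact localization because its defining colimit is filtered), and your reduction of the third factor to the components $\rho_Y$ is legitimate: the transition functors $p^*$ in the limit defining $\Shv(\RZ(X)_\Nis)$ preserve colimits and finite limits, so (co)limits there are computed componentwise and the projections are jointly conservative. But the crux is exactly where you stop. Unwinding your reduction, what has to be proved is that for any diagram $\{\sF_i\}$ of clean Nisnevich sheaves, the unit $\sG\to\cl(\sG)$ with $\sG=\colim^{\Shv_\Nis}\sF_i$ becomes an equivalence after restriction to $Y_\Nis$, i.e.\ that $\sG(V)\to\colim_{V''\to V\ \mathrm{clean}}\sG(V'')$ is an equivalence for every $V$ in $Y_\Nis$. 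The presheaf colimit of the $\sF_i$ is clean, but its Nisnevich sheafification $\sG$ is not, and the values $\sG(V'')$ at modifications of $V$ involve Nisnevich-local data on those modifications that is not controlled by ``filtered colimits commute with finite limits and with finitary sheafification''. What is actually needed is a geometric cofinality input comparing Nisnevich/clean covers of $V$ with pullbacks of clean morphisms $Y'\to Y$ in $\sN_X^\fp$ — i.e.\ the circle of results Lemma~\ref{lem:etaleblowup}, Proposition~\ref{prop:coinitial}, Lemma~\ref{lem:RZcategories}(2,3) once more — or, as the paper does, one should work in the right-lax limit $\Shv^\lax(\RZ(X)_\Nis)$, whose inclusion of cartesian sections has the explicit left exact left adjoint $(\sG^\dagger)_Y=\colim_{p\colon Y'\to Y}p_*(\sG_{Y'})$, and verify the single identity $\rho^\lax(\sF)^\dagger\simeq\rho^\lax(\cl\sF)^\dagger$; this gives the commutation $\rho\circ\cl\simeq\dagger\circ\rho^\lax$, from which colimit preservation is formal. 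Your sentence ``one can interchange $\cl$ past colimits and past the restriction'' is precisely where this argument is missing, and you yourself flag it as the main obstacle; as written it is a gap, not a proof.

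A smaller but real error: your parenthetical claim that any $V$ in the small Nisnevich site of $Y\in\sN_X^\fp$ has no narrow component is false. Objects of $\sN_X$ are constrained only over the generic points of $X$ and may have narrow components (it is modifications in $\sM_X$ that have none); this is why Lemma~\ref{lem:cl-technical} is formulated in terms of the dominant part $Z\subset V$, giving $\cl(\sG)(V)=\colim_{Z'\in\sM_Z}\sG(Z')$. The conclusion that $\cl(\sG)(V)$ is a filtered colimit survives, so this is repairable, but it does not supply the missing interchange above.
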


\begin{proof}
	We prove the result for $\cl_\cdh$, but the same proof applies to $\cl_\cdp$ and $\cl_\rh$ with obvious modifications.
	The restriction functor $\Shv_\cdh(\Sch_X^\fp) \to \Shv_\cdh(\GFin_X^\fp)$ clearly preserves limits, and it preserves colimits by Lemma~\ref{lem:cocontinuous}.
	The functor $\cl\colon \Shv_\cdh(\GFin_X^\fp)\to \Shv_\cdh^\cl(\GFin_X^\fp)$ clearly preserves finite limits, and it preserves colimits because it is left adjoint to the inclusion. It remains to show that the functor $\rho$ preserves colimits and finite limits. Note that $\rho^\lax$ preserves limits and colimits, so in fact $\rho$ preserves limits. The inclusion $\Shv(\RZ(X)_\Nis)\subset \Shv^\lax(\RZ(X)_\Nis)$ has a left exact left adjoint $\sG\mapsto\sG^\dagger$ given by the formula
	\[
	(\sG^\dagger)_Y = \colim_{p\colon Y'\to Y} p_*(\sG_{Y'})
	\]
	(using the fact that Nisnevich sheaves are closed under filtered colimits).
	If $\sF\in\Shv_\Nis(\GFin_X^\fp)$, we see that $\rho^\lax(\sF)^\dagger \to \rho^\lax(\cl(\sF))^\dagger$ is an equivalence. It follows that the square
	\[
	\begin{tikzcd}
		\Shv_\Nis(\GFin_X^\fp) \ar{r}{\rho^\lax} \ar{d}[swap]{\cl} & \Shv^\lax(\RZ(X)_\Nis) \ar{d}{\dagger} \\
			\Shv_\Nis^\cl(\GFin_X^\fp) \ar{r}{\rho} & \Shv(\RZ(X)_\Nis)\rlap,
	\end{tikzcd}
	\] 
	commutes, hence that $\rho$ preserves colimits.
\end{proof}

%

\begin{lem}\label{lem:X^0}
	For any qcqs scheme $X$, the topological space $X^\mathrm{gen}$ is Hausdorff and has a basis of clopen sets.
\end{lem}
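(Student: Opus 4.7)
The strategy is to exhibit a basis of clopen subsets of $X^\mathrm{gen}$, from which Hausdorffness follows automatically: $X$ is sober and hence $T_0$, so the subspace $X^\mathrm{gen}$ is $T_0$ as well, and any $T_0$ space with a basis of clopens is Hausdorff (given distinct points, a basic clopen containing exactly one of them together with its complement separates them). Concretely, I will show that for every quasi-compact open $V \subset X$, the intersection $V \cap X^\mathrm{gen}$ is clopen in $X^\mathrm{gen}$; this suffices since the quasi-compact opens form a basis of the topology on $X$.

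The openness of $V \cap X^\mathrm{gen}$ is immediate. The key input for closedness is the following: if $\eta \in X^\mathrm{gen}$ corresponds to a minimal prime $\mathfrak{p}_\eta$ of an affine open $\Spec A \ni \eta$, then $A_{\mathfrak{p}_\eta}$ is a zero-dimensional local ring, so its maximal ideal is nilpotent. Consequently, every $f \in \mathfrak{p}_\eta$ satisfies $f^n g = 0$ in $A$ for some $n \geq 1$ and some $g \notin \mathfrak{p}_\eta$; equivalently, $D(g) \subset V(f)$. In other words, any function vanishing at $\eta$ can be killed on a Zariski-open neighborhood of $\eta$.

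To deduce closedness of $V \cap X^\mathrm{gen}$, I would take $\eta \in (X \setminus V) \cap X^\mathrm{gen}$, pick an affine open $U = \Spec A$ containing $\eta$, and use quasi-separatedness of $X$ to write $V \cap U$ as a \emph{finite} union $D(f_1) \cup \dotsb \cup D(f_n)$ with $f_i \in A$. Since $\eta \notin V$, each $f_i$ lies in $\mathfrak{p}_\eta$; applying the key observation to each $f_i$ and multiplying the resulting elements produces $g \notin \mathfrak{p}_\eta$ with $D(g) \cap D(f_i) = \varnothing$ for every $i$, so $D(g)$ is an open neighborhood of $\eta$ satisfying $D(g) \cap V = \varnothing$. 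Hence $(X \setminus V) \cap X^\mathrm{gen}$ is open in $X^\mathrm{gen}$, as required.

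The main obstacle, and the only place where quasi-separatedness is used, is the reduction to a finite union $V \cap U = \bigcup_{i=1}^n D(f_i)$: without it, one could not annihilate all the $f_i$ on a single principal open simultaneously, and the argument would break down. Everything else is a standard manipulation with minimal primes in spectral spaces.
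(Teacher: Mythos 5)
Your argument is correct, but it takes a different route from the paper, which simply quotes the result: the affine case is cited as Henriksen--Jerison \cite[Corollary 2.4]{HJ} (the classical theorem that the space of minimal primes of a commutative ring is Hausdorff with a clopen basis), and the general qcqs case is reduced to the affine one via Hochster's theorem that the underlying space of a qcqs scheme is homeomorphic to the spectrum of a ring. What you do instead is essentially reprove the Henriksen--Jerison statement directly in the scheme setting: the observation that any $f$ vanishing at a generic point $\eta$ satisfies $gf^n=0$ for some $g\notin\mathfrak p_\eta$ is exactly the affine mechanism, and you globalize it by using quasi-separatedness to write $V\cap U$ as a finite union of principal opens, so that a single $g$ kills all of them near $\eta$. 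This buys a self-contained, elementary proof that avoids both the external reference and Hochster's theorem (and in fact only uses quasi-separatedness, not quasi-compactness of $X$), at the cost of a page of commutative algebra the paper outsources. One small wording issue: the maximal ideal of the zero-dimensional local ring $A_{\mathfrak p_\eta}$ is a nil ideal (every element is nilpotent), but need not be a nilpotent ideal in the absence of noetherian hypotheses; your argument only uses element-wise nilpotence of each $f_i$, so this is harmless, but the phrase ``its maximal ideal is nilpotent'' should be corrected.
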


\begin{proof}
	If $X$ is affine, this is \cite[Corollary 2.4]{HJ}. The general case follows since $X$ is homeomorphic to an affine scheme.
\end{proof}

\begin{thm}\label{thm:hodim-cdh}
	Let $X$ be a qcqs scheme of finite valuative dimension $d$. Then the $\infty$-topoi $\Shv_\cdp(\Sch_X^\fp)$, $\Shv_\rh(\Sch_X^\fp)$, and $\Shv_\cdh(\Sch_X^\fp)$ have homotopy dimension $\leq d$.
\end{thm}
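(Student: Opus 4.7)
The strategy is to use the left-exact, colimit-preserving functors $\cl_\cdp$, $\cl_\rh$, $\cl_\cdh$ constructed in Proposition~\ref{prop:geometric}, combined with Proposition~\ref{prop:hodimRZ}, which bounds the homotopy dimension of the target $\infty$-topoi. I will describe the cdh case in detail; the cdp and rh cases are analogous, replacing the target by $\Fun(X^\mathrm{gen},\Spc)$ or $\Shv(\RZ(X)_\Zar)$ respectively. Since cdh, cdp, and rh sheaves are all nilinvariant (because $X_\red\to X$ is a cdp cover whose \v{C}ech nerve is constant), I would first replace $X$ by $X_\red$. A general reduced qcqs scheme of finite valuative dimension need not be quasi-integral, since affine schemes can have infinitely many minimal primes; however, such $X$ can be written as a cofiltered limit of quasi-integral qcqs schemes of at most the same valuative dimension, and the finitariness provided by Proposition~\ref{prop:finitary-sheaves} lets one reduce to the quasi-integral case.

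For $X$ quasi-integral qcqs of valuative dimension $\leq d$, let $\sF\in\Shv_\cdh(\Sch_X^\fp)$ be $d$-connective. Since $\cl_\cdh$ preserves colimits and finite limits, it preserves the full subcategory of $d$-connective objects, so $\cl_\cdh(\sF)\in\Shv(\RZ(X)_\Nis)$ is $d$-connective. By Proposition~\ref{prop:hodimRZ}, $\Gamma(\RZ(X),\cl_\cdh(\sF))$ is $0$-connective, i.e.\ inhabited. Preservation of the terminal object supplies a canonical comparison map $\sF(X)\to\Gamma(\RZ(X),\cl_\cdh(\sF))$, and the crux of the argument is to verify that this map is surjective on $\pi_0$. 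Unravelling via Lemma~\ref{lem:cl-technical}, the target equals $\lim_{Y\in\sN_X^\fp}\colim_{Y'\in\sM_Y}\sF(Y')$, so the restriction of a global section to $Y=X\in\sN_X^\fp$ produces a class in $\cl(\sF)(X)=\colim_{X'\in\sM_X}\sF(X')$, i.e.\ a section of $\sF$ over some modification $X'\to X$.

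The main obstacle is descending such a section from $X'$ to $X$. I would proceed by induction on $d$. Let $U\subset X$ be a dense open over which $X'\to X$ is an isomorphism and set $Z=X\setminus U$, so $(X',Z)\to X$ is an abstract blowup square. By Proposition~\ref{prop:cdp}(2), $\sF(X)\simeq\sF(X')\times_{\sF(E)}\sF(Z)$ with $E=X'\times_XZ$. Since $Z$ is narrow, $\dim_v(Z)<d$ by Proposition~\ref{prop:dim_v}(4), and the inductive hypothesis makes $\sF|_Z$ admit a section. The delicate point is to arrange that this section on $Z$ matches the restriction to $E$ of the given section on $X'$: this would be carried out by choosing the modification $X'$ carefully (e.g.\ via Gruson–Raynaud flatification) to control $\dim_v(E)$, and then using cdh descent on $Z$ to reduce the coherence problem to a cdh cover on which the sections are manifestly compatible. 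The main technical difficulty lies in executing this coherent gluing step uniformly, and in particular in showing that the higher obstructions to gluing - naturally living in homotopy groups of $\sF$ in the cdh-connective regime - vanish by an inductive argument using the same homotopy-dimension bound one is trying to prove.
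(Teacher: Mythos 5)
Your first half follows the paper's strategy (use $\cl_\tau$, Proposition~\ref{prop:geometric}, Proposition~\ref{prop:hodimRZ}, and Lemma~\ref{lem:cl-technical} to produce a section of $\sF$ over some modification $X'\to X$, then induct on $d$ via an abstract blowup square), but there are two genuine gaps. The first is the step you flag as the ``main technical difficulty'': matching the section over $Z$ with the restriction to $E=X'\times_XZ$ of the section over $X'$. This is not where flatification or a careful choice of modification is needed, and there is no circularity to worry about; the point you are missing is a \emph{connectivity} upgrade coming from the induction hypothesis. Since $Z$ is narrow in $X$ and $E$ is narrow in $X'$, Proposition~\ref{prop:dim_v}(4,6) gives $\dim_v(Z),\dim_v(E)\le d-1$ for \emph{any} modification, so by the inductive homotopy-dimension bound and Proposition~\ref{prop:hodim}(1) the $d$-connective sheaf $\sF$ has $\sF(Z)$ and $\sF(E)$ not merely nonempty but $1$-connective, i.e.\ connected. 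A homotopy pullback $\sF(X')\times_{\sF(E)}\sF(Z)$ with both legs nonempty and connected base is nonempty, because the two images in $\sF(E)$ can be joined by a path. So there is no coherent gluing problem and no higher obstructions: one only needs $\pi_0$ of a single fiber product to be nonempty, not a descent datum. (Relatedly, your claim that the crux is surjectivity of $\sF(X)\to\Gamma(\RZ(X),\cl_\cdh\sF)$ on $\pi_0$ is not needed and is not proved in the paper.)

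The second gap is the reduction of the general case to the quasi-integral case. You assert that a reduced qcqs scheme of valuative dimension $\le d$ is a cofiltered limit of quasi-integral qcqs schemes of valuative dimension $\le d$; this is unsubstantiated (the natural approximations, e.g.\ by finitely generated $\Z$-subalgebras, are quasi-integral but have unbounded valuative dimension), and even if such an approximation existed, ``finitariness plus Proposition~\ref{prop:finitary-sheaves}'' does not transfer a homotopy-dimension bound through a cofiltered limit: a $d$-connective sheaf on $\Sch_X^\fp$ is not in any evident way a filtered colimit of pullbacks of $d$-connective sheaves from the approximating schemes, and homotopy dimension does not formally pass to limits of $\infty$-topoi (the paper's Lemma~\ref{lem:CM} is a nontrivial theorem of Clausen--Mathew in exactly this kind of situation, and no analogue is available here). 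The paper instead handles general $X$ directly: let $\sE$ be the poset of closed subschemes $Z\subset X$ with $\sF(Z)=\varnothing$; it is closed under cofiltered limits by finitarity, and a minimal element cannot exist because $Z^\mathrm{gen}$ would have to be infinite, Lemma~\ref{lem:X^0} then splits $Z^\mathrm{gen}=S_1\sqcup S_2$ into nonempty clopen pieces, the closures $Z_1,Z_2$ satisfy $\varnothing=\sF(Z)\simeq\sF(Z_1)\times_{\sF(Z_1\cap Z_2)}\sF(Z_2)$, and $\sF(Z_1\cap Z_2)$ is connected by the induction hypothesis (again the narrowness plus connectivity trick), forcing $\sF(Z_1)=\varnothing$ or $\sF(Z_2)=\varnothing$. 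You would need to either supply this Zorn-type argument or genuinely justify your limit claim; as written, the non-quasi-integral case is not handled.
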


\begin{proof}
	 We prove the theorem by induction on $d$. If $d<0$ then $X$ is empty and the result is trivial.
	Let $\tau\in\{\cdp,\rh,\cdh\}$ and let $\sF$ be a $d$-connective $\tau$-sheaf on $\Sch_X^\fp$. We must show that $\sF(X)$ is nonempty.
	
	Suppose first that $X$ is quasi-integral.
	By Proposition~\ref{prop:geometric} and \cite[Proposition 6.5.1.16(4)]{HTT}, $\cl_\tau(\sF)$ is $d$-connective. By Proposition~\ref{prop:hodimRZ}, $\cl_\tau(\sF)$ admits a global section. 
	The space of global sections of $\cl_\tau(\sF)$ is the filtered colimit of $\sF(Y)$ as $Y$ ranges over modifications of $X$ (Lemma~\ref{lem:cl-technical}). Hence, there exists a modification $f\colon Y\to X$ such that $\sF(Y)$ is nonempty. 
	Let $Z\subset X$ be a narrow closed subscheme such that $f^{-1}(X- Z)\to X-Z$ is an isomorphism. Then $Y$ and $Z$ form an abstract blowup square over $X$, so
	\[
	\sF(X) \simeq \sF(Y)\times_{\sF(Y_Z)} \sF(Z).
	\]
	Since $Z$ and $Y_Z$ have valuative dimension $<d$ by Proposition~\ref{prop:dim_v}(4,6), the induction hypothesis applies and we deduce that $\sF(Z)$ and $\sF(Y_Z)$ are connected. Hence, $\sF(X)$ is nonempty.
	
	We now prove the general case. Let $\sE$ be the poset of closed subschemes $Z\subset X$ such that $\sF(Z)=\varnothing$. If $\{Z_i\}$ is a cofiltered diagram in $\sE$, then $\lim_i Z_i$ belongs to $\sE$ since $\sF(\lim_iZ_i)\simeq\colim_i\sF(Z_i)$. In particular, any chain in $\sE$ admits a lower bound. By Zorn's lemma, it remains to show that $\sE$ does not have a minimal element. If $Z\in\sE$, then $Z^\mathrm{gen}$ has infinitely many points (by the first part of the proof). Using Lemma~\ref{lem:X^0}, we can find a coproduct decomposition $Z^\mathrm{gen}=S_1\sqcup S_2$ with $S_1$ and $S_2$ nonempty. Let $Z_i$ be the closure of $S_i$ in $Z$ with reduced scheme structure. Since $S_i$ is closed in $Z^\mathrm{gen}$, $Z_i\cap Z^\mathrm{gen}=S_i$. In particular, $Z_1\cap Z_2$ is a narrow subscheme of $Z$. By Proposition~\ref{prop:dim_v}(4), $Z_1\cap Z_2$ has valuative dimension $<d$, so $\sF(Z_1\cap Z_2)$ is connected by the induction hypothesis. Since $\varnothing=\sF(Z)\simeq \sF(Z_1)\times_{\sF(Z_1\cap Z_2)}\sF(Z_2)$, we deduce that $\sF(Z_1)=\varnothing$ or $\sF(Z_2)=\varnothing$, and hence that $Z$ is not minimal in $\sE$.
\end{proof}

\begin{cor}\label{cor:cdh-hypercomplete}
	Let $X$ be a qcqs scheme of finite valuative dimension. Let $\tau\in\{\cdp,\rh,\cdh\}$. Then every sheaf in $\Shv_{\tau}(\Sch_X^\fp)$ is the limit of its Postnikov tower. In particular, $\Shv_{\tau}(\Sch_X^\fp)$ is hypercomplete.
\end{cor}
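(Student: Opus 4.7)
The plan is to combine the homotopy dimension bound of Theorem~\ref{thm:hodim-cdh} with Lurie's criterion recorded in Proposition~\ref{prop:hodim}(2). The latter requires the $\infty$-topos to be locally of finite homotopy dimension; Theorem~\ref{thm:hodim-cdh} only gives us a global bound for the topos at $X$, so the main task is to upgrade this to a local statement.

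First I would exhibit a natural family of generators for $\Shv_\tau(\Sch_X^\fp)$. The topos is generated under colimits by the representables $h_Y$ for $Y \in \Sch_X^\fp$, so it suffices to check that each slice $\Shv_\tau(\Sch_X^\fp)_{/h_Y}$ has finite homotopy dimension. For this I would identify the slice with $\Shv_\tau(\Sch_Y^\fp)$ via the canonical equivalence induced by the localization at $Y$ (noting that $\Sch_Y^\fp$ is equivalent to $(\Sch_X^\fp)_{/Y}$ since $Y$ is itself of finite presentation over $X$, and that the $\tau$-topology on $\Sch_Y^\fp$ is inherited from that on $\Sch_X^\fp$).

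Next I would observe that $Y$, being of finite type over the qcqs scheme $X$ of finite valuative dimension, is qcqs and has finite valuative dimension by Corollary~\ref{cor:dim_v}. Applying Theorem~\ref{thm:hodim-cdh} to $Y$ in place of $X$ then shows that $\Shv_\tau(\Sch_Y^\fp) \simeq \Shv_\tau(\Sch_X^\fp)_{/h_Y}$ has finite homotopy dimension. Thus $\Shv_\tau(\Sch_X^\fp)$ is locally of finite homotopy dimension, and Proposition~\ref{prop:hodim}(2) yields both the Postnikov convergence and the hypercompleteness.

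The only real subtlety is the identification of the slice topos with a sheaf $\infty$-topos over $Y$; this is standard but one must verify that the $\tau$-covering sieves on a $Y$-scheme $Z$ inside $\Sch_X^\fp$ agree with those defining $\Shv_\tau(\Sch_Y^\fp)$, which is immediate from the cd-structure descriptions in Proposition~\ref{prop:cdp}, since finitely presented abstract blowup squares, Zariski squares, and Nisnevich squares are all stable under the forgetful functor $\Sch_Y^\fp \to \Sch_X^\fp$.
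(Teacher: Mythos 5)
Your proposal is correct and follows essentially the same route as the paper: the paper likewise deduces from Corollary~\ref{cor:dim_v} that every $Y\in\Sch_X^\fp$ has finite valuative dimension, applies Theorem~\ref{thm:hodim-cdh} slicewise to conclude that $\Shv_\tau(\Sch_X^\fp)$ is locally of finite homotopy dimension, and invokes Proposition~\ref{prop:hodim}(2). Your explicit verification of the identification $\Shv_\tau(\Sch_X^\fp)_{/h_Y}\simeq\Shv_\tau(\Sch_Y^\fp)$ is a point the paper leaves implicit, and it is handled correctly.
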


\begin{proof}
	By Corollary~\ref{cor:dim_v}, every $Y\in\Sch_X^\fp$ has finite valuative dimension. Thus, it follows from Theorem~\ref{thm:hodim-cdh} that $\Shv_{\tau}(\Sch_X^\fp)$ is locally of finite homotopy dimension. This implies the result by Proposition~\ref{prop:hodim}(2).
\end{proof}

\begin{rem}
	Corollary~\ref{cor:cdh-hypercomplete} does not assert nor imply that $\Shv_\tau(\Sch_X^\fp)$ is Postnikov complete in the sense of \cite[Definition A.7.2.1]{SAG}. We do not know if this is the case.
\end{rem}

\begin{rem}
	For $\tau\in\{\rh,\cdh\}$ and $X$ any scheme locally of finite valuative dimension, it follows from Corollary~\ref{cor:cdh-hypercomplete} that every $\tau$-sheaf on $\Sch_X^\mathrm{lfp}$ is the limit of its Postnikov tower.
\end{rem}

\begin{cor}\label{cor:stalks-vdim}
	Let $X$ be a scheme locally of finite valuative dimension. Let $f$ be a morphism between finitary rh (resp.\ cdh) sheaves on $\Sch_X$. Then $f$ is an equivalence if and only if it is an equivalence on valuation rings (resp.\ on henselian valuation rings) of finite rank.
\end{cor}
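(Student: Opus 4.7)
The forward direction is immediate. For the converse, the plan is to combine hypercompleteness of the $\tau$-topos with Lemma~\ref{lem:finite-rank} in order to reduce all stalks to finite rank stalks. Since being an equivalence between sheaves on $\Sch_X$ can be checked on any Zariski cover of $X$, and since every (henselian) valuation ring $V$ over $X$ factors through some affine open of $X$ (the closed point of $\Spec V$ has no proper open neighborhood), I may assume $X = \Spec A$ with $A$ of finite valuative dimension. Via Proposition~\ref{prop:finitary-sheaves}(1), $f$ then corresponds to a morphism in $\Shv_\tau(\Sch_X^\fp)$.

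The $\infty$-topos $\Shv_\tau(\Sch_X^\fp)$ is coherent by Proposition~\ref{prop:coherent}(1) and hypercomplete by Corollary~\ref{cor:cdh-hypercomplete}. By Deligne's completeness theorem for coherent $\infty$-topoi together with hypercompleteness, equivalences are therefore detected on stalks at points. The points of this $\infty$-topos were identified in \sectsign\ref{sub:cdp} as the valuation rings (respectively, henselian valuation rings) over $X$, and the stalk of $f$ at such $V$ is simply $f(V)$ (using the equivalence of Proposition~\ref{prop:finitary-sheaves}(1) to extend the sheaf to qcqs schemes). Thus it remains to show that $f(V)$ is an equivalence for every (henselian) valuation ring $V$ over $A$.

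By Lemma~\ref{lem:finite-rank}, any such $V$ can be written as a filtered colimit $V = \colim_\alpha V_\alpha$ of (henselian) valuation rings $V_\alpha$ of finite rank over $A$. Since $f$ is finitary, $f(V) \simeq \colim_\alpha f(V_\alpha)$ is a filtered colimit of equivalences, hence itself an equivalence. The main point of care is invoking the correct form of ``enough points'' for coherent hypercomplete $\infty$-topoi; once this is in place, Lemma~\ref{lem:finite-rank} supplies the reduction to finite rank directly, and the rest is formal.
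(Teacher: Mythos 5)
Your proof is correct and follows essentially the same route as the paper: identify the finitary $\tau$-sheaves with a (locally) coherent $\infty$-topos whose points are the (henselian) valuation rings, combine Deligne's completeness theorem with the hypercompleteness of Corollary~\ref{cor:cdh-hypercomplete} to detect equivalences on these stalks, and pass from finite-rank to arbitrary (henselian) valuation rings via Lemma~\ref{lem:finite-rank} and the finitary hypothesis. The preliminary reduction to affine $X$ is a harmless extra step; the paper instead works directly with the locally coherent topos $\Shv_\tau^{\fin}(\Sch_X)$.
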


\begin{proof}
	If $f$ is an equivalence on (henselian) valuation rings of finite rank, then it is an equivalence on all (henselian) valuation rings by Lemma~\ref{lem:finite-rank}.
	For $\tau\in\{\rh,\cdh\}$, recall that $\Shv_{\tau}^\fin(\Sch_X)$ is a locally coherent $\infty$-topos (Propositions \ref{prop:finitary-sheaves} and \ref{prop:coherent}), whose points are given by evaluating on (henselian) valuation rings. It thus follows from Deligne's completeness theorem \cite[Theorem A.4.0.5]{SAG} that $f$ is an $\infty$-connective morphism.
	But this $\infty$-topos is hypercomplete by Corollary~\ref{cor:cdh-hypercomplete}, so $f$ is an equivalence.
\end{proof}

\section{Milnor excision and cdarc descent}

\subsection{The cdarc topology}

A qcqs morphism of schemes $Y\to X$ is called a \emph{v~cover} if, for every valuation ring $V$ and every morphism $\Spec(V) \to X$, there exists an extension of valuation rings $V\subset W$ and a morphism $\Spec(W)\to Y$ making the following square commute:
\[
\begin{tikzcd}
	\Spec(W) \ar{r} \ar{d} & Y \ar{d} \\
	\Spec(V) \ar{r} & X\rlap.
\end{tikzcd}
\]
This class of morphisms was introduced by Rydh in \cite{Rydh-submerse} as a better-behaved replacement for the class of universally submersive morphisms (with which it coincides on noetherian schemes)\footnote{More precisely, Rydh defines \emph{universally subtrusive} morphisms, which are not necessarily qcqs. A v~cover is thus a qcqs universally subtrusive morphism.}. Rydh also defines the \emph{h~topology} on the category of schemes to be the topology generated by open covers and v~covers of finite presentation, extending Voevodsky's h~topology \cite{HS1} to nonnoetherian schemes.
In \cite[\sectsign 2]{bhatt-scholze-proj}, Bhatt and Scholze further consider the \emph{v~topology} on the category of schemes, which is the topology generated by open covers and v~covers (with no finite presentability condition). 
Finally, in \cite{arc}, Bhatt and Mathew introduce \emph{arc covers} and the associated \emph{arc topology}: a qcqs morphism of schemes is an arc cover if it satisfies the above definition of a v~cover with $V$ and $W$ valuation rings of rank $\leq 1$.

The rh and cdh topologies are ``completely decomposed'' versions of the h topology.
We next introduce analogous completely decomposed versions of the v topology and the arc topology.

\begin{defn}\label{defn:cdarc}
	A qcqs morphism of schemes $Y\to X$ is:
	\begin{itemize}
		\item an \emph{rv cover} if it is surjective on valuation rings, i.e., for every valuation ring $R$, the induced map
	\[
	\Maps(\Spec R,Y) \to \Maps(\Spec R,X)
	\]
	is surjective;
	\item  a \emph{cdv cover} if it is surjective on henselian valuation rings;
	\item an \emph{rarc cover} if it is surjective on valuation rings of rank $\leq 1$;
	\item a \emph{cdarc cover} if it is surjective on henselian valuation rings of rank $\leq 1$.
	\end{itemize}
\end{defn}

Each of these notions of cover defines a topology on the category of schemes, which is generated by these covers and the Zariski topology. A presheaf $\Sch_S^\op\to \Spc$ is a sheaf for this topology if and only if it is a Zariski sheaf and it satisfies descent with respect to any cover $Y\to X$ between qcqs schemes.

\begin{rem}\label{rem:cdv-vs-cdh}
	An rv cover of finite presentation is an rh cover, because valuation rings are precisely the points of the locally coherent $\infty$-topos $\Shv_\rh(\Sch_X^\mathrm{lfp})$. Similarly, a cdv cover of finite presentation is a cdh cover. 
	Recall that the analogous statement for v covers holds by definition (see \cite[Definition 2.7]{bhatt-scholze-proj}).
\end{rem}

\begin{prop} \label{prop:coarse} 
	Every cdv cover is a v cover and every cdarc cover is an arc cover.
\end{prop}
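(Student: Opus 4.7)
The plan is to reduce both statements to the single fact that the henselization of a valuation ring is a henselian valuation ring of the same rank (the fact used in Lemma~\ref{lem:finite-rank}, cf.\ \cite[Tag 0ASK]{stacks}). Thus given any valuation ring $V$ and any morphism $\Spec V\to X$, I would form the henselization $V\hook V^h$; this is an extension of valuation rings with $V^h$ henselian and $\rk(V^h)=\rk(V)$.

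For the first assertion, suppose $Y\to X$ is a cdv cover and $\Spec V\to X$ is given with $V$ a valuation ring. Since $V^h$ is a henselian valuation ring, the cdv hypothesis yields a lift $\Spec V^h\to Y$ of the composite $\Spec V^h\to \Spec V\to X$. Taking $W=V^h$ exhibits the required valuation ring extension $V\sub W$ and morphism $\Spec W\to Y$, so $Y\to X$ is a v~cover.

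For the second assertion, suppose $Y\to X$ is a cdarc cover and $\Spec V\to X$ is given with $\rk(V)\le 1$. Then $V^h$ is a henselian valuation ring with $\rk(V^h)\le 1$, so again the cdarc hypothesis provides a lift $\Spec V^h\to Y$, and taking $W=V^h$ shows $Y\to X$ is an arc cover.

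The only nontrivial input is the behaviour of henselization on valuation rings, which is standard and already invoked in the paper; the rest is a direct unwinding of Definition~\ref{defn:cdarc} against the definitions of v~cover and arc cover recalled at the beginning of the subsection. I expect no real obstacle.
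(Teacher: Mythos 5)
Your proof is correct and is essentially identical to the paper's: both replace $V$ by its henselization $V^h$, use the cdv (resp.\ cdarc) hypothesis to lift $\Spec V^h\to Y$, and invoke the fact that $V^h$ is a henselian valuation ring whose rank does not exceed that of $V$ (\cite[Tag 0ASK]{stacks}). No gaps.
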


\begin{proof}
Let $f\colon Y \rightarrow X$ be a cdv cover and let $g\colon\Spec V \rightarrow X$ be a morphism where $V$ is a valuation ring. Consider the extension of valuation rings $V\subset V^h$. By assumption we have a lift $\Spec V^h \rightarrow Y$ over $g$, verifying that $f$ is a v cover. 
If $f$ is a cdarc cover and $V$ has rank $\leq 1$, then $V^h$ has rank $\leq 1$ \cite[Tag 0ASK]{stacks}, so $f$ is an arc cover.
\end{proof}

The following diagram summarizes the relations between these topologies:
\[
\begin{tikzcd}
	\Zar \ar{r} \ar{d} & \rh \ar{r} \ar{d} & \rv \ar{r} \ar{d} & \rarc \ar{d} \\
	\Nis \ar{r} \ar{d} & \cdh \ar{r} \ar{d} & \cdv \ar{r} \ar{d} & \cdarc \ar{d} \\
	\mathrm{fppf} \ar{r} & \hh \ar{r} & \vv \ar{r} & \arc\rlap.
\end{tikzcd}
\]

It follows from Remark~\ref{rem:cdv-vs-cdh} that every cdv cover $f\colon Y\to X$ between qcqs schemes is a cofiltered limit of cdh covers $f_\alpha\colon Y_\alpha\to X$. However, this does not imply that every finitary cdh sheaf satisfies cdv descent, because filtered colimits need not commute with cosimplicial limits. We now introduce a convenient condition on an $\infty$-category $\sC$ ensuring that finitary $\sC$-valued cdh sheaves are cdv sheaves (see Proposition~\ref{prop:stalks}).

\begin{defn}
	An $\infty$-category $\sC$ is \emph{compactly generated by cotruncated objects} if it is compactly generated and every compact object is cotruncated (i.e., truncated in $\sC^\op$).
\end{defn}

\begin{exam}
	If $\sC$ is a compactly generated $\infty$-category, the subcategory $\sC_{\leq n}\subset \sC$ of $n$-truncated objects is compactly generated by cotruncated objects.
\end{exam}

\begin{exam}
	If $\sC$ is a compactly generated $\infty$-category, then the $\infty$-category $\Stab(\sC)_{\leq 0}$ of coconnective spectra in $\sC$ is compactly generated by cotruncated objects.
\end{exam}

\begin{lem}\label{lem:cotruncated}
	Let $\sC$ be an $\infty$-category compactly generated by cotruncated objects.
	\begin{enumerate}
		\item Filtered colimits commute with cosimplicial limits in $\sC$.
		\item If $\sX$ is an $\infty$-topos, any sheaf $\sX^\op\to\sC$ is hypercomplete.\footnote{This statement only requires $\sC$ to be generated under colimits by cotruncated objects.}
	\end{enumerate}
\end{lem}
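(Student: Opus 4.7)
The strategy for both parts is to reduce to a statement about truncated mapping spaces, using the cotruncated generators. Fix a set $\sG$ of compact cotruncated generators of $\sC$. For each $c \in \sG$, there exists an integer $n = n(c)$ such that $\Maps(c,-) \colon \sC \to \Spc$ factors through $\Spc_{\le n}$; moreover it preserves all small limits and, by compactness of $c$, preserves filtered colimits. Finally, since $\sG$ generates $\sC$ under colimits, the family $\{\Maps(c,-)\}_{c\in\sG}$ is jointly conservative, so a morphism in $\sC$ is an equivalence if and only if it becomes one after applying every $\Maps(c,-)$.

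For part~(1), let $I$ be a filtered $\infty$-category and $\{X^\bullet_\alpha\}_{\alpha \in I}$ an $I$-diagram of cosimplicial objects of $\sC$. Applying $\Maps(c,-)$ to the canonical comparison map $\colim_\alpha \lim_\Delta X^\bullet_\alpha \to \lim_\Delta \colim_\alpha X^\bullet_\alpha$ turns it into the analogous comparison map for the $I$-diagram $\{\Maps(c, X^\bullet_\alpha)\}_\alpha$ of cosimplicial objects in $\Spc_{\le n}$. Since $\Spc_{\le n}$ is an $(n+1)$-category, cosimplicial limits in $\Spc_{\le n}$ agree with their restrictions to the finite subcategory $\Delta_{\le n+1} \subset \Delta$ (see, e.g., \cite[\sectsign 5.5.6]{HTT}). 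Such finite limits commute with filtered colimits in $\Spc$, so every $\Maps(c,-)$ takes the comparison map to an equivalence; by joint conservativity of $\{\Maps(c,-)\}_{c\in\sG}$, the original map is an equivalence.

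For part~(2), let $F \colon \sX^\op \to \sC$ be a sheaf. It suffices to show that $F$ inverts every $\infty$-connective morphism of $\sX$. For each $c \in \sG$, the composite $\Maps(c, F(-)) \colon \sX^\op \to \Spc$ is a sheaf of spaces landing in $\Spc_{\le n}$, so by Yoneda it is representable by an $n$-truncated object of $\sX$. Truncated objects of an $\infty$-topos are hypercomplete \cite[Lemma 6.5.2.9]{HTT}, so this composite inverts $\infty$-connective morphisms. Joint conservativity of $\{\Maps(c,-)\}_{c \in \sG}$ then implies that $F$ inverts $\infty$-connective morphisms and is therefore hypercomplete. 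Note that only the joint conservativity and limit-preservation of the functors $\Maps(c,-)$ are used here, so the argument goes through under the weaker hypothesis that $\sC$ is merely generated under colimits by cotruncated objects.

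The only substantive input is the reduction of cosimplicial limits in $(n+1)$-categories to finite limits used in part~(1); everything else is a formal detection argument combining the existence of cotruncated generators with the fact that truncated sheaves on an $\infty$-topos are automatically hypercomplete.
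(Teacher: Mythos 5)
Your proof is correct and is essentially the paper's argument made explicit: the paper simply says both statements follow from the special case $\sC=\Spc_{\leq n}$, and your reduction via the jointly conservative, limit- and filtered-colimit-preserving functors $\Maps(c,-)$ landing in $\Spc_{\leq n}$, together with the $\Delta_{\leq n+1}$ cofinality argument for (1) and hypercompleteness of truncated objects for (2), is exactly how that reduction is meant to be carried out (including the observation that compactness is not needed for (2), matching the paper's footnote).
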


\begin{proof}
	Both statements follow immediately from the special case $\sC=\Spc_{\leq n}$.
\end{proof}

The following proposition generalizes and gives a different proof of \cite[Proposition 3.31]{arc}.

\begin{prop}\label{prop:stalks}
	Let $S$ be a scheme and let $\sC$ be compactly generated by cotruncated objects.
	\begin{enumerate}
		\item If $\sF\colon \Sch_S^\op\to\sC$ is finitary, then $\sF$ is an rv (resp.\ cdv, v) sheaf if and only if it is an rh (resp.\ cdh, h) sheaf.
		\item Let $\sF,\sG\colon\Sch_S^\op\to\sC$ be finitary rh (resp.\ cdh, h) sheaves. Then a morphism $\sF\to\sG$ is an equivalence if and only if $\sF(V)\to\sG(V)$ is an equivalence for every valuation ring $V$ (resp.\ every henselian valuation ring $V$, every absolutely integrally closed valuation ring $V$).
	\end{enumerate}
\end{prop}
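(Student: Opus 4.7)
The plan is to prove (1) first and then use it in (2) to pass to a topology with enough points.

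For (1), the forward implication is immediate from Remark~\ref{rem:cdv-vs-cdh} (and the analogous statement for v covers noted there): every finitely presented rv (resp.\ cdv, v) cover is an rh (resp.\ cdh, h) cover, so any rv (cdv, v) sheaf is in particular an rh (cdh, h) sheaf. For the reverse, I would let $\sF$ be a finitary rh (resp.\ cdh, h) sheaf and $f\colon Y\to X$ an rv (resp.\ cdv, v) cover between qcqs $S$-schemes. Writing $Y$ as a cofiltered limit of finitely presented $X$-schemes $Y_\alpha$, the defining lifting property of the cover descends to each $Y_\alpha\to X$, so that each $f_\alpha$ is a finitely presented rv (cdv, v) cover, and hence an rh (cdh, h) cover by Remark~\ref{rem:cdv-vs-cdh}. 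Since fiber products commute with cofiltered limits and $\sF$ is finitary, the Čech nerve of $f$ applied to $\sF$ is the filtered colimit of the Čech nerves of the $f_\alpha$ applied to $\sF$. Čech descent for $f$ then reduces to the commutation of a cosimplicial limit with a filtered colimit, which is exactly Lemma~\ref{lem:cotruncated}(1), since $\sF$ already satisfies Čech descent for each $f_\alpha$.

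For (2), necessity is trivial; for sufficiency, by (1) both $\sF$ and $\sG$ are sheaves for the rv (resp.\ cdv, v) topology, and by Lemma~\ref{lem:cotruncated}(2) any $\sC$-valued sheaf on an $\infty$-topos is automatically hypercomplete. It therefore suffices to show that a morphism $f\colon\sF\to\sG$ satisfying the stalk hypothesis is $\infty$-connective in the relevant $\infty$-topos. For rh and cdh, that topos is $\Shv_\tau(\Sch_S^\mathrm{lfp})\simeq \Shv_\tau^\fin(\Sch_S)$ by Proposition~\ref{prop:finitary-sheaves}(2); it is locally coherent by Proposition~\ref{prop:coherent}(2), and its points are identified in \sectsign\ref{sub:cdp} as precisely the valuation rings (resp.\ henselian valuation rings) over $S$. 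Deligne's completeness theorem then upgrades the stalk equivalence to $\infty$-connectivity.

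The main obstacle is the h case, for which Proposition~\ref{prop:coherent} is silent. The plan here is to replace the h topos by the v topos on $\Sch_S^\qcqs$, which by work of Bhatt--Scholze is locally coherent with points the absolutely integrally closed valuation rings. By (1), the finitary h sheaves $\sF$ and $\sG$ are v sheaves on $\Sch_S^\qcqs$, so the same Deligne-completeness-plus-hypercompleteness strategy applies in this setting to conclude.
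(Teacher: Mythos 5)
Your part (1) and the rh/cdh cases of part (2) follow the paper's own route. For (1), the hard direction (write an rv/cdv/v cover of a qcqs scheme as a cofiltered limit of finitely presented covers, note these are rh/cdh/h covers by Remark~\ref{rem:cdv-vs-cdh}, and commute the cosimplicial limit past the filtered colimit via Lemma~\ref{lem:cotruncated}(1) using finitarity) is exactly the paper's argument. Two smaller points. First, your justification of the easy direction of (1) is logically inverted: Remark~\ref{rem:cdv-vs-cdh} says that finitely presented rv covers are rh covers, which is what the \emph{hard} direction uses; to get ``rv sheaf $\Rightarrow$ rh sheaf'' you instead need that the generating rh (resp.\ cdh) covers --- Zariski (resp.\ Nisnevich) covers and finitely presented completely decomposed proper covers --- are rv (resp.\ cdv) covers, which follows from locality (resp.\ henselianity) of valuation rings together with Lemma~\ref{lem:splitting} and the valuative criterion of properness. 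Second, in (2) the notions of ``stalk'' and ``$\infty$-connective'' for $\sC$-valued sheaves need to be grounded: the clean way (and the paper's first move) is to apply $\Maps(c,-)$ for $c\in\sC$ compact, reducing to $\sC=\Spc_{\leq n}$, after which Deligne's completeness theorem plus truncatedness finishes the rh/cdh cases; your appeal to Lemma~\ref{lem:cotruncated}(2) gestures at this but skips the corepresentable reduction that makes it precise.

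The genuine gap is your h case of (2). You propose to pass, via (1), to ``the v topos on $\Sch_S^\qcqs$'' and assert that it is locally coherent with points the absolutely integrally closed valuation rings ``by work of Bhatt--Scholze''. This is not established in \cite{bhatt-scholze-proj}, and it is not something you can take off the shelf: v covers are not required to be finitely presented, so there is no analogue of the reduction to the essentially small site $\Sch_S^\fp$ (Proposition~\ref{prop:finitary-sheaves}) and no analogue of the finitarity argument behind Proposition~\ref{prop:coherent}; moreover $\Sch_S^\qcqs$ is a large category, so even forming this ``topos'' and invoking \cite[Theorem A.4.0.5]{SAG} requires cardinality cutoffs and a separate identification of its points. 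This is precisely the difficulty that forces both this paper and \cite{arc} to treat the v/cdv/rv/arc-type topologies cover-by-cover (as in part (1)) rather than topos-theoretically; indeed the paper's proof of (2) never uses part (1) at all. Instead it runs the same Deligne argument directly in the h topos $\Shv_\h(\Sch_S^{\mathrm{lfp}})$: since h covers are by definition finitely presented v covers, this topos is locally coherent (same finitarity mechanism as in Proposition~\ref{prop:coherent}), and its points are the absolutely integrally closed valuation rings. So your h case either needs a genuine proof of coherence and of the point-identification for the v topology, or --- much simpler --- should be replaced by the argument in the h topos itself, which also makes the detour through (1) unnecessary for (2).
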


\begin{proof}
	We give the proof for rh, but the same proof works for cdh and h.
	
	(1) By Remark~\ref{rem:cdv-vs-cdh}, every rv cover $f\colon Y\to X$ between qcqs schemes is a cofiltered limit of rh covers $f_\alpha\colon Y_\alpha\to X$, hence the Čech nerve of $f$ is the limit of the Čech nerves of $f_\alpha$. The claim now follows from Lemma~\ref{lem:cotruncated}(1).
	
	(2) Applying $\Maps(c,-)$ for $c\in\sC$ compact, we can assume that $\sC=\Spc_{\leq n}$. Then the result follows from Deligne's completeness theorem \cite[Theorem A.4.0.5]{SAG}, since valuation rings are the points of the locally coherent $\infty$-topos $\Shv_\rh(\Sch_S^\mathrm{lfp})$.
\end{proof}

\subsection{Milnor squares}

A commutative square of schemes
\begin{equation*}\label{eqn:Milnor}
\begin{tikzcd}
	W \ar[hook]{r} \ar{d} & Y \ar{d}{f} \\
	Z \ar[hook]{r}{i} & X 
\end{tikzcd}
\end{equation*}
is called a \emph{Milnor square} if it is bicartesian, $f$ is affine, and $i$ is a closed immersion. We will also denote by $f\colon (Y,W)\to(X,Z)$ or $Q\colon (Y,W)\to(X,Z)$ such a square.

	A Milnor square is also cocartesian in the category of ringed spaces. Indeed, since $Z$ and $Y$ are affine over $X$, the pushout of $Z\leftarrow W\to Y$ in the category of ringed spaces (equivalently, of locally ringed spaces) is a scheme by \cite[Théorème 7.1]{ferrand}. In particular, $f\colon Y\to X$ is an isomorphism over the open complement of $Z$. By \cite[Théorème 5.1]{ferrand}, a Milnor square of affine schemes is equivalently a cartesian square of commutative rings
\[
\begin{tikzcd}
	A \ar{r} \ar{d} & B \ar{d} \\
	A/I \ar{r} & B/J\rlap.
\end{tikzcd}
\]
A general Milnor square is therefore Zariski-locally of this form.

\begin{rem}\label{rem:Milnor-spans}
	A span of schemes $Z\stackrel h\leftarrow W\stackrel e\to Y$ with $h$ affine and $e$ a closed immersion can be completed to a Milnor square if and only if there exist affine morphisms $a\colon Y\to S$ and $b\colon Z\to S$ such that $a\circ e = b\circ h$.
\end{rem}

\begin{defn}
	A presheaf $\sF\colon \Sch_S^\op\to\sC$ satisfies \emph{Milnor excision} if $\sF(\varnothing)=*$ and $\sF$ sends Milnor squares to cartesian squares.
\end{defn}

Note that if $\sF$ is a Zariski sheaf, then $\sF$ satisfies Milnor excision if and only if it sends Milnor squares of affine schemes to pullback squares.

\begin{exam}
	For any $S$-scheme $X$, the representable presheaf $\Maps_S(-,X)$ satisfies Milnor excision, since Milnor squares are cocartesian.
\end{exam}

\begin{exam}
	Homotopy K-theory $\KH\colon (\Sch^\qcqs)^\op\to\Spt$ satisfies Milnor excision \cite[Theorem 2.1]{Weibel}.
\end{exam}

\begin{rem}
	If $\sC$ is a pointed $\infty$-category with finite products and $\sF\colon \Sch_R^\op\to \sC$ is a $\Sigma$-presheaf, one can extend $\sF$ to nonunital commutative $R$-algebras by setting
	\[
	\sF(I) = \fib(\sF(R\times I)\to \sF(R)),
	\]
	where $R\times I$ is the unitalization of the nonunital $R$-algebra $I$. If $\sF$ satisfies Milnor excision, then for any $R$-algebra $A$ and any ideal $I\subset A$, the null sequence
	\[
	\sF(I) \to \sF(A) \to \sF(A/I)
	\]
	is a fiber sequence. In fact, if $\sF$ is a Zariski sheaf and $\sC^\op$ is prestable, this property is equivalent to Milnor excision.
\end{rem}

\begin{lem}[Approximating Milnor squares]
	\label{lem:Milnor-approx}
	Let $S$ be a qcqs scheme.
	\begin{enumerate}
		\item Every Milnor square $(Y,W)\to (X,Z)$ in $\Sch_S^\qcqs$ is a cofiltered limit of Milnor squares $(Y_\alpha,W_\alpha)\to (X_\alpha,Z_\alpha)$ where $Z_\alpha$, $Y_\alpha$, and $W_\alpha$ are of finite presentation over $S$. If $X$ is affine over $S$, we can assume $X_\alpha$ affine over $S$.
		\item Every Milnor square $f\colon(Y,W)\to (X,Z)$ in $\Sch_S^\qcqs$ is a cofiltered limit of Milnor squares $f_\alpha\colon (Y_\alpha,W_\alpha)\to (X,Z)$ where $f_\alpha$ is of finite type.
	\end{enumerate}
\end{lem}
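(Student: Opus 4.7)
The plan is to use Ferrand's theorem (cited just before the lemma) to identify a Milnor square with the data of a span $Z\leftarrow W\to Y$ in which $W\hook Y$ is a closed immersion and $W\to Z$ is affine, the square being recovered as the Ferrand pushout; then such spans can be approximated by finitely presented (resp.\ finite type) ones via standard Thomason--Trobaugh-style techniques.

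For part (2), $X$ and $Z$ are fixed and only the affine leg $f\colon Y\to X$ must be approximated. Write $Y=\Spec_X\mathcal B$ for a quasi-coherent $\sO_X$-algebra $\mathcal B$, and present $\mathcal B$ as the filtered colimit of its finitely generated quasi-coherent $\sO_X$-subalgebras $\mathcal B_\alpha$. Then $f_\alpha\colon Y_\alpha:=\Spec_X\mathcal B_\alpha\to X$ is affine of finite type, $W_\alpha:=Z\times_XY_\alpha\hook Y_\alpha$ is a closed immersion, and by construction $Y=\lim_\alpha Y_\alpha$ and $W=\lim_\alpha W_\alpha$. The only nontrivial point is that each approximating square is still Milnor. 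Via Ferrand and Zariski-localizing, this reduces to the ring statement: if $A=(A/I)\times_{B/IB}B$ and $B_\alpha\subset B$ is any $A$-subalgebra, then $A=(A/I)\times_{B_\alpha/IB_\alpha}B_\alpha$. Injectivity of $A\to (A/I)\times B_\alpha$ is inherited from the original Milnor identity since $B_\alpha\hook B$ is injective. For surjectivity, lift a compatible pair $(\bar a,b)\in (A/I)\times B_\alpha$ to some $a'\in A$ using the original Milnor identification; the image of $a'$ in $B$ equals $b\in B_\alpha$, so the lift does lie over $(\bar a,b)$.

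For part (1), the entire square must be approximated, and the natural approach is to approximate the span $Z\leftarrow W\to Y$ directly. By standard approximation of qcqs schemes (Stacks Tags 01ZC, 09MV), each of $Y$ and $Z$ is a cofiltered limit of finitely presented $S$-schemes $Y_\alpha$, $Z_\beta$. Descend the closed immersion $W\hook Y$ by expressing its quasi-coherent ideal sheaf as a filtered union of finitely generated quasi-coherent subideals; each descends for large $\alpha$ to a finitely presented closed subscheme of $Y_\alpha$, whose pullback to $Y$ recovers $W$ in the limit. Analogously, writing the affine morphism $W\to Z$ as the relative spectrum of a quasi-coherent $\sO_Z$-algebra and approximating it by finitely presented $\sO_{Z_\beta}$-subalgebras, one descends to affine morphisms into $Z_\beta$. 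Coordinating these approximations yields a cofinal system of finitely presented $S$-scheme spans $Z_\alpha\leftarrow W_\alpha\to Y_\alpha$ retaining the closed-immersion and affine conditions; define $X_\alpha$ as the Ferrand pushout of each. Then $(Y_\alpha,W_\alpha)\to(X_\alpha,Z_\alpha)$ is Milnor by construction, and $X=\lim_\alpha X_\alpha$. When $X$ is affine over $S$, all of $Y,Z,W$ are affine over $S$ and the approximations can be chosen affine over $S$; Ferrand's theorem then ensures $X_\alpha$ is affine over $S$ as well.

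The main obstacle lies in part (1): coordinating the several independent cofiltered approximations of $Y$, $Z$, and $W$ into a single cofinal cofiltered system in which the closed-immersion and affine conditions are simultaneously preserved, and verifying that the Ferrand pushouts assemble correctly under the resulting limit. This is standard bookkeeping with Stacks Tags 01ZC, 09MV, and 09ZR; once set up, the Milnor property of each approximating square is automatic from the Ferrand-pushout construction.
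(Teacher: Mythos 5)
Your part (2) is correct and essentially coincides with the paper's argument: one takes the finitely generated $\sO_X$-subalgebras $\sA_\alpha\subset f_*\sO_Y$, sets $Y_\alpha=\Spec\sA_\alpha$, $W_\alpha=Y_\alpha\times_XZ$, and checks the squares stay bicartesian; your ring-level verification is a correct variant of the paper's observation that $\sI\to\sI\sA_\alpha$ is an isomorphism because $\sI\to\sI\sA$ is.

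Part (1), however, has a genuine gap at exactly the step you declare automatic. After approximating $Y$ and $Z$ by \emph{arbitrary} finitely presented $S$-schemes $Y_\alpha$, $Z_\alpha$ and descending the two legs of the span, you ``define $X_\alpha$ as the Ferrand pushout'' and claim the resulting square is Milnor by construction. But the pushout of a span $Z_\alpha\leftarrow W_\alpha\hook Y_\alpha$ (affine leg, closed-immersion leg) need not exist as a scheme, and even when it does the square need not be a Milnor square: Remark~\ref{rem:Milnor-spans} records that such a span can be completed to a Milnor square \emph{if and only if} there exist affine morphisms $Y_\alpha\to T$ and $Z_\alpha\to T$ to a common scheme agreeing on $W_\alpha$. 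The original $Y$ and $Z$ are affine over $X$, not over $S$, and your approximations are just finitely presented $S$-schemes carrying no compatible affine structure over any common base; so there is no reason the pushout $X_\alpha$ exists, nor that $Y_\alpha\to X_\alpha$ is affine, $Z_\alpha\to X_\alpha$ is a closed immersion, and the square is cartesian. (The identification $X\simeq\lim_\alpha X_\alpha$ would also need the pushouts to be computed compatibly with the limit, which is again what affineness over a common base provides, via fiber products of structure sheaves.) The paper's proof is organized precisely to secure this: when $X$ is affine over $S$, the whole span is affine over $S$ and is approximated by spans of \emph{affine} finitely presented $S$-schemes, which complete to Milnor squares by Remark~\ref{rem:Milnor-spans}; in general, one first applies this case with $S=X$ to replace the span by finitely presented affine $X$-schemes, then writes $X$ as a limit of finitely presented $S$-schemes $X_\beta$ and descends the span to finitely presented affine $X_\beta$-schemes, so every approximating span remains affine over a common base and the completion is legitimate. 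Your outline can be repaired along these lines, but as written the Milnor property of the approximating squares --- the heart of part (1) --- is unjustified.
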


\begin{proof}
	(1) Assume first that $X$ is affine over $S$. In what follows we implicitly use Remark~\ref{rem:Milnor-spans} to obtain Milnor squares from spans of affine $S$-schemes.
	 Since $X$ is qcqs, we can write $Z$ as a cofiltered intersection of finitely presented closed subschemes $Z_\alpha\subset X$. Then the span $Z\leftarrow W\hook Y$ is the limit of the spans $Z_\alpha\leftarrow Y\times_XZ_\alpha \hook Y$, so we can assume that $W\hook Y$ is finitely presented. Writing $Z$ as a limit of finitely presented affine $S$-schemes, we can also assume that $Z$ is finitely presented over $S$. Since $W\hook Y$ is finitely presented, we can write it as a limit of closed immersions $W_\alpha\hook Y_\alpha$ between finitely presented affine $S$-schemes, and since $Z$ is finitely presented the map $W\to Z$ factors through some $W_\alpha$.
	
	In particular, this proves the result when $S=X$. In general, we can write $X$ as a limit of finitely presented $S$-schemes $X_\alpha$, and any span of finitely presented affine $X$-schemes is pulled back from $X_\alpha$ for some $\alpha$.
	
(2) The $\sO_X$-algebra $\sA=f_*(\sO_Y)$ is the filtered union of its finitely generated subalgebras $\sA_\alpha$. Set $Y_\alpha=\Spec\sA_\alpha$ and $W_\alpha=Y_\alpha\times_XZ$. If $\sI\subset\sO_X$ is the ideal of $Z$, then $\sI\to\sI\sA_\alpha$ is an isomorphism, since the composite $\sI\to\sI\sA_\alpha\hook \sI \sA$ is. This shows that $(Y_\alpha,W_\alpha)\to (X,Z)$ is a Milnor square.
\end{proof}

\begin{cor}\label{cor:Milnor-approx}
	Let $S$ be a qcqs scheme of finite valuative dimension. Then every Milnor square $f\colon(Y,W)\to (X,Z)$ in $\Sch_S^\qcqs$ is a cofiltered limit of Milnor squares $f_\alpha\colon(Y_\alpha,W_\alpha)\to (X_\alpha,Z_\alpha)$ where $X_\alpha$ has finite valuative dimension and $f_\alpha$ is of finite type.
\end{cor}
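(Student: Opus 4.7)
The plan is to apply Lemma~\ref{lem:Milnor-approx}(1) directly and conclude by means of Corollary~\ref{cor:dim_v}. Concretely, by Lemma~\ref{lem:Milnor-approx}(1), the given Milnor square $f\colon (Y,W)\to (X,Z)$ in $\Sch_S^\qcqs$ can be written as a cofiltered limit of Milnor squares $f_\alpha\colon (Y_\alpha,W_\alpha)\to (X_\alpha,Z_\alpha)$ where $Y_\alpha$, $W_\alpha$, $Z_\alpha$ are of finite presentation over $S$. Inspecting the proof of that lemma, one sees that the $X_\alpha$ are likewise of finite presentation over $S$: after reducing to the case in which $X$ is affine over $S$ (so that $X_\alpha$ is affine and finitely presented over $S$), the general case is handled by writing $X$ as a cofiltered limit of finitely presented $S$-schemes $X_\alpha$ and descending the span $Z\leftarrow W\hookrightarrow Y$ to some finite stage.

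With $X_\alpha$ of finite type over the qcqs scheme $S$ of finite valuative dimension, Corollary~\ref{cor:dim_v} immediately yields that $X_\alpha$ has finite valuative dimension. It remains to check that $f_\alpha$ is of finite type. Since $f_\alpha$ is affine and both $X_\alpha$ and $Y_\alpha$ are of finite type over $S$, this is a purely local statement: Zariski-locally, $f_\alpha$ is given by a ring map $A\to B$ between finitely generated $S$-algebras, and generators of $B$ over $S$ are automatically generators of $B$ over $A$.

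The only minor subtlety is that the statement of Lemma~\ref{lem:Milnor-approx}(1) does not explicitly list $X_\alpha$ among the finitely presented pieces; one must peek at the proof. Should one wish to avoid this, one can instead combine parts (1) and (2) of the lemma by first applying (2) to reduce to the case in which $f$ is of finite type, and then applying (1) to the resulting finite-type Milnor square, so that the finite-type property of $f_\alpha$ is preserved by the base-approximation step. I do not anticipate any serious obstacle beyond this bookkeeping.
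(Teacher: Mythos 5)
The key step of your argument---that in the approximation produced by Lemma~\ref{lem:Milnor-approx}(1) the schemes $X_\alpha$ are of finite presentation over $S$, so that Corollary~\ref{cor:dim_v} applies---is false, and it is not what the proof of that lemma gives. There the approximating square is reconstructed from the span $Z_\alpha\leftarrow W_\alpha\hookrightarrow Y_\alpha$ via Remark~\ref{rem:Milnor-spans}, so its base is the Ferrand pushout $X_\alpha=Y_\alpha\sqcup_{W_\alpha}Z_\alpha$; this is exactly why the lemma only asserts that $X_\alpha$ is \emph{affine} over $S$ when $X$ is, and never that it is finitely presented. Such pushouts genuinely fail to be of finite type: take $S=\Spec k$, $Y_\alpha=\A^2=\Spec k[x,y]$, $W_\alpha=V(x)$, $Z_\alpha=\Spec k$ with $W_\alpha\to Z_\alpha$ the structure map; then $X_\alpha=\Spec\bigl(k+xk[x,y]\bigr)=\Spec k[x,xy,xy^2,\dotsc]$, which is not a finitely generated $k$-algebra. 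Hence Corollary~\ref{cor:dim_v} cannot be applied to $X_\alpha$, and your fallback route (apply part (2) first, then part (1)) hits the same wall, because the base produced by the part-(1) step is again such a pushout, not a finite-type $S$-scheme.

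The missing ingredient---and the real content of the corollary, since finite-typeness of $f_\alpha$ is the easy part (your cancellation argument works without assuming anything about $X_\alpha\to S$, or one can quote Lemma~\ref{lem:Milnor-approx}(2) over the fixed base)---is how to bound the valuative dimension of the pushout. This is what Proposition~\ref{prop:residue} is for: once $Y_\alpha$ and $Z_\alpha$ are finitely presented over $S$, the space $X_\alpha$ is covered by $Z_\alpha$ and $X_\alpha- Z_\alpha\simeq Y_\alpha- W_\alpha$, so the residue fields of points of $X_\alpha$ have bounded transcendence degree over their images in $S$, and Proposition~\ref{prop:residue} yields $\dim_v(X_\alpha)\leq\dim_v(S)+d<\infty$. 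With that substitution your outline becomes the paper's proof; as written, it has a genuine gap.
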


\begin{proof}
	By Lemma~\ref{lem:Milnor-approx}(1), we can assume that $Y$ and $Z$ are finitely presented over $S$. 
	The complement of $Z$ in $X$ is isomorphic to $Y-W$, so the degrees of the residual field extensions of $X$ over $S$ are bounded. It follows from Proposition~\ref{prop:residue} that $X$ has finite valuative dimension, and we conclude using Lemma~\ref{lem:Milnor-approx}(2).
\end{proof}

\begin{lem}\label{lem:Milnor-BC}
	Let $Q\colon (Y,W)\to(X,Z)$ be a Milnor square, let $h\colon X'\to X$ be a morphism, and let $Q'\colon (Y',W')\to (X',Z')$ be the base change of $Q$ along $h$. If $h$ is flat or if $X'$ is reduced, then $Q'$ is a Milnor square.
\end{lem}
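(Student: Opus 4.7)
The plan is to reduce the cocartesian aspect of $Q'$ to an algebraic question and resolve it via a short $\mathrm{Tor}$ computation. Since affine morphisms and closed immersions are stable under base change, $f'$ is affine and $i'$ is a closed immersion, and $Q'$ is cartesian for free. So the only nontrivial content is that $Q'$ is cocartesian in schemes.

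Because $f$ is affine, this is a Zariski-local question on both $X$ and $X'$, so I would reduce to the case $X = \Spec A$, $Y = \Spec B$, $Z = \Spec A/I$, $W = \Spec B/IB$, $X' = \Spec A'$. By Ferrand's theorem the Milnor property of the original square translates into the short exact sequence of $A$-modules
\[
0 \to A \to B \oplus A/I \to B/IB \to 0,
\]
and the claim becomes: after applying $-\otimes_A A'$ the sequence is again short exact, where the middle term is $B' \oplus A'/IA'$ and the right term is $B'/IB'$ (with $B' = B \otimes_A A'$, using that $(B/IB)\otimes_A A' = B'/IB'$). Right exactness is automatic; only the injectivity of $A' \to B' \oplus A'/IA'$ is at stake, and by the long exact sequence of $\mathrm{Tor}$ the kernel $K$ is a quotient of $\mathrm{Tor}_1^A(A', B/IB)$.

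If $h$ is flat, then $\mathrm{Tor}_1^A(A', B/IB) = 0$ and we are done. For the reduced case — the substantive one — I would use two elementary observations about $K$. Since $B/IB$ is an $A/I$-module, so is $\mathrm{Tor}_1^A(A', B/IB)$; hence $I \cdot K = 0$. At the same time $K \subset \ker(A' \to A'/IA') = IA'$. Writing any $a \in K$ as $\sum_i x_i a_i$ with $x_i \in I$ and $a_i \in A'$, and using $I \cdot K = 0$, one computes $a^2 = \sum_i a_i(x_i a) = 0$. Thus every element of $K$ is nilpotent, and reducedness of $A'$ forces $K = 0$.

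The only genuine subtlety is this squaring trick that exploits the simultaneous containments $K \subset IA'$ and $K \subset \{a : Ia = 0\}$; the flat case and the reduction to affine rings are both routine.
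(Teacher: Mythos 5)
Your proof is correct. The skeleton — base change preserves cartesianness, affineness and closed immersions, so only cocartesianness is at stake; this is Zariski-local and, via Ferrand, amounts to injectivity of $A'\to B'\oplus A'/IA'$ after tensoring the exact sequence $0\to A\to B\oplus A/I\to B/IB\to 0$ with $A'$; flatness settles it immediately — is essentially the paper's (which phrases the flat case as left exactness of $h^*$ applied to $\sI\to\sO_Y$ rather than as vanishing of $\operatorname{Tor}_1$). Where you genuinely diverge is the reduced case: the paper argues geometrically, observing that $Y'\sqcup Z'\to X'$ is completely decomposed, so the comparison map is injective after passing to the residue field of each generic point of the reduced scheme $X'$; you instead show algebraically that the kernel $K$ satisfies $K\subset IA'$ and $IK=0$ (the latter because $K$ is a quotient of $\operatorname{Tor}_1^A(A',B/IB)$, an $A/I$-module), whence $K^2=0$ and reducedness kills $K$. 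Both arguments are sound. Yours is self-contained commutative algebra, avoids the stacks-project input and the completely-decomposed observation, and in fact yields the slightly finer statement that without any hypothesis on $X'$ the failure of base change is measured by a square-zero ideal; the paper's version makes the geometric mechanism transparent (a completely decomposed cover of a reduced scheme cannot kill anything generically) and is shorter given the surrounding machinery.
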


\begin{proof}
	We only have to check that $Q'$ is cocartesian, i.e., that the square
	\[
	\begin{tikzcd}
		\sO_{X'} \ar{r} \ar[->>]{d} & \sO_{Y'} \ar[->>]{d} \\
		\sO_{Z'} \ar{r} & \sO_{W'}
	\end{tikzcd}
	\]
	of quasi-coherent sheaves on $X'$ is cartesian. If $\sI\subset\sO_X$ is the ideal of $Z$, the kernels of the vertical maps are the images of $h^*(\sI)$. Thus, the above square is cartesian if and only if the restriction of the map $\sO_{X'}\to \sO_{Y'}\times_{\sO_{W'}}\sO_{Z'}$ to the image of $h^*(\sI)$ is injective. If $h$ is flat, then $h^*$ is left exact and so $h^*(\sI)\to \sO_{Y'}$ is injective. If $X'$ is reduced, we claim that $\sO_{X'}\to \sO_{Y'}\times_{\sO_{W'}}\sO_{Z'}$ is injective. It suffices to show that $x^*(\sO_{Y'}\times\sO_{Z'})\neq 0$ for any (generic) point $x\in X'$ \cite[Tag 00EW]{stacks}. But this follows from the fact that $Y'\sqcup Z'\to X'$ is completely decomposed.
\end{proof}

\begin{rem}
	Milnor squares are not closed under arbitrary base change, and so Milnor excision is not (a priori) a topological condition.
	However, it becomes a topological condition when supplemented with nilinvariance. 
	Indeed, let $P$ be the cd-structure on $\Sch_S$ consisting of cartesian squares
	\[
	\begin{tikzcd}
		W \ar[hook]{r} \ar{d} & Y \ar{d}{f} \\
		Z \ar[hook]{r}{i} & X 
	\end{tikzcd}
	\]
	where $i$ is a closed immersion, $f$ is affine, and the induced map $Y\sqcup_WZ \to X$ is a nilimmersion.
	It is clear that a presheaf on $\Sch_S$ satisfies $P$-excision if and only if it is nilinvariant and satisfies Milnor excision. Using Lemma~\ref{lem:Milnor-BC}, it is easy to check that the cd-structure $P$ is stable under base change. In fact, it satisfies Voevodsky's criterion \cite[Theorem 3.2.5]{AHW}, so that a presheaf on $\Sch_S$ satisfies $P$-excision if and only if it is a sheaf for the topology generated by $P$.
\end{rem}

\begin{prop}[Bhatt–Mathew]
	\label{prop:Milnor-rarc}
	Let $(Y,W)\to (X,Z)$ be a Milnor square. Then $Y\sqcup Z\to X$ is an rarc cover.
\end{prop}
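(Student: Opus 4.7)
The plan is to reduce the claim to a ring-theoretic lifting problem. Working Zariski-locally, I would identify the Milnor square with a cartesian square of rings $A = A/I \times_{B/IB} B$, so that $X = \Spec A$, $Y = \Spec B$, and $Z = V(I)$. A morphism $\Spec V \to X$ from a valuation ring $V$ of rank $\le 1$ is then a ring map $A \to V$; I would set $I' = IV \subset V$ and split into cases on $I'$.

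First, if $I' = V$, then some $i \in I$ is invertible in $V$, and inverting $i$ in the short exact sequence $0 \to A \to A/I \oplus B \to B/IB \to 0$ (using that $(A/I)[1/i]=(B/IB)[1/i]=0$) yields $A[1/i] \cong B[1/i]$; hence $A \to V$ extends to $B \to V$, giving a lift to $Y$. Second, if $I' = 0$, the map factors through $A/I$, giving a lift to $Z$. This leaves the case $0 \neq I' \subsetneq V$, which forces $V$ to have rank exactly $1$.

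For this remaining case, I would apply Lemma~\ref{lem:Milnor-BC} (valid because $\Spec V$ is reduced) to get a base-changed Milnor square
\[
V = V/I' \times_{V'/I'V'} V', \qquad V' := V \otimes_A B,
\]
and use that $I' \otimes_V \Frac V = \Frac V$ (as $I'$ is a nonzero ideal in a rank-$1$ valuation ring) to produce a canonical $V$-algebra map $\phi \colon V' \to \Frac V$ whose restriction to $V$ is the inclusion. Since $V$ has rank $1$, its only subrings containing $V$ are $V$ and $\Frac V$, so $\phi(V')$ is one of these; if $\phi(V') = V$, then $\phi$ itself is a $V$-algebra retraction $V' \to V$, equivalently a lift $\Spec V \to Y$.

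The hard part will be ruling out the possibility $\phi(V') = \Frac V$. My plan is: assume $\phi$ is surjective with kernel $N$, observe that $\phi(I'V') = I' \Frac V = \Frac V$ forces $V' = I'V' + N$, and write $1 = y + n$ with $y \in I'V'$ and $n \in N$, so $n \equiv 1 \pmod{I'V'}$. Then the pair $(\bar 1, n) \in V/I' \times V'$ has matching images in $V'/I'V'$, and the fiber-product description of $V$ produces a unique $v \in V$ with $v \equiv 1 \pmod{I'}$ and image $n$ in $V'$. Applying $\phi$ and using that $\phi|_V$ is the inclusion $V \hookrightarrow \Frac V$ gives $v = \phi(n) = 0$ in $\Frac V$, whence $v = 0$ in $V$, contradicting $v \equiv 1 \pmod{I'}$ since $I' \neq V$. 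This argument exploits that rank-$1$ valuation rings have no intermediate overrings, which is also why it would fail at higher rank, consistent with the fact that Milnor squares are not cdv covers in general.
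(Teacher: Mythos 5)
Your proposal is correct and follows essentially the same route as the paper's proof: reduce via Lemma~\ref{lem:Milnor-BC} to a Milnor square over a rank~$\leq 1$ valuation ring $V$, dispose of the trivial cases $IV=0,V$, produce the canonical map $V'\to\Frac V$ by localizing away from $Z$, and use that the only overrings of a rank-one valuation ring are $V$ and $\Frac V$. The sole difference is the final exclusion of $\phi(V')=\Frac V$: the paper does it in one line from the isomorphism $I'\isoto I'V'$ (which forces $x\,\phi(V')\subset V$ for a nonzero $x\in I'$), whereas you run an equivalent element-wise contradiction using the fiber-product description $V=V/I'\times_{V'/I'V'}V'$; both arguments exploit exactly the same bicartesian structure.
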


\begin{proof}
	By Lemma~\ref{lem:Milnor-BC}, we can assume that $X=\Spec R$ where $R$ is a valuation ring of rank $\leq 1$.
	Then $Z=\Spec R/I$ and $Y=\Spec S$.
	 If $I=0$ or $I=R$, the result is trivial. Otherwise, there exists $x\in I$ that is neither zero nor invertible. Then $R$ has rank $1$ and $R[1/x]$ is the fraction field of $R$. Since $Y\sqcup Z\to X$ is completely decomposed, the map $R\to R[1/x]$ factors through a map $h\colon S\to R[1/x]$. To conclude, we show that $h(S)\subset R$.
	 Since $R$ has rank $1$ and $R\subset h(S)$, either $h(S)=R$ or $h(S)=R[1/x]$. Now $I\to IS$ is an isomorphism, so the restriction of $h$ to $IS$ lands in $R$. In particular $xh(S)\subset R$, which is only possible if $h(S)=R$.
\end{proof}

The following corollary generalizes \cite[Proposition 4.15]{arc}.

\begin{cor}\label{cor:cdarc=>milnor}
	Let $\sC$ be an $\infty$-category and let $\sF\colon \Sch_S^\op\to\sC$ be an rarc sheaf. Then $\sF$ satisfies Milnor excision.
\end{cor}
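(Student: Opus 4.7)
The plan is to combine Proposition~\ref{prop:Milnor-rarc} with rarc descent for $\sF$. By Yoneda one reduces to $\sC = \Spc$, and $\sF(\varnothing) = *$ is immediate from the sheaf property. Given a Milnor square $(Y, W) \to (X, Z)$, Proposition~\ref{prop:Milnor-rarc} furnishes an rarc cover $p \colon Y \sqcup Z \to X$, so rarc descent identifies $\sF(X)$ with the totalization of $\sF$ applied to the Čech nerve of $p$. It remains to identify this totalization with $\sF(Y) \times_{\sF(W)} \sF(Z)$.

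Using the cartesianness of the Milnor square (so $Y \times_X Z = W$ and $Z \times_X Z = Z$) together with the $\Sigma$-presheaf property of $\sF$, the degree-$n$ term of the Čech cosimplicial object decomposes over sequences in $\{Y, Z\}^{n+1}$: the all-$Y$ sequence contributes $\sF(Y \times_X \cdots \times_X Y)$, the all-$Z$ sequence contributes $\sF(Z)$, and any mixed sequence with exactly $k \geq 1$ entries equal to $Y$ contributes $\sF(W \times_Z \cdots \times_Z W)$ ($k$-fold self fibre-power of $W$ over $Z$). To eliminate the excess terms $\sF(Y \times_X \cdots \times_X Y)$ for $k \geq 2$, the key auxiliary claim is that $Y \sqcup (W \times_Z W) \to Y \times_X Y$ is itself an rarc cover: given a valuation ring $V$ of rank $\leq 1$ and a morphism $\Spec V \to Y \times_X Y$, either the composite $\Spec V \to X$ has image meeting $X \setminus Z$ (so the two projections to $Y$ agree, because $Y \to X$ is an isomorphism away from $Z$, and the morphism lifts through the diagonal $Y$), or the composite factors through $Z$ (so both projections land in $W = Y \times_X Z$, and the morphism lifts to $W \times_Z W$). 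Iterating this observation collapses the entire Čech totalization to the equalizer of $\sF(Y) \times \sF(Z) \rightrightarrows \sF(W)$, which is the Milnor pullback.

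The expected main obstacle is carrying out this iterated collapse cleanly while keeping track of all higher cosimplicial coherences. The cleanest conceptual packaging is to show that the representable Milnor square becomes cocartesian in the $\infty$-topos $\Shv_\rarc(\Sch_S)$; the desired equivalence then follows formally by mapping into $\sF$ via the universal property of pushouts. This cocartesianness in turn reflects that, after rarc sheafification, the effective kernel-pair groupoid of $Y \sqcup Z \to X$ agrees with the groupoid encoded by the cartesian Milnor square, an identification which propagates inductively via the auxiliary rarc cover above.
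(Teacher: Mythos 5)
Your geometric inputs are all correct: \propref{prop:Milnor-rarc} gives the rarc cover $Y\sqcup Z\to X$, the termwise decomposition of its \v{C}ech nerve is right (using that $Z\to X$ is a monomorphism), and your auxiliary claim that $Y\sqcup(W\times_Z W)\to Y\times_X Y$ is an rarc cover is true: for a valuation ring $V$ of rank $\leq 1$, either the generic point of $\Spec V$ maps to $X\setminus Z$, in which case the two maps to $Y$ agree on a dense open and hence everywhere (using that $Y\to X$ is separated and $\Spec V$ is integral), or $\Spec V\to X$ factors through $Z$ scheme-theoretically because $V$ is a domain. These facts are exactly the hypotheses of Voevodsky's criterion for cd-structures: cartesian squares with a monomorphism leg, stability under base change, and the diagonal condition.

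The gap is that you never pass from these inputs to the conclusion. The ``iterated collapse'' of the \v{C}ech totalization is the actual content of the statement, and your proposed packaging---show that the Milnor square becomes cocartesian in $\Shv_\rarc(\Sch_S)$ and then map into $\sF$---is circular as stated: by the Yoneda lemma, cocartesianness of the sheafified square is \emph{equivalent} to Milnor excision for every space-valued rarc sheaf, which is what is to be proved, so it is a restatement rather than a reduction. The step you are missing is supplied by \cite[Theorem 3.2.5]{AHW} (Voevodsky's criterion), and this is how the paper argues: it applies that theorem to the cd-structure consisting of all base changes of Milnor squares (base changes are needed so the cd-structure is stable under pullback; compare \lemref{lem:Milnor-BC}), and then \propref{prop:Milnor-rarc} shows every distinguished square yields an rarc cover, so an rarc sheaf is a sheaf for the generated topology and hence satisfies excision for these squares. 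If you want to avoid citing that theorem, you must reprove its content in this case---for instance, show that the map to the sheafified representable of $X$ from the pushout of those of $Y$ and $Z$ along $W$ is both an effective epimorphism and a monomorphism, the latter requiring an induction driven by your auxiliary cover---which is a genuine argument of some length, not a formal consequence of the facts you listed.
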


\begin{proof}
	We can assume $\sC=\Spc$.
	We prove that $\sF$ sends any base change of a Milnor square to a cartesian square. Such squares form a cd-structure on $\Sch_S$ satisfying the assumptions of Voevodsky's criterion \cite[Theorem 3.2.5]{AHW}. The claim then follows from \emph{loc.\ cit.} and Proposition~\ref{prop:Milnor-rarc}.
\end{proof}

\subsection{Criteria for Milnor excision and cdarc descent}

If $V$ is a valuation ring and $\p\subset V$ is a prime ideal, the square
\begin{equation}\label{eqn:v-square}
\begin{tikzcd}
	V \ar{r} \ar{d} & V_\p \ar{d} \\
	V/\p \ar{r} & \kappa(\p)
\end{tikzcd}
\end{equation}
is an example of a Milnor square (see \cite[Lemma 3.12]{huber-kelly} or \cite[Proposition 2.8]{arc}).

\begin{defn}
	A presheaf $\sF$ on $\Sch_S$ satisfies \emph{v-excision} if, for every valuation ring $V$ over $S$ and every prime ideal $\p\subset V$, $\sF$ sends the square~\eqref{eqn:v-square} to a cartesian square. We say that $\sF$ satisfies \emph{henselian v-excision} if this holds whenever $V$ is a henselian valuation ring.
\end{defn}

\begin{rem}
	If $\sF$ is finitary and $S$ is locally of finite valuative dimension, it suffices to check (henselian) v-excision on valuation rings of finite rank (Lemma~\ref{lem:finite-rank}).
\end{rem}

\begin{thm}\label{thm:main}
	Let $S$ be a scheme, $\sC$ a compactly generated $\infty$-category, and $\sF\colon \Sch_S^{\op}\to \sC$ a finitary rh (resp.\ cdh) sheaf. Consider the following assertions:
	\begin{enumerate}
		\item $\sF$ satisfies v-excision (resp.\ henselian v-excision);
		\item $\sF$ satisfies Milnor excision;
		\item $\sF$ satisfies rarc (resp.\ cdarc) descent.
	\end{enumerate}
	In general, $(3)\Rightarrow(2)\Rightarrow(1)$. If $S$ has finite valuative dimension, then $(1)\Rightarrow (2)$. If every compact object of $\sC$ is cotruncated, then $(1)\Rightarrow (3)$.
\end{thm}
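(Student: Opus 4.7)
The implications $(3)\Rightarrow(2)$ and $(2)\Rightarrow(1)$ are essentially formal. For $(3)\Rightarrow(2)$: every rarc cover is a fortiori a cdarc cover (surjectivity on all rank $\leq 1$ valuation rings is stronger than surjectivity on just the henselian ones), so the cdarc topology refines the rarc topology and $\Shv_\cdarc\subseteq \Shv_\rarc$; in either variant of the theorem $\sF$ is therefore at least an rarc sheaf, and Corollary~\ref{cor:cdarc=>milnor} delivers Milnor excision. For $(2)\Rightarrow(1)$: the v-excision square \eqref{eqn:v-square} is itself a Milnor square, so Milnor excision implies v-excision and hence henselian v-excision.

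The real content is in $(1)\Rightarrow(3)$ under the cotruncated hypothesis and $(1)\Rightarrow(2)$ under finite valuative dimension of $S$. My plan is to prove the cotruncated case first and bootstrap $(1)\Rightarrow(2)$ via a Postnikov-tower argument anchored in cdh hypercompleteness. For the cotruncated case, Proposition~\ref{prop:stalks}(1) upgrades $\sF$ from a finitary rh (resp.\ cdh) sheaf to a finitary rv (resp.\ cdv) sheaf, leaving only the enlargement from rv/cdv descent to rarc/cdarc descent. By Proposition~\ref{prop:stalks}(2) and Lemma~\ref{lem:finite-rank} it suffices to show that the unit $\sF\to L_\cdarc\sF$ is an equivalence on (henselian) valuation rings $V$ of finite rank; I would induct on $\rk V$. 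The base case $\rk V\leq 1$ is automatic, since such $V$ are precisely the points of the cdarc (resp.\ rarc) topos, so both sides compute the stalk at $V$. For $\rk V\geq 2$, pick an intermediate prime $0\subsetneq \p\subsetneq \mathfrak m_V$ and apply henselian v-excision: for $\sF$ this is hypothesis (1), while for $L_\cdarc\sF$ it follows from the fact that $L_\cdarc\sF$ is an rarc sheaf—hence Milnor-excisive by Corollary~\ref{cor:cdarc=>milnor}—and that \eqref{eqn:v-square} is itself a Milnor square. Applying the inductive hypothesis to $\sF(V_\p)$, $\sF(V/\p)$, $\sF(\kappa(\p))$ (all of strictly smaller rank) closes the argument.

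For $(1)\Rightarrow(2)$ under finite valuative dimension of $S$, consider the Postnikov tower $\{\tau_{\leq n}\sF\}_{n\geq 0}$ in $\Shv_\tau(\Sch_S^\fp)$, $\tau\in\{\rh,\cdh\}$. Since truncation in an $\infty$-topos is left exact and accessible, each $\tau_{\leq n}\sF$ is a finitary $\tau$-sheaf taking values in $\sC_{\leq n}$—which is compactly generated by cotruncated objects—and still satisfies (henselian) v-excision. The cotruncated case just proved therefore yields Milnor excision for each $\tau_{\leq n}\sF$. Finite valuative dimension of $S$ together with Corollary~\ref{cor:cdh-hypercomplete} identifies $\sF\simeq \lim_n \tau_{\leq n}\sF$, and cartesian squares are stable under limits, so $\sF$ itself satisfies Milnor excision.

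The main obstacle is the rank induction in $(1)\Rightarrow(3)$: one needs that $V_\p$ is again henselian when $V$ is henselian and $\p$ is a prime (a classical fact about valuation rings that nonetheless demands careful invocation), so that the inductive hypothesis applies directly at $V_\p$. If this fact proves awkward to cite cleanly, one can instead broaden the induction to cover all valuation rings of finite rank, using that the cdh sheaf $\sF$ is automatically an rh sheaf (cdh refines rh), hence rv by Proposition~\ref{prop:stalks}(1), and comparing $\sF(V_\p)$ with $\sF(V_\p^h)$ by rh/rv-descent.
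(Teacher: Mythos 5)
Your treatment of $(3)\Rightarrow(2)\Rightarrow(1)$ is correct and agrees with the paper: cdarc refines rarc, so in either variant $\sF$ is an rarc sheaf and Corollary~\ref{cor:cdarc=>milnor} applies, while the squares~\eqref{eqn:v-square} are Milnor squares. The two substantive directions, however, both have genuine gaps.

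For $(1)\Rightarrow(3)$, your reduction to henselian valuation rings of \emph{finite rank} is not available: Lemma~\ref{lem:finite-rank} requires the base to have finite valuative dimension, and this hypothesis is deliberately absent from the cotruncated part of the theorem. Over a general $S$, henselian valuation rings can have infinite rank and need not be filtered colimits of finite-rank ones, so your rank induction never reaches them---and this is exactly the hard case. The paper's proof replaces the rank induction by a Zorn's-lemma argument on the poset of intervals $[\p,\q]\subset\Spec V$: cotruncatedness (Lemma~\ref{lem:cotruncated}(1)) plus finitariness first reduce to a \emph{finitely presented} cdarc cover $f$, and the key step absent from your argument is that a section of $f$ over $\kappa(\p)=\colim_{\q>\p}V_{[\p,\q]}$ spreads out to some $V_{[\p,\q]}$. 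In addition, your mediating object $L_{\cdarc}\sF$ is problematic: to invoke Proposition~\ref{prop:stalks}(2) you need it to be a \emph{finitary} cdh sheaf, but the cdarc topology is not finitary, so sheafification need not preserve finitariness (this is precisely the subtlety the paper flags when explaining why finitary cdh sheaves are not automatically cdv sheaves), and your base-case claim that rank $\leq 1$ henselian valuation rings are ``the points of the cdarc topos,'' so that the unit is an equivalence there, is asserted rather than proved. The paper avoids sheafification altogether by checking \v{C}ech descent cover by cover via the comparison map $\phi(-)=\lim_n\sF(\check C_n(f)\times_X-)$, which is a finitary cdh sheaf because $f$ is finitely presented and compact objects of $\sC$ are cotruncated.

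For $(1)\Rightarrow(2)$, the Postnikov bootstrap does not work for a general compactly generated $\sC$. Already for $\sC=\Spt$ (the case needed for the applications), the $n$-truncated objects of $\Spt$ in the categorical sense are trivial, so $\sC_{\leq n}$-valued truncations carry no information and $\sF\not\simeq\lim_n\tau_{\leq n}\sF$; if instead you truncate with respect to the $t$-structure (valuewise or sheaf-theoretically), $\tau_{\leq n}$ does not preserve cartesian squares of spectra (there is a $\pi_{n+1}$-boundary obstruction), so neither the sheaf condition nor (henselian) v-excision is inherited by $\tau_{\leq n}\sF$, contrary to what your argument needs. Moreover Corollary~\ref{cor:cdh-hypercomplete} is a statement about the $\infty$-topos of sheaves of \emph{spaces}; it does not yield convergence of Postnikov towers for $\sC$-valued sheaves, and the paper explicitly remarks that it does not know whether the cdh $\infty$-topos is Postnikov complete. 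The point of the finite-valuative-dimension hypothesis is to avoid any cotruncatedness or convergence input: the paper's proof applies $\Maps(c,-)$ for compact $c$ to reduce to space-valued sheaves, approximates the Milnor square so that the base $X$ is qcqs of finite valuative dimension (Corollary~\ref{cor:Milnor-approx}), uses Corollary~\ref{cor:stalks-vdim} (Deligne completeness together with Theorem~\ref{thm:hodim-cdh}) to reduce to (henselian) valuation rings of finite rank---legitimate there precisely because of the dimension hypothesis---and then runs a rank induction whose rank $\leq 1$ case uses Proposition~\ref{prop:Milnor-rarc} and Lemma~\ref{lem:milnor-section} and whose inductive step uses Lemma~\ref{lem:more-squares} (itself relying on Lemma~\ref{lem:extend}, since one needs v-excision for the base-changed squares over arbitrary valuation rings, not just the squares~\eqref{eqn:v-square}).
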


The implication $(2)\Rightarrow(1)$ holds because the squares~\eqref{eqn:v-square} are Milnor squares, and the implication $(3)\Rightarrow(2)$ is Corollary~\ref{cor:cdarc=>milnor}. The proofs of $(1)\Rightarrow (2)$ and $(1)\Rightarrow (3)$ will require some preliminaries.

Recall that a proper ideal $I$ in a valuation ring $V$ is prime if and only if it is radical, in which case the quotient $V/I$ is a valuation ring. Moreover, for any subset $S\subset V$ with $0\notin S$, there is a prime ideal $\p$ such that $S^{-1}V=V_\p$, and this is again a valuation ring.

\begin{lem} \label{lem:hens-loc} 
	Let $V$ be a (henselian) valuation ring and $\mathfrak{p}\subset V$ a prime ideal. Then $V/\p$ and $V_\p$ are also (henselian) valuation rings.
\end{lem}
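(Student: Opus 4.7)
The valuation-ring claims are classical: the primes of a valuation ring $V$ are in order-reversing bijection with its overrings in $K = \Frac(V)$ via $\p \mapsto V_\p$, and with its valuation-ring quotients via $\p \mapsto V/\p$. When $V$ is henselian, $V/\p$ is automatically henselian as a quotient of a henselian local ring, so the only substantial case is henselianness of $V_\p$.

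I would prove that directly by clearing denominators and invoking Hensel's lemma for $V$. The key structural remark is that each $s \in V \setminus \p$ is already a unit in $V_\p$, so the maximal ideal of $V_\p$ is literally $\p$, and in particular $\p \subset \m_V$. Equivalently, writing $v \colon K^\times \twoheadrightarrow \Gamma$ for the valuation of $V$ and $\Delta \subset \Gamma$ for the convex subgroup corresponding to $\p$, the valuation of $V_\p$ is the composite $K^\times \xrightarrow{v} \Gamma \twoheadrightarrow \Gamma/\Delta$, and $V \setminus \p$ is characterised by $v(s) \in \Delta$.

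For Newton's form of Hensel's lemma over $V_\p$, take a monic $f \in V_\p[x]$ of degree $n$ and $a \in V_\p$ with $f(a) \in \p$ and $f'(a) \in V_\p^\times$. Pick $s \in V \setminus \p$ with $sa \in V$ and $s c_i \in V$ for every coefficient $c_i$ of $f$, and consider the monic $g(y) := s^n f(y/s) \in V[y]$. A short calculation gives $g(sa) = s^n f(a)$ and $g'(sa) = s^{n-1} f'(a)$; since $v(s), v(f'(a)) \in \Delta$ while $v(f(a))$ is strictly larger than every element of $\Delta$ (which is precisely what $f(a) \in \p$ encodes valuationally), one obtains $v(g(sa)) > 2 v(g'(sa))$ in $\Gamma$. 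Hensel's lemma for $V$ then produces a unique $\beta \in V$ with $g(\beta) = 0$ and $v(\beta - sa) > v(g'(sa))$, and $\alpha := \beta/s \in V_\p$ is the desired root of $f$ with $\alpha \equiv a \pmod{\p}$. The only real obstacle is this valuation-theoretic bookkeeping in the last step; everything else is routine, and alternatively one could just cite the standard fact from valuation theory that coarsenings of henselian valuations are henselian.
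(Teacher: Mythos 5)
Your proof is correct, but it takes a genuinely different route from the paper's. The paper disposes of the lemma in one line: it declares everything except the henselianity of $V_\p$ to be standard (the quotient and localization of a valuation ring are valuation rings, and $V/\p$ is henselian as a quotient of a henselian local ring), and for the one nontrivial point --- that the coarsening $V_\p$ of a henselian valuation ring is henselian --- it simply cites Nagata. You instead verify this point directly: you check the simple-root-lifting criterion for $V_\p$ (a legitimate equivalent characterization of henselian local rings) by rescaling a monic $f\in V_\p[x]$ to $g(y)=s^n f(y/s)\in V[y]$ and invoking the Newton form of Hensel's lemma in $V$; your bookkeeping $v(g(sa))=n\,v(s)+v(f(a))$ and $v(g'(sa))=(n-1)v(s)+v(f'(a))$, with $v(s),v(f'(a))\in\Delta$ and $v(f(a))>\Delta$, correctly yields $v(g(sa))>2\,v(g'(sa))$. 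What this buys is a self-contained argument (modulo the classical equivalence between henselianity of a valued field and Newton approximation), at the cost of length; your closing alternative --- just quoting that coarsenings of henselian valuations are henselian --- is literally the paper's proof. One small imprecision to fix: the conclusion you quote, $v(\beta-sa)>v(g'(sa))$, only gives $v(\alpha-a)>(n-2)v(s)+v(f'(a))$, i.e.\ strict inequality against a \emph{single} element of $\Delta$; this does not yet show $\alpha\equiv a\pmod{\p}$, since that requires $v(\alpha-a)>\delta$ for \emph{every} $\delta\in\Delta$ (membership $\alpha\in V_\p$ is automatic from $\alpha=\beta/s$). You should instead use the sharper standard conclusion $v(\beta-sa)=v(g(sa))-v(g'(sa))$, which gives $v(\alpha-a)=v(f(a))-v(f'(a))>\Delta$ and hence the required congruence.
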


\begin{proof}
	The only nontrivial statement is that $V_\p$ is henselian if $V$ is. This follows from \cite[page 50, Corollary]{nagata-hensel}.
\end{proof}

\begin{lem} \label{lem:extend} 
	Let $V$ be a valuation ring, $\sC$ an $\infty$-category, and $\sF\colon \Sch_V^{\op} \rightarrow \sC$ a nilinvariant presheaf satisfying v-excision (resp.\ henselian v-excision). Then, for any prime ideal $\p\subset V$ and any valuation ring (resp.\ any henselian valuation ring) $W$ over $V$, the square
\[
\begin{tikzcd}
\sF(W) \ar{r} \ar{d} & \sF(W \otimes_V V_{\mathfrak{p}}) \ar{d} \\
\sF(W\otimes_V V/\mathfrak{p}) \ar{r} & \sF(W \otimes_V \kappa(\mathfrak{p}))
\end{tikzcd}
\]
is cartesian.
\end{lem}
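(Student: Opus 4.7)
The plan is to identify the three tensor products $W \otimes_V V_{\p}$, $W \otimes_V V/\p$, and $W \otimes_V \kappa(\p)$ (the latter two up to reductions) with a localization and a quotient of the valuation ring $W$ itself, and then to deduce the cartesianness of the square from two applications of v-excision inside $W$.

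First I would reduce to the case when $V \to W$ is injective. The kernel $\mathfrak{a}$ of $V \to W$ is prime in $V$ since $W$ is a domain. If $\mathfrak{a} \nsubseteq \p$, then since primes of $V$ form a chain, $\p \subsetneq \mathfrak{a}$; so $\mathfrak{a}$ generates the unit ideal in $V_{\p}$, and the three tensor products degenerate as $W \otimes_V V_{\p} = W \otimes_V \kappa(\p) = 0$ and $W \otimes_V V/\p = W$, making the square trivially cartesian. Otherwise $\mathfrak{a} \subseteq \p$ and one may replace $(V,\p)$ by $(V/\mathfrak{a},\p/\mathfrak{a})$ without altering any of the tensor products. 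So assume $V \hook W$.

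Now $W$ is torsion-free, hence flat, over the valuation ring $V$; going-down then implies that the image of $\Spec W \to \Spec V$ consists of the primes of $V$ contained in $\p^* := f^{-1}(\m_W)$, where $f\colon V\to W$. If $\p \nsubseteq \p^*$, some element of $\p$ maps to a unit in $W$, forcing $\p W = W$ and $W \otimes_V V_{\p} = W$, so the square is again trivially cartesian (identity top row, $\sF(\varnothing) \to \sF(\varnothing)$ bottom row). Otherwise $\p \subseteq \p^*$, and the fiber of $\Spec W \to \Spec V$ over $\p$ is nonempty. Letting $\mathfrak{r} \subseteq \q' \subset W$ be respectively the smallest and largest primes of $W$ lying over $\p$, one verifies using the chain structure of ideals in the valuation ring $W$ that $W \otimes_V V_{\p} = W_{\q'}$, that $\sqrt{\p W} = \mathfrak{r}$, and that $(W_{\q'}/\p W_{\q'})_{\red} = (W/\mathfrak{r})_{\q'/\mathfrak{r}}$; by \lemref{lem:hens-loc}, each of $W/\mathfrak{r}$, $W_{\q'}$, and $(W/\mathfrak{r})_{\q'/\mathfrak{r}}$ is a (henselian) valuation ring over $V$.

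Applying nilinvariance to replace $\sF(W/\p W)$ with $\sF(W/\mathfrak{r})$ and $\sF(W_{\q'}/\p W_{\q'})$ with $\sF((W/\mathfrak{r})_{\q'/\mathfrak{r}})$, the target square becomes
\[
\begin{tikzcd}
\sF(W) \ar{r} \ar{d} & \sF(W_{\q'}) \ar{d} \\
\sF(W/\mathfrak{r}) \ar{r} & \sF((W/\mathfrak{r})_{\q'/\mathfrak{r}})\rlap.
\end{tikzcd}
\]
The v-excision hypothesis applied to $W$ at $\q'$ and to $W/\mathfrak{r}$ at $\q'/\mathfrak{r}$ gives
\[
\sF(W) \simeq \sF(W_{\q'}) \times_{\sF(\kappa(\q'))} \sF(W/\q'), \qquad \sF(W/\mathfrak{r}) \simeq \sF((W/\mathfrak{r})_{\q'/\mathfrak{r}}) \times_{\sF(\kappa(\q'))} \sF(W/\q'),
\]
so stacking these two cartesian squares vertically and cancelling the common bottom row $\sF(W/\q') \to \sF(\kappa(\q'))$ exhibits the above square as cartesian. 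The main obstacle is the bookkeeping of tensor-product identifications in the nontrivial case $\p\subseteq\p^*$; once the reductions have been identified, v-excision applied twice delivers the result by formal fiber-product manipulation.
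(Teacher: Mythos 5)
Your proof is correct, and it reaches the conclusion through a transposed version of the paper's own diagram. The paper likewise reduces to the case where $\p$ lies in the image of $\Spec W\to \Spec V$ and identifies $W\otimes_V V_\p\simeq W_{\q}$ for $\q$ the largest prime of $W$ over $\p$, but it then enlarges the square \emph{horizontally}, adjoining a column of further localizations at $\sqrt{\p W}$: its two excision inputs are the squares \eqref{eqn:v-square} for $(W,\sqrt{\p W})$ and for $(W_{\q},\sqrt{\p W}W_{\q})$, and nilinvariance disposes of the bottom-left square exactly as in your argument. You instead enlarge \emph{vertically}, adjoining a row of quotients by $\q$, so your two excision inputs are the squares for $(W,\q)$ and for $(W/\sqrt{\p W},\,\q/\sqrt{\p W})$. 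Both routes amount to two applications of (henselian) v-excision plus nilinvariance plus the pasting law, but the instances used differ, and your arrangement has the small advantage that in the henselian case it only requires the easy half of \lemref{lem:hens-loc} (quotients of henselian valuation rings are henselian), whereas the paper's arrangement needs the localization half ($W_{\q}$ henselian, which rests on Nagata). Your handling of the degenerate cases, via flatness and going-down to show that the image of $\Spec W\to\Spec V$ is the set of primes contained in $\p^*=f^{-1}(\m_W)$, is also more self-contained than the paper's one-line reduction. The only micro-gap: in the case $\p\not\subset\p^*$, the equality $W\otimes_V V_\p= W$ follows because every element of $V\setminus\p$ lies outside $\p^*$ (as $\p^*\subsetneq\p$) and hence maps to a unit; the observation you state, that some element of $\p$ becomes a unit, only gives $\p W=W$.
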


\begin{proof}
	By Lemma~\ref{lem:Milnor-BC}, the image of $\Spec W\to \Spec V$ is an interval in the specialization poset. Hence, if $\p$ is not in the image, the result is trivial. Otherwise, let $\q\subset W$ be the maximal prime ideal lying over $\p$, so that $W \otimes_V V_{\mathfrak{p}}\simeq W_\q$. Note that $\sqrt{\p W}\subset\q$.
	 We can then form the commutative diagram
\begin{equation*} \label{eq:big}
\begin{tikzcd}
W \ar{r} \ar{d} & W_{\mathfrak{q}} \ar{r} \ar{d} & W_{\sqrt{\mathfrak{p}W}} \ar{d} \\
W/\mathfrak{p}W \ar{r} \ar{d}  & W_{\mathfrak{q}}/\mathfrak{p}W_{\mathfrak{q}} \ar{r} \ar{d} & W_{\sqrt{\mathfrak{p}W}}/\mathfrak{p}W_{\sqrt{\mathfrak{p}W}} \ar{d}\\
W/\sqrt{\mathfrak{p}W} \ar{r} & W_{\mathfrak{q}}/\sqrt{\mathfrak{p}W_\q} \ar{r} & \kappa(\sqrt{\mathfrak{p}W})\rlap.
\end{tikzcd}
\end{equation*}
Our goal is to prove that $\sF$ converts the top left square into a cartesian square. Both the boundary square and the right rectangle are instances of~\eqref{eqn:v-square}. Moreover, by Lemma~\ref{lem:hens-loc}, $W_\q$ is henselian if $W$ is. By assumption, $\sF$ takes these squares to cartesian squares. It then suffices to prove that $\sF$ takes the bottom left square to a cartesian square. But this follows from the fact that $\sF$ is nilinvariant.
\end{proof}

The next lemma is analogous to \cite[Lemma 4.4]{arc}.

\begin{lem} \label{lem:more-squares} 
	Let $V$ be a valuation ring, $\sC$ a compactly generated $\infty$-category, and $\sF\colon\Sch_V^{\op} \rightarrow \sC$ a finitary rh (resp.\ cdh) sheaf satisfying v-excision (resp.\ henselian v-excision).
	Assume that $V$ has finite rank or that compact objects in $\sC$ are cotruncated.
	For every prime ideal $\p\subset V$ and every $V$-scheme $X$, the square
\[
\begin{tikzcd}
\sF(X) \ar{r} \ar{d} & \sF(X\otimes_V V_\p) \ar{d} \\
\sF(X\otimes_VV/\p) \ar{r} & \sF(X\otimes_V \kappa(\p))
\end{tikzcd}
\]
is cartesian.
\end{lem}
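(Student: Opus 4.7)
The strategy is to interpret the candidate cartesian square, as $X$ varies, as a morphism between two finitary rh (resp.\ cdh) sheaves on $\Sch_V$, and then to reduce to the case $X=\Spec W$ with $W$ a (henselian) valuation ring, which is exactly the content of Lemma~\ref{lem:extend}.

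Concretely, define $T\colon \Sch_V^\op \to \sC$ by
\[
T(X) = \sF(X\otimes_V V_\p)\times_{\sF(X\otimes_V \kappa(\p))}\sF(X\otimes_V V/\p),
\]
equipped with its canonical natural map $\alpha\colon \sF\to T$; the lemma asserts that $\alpha$ is an equivalence. Each of the three factors $\sF(X\otimes_V R)$, for $R\in\{V_\p,\,V/\p,\,\kappa(\p)\}$, is $\sF$ precomposed with the base-change functor $-\otimes_V R\colon \Sch_V\to\Sch_V$. Base change preserves rh (resp.\ cdh) covers and commutes with cofiltered limits of qcqs schemes along affine transition maps, so each such composite is again a finitary rh (resp.\ cdh) sheaf on $\Sch_V$. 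Since $\sC$ is compactly generated, finite limits commute with filtered colimits in $\sC$, and thus $T$ is a finitary rh (resp.\ cdh) sheaf and $\alpha$ is a morphism between such sheaves.

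It therefore suffices to verify $\alpha$ on enough stalks, and here the two hypotheses of the lemma split into two parallel cases. If $V$ has finite rank then $\Spec V$ has finite valuative dimension, so Corollary~\ref{cor:stalks-vdim} reduces us to showing that $\sF(W)\to T(W)$ is an equivalence for every (henselian) valuation ring $W$ over $V$ of finite rank. If instead every compact object of $\sC$ is cotruncated, then Proposition~\ref{prop:stalks}(2) provides the same reduction with $W$ ranging over arbitrary (henselian) valuation rings over $V$. In either case the required equivalence is precisely the output of Lemma~\ref{lem:extend} applied to $W$; nilinvariance of $\sF$, which is hypothesized there, is automatic since nilimmersions $X_\red\hook X$ are cdp covers and any rh or cdh sheaf is in particular a cdp sheaf.

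The main obstacle is bookkeeping rather than geometric: one must check carefully that $T$ inherits the finitary sheaf property from $\sF$, and then match each of the two auxiliary hypotheses to the correct stalk criterion (Corollary~\ref{cor:stalks-vdim} in the finite-rank case, Proposition~\ref{prop:stalks}(2) in the cotruncated case). Once this matching is in place, the substantive input---translating an arbitrary prime $\p\subset V$ into the two universal Milnor squares via a $3\times 3$ diagram---is already packaged inside Lemma~\ref{lem:extend}.
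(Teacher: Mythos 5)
Your proof is correct and is essentially the paper's own argument: you form the same canonical morphism $\sF\to\sF(-\otimes_V V_\p)\times_{\sF(-\otimes_V\kappa(\p))}\sF(-\otimes_V V/\p)$ of finitary rh (resp.\ cdh) sheaves, check it on (henselian) valuation rings via Lemma~\ref{lem:extend}, and conclude by Corollary~\ref{cor:stalks-vdim} in the finite-rank case and Proposition~\ref{prop:stalks}(2) in the cotruncated case. The only detail worth adding is that in the finite-rank case Corollary~\ref{cor:stalks-vdim} is applied to the space-valued sheaves $\Maps(c,\sF(-))$ for compact $c\in\sC$, as in the paper.
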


\begin{proof} 
	Consider the canonical map
	\[
	\phi\colon \sF(-)\to \sF(-\otimes_VV_\p) \times_{\sF(-\otimes_V\kappa(\p))} \sF(-\otimes_VV/\p)
	\]
	between finitary rh (resp.\ cdh) sheaves on $\Sch_V$. By Lemma~\ref{lem:extend}, $\phi$ is an equivalence on (henselian) valuation rings. 
	If $V$ has finite rank, we deduce that $\phi$ is an equivalence by Corollary~\ref{cor:stalks-vdim}, applied to $\Maps(c,\sF(-))$ for every compact object $c\in\sC$.
	On the other hand, if compact objects in $\sC$ are cotruncated, then $\phi$ is an equivalence by Proposition~\ref{prop:stalks}(2).
\end{proof}

\begin{lem}\label{lem:milnor-section}
	Let $Q\colon (Y,W)\to (X,Z)$ be a base change of a Milnor square in $\Sch_S$ such that $Y\to X$ admits a section. If $\sC$ is a compactly generated $\infty$-category and $\sF\colon \Sch_S^\op\to \sC$ is a finitary rh sheaf, then $\sF(Q)$ is cartesian.
\end{lem}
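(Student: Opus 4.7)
The plan is to use the section $s\colon X\to Y$ to build an abstract blowup square, apply the cdp sheaf property of $\sF$, and conclude via the pasting law for cartesian squares.

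For the reduction, since $\sF$ is a Zariski sheaf and the conclusion is local on $X$ (with the section restricting along any Zariski open of $X$), I may assume $X$ is affine; then $Y$, $Z$, and $W$ are affine as well, since $f$ is affine and the map $Z\to X$ is a closed immersion, so all schemes involved are qcqs.

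Next, I claim that
\[
\begin{tikzcd}
Z \ar{r} \ar{d} & X \ar{d}{s} \\
W \ar[hook]{r} & Y
\end{tikzcd}
\]
is an abstract blowup square. Since the original Milnor square is cocartesian, $f$ is an isomorphism over the open complement of $Z$, a property preserved under arbitrary base change. As $f$ is affine, hence separated, $s$ is a closed immersion, and $s^{-1}(W) = (f\circ s)^{-1}(Z) = Z$ makes the square cartesian. Finally, $s|_{X\setminus Z}$ inverts $f|_{Y\setminus W}$, so $s$ is an isomorphism over $Y\setminus W$. By Remark~\ref{rem:abu}, the finitary cdp sheaf $\sF$ sends this abstract blowup square to a cartesian square.

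Letting $s_Z\colon Z \to W$ denote the section of $W\to Z$ induced by $s$, I paste $\sF(Q)$ together with the image under $\sF$ of the abstract blowup square above to obtain the diagram
\[
\begin{tikzcd}
\sF(X) \ar{r}{f^*} \ar{d} & \sF(Y) \ar{d} \ar{r}{s^*} & \sF(X) \ar{d} \\
\sF(Z) \ar{r} & \sF(W) \ar{r}{s_Z^*} & \sF(Z)\rlap.
\end{tikzcd}
\]
The right square is cartesian by the previous paragraph, and its vertical maps are both the restriction $\sF(X)\to\sF(Z)$ along $Z\hookrightarrow X$. Both horizontal compositions equal the identity, using $f\circ s=\id_X$ and the fact that $s_Z$ is a section of $W\to Z$, so the outer rectangle is tautologically cartesian. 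The pasting law for cartesian squares then forces $\sF(Q)$, the left square, to be cartesian, as required. The only substantive step in the argument is the verification that the section produces an abstract blowup square; the remainder is formal.
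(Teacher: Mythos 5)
Your proof is correct and takes essentially the same route as the paper's: the paper also uses the section to form the second square $(X,Z)\to(Y,W)$, notes that the closed immersions $s\colon X\to Y$ and $W\hookrightarrow Y$ cover $Y$ and approximates them by finitely presented closed immersions to see that $\sF$ sends that square to a cartesian square, and then concludes by the same pasting/retraction argument; your only cosmetic difference is packaging that middle square as an abstract blowup square over $Y$ and invoking Remark~\ref{rem:abu}, which encapsulates the same finitely presented approximation. (Only a wording slip: the vertical maps that are ``both the restriction $\sF(X)\to\sF(Z)$'' are those of the outer rectangle, not of the right square.)
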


\begin{proof}
	We can assume $X$ qcqs.
	We have cartesian morphisms $(X,Z)\to (Y,W)\to (X,Z)$.
		Since $Y\to X$ is an isomorphism over $X-Z$, the closed immersions $X\to Y$ and $W\to Y$ cover $Y$. Writing them as cofiltered limits of finitely presented closed immersions, we deduce that $\sF$ sends the square $(X,Z)\to (Y,W)$ to a cartesian square. Hence, $\sF(Q)$ is also cartesian.
\end{proof}

\begin{proof}[Proof of (1) $\Rightarrow$ (2) in Theorem~\ref{thm:main}]
	Let $Q\colon (Y,W)\to (X,Z)$ be a Milnor square in $\Sch_S$. We want to show that $\sF(Q)$ is cartesian. Since $\sF$ is a Zariski sheaf, we can assume that $S$ and $X$ are qcqs. By Corollary~\ref{cor:Milnor-approx}, since $\sF$ is finitary, we can assume that $X$ has finite valuative dimension. Consider the morphism
	\[
	\phi\colon \sF(-) \to \sF(-\times_XY) \times_{\sF(-\times_XW)} \sF(-\times_XZ)
	\]
	between finitary rh (resp.\ cdh) sheaves on $\Sch_X$. By Corollary~\ref{cor:stalks-vdim}, it remains to show that $\phi$ is an equivalence on $\Spec V$ for every (henselian) valuation ring of finite rank $V$ over $X$. We prove this by induction on the rank of $V$.
	 If $V$ has rank $\leq 1$, then $\Spec V\to X$ lifts to $Y\sqcup Z$ by Proposition~\ref{prop:Milnor-rarc}; if it lifts to $Z$ the result is trivial, otherwise we conclude using Lemma~\ref{lem:milnor-section}. If $V$ has rank $\geq 2$, choose a prime ideal $\p\subset V$ which is neither zero nor maximal. By the induction hypothesis, $\phi$ is an equivalence on $V_\p$, $V/\p$, and $\kappa(\p)$. It then follows from Lemma~\ref{lem:more-squares} that $\phi$ is an equivalence on $V$.
\end{proof}

\begin{proof}[Proof of (1) $\Rightarrow$ (3) in Theorem~\ref{thm:main}]
	The proof is essentially the same as that of \cite[Proposition 4.8]{arc} with minor simplifications. 
	We assume that $\sC$ is compactly generated by cotruncated objects and that $\sF\colon\Sch_S^\op\to\sC$ is a finitary cdh sheaf satisfying henselian v-excision (the rh case is treated similarly). Let $f\colon Y\to X$ be a cdarc cover between qcqs schemes in $\Sch_S$ and let $\check C_\bullet(f)$ be its Čech nerve. We must show that $\sF(X)\to \lim_n\sF(\check C_n(f))$ is an equivalence. We can write $f$ as a cofiltered limit of finitely presented morphisms $f_\alpha\colon Y_\alpha\to X$, which are automatically cdarc covers; using Lemma~\ref{lem:cotruncated}(1) and the assumption that $\sF$ is finitary, we can assume that $f$ is finitely presented.
	 We consider the canonical morphism
	\[
	\phi\colon \sF(-) \to \lim_n\sF(\check C_n(f)\times_X -),
	\]
	which is a morphism between finitary cdh sheaves on $\Sch_X$. By Proposition~\ref{prop:stalks}(2), we are reduced to the case where $X=\Spec V$ for $V$ a henselian valuation ring. 
	
	For $\p\subset \q$ prime ideals in $V$, let $[\p,\q]$ be the set of prime ideals containing $\p$ and contained in $\q$; such a subset of $\Spec V$ will be called an \emph{interval}. We denote by $\mathrm{Int}_V$ the poset of intervals in $\Spec V$. 
	For an interval $I=[\p,\q]$, we let $V_I=(V/\p)_\q$. Then $V_I$ is a henselian valuation ring by Lemma~\ref{lem:hens-loc}, and the assignment $ I \mapsto V_I$ is a functor $\mathrm{Int}_V^\op \to \CAlg_V$ that preserves filtered colimits. Let $\sE\subset \mathrm{Int}_V$ be the subposet of intervals $I$ such that $\phi$ is an equivalence on $V_J$ for all $J\subset I$. The subposet $\sE$ has the following properties:
	 \begin{enumerate}
	 	\item If $f$ has a section over $V_I$, then $I\in\sE$.
	 	\item If $I$ has length $\leq 1$, then $I\in\sE$. This follows from (1) and the definition of cdarc cover.
	 	\item For any prime $\p\neq\m$, there exists $\q>\p$ such that $[\p,\q]\in\sE$. If $\p$ has an immediate successor $\q$, this follows from (2). Otherwise, $\kappa(\p)=\colim_{\q>\p}V_{[\p,\q]}$. Since $f$ is finitely presented and has a section over $\kappa(\p)$, it has a section over $V_{[\p,\q]}$ for some $\q>\p$, so the claim follows from (1).
	 	\item For any prime $\q\neq 0$, there exists $\p<\q$ such that $[\p,\q]\in\sE$. The proof is exactly the same as in (3).
		\item If $[\p,\q]\in\sE$ and $[\q,\mathfrak r]\in\sE$, then $[\p,\mathfrak r]\in\sE$. This follows from Lemma~\ref{lem:more-squares}.
		\item $\sE$ is closed under filtered colimits in $\mathrm{Int}_V$. Indeed, if $[\p,\q]$ is the filtered colimit of $[\p_\alpha,\q_\alpha]\in \sE$, then by (2)–(4) there exists $\alpha$ such that $[\p,\p_\alpha]\in\sE$ and $[\q_\alpha,\q]\in\sE$, hence $[\p,\q]\in\sE$ by (5).
	 \end{enumerate}
	 By (6) and Zorn's lemma, there exists a maximal interval $[\p,\q]\in\sE$. By (3) and (5), we must have $\q=\m$, and by (4) and (5) we must have $\p=0$. This means that $\phi$ is an equivalence on $V$, as desired.
\end{proof}

\begin{rem}
	Under the assumption that $\sC$ is compactly generated by cotruncated objects, Theorem~\ref{thm:main} is analogous to \cite[Theorem 4.1]{arc}, with the h topology replaced by the rh or cdh topology. We remark that there is no obvious version of this result for the eh topology (the topology generated by the étale and cdp topologies), because if $V$ is a strictly henselian valuation ring and $\p\subset V$ is a prime ideal, $V_\p$ is not strictly henselian in general.
\end{rem}

For $k$ a perfect field, let $\Omega^n_\cdh$ denote the cdh sheafification of the presheaf $X\mapsto \Omega^n(X/k)$ (which coincides with its rh sheafification \cite[Theorem 4.12]{huber-kelly}).

\begin{cor}\label{cor:Omega}
	Let $k$ be a perfect field and $n\geq 0$.
	Then $\Omega^n_\cdh\colon \Sch_k^\op\to \mathrm{Mod}_k$ satisfies cdarc descent. In particular, it satisfies Milnor excision.
\end{cor}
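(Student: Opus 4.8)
The plan is to obtain this as an instance of \thmref{thm:main} (the cdh version), applied with $S=\Spec k$ and $\sF=\Omega^n_\cdh$; the work lies in checking the hypotheses and verifying the single condition~(1). For the hypotheses: $\Spec k$ has valuative dimension $0$, hence finite valuative dimension; the presheaf $X\mapsto\Omega^n(X/k)$ is finitary, since Kähler differentials and their exterior powers commute with filtered colimits of rings, so by \propref{prop:finitary-sheaves} its cdh sheafification $\Omega^n_\cdh$ is a finitary cdh sheaf on $\Sch_k$; and since $\Omega^n$ is a presheaf of discrete $k$-modules, $\Omega^n_\cdh$ is valued in coconnective complexes — indeed, \thmref{thm:hodim-cdh} together with \corref{cor:dim_v} confines $\Omega^n_\cdh(X)$ to the degrees $[-\dim_v(X),0]$. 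We may therefore apply \thmref{thm:main} with $\sC=\D(k)_{\le 0}$, which is compactly generated by cotruncated objects, so that only condition~(1), henselian v-excision, needs to be checked.

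To check it, fix a henselian valuation ring $V$ over $k$ and a prime ideal $\p\subset V$. By \lemref{lem:hens-loc} the rings $V_\p$, $V/\p$ and $\kappa(\p)$ are again henselian valuation rings, and all four are equicharacteristic because $k$ is a field. Now $\Spec V$, $\Spec V_\p$, $\Spec V/\p$, and $\Spec\kappa(\p)$ are local $k$-schemes with no nontrivial cdh covering sieve, hence they are points of the $\infty$-topos $\Shv_\cdh(\Sch_k^\mathrm{lfp})$ (see \sectsign\ref{sub:cdp}); since a sheafification has the same stalks as the presheaf it comes from, $\Omega^n_\cdh$ agrees with $\Omega^n$ on each of these schemes. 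Therefore the image under $\Omega^n_\cdh$ of the Milnor square~\eqref{eqn:v-square} coincides with its image under $\Omega^n$, which is cartesian by \cite[Lemma 3.14]{huber-kelly} (v-excision for differential forms over equicharacteristic valuation rings). This gives condition~(1).

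It then remains only to invoke \thmref{thm:main}: condition~(1) yields condition~(2), Milnor excision, because $\dim_v(\Spec k)<\infty$, and it yields condition~(3), cdarc descent, because every compact object of $\D(k)_{\le 0}$ is cotruncated. The one step that is more than bookkeeping with the material of \sectsign\ref{sub:cdp} is the passage from v-excision for $\Omega^n$ to v-excision for its cdh sheafification $\Omega^n_\cdh$, i.e.\ the identification of the two presheaves on (henselian) valuation rings; I expect this — rather than the application of the abstract theorem — to be where whatever real content there is resides, although it follows readily from the description of the points of the cdh topos recalled in the excerpt.
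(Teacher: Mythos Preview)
Your proof is correct and follows essentially the same approach as the paper: verify that $\Omega^n_\cdh$ is a finitary cdh sheaf, invoke \cite[Lemma~3.14]{huber-kelly} for v-excision, and apply \thmref{thm:main}. The paper's proof is terser, citing the Huber--Kelly lemma directly for $\Omega^n_\cdh$; your explicit observation that $\Omega^n_\cdh$ agrees with $\Omega^n$ on henselian valuation rings (as cdh points) is the right way to make that step precise. One minor simplification: the detour through $\D(k)_{\le 0}$ and the homotopy-dimension bound is unnecessary, since the statement already declares $\Omega^n_\cdh$ to be $\mathrm{Mod}_k$-valued, and the ordinary $1$-category $\mathrm{Mod}_k$ is compactly generated with every object $0$-truncated, hence cotruncated.
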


\begin{proof}
	By \cite[Lemma 3.14]{huber-kelly}, the cdh sheaf $\Omega^n_\cdh$ satisfies v-excision. It is also finitary since $X\mapsto \Omega^n(X/k)$ is and cdh sheafification preserves this property. Hence, the corollary follows from the implication $(1)\Rightarrow(3)$ of Theorem~\ref{thm:main}.
\end{proof}


We shall say that a presheaf $\sF$ on $\Sch_S$ satisfies \emph{rigidity} for a henselian pair $(A,I)$ over $S$ if $\sF(A)\to\sF(A/I)$ is an equivalence.

\begin{thm}\label{thm:rigidity}
	Let $S$ be a scheme, $\sC$ a compactly generated $\infty$-category, and $\sF\colon \Sch_S^{\op}\to \sC$ a finitary cdh sheaf.
	Suppose that $\sF$ satisfies rigidity for henselian valuation rings.
	\begin{enumerate}
		\item If $S$ has finite valuative dimension, then $\sF$ satisfies Milnor excision.
		\item If compact objects in $\sC$ are cotruncated, then $\sF$ is a cdarc sheaf.
	\end{enumerate}
\end{thm}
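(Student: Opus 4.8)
The plan is to deduce Theorem~\ref{thm:rigidity} from Theorem~\ref{thm:main}, by observing that the rigidity hypothesis forces henselian v-excision essentially for free.

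First I would verify that $\sF$ satisfies henselian v-excision. Let $V$ be a henselian valuation ring over $S$ and let $\p\subset V$ be a prime ideal; we must show that $\sF$ carries the square~\eqref{eqn:v-square} attached to $(V,\p)$ to a cartesian square. Since $V$ is a henselian local ring and $\p$ is a proper ideal contained in its maximal ideal, the pair $(V,\p)$ is henselian (e.g.\ \cite[Tag 0DYD]{stacks}), so by the rigidity hypothesis the map $\sF(V)\to\sF(V/\p)$ is an equivalence. By Lemma~\ref{lem:hens-loc}, $V_\p$ is again a henselian valuation ring, hence $(V_\p,\p V_\p)$ is a henselian pair with quotient $\kappa(\p)=V_\p/\p V_\p$, and rigidity gives that $\sF(V_\p)\to\sF(\kappa(\p))$ is also an equivalence. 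Thus under $\sF$ the two vertical arrows of~\eqref{eqn:v-square} become equivalences; and a commutative square whose two vertical (equivalently, two horizontal) arrows are equivalences is cartesian, since the base change of an equivalence along any map is an equivalence. Hence $\sF$ satisfies henselian v-excision, i.e.\ assertion~(1) of Theorem~\ref{thm:main}.

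It then remains to invoke Theorem~\ref{thm:main} for the finitary cdh sheaf $\sF$ valued in the compactly generated $\infty$-category $\sC$. If $S$ has finite valuative dimension, the implication $(1)\Rightarrow(2)$ of that theorem yields Milnor excision, which is part~(1) of Theorem~\ref{thm:rigidity}. If every compact object of $\sC$ is cotruncated, the implication $(1)\Rightarrow(3)$ yields cdarc descent, which is part~(2).

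I do not expect a genuine obstacle: beyond Theorem~\ref{thm:main} and Lemma~\ref{lem:hens-loc}, the only inputs are the standard commutative-algebra fact that $(A,I)$ is a henselian pair whenever $A$ is henselian local and $I\subsetneq A$, and the elementary remark that a square with a parallel pair of equivalences is cartesian. The single point to state with care is the latter (and the identification of exactly which arrows of~\eqref{eqn:v-square} are provided as equivalences by the rigidity hypothesis); everything else is formal.
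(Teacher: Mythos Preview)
Your proposal is correct and follows exactly the paper's approach: the paper's proof is the single sentence ``The assumption and Lemma~\ref{lem:hens-loc} immediately imply that $\sF$ satisfies henselian v-excision. The conclusion therefore follows from Theorem~\ref{thm:main},'' and you have spelled out precisely these implicit details. One small remark: the phrase ``rigidity for henselian valuation rings'' in the paper most naturally refers to the pairs $(V,\m)$, so to conclude $\sF(V)\to\sF(V/\p)$ is an equivalence you might instead note that $V/\p$ is a henselian valuation ring (again Lemma~\ref{lem:hens-loc}) with the same residue field as $V$, and apply rigidity to both $(V,\m)$ and $(V/\p,\m/\p)$; this is of course equivalent to what you wrote.
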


\begin{proof}
	The assumption and Lemma~\ref{lem:hens-loc} immediately imply that $\sF$ satisfies henselian v-excision. The conclusion therefore follows from Theorem~\ref{thm:main}.
\end{proof}


\begin{cor}\label{cor:rigidity}
	Let $R$ be a henselian local ring, $\sC$ a compactly generated $\infty$-category such that $\sC^\op$ is prestable, and $\sF\colon \Sch_R^{\op}\to \sC$ a finitary cdh sheaf. Assume that either $R$ has finite valuative dimension or that compact objects in $\sC$ are cotruncated.
	 If $\sF$ satisfies rigidity for henselian local rings, then $\sF$ satisfies rigidity for all henselian pairs.
\end{cor}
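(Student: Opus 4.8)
The plan is to reduce rigidity for an arbitrary henselian pair over $R$ to rigidity for a henselian \emph{local} ring by passing to the unitalization of the ideal and using Milnor excision, so that only the ideal matters.

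First I would record that rigidity for henselian local rings includes rigidity for henselian valuation rings, so by Lemma~\ref{lem:hens-loc} (exactly as in the proof of Theorem~\ref{thm:rigidity}) $\sF$ satisfies henselian v-excision; hence by Theorem~\ref{thm:main} ($(1)\Rightarrow(2)$ when $R$ has finite valuative dimension, $(1)\Rightarrow(3)\Rightarrow(2)$ when the compact objects of $\sC$ are cotruncated) $\sF$ satisfies Milnor excision. As a cdh sheaf, $\sF$ is a Zariski sheaf and a $\Sigma$-presheaf with $\sF(\varnothing)=\ast$, and $\sC$ is pointed with finite limits since $\sC^{\op}$ is prestable; so the Remark following the definition of Milnor excision applies: for a nonunital $R$-algebra $J$ one has $\sF(J)=\fib\bigl(\sF(R\ltimes J)\to\sF(R)\bigr)$, where $R\ltimes J$ is the unitalization, and for every $R$-algebra $B$ and ideal $J\subseteq B$ the sequence $\sF(J)\to\sF(B)\to\sF(B/J)$ is a fiber sequence. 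In particular, for a henselian pair $(A,I)$ over $R$ the map $\sF(A)\to\sF(A/I)$ has fiber $\sF(I)$; as $\sC^{\op}$ is prestable, it is therefore an equivalence once $\sF(I)\simeq 0$, i.e.\ once $\sF(R\ltimes I)\to\sF(R)$ is an equivalence.

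The heart of the argument is that $R\ltimes I$ is again a henselian local ring. Because $(A,I)$ is a henselian pair, $I$ lies in the Jacobson radical of $A$, so for $x\in I$ the element $1+x$ is a unit of $A$ with inverse of the form $1+y$, $y\in I$; thus $I$ is a \emph{radical ring} (the operation $x\circ y=x+y+xy$ makes $I$ a group). This forces $R\ltimes I$ to be local with maximal ideal $\mathfrak{m}_R\ltimes I$ and residue field $\kappa_R$: an element with unit $R$-coordinate is a product of some $(r,0)$ with $r\in R^{\times}$ and some $(1,z)$, and the latter is a unit by the radical-ring property. Moreover $(R\ltimes I,\,0\ltimes I)$, with $0\ltimes I=\ker(R\ltimes I\to R)$, is a henselian pair: the $R$-algebra map $\psi\colon R\ltimes I\to A$, $(r,x)\mapsto r\cdot 1_A+x$, is injective on $0\ltimes I$ and restricts to an isomorphism $0\ltimes I\isoto I$ there while identifying the quotients $(R\ltimes I)/(0\ltimes I)=R\to A/I$; so Gabber's criterion for $(R\ltimes I,0\ltimes I)$ — the unique solvability in $0\ltimes I$ of $f(T)=0$ for $f$ with $f(0)\in 0\ltimes I$ and $f'(0)$ invertible — follows by transporting the corresponding statement for the henselian pair $(A,I)$ through $\psi$. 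Since the quotient $(R\ltimes I)/(0\ltimes I)=R$ is itself henselian local, composing henselian pairs shows $(R\ltimes I,\mathfrak{m}_R\ltimes I)$ is henselian, so $R\ltimes I$ is a henselian local ring over $R$.

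Finally I would conclude: applying the rigidity hypothesis to the henselian local rings $R\ltimes I$ and $R=(R\ltimes I)/(0\ltimes I)$, both with residue field $\kappa_R$, the maps $\sF(R\ltimes I)\to\sF(\kappa_R)$ and $\sF(R)\to\sF(\kappa_R)$ are equivalences, and the first factors through the second; hence $\sF(R\ltimes I)\to\sF(R)$ is an equivalence, $\sF(I)\simeq 0$, and $\sF(A)\to\sF(A/I)$ is an equivalence. The main obstacle is the previous paragraph — proving that $R\ltimes I$ is henselian local, in particular that $(R\ltimes I,0\ltimes I)$ is a henselian pair — which needs some care because $\psi$ need not be injective (one uses only its injectivity on $0\ltimes I$) and relies on the composition property for henselian pairs; the rest is formal, given Theorem~\ref{thm:rigidity}, the fiber-sequence reformulation of Milnor excision, and the defining hypothesis.
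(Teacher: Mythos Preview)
Your proof is correct and is essentially the paper's argument unpacked: your unitalization $R\ltimes I$ is exactly the paper's $R'=R\times_{A/I}A$, and your fiber-sequence reformulation of Milnor excision is just a repackaging of the cartesian Milnor square $\sF(R')\to\sF(A)$, $\sF(R)\to\sF(A/I)$ together with the prestability of $\sC^{\op}$. The paper dispatches the henselianness of $R'$ in one line by citing Gabber's criterion, while you spell out the transport along $\psi\colon R\ltimes I\to A$; the key (and only delicate) point you flagged---that one needs injectivity of $\psi$ only on $0\ltimes I$, together with the observation that $f((0,a))$ automatically lands in $0\ltimes I$---is exactly what makes this work.
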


\begin{proof}
	Let $(A,I)$ be a henselian pair over $R$, and form the Milnor square
	\[
	\begin{tikzcd}
		R' \ar{r} \ar{d} & A \ar{d} \\
		R \ar{r} & A/I\rlap.
	\end{tikzcd}
	\]
	By Gabber's characterization of henselian pairs \cite[Tag 09XI]{stacks}, $R'$ is a henselian local ring. By Theorem~\ref{thm:rigidity}, $\sF$ satisfies Milnor excision, so we have a cartesian square
	\[
	\begin{tikzcd}
		\sF(R') \ar{r} \ar{d} & \sF(A) \ar{d} \\
		\sF(R) \ar{r} & \sF(A/I)
	\end{tikzcd}
	\]
	in $\sC$. Since $\sC^\op$ is prestable, this square is also cocartesian. By assumption, $\sF$ satisfies rigidity for $R'$ and $R$, which have the same residue field, so $\sF(R')\to \sF(R)$ is an equivalence. It follows that $\sF(A)\to\sF(A/I)$ is an equivalence.
\end{proof}

\subsection{Application to motivic spectra}

In this subsection, we use Theorem~\ref{thm:rigidity} to show that motivic spectra with suitable torsion satisfy Milnor excision.
The main ingredients are the rigidity theorem of Ananyevskiy and Druzhinin \cite{sh-rigid}, the inseparable local desingularization theorem of Temkin \cite{temkin-insep}, and the perfection-invariance theorem of Khan and the first author \cite{shperf}.

Let $S$ be a scheme and $E\in\SH(S)$ a motivic spectrum. We denote by $E(-)\colon \Sch_S^\op\to \Spt$ the associated spectrum-valued cohomology theory: for an $S$-scheme $X$, $E(X)$ is the mapping spectrum from $\1_X$ to $E_X$ in $\SH(X)$.

\begin{lem}\label{lem:E(-)}
	For any scheme $S$ and any $E\in\SH(S)$, $E(-)\colon \Sch_S^\op\to \Spt$ is a finitary cdh sheaf.
\end{lem}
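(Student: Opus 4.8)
The plan is to deduce everything from the definition of $E(-)$ as the global–sections presheaf $X\mapsto\Maps_{\SH(X)}(\1_X, E_X)$ attached to $E$, using three standard structural features of $X\mapsto\SH(X)$: continuity, Nisnevich descent of the sheaf of $\infty$-categories $X\mapsto\SH(X)$, and the localization (recollement) property for a closed immersion together with its open complement. Recall that the cdh topology is generated by the Nisnevich and cdp topologies, so a presheaf is a cdh sheaf iff it is both a Nisnevich sheaf and a cdp sheaf; and by \propref{prop:cdp} (after reducing to a qcqs base by Zariski descent and invoking \propref{prop:finitary-sheaves}, once we know $E(-)$ is finitary) being a cdp sheaf amounts to $E(\varnothing)\simeq\ast$ together with $E(-)$ sending finitely presented abstract blowup squares to cartesian squares. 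As $\SH(\varnothing)\simeq 0$ we have $E(\varnothing)\simeq\ast$ for free, so it remains to prove: \emph{(a)} $E(-)$ is finitary; \emph{(b)} $E(-)$ is a Nisnevich sheaf; \emph{(c)} $E(-)$ sends abstract blowup squares to cartesian squares.

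For \emph{(a)}, let $X=\lim_\alpha X_\alpha$ be a cofiltered limit of qcqs $S$-schemes with affine transition maps. By continuity of $\SH$ \cite{hoyois-sixops}, $\SH(X)\simeq\colim_\alpha\SH(X_\alpha)$ in $\PrL$, a filtered colimit whose underlying $\infty$-category is the filtered colimit of the underlying $\infty$-categories. Both $\1_X$ and $E_X$ are pulled back along each projection $X\to X_\alpha$ from $\1_{X_\alpha}$ and $E_{X_\alpha}$; since mapping spectra in a filtered colimit of $\infty$-categories between objects arising from a single stage are the filtered colimit of the stagewise mapping spectra, $E(X)\simeq\colim_\alpha\Maps_{\SH(X_\alpha)}(\1_{X_\alpha}, E_{X_\alpha})=\colim_\alpha E(X_\alpha)$. (No compactness of $\1$ is needed; both arguments already live at one stage.) For \emph{(b)}, it is standard that $X\mapsto\SH(X)$ is a Nisnevich sheaf of $\infty$-categories, so for an elementary Nisnevich square on $X$ with remaining terms $U,Y,W$ the comparison functor $\SH(X)\to\SH(U)\times_{\SH(W)}\SH(Y)$ is an equivalence. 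As mapping spectra in a limit of $\infty$-categories form the corresponding limit, applying $\Maps(\1,-)$ to the compatible families $\1$ and $E$ shows $E(-)$ sends Nisnevich squares to cartesian squares; with $E(\varnothing)\simeq\ast$ this gives that $E(-)$ is a Nisnevich sheaf (e.g.\ by \cite[Theorem 3.2.5]{AHW}).

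For \emph{(c)} --- the crux --- consider an abstract blowup square
\[
\begin{tikzcd}
	D \ar{r} \ar{d} & Y \ar{d}{p} \\
	Z \ar{r}{i} & X
\end{tikzcd}
\]
with $i$ a closed immersion and $p$ proper and an isomorphism over the open complement $j\colon U\hookrightarrow X$ of $Z$; put $k\colon Y-D=Y\times_X U\hookrightarrow Y$ (the open complement of $D$ in $Y$), $p_U\colon Y-D\xrightarrow{\,\sim\,}U$, and let $\ell\colon D\hookrightarrow Y$ be the inclusion. By nilinvariance of $\SH$ we may assume all schemes reduced. Applying $\Maps_{\SH(X)}(\1_X,-)$ to the localization triangle $j_!j^*E_X\to E_X\to i_*i^*E_X$ identifies $\fib\big(E(X)\to E(Z)\big)\simeq\Maps_{\SH(X)}(\1_X, j_!j^*E_X)$, and applying $\Maps_{\SH(Y)}(\1_Y,-)$ to $k_!k^*E_Y\to E_Y\to\ell_*\ell^*E_Y$ identifies $\fib\big(E(Y)\to E(D)\big)\simeq\Maps_{\SH(Y)}(\1_Y, k_!k^*E_Y)$. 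Now $p^*\1_X\simeq\1_Y$, $k^*p^*\simeq p_U^{*}j^*$, and $p^*j_!\simeq k_!p_U^{*}$ (base change for lower shriek along an open immersion), so by the $(p^*\dashv p_*)$ adjunction the latter fiber is $\Maps_{\SH(X)}(\1_X, p_*k_!p_U^{*}j^*E_X)$. From $p\circ k=j\circ p_U$ and functoriality of lower shriek, $p_!k_!\simeq j_!(p_U)_!$; since $p$ and $p_U$ are proper, $p_!\simeq p_*$ and $(p_U)_!\simeq(p_U)_*$, whence $p_*k_!\simeq j_!(p_U)_*$; and $(p_U)_*p_U^{*}\simeq\id$ since $p_U$ is an isomorphism. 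Composing, $p_*k_!p_U^{*}j^*E_X\simeq j_!j^*E_X$, and tracking the maps shows the resulting equivalence $\fib(E(X)\to E(Z))\simeq\fib(E(Y)\to E(D))$ is the one induced by $p^*$; hence $E(-)$ takes the square to a cartesian square.

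The hard part is \emph{(c)} --- not the strategy, which is the familiar ``cdh descent via localization'' argument (as for $\KGL$-cohomology, i.e.\ $\KH$), but the six-functor bookkeeping: one must check that the exchange isomorphisms $p^*j_!\simeq k_!p_U^{*}$ and $p_*k_!\simeq j_!(p_U)_*$ are mutually compatible so that the displayed chain genuinely refines the map induced by $p^*$ on fibers, and one must ensure the whole formalism (localization, proper base change, $p_!\simeq p_*$ for proper $p$, continuity) is available for $\SH(-)$ over arbitrary qcqs --- not just noetherian --- bases \cite{hoyois-sixops}. If one prefers a black box, \emph{(c)} can instead be deduced from a cdh-descent statement for $\SH(-)$ cited from the literature; but the localization argument above is self-contained with the tools already in play.
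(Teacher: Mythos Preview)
Your argument is correct. The paper's own proof is simply two literature citations---\cite[Corollary~6.25]{hoyois-sixops} for cdh descent and \cite[Proposition~C.12(4)]{HoyoisGLV} for finitariness---so your write-up is not so much a different route as an unpacking of what lies behind those citations: continuity of $\SH$ gives (a), Nisnevich descent of the assignment $X\mapsto\SH(X)$ gives (b), and the localization/proper-base-change computation gives (c), which is precisely the mechanism underlying the cited corollary. One small remark: the aside ``by nilinvariance of $\SH$ we may assume all schemes reduced'' is true (localization applied to a nilimmersion, whose open complement is empty, shows $i^*$ is an equivalence) but unnecessary---nothing in your argument for (c) uses reducedness. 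The trade-off is the obvious one: the paper's version is two lines at the cost of opacity; yours exposes exactly which pieces of the six-functor formalism over qcqs bases are being invoked (and correctly flags the finite-presentation hypotheses needed for $p_!\simeq p_*$), at the cost of length and the coherence bookkeeping you note at the end.
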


\begin{proof}
	The presheaf $E(-)$ satisfies cdh descent by \cite[Corollary 6.25]{hoyois-sixops}, and it is finitary by \cite[Proposition C.12(4)]{HoyoisGLV}.
\end{proof}

Let $\sC$ be a symmetric monoidal $\infty$-category and $\Phi\subset \pi_0\Maps(\1,\1)$ a multiplicative subset. Recall that an object $E\in\sC$ is \emph{$\Phi$-periodic} if every $\phi\in\Phi$ acts invertibly on $E$ \cite[\sectsign 12.1]{norms}. We say that $E\in\sC$ is \emph{$\Phi$-nilpotent} if $\Maps(E,E')\simeq *$ for every $\Phi$-periodic object $E'$, and \emph{$\Phi$-torsion} if there exists $\phi\in\Phi$ such that $\phi\colon E\to E$ factors through an initial object of $\sC$. Note that every $\Phi$-torsion object is $\Phi$-nilpotent.

\begin{lem}\label{lem:nilpotent}
	Let $\sC$ be a stable compactly generated symmetric monoidal $\infty$-category and let $\Phi\subset \pi_0\Maps(\1,\1)$ be a multiplicative subset. Then the full subcategory of $\Phi$-nilpotent objects in $\sC$ is compactly generated, with compact objects the compact $\Phi$-torsion objects of $\sC$.
\end{lem}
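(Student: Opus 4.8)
The plan is to realize the subcategory of $\Phi$-nilpotent objects as the kernel of a smashing localization and then apply the structure theory of compactly generated stable $\infty$-categories. First I would note that the $\Phi$-periodic objects are closed under all limits and colimits in $\sC$ (for a diagram of such objects, multiplication by any $\phi\in\Phi$ on the (co)limit is a (co)limit of equivalences) and form an accessible subcategory, so that the inclusion admits a left adjoint $L_\Phi\colon\sC\to\sC$, which is a smashing Bousfield localization. Writing $\1[\Phi^{-1}] := L_\Phi\1$, one has $L_\Phi E\simeq E\otimes\1[\Phi^{-1}]$ and a cofiber sequence of endofunctors $A_\Phi\otimes(-)\to\id\to L_\Phi$, where $A_\Phi := \Fib(\1\to\1[\Phi^{-1}])$. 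By definition $E$ is $\Phi$-nilpotent iff $\Maps(E,E')\simeq\ast$ for all $L_\Phi$-local $E'$, and taking $E' = L_\Phi E$ shows this is equivalent to $L_\Phi E\simeq 0$; hence the $\Phi$-nilpotent objects are exactly $\ker L_\Phi = \{E : E\otimes\1[\Phi^{-1}]\simeq 0\}$.

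For a compact $c\in\sC$ and $\phi\in\Phi$ set $c/\phi := \Cofib(\phi\cdot\id_c\colon c\to c)$, which is again compact, being a finite colimit of compact objects. I claim $\ker L_\Phi$ is the localizing subcategory $\sD$ generated by the set of all such $c/\phi$ (over compact $c$ and $\phi\in\Phi$). The inclusion $\sD\subseteq\ker L_\Phi$ is clear: $L_\Phi(c/\phi)\simeq c\otimes\Cofib(\phi\colon\1[\Phi^{-1}]\to\1[\Phi^{-1}])\simeq 0$ since $\phi$ acts invertibly on $\1[\Phi^{-1}]$, and $\ker L_\Phi$ is closed under colimits. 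For the reverse inclusion, any $E\in\ker L_\Phi$ satisfies $E\simeq A_\Phi\otimes E$; writing $E$ as a filtered colimit of compact objects reduces matters to checking $A_\Phi\otimes c\in\sD$ for $c$ compact, and $A_\Phi\otimes c\simeq\Fib(c\to c\otimes\1[\Phi^{-1}])$ is a filtered colimit of objects built from the $c/\phi$ by finitely many (de)suspensions and cofiber sequences (using the standard filtration of $\Cofib(\phi\psi)$ by $\Cofib(\phi)$ and $\Cofib(\psi)$), hence lies in $\sD$. Because $\sD$ is generated by a \emph{set} of objects that are compact in $\sC$, the canonical functor to $\sC$ from the Ind-completion of the thick subcategory they generate is fully faithful with essential image $\sD$; thus $\ker L_\Phi$ is compactly generated, its inclusion into $\sC$ preserves compact objects, and its compact objects are precisely the objects of $\ker L_\Phi$ that are compact in $\sC$ (equivalently, the thick subcategory of $\sC$ generated by the $c/\phi$).

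It remains to identify these with the compact $\Phi$-torsion objects of $\sC$. One inclusion holds because every $\Phi$-torsion object is $\Phi$-nilpotent. Conversely, let $E$ be compact and $\Phi$-nilpotent. Applying $\Maps(E,-)$ to $0\simeq L_\Phi E\simeq\colim_{\phi\in\Phi}E$ (the filtered colimit of copies of $E$ along multiplication by the elements of $\Phi$) and using compactness of $E$ gives $\ast\simeq\colim_{\phi\in\Phi}\Maps(E,E)$; passing to $\pi_0$, the class of $\id_E$ becomes trivial in the filtered colimit of sets $\colim_{\phi\in\Phi}\pi_0\Maps(E,E)$, so $\phi\cdot\id_E\simeq 0$ for some $\phi\in\Phi$, i.e.\ $E$ is $\Phi$-torsion.

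The only step carrying genuine content is the identification $\ker L_\Phi = \sD$, and within it the need for a concrete enough description of $A_\Phi\otimes c$ to locate it inside $\sD$; once that is in place, compact generation and the description of the compact objects are formal (fully faithfulness of Ind-completions on subcategories of compact objects), and the comparison of $\Phi$-nilpotent with $\Phi$-torsion objects is the elementary colimit argument above. The remaining work is bookkeeping: making the presentation of $\1[\Phi^{-1}]$ as a filtered colimit and the filtration of $\Cofib(\phi\psi)$ precise for a general multiplicative subset $\Phi$.
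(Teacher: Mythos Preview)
Your argument is correct and follows essentially the same route as the paper: both establish the localization $L_\Phi$ with the explicit filtered-colimit description, identify the $\Phi$-nilpotent objects as $\ker L_\Phi$ (equivalently, the image of the colocalization $R_\Phi=\Fib(\id\to L_\Phi)$), and show that this subcategory is generated under colimits by the objects $\Fib(\phi\colon c\to c)$ for $c$ compact and $\phi\in\Phi$. The only noteworthy difference is in the final identification of compact $\Phi$-nilpotent objects with compact $\Phi$-torsion objects: the paper observes directly that $\Fib(\phi\colon c\to c)$ is $\phi^2$-torsion (so the generators are already torsion, and torsion is stable under thick closure), whereas you give a separate argument via compactness and the vanishing of $\colim_{\phi\in\Phi}\Maps(E,E)$. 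Both work; the paper's observation is slightly more economical, while your argument makes the role of compactness more explicit.
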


\begin{proof}
	Under the assumption on $\sC$, the full subcategory of $\Phi$-periodic objects is reflective, with localization functor $L_\Phi$ sending $E$ to the colimit of a filtered diagram in which every arrow is of the form $\phi\colon E\to E$ for some $\phi\in\Phi$ \cite[Lemma 12.1]{norms}. If $R_\Phi$ is the fiber of $\id_\sC\to L_\Phi$, then $R_\Phi$ is right adjoint to the inclusion of the full subcategory of $\Phi$-nilpotent objects, and it preserves colimits.
	Let $E\in\sC$ be $\Phi$-nilpotent. Then $E$ is a filtered colimit of objects of the form $R_\Phi(C)$ with $C\in\sC$ compact, and $R_\Phi(C)$ itself is a filtered colimit of objects of the form $\fib(\phi\colon C\to C)$ for $\phi\in\Phi$. It remains to observe that $\fib(\phi\colon C\to C)$ is compact and $\phi^2$-torsion.
\end{proof}

\begin{lem}\label{lem:GW-nilpotence}
	Let $k$ be a field of characteristic $p>0$ and let $\phi\in\GW(k)$ be an element such that $\rk(\phi)$ is prime to $p$. Then $\GW(k)=(p,\phi)$.
\end{lem}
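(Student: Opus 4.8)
The plan is to show that $(p,\phi)$ is the unit ideal of $\GW(k)$ by analysing the ring $\GW(k)/(p)$ through the rank homomorphism $\rk\colon\GW(k)\to\Z$. The key claim is that
\[
\bigl(\GW(k)/(p)\bigr)_{\mathrm{red}}\cong\Z/p
\]
via $\rk$. Granting this: since $\rk(\phi)$ is prime to $p$, the image of $\phi$ is a unit in $\Z/p$, so further quotienting by $\phi$ gives the zero ring; hence $\GW(k)/(p,\phi)$ has vanishing reduction, so $1$ is nilpotent in it, so $\GW(k)/(p,\phi)=0$, i.e.\ $(p,\phi)=\GW(k)$.

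To prove the claim, note first that $\GW(k)/(p)\neq 0$ because $\rk(p\langle 1\rangle)=p$ is not a unit in $\Z$, and that the kernel of the surjection $\GW(k)/(p)\twoheadrightarrow\Z/p$ induced by $\rk$ is the image of the fundamental ideal $I(k):=\ker(\rk)$; so it is enough to show that this image lies in the nilradical. Since $\chr k=p>0$, the field $k$ is not formally real, hence by Pfister's theorem the Witt ring $W(k)$ is $2$-primary torsion. If $p$ is odd, multiplication by $p$ is then an automorphism of $W(k)$, so $W(k)=pW(k)$; feeding this into the exact sequence $0\to\Z h\to\GW(k)\xrightarrow{\pi}W(k)\to 0$ (with $h$ the hyperbolic plane, of rank $2$, using $\chr k\neq 2$) and comparing ranks yields $I(k)\subseteq p\,\GW(k)$, so in fact $\GW(k)/(p)\cong\Z/p$ already. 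If $p=2$, use instead the identity $\rho_a^2=2\rho_a$ valid in $\GW(k)$ for $\rho_a:=\langle 1\rangle-\langle a\rangle$ (coming from $\langle a\rangle^2=\langle a^2\rangle=\langle 1\rangle$): modulo $2$ this says $\rho_a$ squares to zero. Since in characteristic $2$ every nondegenerate symmetric bilinear form is diagonal or alternating --- so that $\GW(k)$ is generated as an abelian group by the classes $\langle a\rangle$ (using $[h]=2[\langle 1\rangle]$, as $\langle 1,1,1\rangle\cong\langle 1\rangle\perp h$) and $I(k)$ is generated as an abelian group by the $\rho_a$ --- the image of $I(k)$ in $\GW(k)/(2)$ is generated by nilpotents, hence lies in the nilradical (which is an additive subgroup).

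The only delicate point is the characteristic-$2$ case, which leans on the structure of symmetric bilinear forms over a (possibly imperfect) field of characteristic $2$ in order to exhibit a convenient set of additive generators of $I(k)$. One may sidestep this by instead quoting the classical description of $\operatorname{Spec}\GW(k)$ for a non-real field $k$: namely $\GW(k)_{\mathrm{red}}\cong\Z$, so that the maximal ideals of $\GW(k)$ are exactly the ideals $\rk^{-1}(q\Z)$ for $q$ prime. One then checks directly that $(p,\phi)$ is contained in no such maximal ideal: $p\in\rk^{-1}(q\Z)$ forces $q=p$, and then $\rk(\phi)\notin p\Z$ by hypothesis --- so $(p,\phi)=\GW(k)$.
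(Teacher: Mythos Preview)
Your argument is correct. Both your proof and the paper's split into the cases $p=2$ and $p$ odd and ultimately rest on Pfister's theorem that $\W(k)$ is $2$-primary torsion for a non-formally-real field, but the packaging differs. The paper works directly with the quotient $\W(k)=\GW(k)/(h)$ and the identity $\phi h=\rk(\phi)h$: for $p=2$ it uses that $h=2\langle 1\rangle$ in $\GW(k)$ (the same fact you verify via $\langle 1,1,1\rangle\cong\langle 1\rangle\perp h$) together with the statement that odd-rank classes are units in $\W(k)$; for $p$ odd it observes that $p$ is a unit in $\W(k)$ and that $h\in(p,\phi)$ by the identity above, whence $(p,\phi)\supset(p,h)=\GW(k)$. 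Your route instead analyzes $\GW(k)/(p)$ via its reduction: for $p$ odd you show $I(k)\subset p\,\GW(k)$ outright (a clean computation using the exact sequence $0\to\Z h\to\GW(k)\to\W(k)\to 0$), and for $p=2$ you show the image of $I(k)$ is nil via the explicit relation $\rho_a^2=2\rho_a$. Your $p=2$ argument is a little more self-contained, since it effectively reproves that the fundamental ideal of $\W(k)$ is nil rather than quoting it; the paper's argument is marginally slicker for $p$ odd by avoiding the lift-and-compare-ranks step. Your alternative via $\GW(k)_\red\cong\Z$ for non-real $k$ is also valid and is perhaps the most conceptual formulation.
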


\begin{proof}
	Recall that $h\Z\subset\GW(k)$ is an ideal such that $\GW(k)/h\Z=\W(k)$. Moreover, $\W(k)$ is $2$-power torsion and the invertible elements in $\W(k)$ are forms of odd rank \cite[III \sectsign 3]{milnor1973symmetric}. If $p=2$, then $h=2$ and $\phi$ has odd rank, so $\phi$ becomes invertible in $\W(k)=\GW(k)/2\Z$, which proves the claim.
	Suppose that $p$ is odd. Since $\W(k)$ is $2$-power torsion, $p$ becomes invertible in $\W(k)$, so $(p,h)$ is the unit ideal. On the other hand, since $\phi h=\rk(\phi) h$ and $\rk(\phi)$ is prime to $p$, $h$ belongs to $(p,\phi)$. Hence, $(p,\phi)$ is the unit ideal.
\end{proof}

The following theorem is a minor generalization of the rigidity theorem of Ananyevskiy and Druzhinin \cite{sh-rigid}.

\begin{thm}[Ananyevskiy–Druzhinin]
	\label{thm:smooth-rigidity}
	Let $k$ be a field and let $\Phi\subset\GW(k)$ be the subset of elements with rank invertible in $k$. If $E\in\SH(k)$ is $\Phi$-nilpotent, then $E(-)\colon \Sch_k^\op\to\Spt$ satisfies rigidity for ind-smooth henselian local rings.
\end{thm}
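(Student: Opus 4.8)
The plan is to reduce Theorem~\ref{thm:smooth-rigidity} to the rigidity theorem of \cite{sh-rigid} by two colimit reductions, one on the coefficient spectrum and one on the base ring, after which the hypotheses of \emph{loc.\ cit.}\ apply. For the coefficients: for any scheme $X$ the functor $\SH(k)\to\Spt$, $E\mapsto E(X)=\Maps_{\SH(X)}(\1_X,E_X)$, preserves filtered colimits, because pullback $\SH(k)\to\SH(X)$ preserves colimits and $\1_X$ is compact in $\SH(X)$; since "satisfies rigidity for a fixed henselian pair" is stable under filtered colimits in $E$, and since by Lemma~\ref{lem:nilpotent} every $\Phi$-nilpotent object of $\SH(k)$ is a filtered colimit of compact $\Phi$-torsion objects, we may assume $E$ is $\Phi$-torsion, i.e.\ $\phi\cdot\id_E$ is null for some $\phi\in\GW(k)$ with $\rk(\phi)$ invertible in $k$. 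For the ring: an ind-smooth henselian local $k$-algebra $\mathcal{O}$ is a filtered colimit of essentially smooth henselian local $k$-algebras $\mathcal{O}_\alpha$ (present $\mathcal{O}$ as a filtered colimit of smooth $k$-algebras, localize, and henselize, using that henselization commutes with filtered colimits); any ideal $I\subset\mathcal{O}$ is the filtered colimit of its preimages $I_\alpha\subset\mathcal{O}_\alpha$, each $(\mathcal{O}_\alpha,I_\alpha)$ a henselian pair; and since $E(-)$ is finitary (Lemma~\ref{lem:E(-)}), $E(\mathcal{O})\simeq\colim_\alpha E(\mathcal{O}_\alpha)$ and likewise for the quotients, so it suffices to prove rigidity for the $(\mathcal{O}_\alpha,I_\alpha)$.

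It then remains to invoke \cite{sh-rigid}. I expect their argument — a transfer/specialization argument along an auxiliary generically finite map over the essentially smooth henselian local ring — to apply in the present generality essentially verbatim: the numerical input is that a certain transfer differs from the identity by multiplication by an element of $\GW(k)$ whose rank is the generic degree of the auxiliary map, and this element acts invertibly on $E$ after passing to the residue field precisely because $\rk(\phi)$ is invertible in $k$ and $\phi\cdot\id_E=0$. In positive characteristic $p$ there is moreover a simplification: by Lemma~\ref{lem:GW-nilpotence} one has $\GW(k)=(p,\phi)$, so writing $1=ap+b\phi$ and using $\phi\cdot\id_E=0$ shows that $p\cdot\id_E$ is an equivalence, whence $E\in\SH(k)[1/p]$ and one is reduced to coefficients with torsion prime to $p$.

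The main obstacle is this last, substantive step: verifying that the proof of \cite{sh-rigid} indeed goes through with the coefficient hypothesis "$\phi$-torsion with $\rk(\phi)$ invertible in $k$" in place of "$n$-torsion with $n$ invertible in $k$". In characteristic zero this cannot be bypassed by a reduction to honest integer torsion — for instance, over $\mathbf{R}$ the $\rho$-inverted motivic sphere is $\bigl(\langle 1\rangle+\langle -1\rangle\bigr)$-torsion, with the hyperbolic form of rank $2$ invertible in $\mathbf{R}$, yet it is equivalent to the ordinary sphere spectrum and hence not torsion — so one genuinely needs the $\GW(k)$-linear form of the rigidity theorem. The two colimit reductions, by contrast, are entirely formal.
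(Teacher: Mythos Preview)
Your two colimit reductions (to $\Phi$-torsion $E$ via Lemma~\ref{lem:nilpotent}, and to essentially smooth henselian local rings via finitariness) are correct and both appear in the paper's proof. The difference is in how the remaining step is handled: you leave the application of \cite{sh-rigid} as an acknowledged obstacle, hoping its proof goes through over arbitrary fields for $\GW(k)$-torsion coefficients, whereas the paper closes this gap by an extra reduction that uses an observation you already made but did not exploit.

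You noted that in characteristic $p>0$, Lemma~\ref{lem:GW-nilpotence} forces $p$ to act invertibly on any $\Phi$-torsion $E$. The paper uses this not merely to simplify the torsion hypothesis, but to invoke \cite[Corollary~2.1.5]{shperf}: once the exponential characteristic acts invertibly on $E$, the presheaf $E(-)$ is invariant under universal homeomorphisms. Since $k\to k^\perf$ induces a universal homeomorphism on every $k$-scheme and henselian pairs are stable under integral base change, one may replace $k$ by $k^\perf$ and the given ind-smooth henselian local $k$-algebra by its (still ind-smooth, henselian, local) base change. After this reduction to a perfect base field, together with your finitary reduction to henselizations of points on smooth $k$-schemes, the statement is \emph{precisely} \cite[Corollary~7.11]{sh-rigid}, so no inspection of its proof is required. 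In characteristic zero the perfection step is vacuous, and your concern about genuinely $\GW(k)$-torsion (non-integer-torsion) coefficients is already handled by the statement of \cite[Corollary~7.11]{sh-rigid} itself.

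So the missing idea is: the $p$-invertibility you derived should be fed into universal-homeomorphism invariance to reduce to a perfect field, rather than used as a side remark. This is cleaner than attempting to rerun the Ananyevskiy--Druzhinin argument over imperfect fields.
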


\begin{proof}
	By Lemma~\ref{lem:nilpotent}, we may assume that $E$ is $\Phi$-torsion.
	It follows from Lemma~\ref{lem:GW-nilpotence} that the exponential characteristic of $k$ acts invertibly on $E$, so that $E(-)$ is invariant under universal homeomorphisms \cite[Corollary 2.1.5]{shperf}. Since henselian pairs are preserved by integral base change, we may replace $k$ by $k^\perf$ and hence assume $k$ perfect. 
	Furthermore, since $E(-)$ is finitary, it suffices to prove rigidity for henselizations of points on smooth $k$-schemes.
	In this case the result is precisely \cite[Corollary 7.11]{sh-rigid}.
\end{proof}

If $k$ has characteristic zero, Zariski showed that every valuation ring over $k$ is ind-smooth \cite[Theorem U$_3$]{Zariski} (this is also a consequence of Hironaka's resolution of singularities). Therefore, Theorem~\ref{thm:smooth-rigidity} implies that $\Phi$-nilpotent motivic spectra over $k$ satisfy rigidity for henselian valuation rings. Our next goal is to prove this in arbitrary characteristic, using the following desingularization theorem of Temkin. We are grateful to Benjamin Antieau for communicating to us this result.

\begin{thm}[Temkin] 
	\label{thm:temkin}
	Let $k$ be a perfect field and $V$ a perfect valuation ring over $k$. Then $V$ is the filtered colimit of its smooth $k$-subalgebras.
\end{thm}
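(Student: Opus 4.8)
\emph{Proof proposal.} The plan is to deduce this from Temkin's inseparable local uniformization \cite{temkin-insep}, exploiting the perfectness of $V$ to absorb the purely inseparable extensions that theorem introduces. We may assume $\chr k = p>0$; in characteristic zero every valuation ring over $k$ is already ind-smooth by Zariski \cite[Theorem U$_3$]{Zariski}. Write $V$ as the filtered union of its finitely generated $k$-subalgebras $A$ (the subring generated by two of them is again finitely generated, and every element lies in such a subalgebra). It therefore suffices to prove the following: for every finitely generated $k$-subalgebra $A\subset V$ there is a finitely generated \emph{smooth} $k$-subalgebra $B$ with $A\subset B\subset V$. Granting this, the smooth finitely generated $k$-subalgebras of $V$ are cofinal among all finitely generated $k$-subalgebras, hence form a filtered system with union $V$, which is the assertion.

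To prove the claim, fix such an $A$, set $K=\Frac(V)$ and $K_0=\Frac(A)\subset K$, and let $R=V\cap K_0$; this is a valuation ring of $K_0$ containing $A$, i.e.\ a valuation centered on the integral scheme $\Spec A$. The content of Temkin's theorem that we use is that there is a finite purely inseparable extension $L/K_0$ such that the extension of the valuation of $R$ to $L$ admits a finitely generated $A$-subalgebra $C\subset L$ of its valuation ring $R'$, with $\Frac(C)=L$, whose center $\mathfrak p=\mathfrak m_{R'}\cap C$ is a regular point of $\Spec C$. The extension $R'$ of the valuation to $L$ is unique because $L/K_0$ is purely inseparable. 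Now, since $V$ is perfect, $K$ is perfect, so there is a unique $K_0$-embedding $L\hookrightarrow K$ (the $p$-power roots defining $L$ over $K_0$ exist uniquely in $K$); fix it. Then $R'=V\cap L$ by the same uniqueness, so $C\subset R'\subset V$, while $A\subset C$ by construction. Finally, the regular locus of the Noetherian scheme $\Spec C$ is open and contains $\mathfrak p$, so there is $g\in C\setminus\mathfrak p$ with $\Spec C_g$ regular; since $g\notin\mathfrak m_{R'}$ it is a unit in $R'$, whence $C_g\subset R'\subset V$. Then $B:=C_g$ is finitely generated over $k$ and regular, hence smooth over $k$ as $k$ is perfect, and $A\subset C\subset B\subset V$. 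This proves the claim.

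The only real work lies in matching the statement of \cite{temkin-insep} with the form used above: Temkin phrases inseparable local uniformization in terms of a modification of $\Spec A$ followed by a purely inseparable alteration after which the center of the valuation becomes regular, so one must unwind this to extract the finitely generated $A$-subalgebra $C$ of the extended valuation ring $R'$. One should also check that no restriction on the rank or type of the valuation is needed, but handling arbitrary valuations is precisely the strength of Temkin's result. Everything else is routine: perfectness descends from $V$ to $K$, valuation extensions along purely inseparable extensions are unique, ``finite type and regular over a perfect field $\Rightarrow$ smooth'' is standard, and the reduction to the claim is elementary bookkeeping with filtered colimits.
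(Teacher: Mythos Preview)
Your proposal is correct and follows essentially the same approach as the paper: reduce to showing every finitely generated $k$-subalgebra $A\subset V$ is contained in a smooth one, apply Temkin's inseparable local uniformization to the valuation $R=V\cap\Frac(A)$, and use perfectness of $V$ to embed the resulting purely inseparable extension $L$ into $\Frac(V)$ so that the regular (hence smooth, $k$ being perfect) localization lands inside $V$. The paper's proof is slightly terser and cites the precise form of Temkin's statement (modification $A\subset A'\subset \Frac(A)$ followed by normalization in $L$), but the argument is the same.
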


\begin{proof} 
	We must show that every finitely generated subalgebra $A\subset V$ is contained in a smooth subalgebra of $V$. Let $K$ be the fraction field of $A$ and let $R=V\cap K$. Then $R$ is a valuation ring of $K$ centered on $A$.
	By \cite[Theorem 1.3.2]{temkin-insep}, there exists a finite purely inseparable extension $L/K$ and an algebra $A\subset A'\subset K$ such that, if $S$ is the unique valuation ring of $L$ dominating $R$, then $S$ is centered on a smooth point of the normalization $A''$ of $A'$ in $L$. 
	Since $V$ is perfect, $L$ is contained in the fraction field of $V$ and $S=V\cap L$. Hence, if $A'''\subset S$ is a localization of $A''$ that is smooth over $k$, then $A\subset A'''\subset V$, as desired.
\end{proof}

\begin{cor}\label{cor:v-rigidity}
	Let $k$ be a field and $E \in \SH(k)$. Suppose that $E$ is $\Phi$-nilpotent where $\Phi\subset\GW(k)$ is the subset of elements whose rank is invertible in $k$. Then the presheaf of spectra $E(-)\colon \Sch^{\op}_k \rightarrow \Spt$ satisfies rigidity for henselian valuation rings.
\end{cor}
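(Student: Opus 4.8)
The plan is to reduce to the case where both the base field and the valuation ring are \emph{perfect}, and then combine Temkin's theorem (Theorem~\ref{thm:temkin}) with the Ananyevskiy--Druzhinin rigidity theorem (Theorem~\ref{thm:smooth-rigidity}). First I would use Lemma~\ref{lem:nilpotent} to assume that $E$ is $\Phi$-torsion. If $\chr k = 0$ there is nothing further to prove, since every valuation ring over $k$ is ind-smooth (Zariski, \cite[Theorem U$_3$]{Zariski}) and henselian valuation rings over $k$ therefore fall directly under Theorem~\ref{thm:smooth-rigidity}. So assume $\chr k = p > 0$. Lemma~\ref{lem:GW-nilpotence} gives a relation $1 = ap + b\phi$ in $\GW(k)$ with $\phi$ acting as $0$ on $E$, so $p$ acts invertibly on $E$; by \cite[Corollary 2.1.5]{shperf} the presheaf $E(-)$ is then invariant under universal homeomorphisms of $k$-schemes.

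Next I would pass to perfections. Let $V$ be a henselian valuation ring over $k$ with maximal ideal $\m$ and residue field $\kappa$, and let $V^\perf$ be its absolute perfection. I would record that $V^\perf$ is again a valuation ring, that it is integral over $V$ and local and hence henselian, that its maximal ideal $\m^\perf$ satisfies $V^\perf/\m^\perf \cong \kappa^\perf$, and that, being perfect, $V^\perf$ is naturally a $k^\perf$-algebra. The inclusions $V \hookrightarrow V^\perf$ and $\kappa \hookrightarrow \kappa^\perf$ are morphisms of $k$-schemes inducing universal homeomorphisms on spectra, so the previous step yields equivalences $E(V) \to E(V^\perf)$ and $E(\kappa) \to E(\kappa^\perf)$. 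Applying $E(-)$ to the commutative square relating $V$, $\kappa$, $V^\perf$ and $\kappa^\perf$ then reduces the claim to the assertion that $E(V^\perf) \to E(V^\perf/\m^\perf)$ is an equivalence.

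Finally I would invoke the two main inputs over the perfect field $k^\perf$. Writing $E^\perf \in \SH(k^\perf)$ for the pullback of $E$, one has $E^\perf(X) \simeq E(X)$ for every $k^\perf$-scheme $X$, and $E^\perf$ is again $\Phi'$-torsion for $\Phi'$ the elements of $\GW(k^\perf)$ of rank invertible in $k^\perf$ (neither the rank nor the characteristic changes under $\GW(k) \to \GW(k^\perf)$). By Theorem~\ref{thm:temkin}, $V^\perf$ is the filtered colimit of its smooth $k^\perf$-subalgebras, hence an ind-smooth henselian local ring, so Theorem~\ref{thm:smooth-rigidity} applied over $k^\perf$ shows that $E^\perf(V^\perf) \to E^\perf(V^\perf/\m^\perf)$ is an equivalence. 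Unwinding the identification $E^\perf(-) \simeq E(-)$ and combining with the second paragraph completes the argument.

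I expect the only real content beyond this assembly to be the bookkeeping in the middle step. One has to check that $V^\perf$ is a genuinely \emph{henselian} valuation ring with residue field $\kappa^\perf$, so that replacing $V$ by $V^\perf$ is legitimate, and one has to use exactly the right form of universal-homeomorphism-invariance: it is what transports the statement from $k$ (where Temkin's theorem is unavailable when $k$ is imperfect) to $k^\perf$, via the fact that $E^\perf(-)$ and $E(-)$ agree on $k^\perf$-schemes. Everything else is formal, given Theorems~\ref{thm:temkin} and \ref{thm:smooth-rigidity} and the perfection-invariance theorem of \cite{shperf}.
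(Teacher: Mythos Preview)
Your proposal is correct and follows essentially the same approach as the paper: reduce to $\Phi$-torsion via Lemma~\ref{lem:nilpotent}, use Lemma~\ref{lem:GW-nilpotence} and \cite[Corollary 2.1.5]{shperf} to pass to perfections, and then apply Theorems~\ref{thm:temkin} and~\ref{thm:smooth-rigidity} over $k^\perf$. The only difference is that the paper handles all characteristics uniformly (in characteristic zero perfection is the identity, so the reduction is vacuous) rather than singling out the characteristic-zero case via Zariski's theorem, and it phrases the henselianness of $V^\perf$ via ``henselian pairs are preserved by integral base change'' rather than your ``integral and local over henselian local implies henselian''; both are valid.
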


\begin{proof} 
	By Lemmas \ref{lem:nilpotent} and \ref{lem:GW-nilpotence}, the exponential characteristic of $k$ acts invertibly in $E$, so that $E(-)$ is invariant under universal homeomorphisms \cite[Corollary 2.1.5]{shperf}. Let $V$ be a henselian valuation ring over $k$. Then $V^\perf$ is a henselian valuation ring over $k^\perf$, since henselian pairs are preserved by integral base change. Thus, we can assume $k$ and $V$ perfect. 
	In this case, the result follows from Theorems \ref{thm:smooth-rigidity} and \ref{thm:temkin}.
\end{proof}

Recall that the effective homotopy $t$-structure on $\SH(S)$ is defined as follows: $\SH(S)^\eff_{\geq 0}\subset\SH(S)$ is the full subcategory generated under colimits and extensions by $\Sigma^\infty_\T X_+$ for $X\in\Sm_S$. It follows immediately from the definitions that $E$ is $n$-truncated in the effective homotopy $t$-structure if and only if $E(X)\in\Spt_{\leq n}$ for every smooth $S$-scheme $X$.

\begin{cor} \label{cor:truncative} 
	Let $k$ be a field of exponential characteristic $c$ and let $E\in\SH(k)[\frac 1c]$. If $E$ is $n$-truncated in the effective homotopy $t$-structure, then the presheaf of spectra $E(-)\colon \Sch^{\op}_k \rightarrow \Spt$ factors through $\Spt_{\leq n}$.
\end{cor}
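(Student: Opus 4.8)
The plan is to deduce that $E(X)\in\Spt_{\leq n}$ for \emph{every} $X\in\Sch_k$ from the special case of henselian valuation rings, using the cdh‑hypercompleteness results of \sectsign\ref{sub:hodim} to perform that reduction and Temkin's theorem (Theorem~\ref{thm:temkin}) together with the definition of the effective homotopy $t$‑structure to handle the base case. First I would record some easy reductions. Since $E(-)$ is a Zariski sheaf (Lemma~\ref{lem:E(-)}) and $\Spt_{\leq n}$ is closed under limits, it suffices to treat $X$ qcqs; since $E(-)$ is finitary and every qcqs $k$‑scheme is a cofiltered limit of $k$‑schemes of finite type — which are noetherian of finite Krull dimension, hence of finite valuative dimension by Proposition~\ref{prop:dim_v}(9) — and $\Spt_{\leq n}$ is closed under filtered colimits, it suffices to treat $X$ qcqs of finite valuative dimension. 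Finally, because $c$ acts invertibly on $E$, the presheaf $E(-)$ is invariant under universal homeomorphisms by \cite[Corollary 2.1.5]{shperf}; I will use this twice below.

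Now fix $X$ qcqs of finite valuative dimension. The restriction $E(-)|_{\Sch_X^\fp}$ is a cdh sheaf of spectra, which is hypercomplete by Corollary~\ref{cor:cdh-hypercomplete}. Therefore, if the homotopy sheaves $\pi_i\bigl(E(-)|_{\Sch_X^\fp}\bigr)$ vanish for all $i>n$, then this sheaf of spectra is $n$‑truncated, and since the global sections functor $\Gamma$ is left $t$‑exact (being right adjoint to the $t$‑exact constant‑sheaf functor) we conclude $E(X)=\Gamma\bigl(E(-)|_{\Sch_X^\fp}\bigr)\in\Spt_{\leq n}$. So it is enough to show these homotopy sheaves vanish above degree $n$. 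The $\infty$‑topos $\Shv_\cdh(\Sch_X^\fp)$ is coherent (Proposition~\ref{prop:coherent}), hence has enough points, and its points are the henselian valuation rings over $X$; because $E(-)$ is finitary, the stalk of $\pi_i\bigl(E(-)|_{\Sch_X^\fp}\bigr)$ at the point determined by a henselian valuation ring $V$ over $X$ is $\pi_i(E(V))$, and by Lemma~\ref{lem:finite-rank} it suffices to consider $V$ of finite rank. Thus the whole statement reduces to showing: $\pi_i(E(V))=0$ for every $i>n$ and every henselian valuation ring $V$ over $k$.

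To prove this, replace $V$ by its perfection $V^\perf$, which is again a henselian valuation ring (perfection preserves the valuation‑ring property, and henselian pairs are stable under integral base change); invariance under universal homeomorphisms gives $E(V)\simeq E(V^\perf)$. Since $V^\perf$ is a perfect $k^\perf$‑algebra, Temkin's theorem (Theorem~\ref{thm:temkin}) exhibits it as a filtered colimit of smooth $k^\perf$‑subalgebras $A_\alpha$, so $E(V^\perf)\simeq\colim_\alpha E(A_\alpha)$ by finitariness; and each $A_\alpha$, being of finite presentation over $k^\perf=\bigcup_m k^{1/p^m}$, is the base change of a smooth $k^{1/p^m}$‑algebra for some $m$, whence $E(A_\alpha)$ is itself a filtered colimit of spectra $E(C)$ with $C$ smooth over a finite‑exponent purely inseparable extension $k^{1/p^m}$ of $k$. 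For such $C$, the $m$‑fold relative Frobenius $C\to C^{(p^m/k)}$ is a universal homeomorphism of $k$‑schemes, and $C^{(p^m/k)}_\red$ is smooth over $k$ (it is the base change of $C$ along the field isomorphism $k^{1/p^m}\xrightarrow{\sim}k$, and $C^{(p^m/k)}_\red\hookrightarrow C^{(p^m/k)}$ is a nilimmersion); hence $E(C)\simeq E\bigl(C^{(p^m/k)}\bigr)\simeq E\bigl(C^{(p^m/k)}_\red\bigr)\in\Spt_{\leq n}$ by the hypothesis that $E$ is $n$‑truncated in the effective homotopy $t$‑structure. As $\Spt_{\leq n}$ is closed under filtered colimits, $\pi_i(E(V))=0$ for all $i>n$, completing the argument.

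The step that does the real structural work is the reduction to henselian valuation rings: it is precisely hypercompleteness of the cdh $\infty$‑topos (Corollary~\ref{cor:cdh-hypercomplete}) that licenses checking $n$‑truncatedness stalkwise. The main technical nuisance is the passage to a perfect base in the last paragraph: once one is over a perfect field the result is immediate from Temkin's theorem and the definition of the effective homotopy $t$‑structure, but propagating the $t$‑structure bound along the inseparable base changes forced by Temkin requires the relative‑Frobenius manoeuvre above (alternatively, one could invoke perfection‑invariance of $\SH(-)[\tfrac1c]$ to replace $k$ by $k^\perf$ and $E$ by its pullback at the outset, which keeps the effective homotopy $t$‑structure bound by matching up the generating smooth schemes).
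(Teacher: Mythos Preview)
Your proof is correct and follows essentially the same strategy as the paper: reduce to (henselian) valuation rings via the finitary cdh/rh sheaf property and hypercompleteness, then apply Temkin's theorem to perfect valuation rings. The paper is terser on the reduction step (it uses the rh topology and simply asserts that it suffices to check valuation rings, implicitly relying on the same hypercompleteness input you spell out), and for the passage from smooth $k^\perf$-schemes back to smooth $k$-schemes it cites \cite[Lemma~1.12]{SuslinDM} rather than your relative-Frobenius manoeuvre; these are equivalent in content.
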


\begin{proof}
	Since $E(-)$ is a finitary rh sheaf (Lemma~\ref{lem:E(-)}), it suffices to show that $E(V)$ is $n$-truncated for every valuation ring $V$ over $k$. 
	By \cite[Corollary 2.1.5]{shperf}, $E(-)$ is invariant under universal homeomorphisms, so we can assume that $V$ is perfect. Moreover, by \cite[Lemma 1.12]{SuslinDM}, for any finitely presented smooth $k^\perf$-scheme $X$, there is a smooth $k$-scheme $Y$ and a universal homeomorphism $X\to Y$; hence, $E(X)$ is $n$-truncated.
	By Theorem~\ref{thm:temkin}, $V$ is a filtered colimit of smooth $k^\perf$-algebras, so $E(V)$ is a filtered colimit of $n$-truncated spectra and hence is $n$-truncated.
\end{proof}

\begin{thm} \label{thm:main-excision} 
	Let $k$ be a field and $E \in \SH(k)$. Suppose that $E$ is $\Phi$-nilpotent where $\Phi\subset\GW(k)$ is the subset of elements whose rank is invertible in $k$.
	\begin{enumerate}
		\item The presheaf of spectra $E(-)\colon\Sch_k^{\op} \rightarrow \Spt$ satisfies Milnor excision.
		\item If $E$ is truncated in the effective homotopy $t$-structure, then the presheaf of spectra $E(-)\colon\Sch_k^{\op} \rightarrow \Spt$ satisfies cdarc descent.
	\end{enumerate}
\end{thm}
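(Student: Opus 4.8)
The plan is to deduce both assertions from Theorem~\ref{thm:rigidity}. Two of its hypotheses are met immediately: by Lemma~\ref{lem:E(-)} the presheaf $E(-)\colon\Sch_k^\op\to\Spt$ is a finitary cdh sheaf, and by Corollary~\ref{cor:v-rigidity} it satisfies rigidity for henselian valuation rings (this is precisely where the $\Phi$-nilpotence of $E$, together with the rigidity theorem of Ananyevskiy--Druzhinin and Temkin's desingularization theorem, enters). So for each part it only remains to verify the relevant side condition. For part~(1) I would simply note that $S=\Spec k$ has valuative dimension $0$, hence finite, and that $\Spt$ is compactly generated; Theorem~\ref{thm:rigidity}(1) then applies verbatim and yields Milnor excision.

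For part~(2) the obstruction is that Theorem~\ref{thm:rigidity}(2) requires the compact objects of the target $\infty$-category to be cotruncated, which fails for $\Spt$ (finite spectra are not cotruncated). The remedy is to use the extra hypothesis that $E$ is truncated in the effective homotopy $t$-structure, say $n$-truncated, in order to replace the target by $\Spt_{\leq n}$. First I would record that the exponential characteristic $c$ of $k$ acts invertibly on $E$ --- this follows from Lemmas~\ref{lem:nilpotent} and~\ref{lem:GW-nilpotence} exactly as in the proof of Corollary~\ref{cor:v-rigidity} --- so that $E\in\SH(k)[\tfrac1c]$ and Corollary~\ref{cor:truncative} applies, showing that $E(-)$ factors through the full subcategory $\Spt_{\leq n}\subset\Spt$. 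The $\infty$-category $\Spt_{\leq n}$ is compactly generated by cotruncated objects, being the $n$-truncated objects of the compactly generated $\infty$-category $\Spt$ (one of the examples following the relevant definition).

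The one point requiring a little care, which I regard as the main (though mild) obstacle, is the bookkeeping along the fully faithful inclusion $\iota\colon\Spt_{\leq n}\hookrightarrow\Spt$. Since $\iota$ is right adjoint to the truncation functor $\tau_{\leq n}$ it preserves limits, and since $\pi_m$ commutes with filtered colimits it also preserves filtered colimits; hence a presheaf valued in $\Spt_{\leq n}$ is a finitary cdh sheaf (resp.\ satisfies henselian v-excision, resp.\ satisfies cdarc descent) if and only if its composite with $\iota$ is. Thus, viewed as a $\Spt_{\leq n}$-valued presheaf, $E(-)$ is still a finitary cdh sheaf satisfying rigidity for henselian valuation rings, and Theorem~\ref{thm:rigidity}(2) applied over $\Spt_{\leq n}$ gives cdarc descent there; composing with $\iota$ transports this back to cdarc descent for the original $\Spt$-valued presheaf $E(-)$, completing the proof. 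Everything beyond this transfer argument is a direct citation of the results already established.
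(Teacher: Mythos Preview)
Your proof is correct and follows the same approach as the paper: both invoke Lemma~\ref{lem:E(-)} and Corollary~\ref{cor:v-rigidity} to verify the hypotheses of Theorem~\ref{thm:rigidity}, and for part~(2) both use Corollary~\ref{cor:truncative} to factor $E(-)$ through $\Spt_{\leq n}$. Your version is simply more explicit about two points the paper leaves implicit, namely the verification that $c$ acts invertibly on $E$ (needed to apply Corollary~\ref{cor:truncative}) and the bookkeeping along the inclusion $\Spt_{\leq n}\hookrightarrow\Spt$.
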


\begin{proof}
The presheaf $E(-)$ is a finitary cdh sheaf by Lemma~\ref{lem:E(-)}, and it satisfies rigidity for henselian valuation rings by Corollary~\ref{cor:v-rigidity}. Moreover, if $E$ is $n$-truncated in the effective homotopy $t$-structure, then $E(-)$ takes values in the $\infty$-category $\Spt_{\leq n}$ by Corollary~\ref{cor:truncative}, which is compactly generated by cotruncated objects. The theorem now follows from Theorem~\ref{thm:rigidity}.
\end{proof}

\begin{cor}
	Let $k$ be a field, $E \in \SH(k)$, and $n$ an integer invertible in $k$. Then the presheaf of spectra $E(-)/n\colon \Sch_k^\op\to \Spt$ satisfies Milnor excision.
\end{cor}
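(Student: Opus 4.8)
The plan is to deduce this from Theorem~\ref{thm:main-excision}(1), applied not to $E$ itself but to the motivic spectrum $E/n := \Cofib(n\colon E\to E)\in\SH(k)$.

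First I would identify the presheaf $E(-)/n$ with the cohomology theory represented by $E/n$, that is, $E(-)/n \simeq (E/n)(-)$. This is immediate: for fixed $X\in\Sch_k$, the functor $\SH(k)\to\Spt$, $F\mapsto F(X)$, is exact, being the composite of the base-change functor $F\mapsto F_X$ with $\Maps_{\SH(X)}(\1_X,-)$; hence it carries the cofiber sequence $E\xrightarrow{n}E\to E/n$ to a cofiber sequence, naturally in $X$, which gives $(E/n)(X)\simeq E(X)/n$.

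Next I would check that $E/n$ is $\Phi$-nilpotent, where $\Phi\subset\GW(k)$ is the subset of elements whose rank is invertible in $k$. Viewing the integer $n$ as an element of $\pi_0\Maps(\1,\1)=\GW(k)$, it is the rank-$n$ form $n\angles{1}$; since $n$ is invertible in $k$, we have $n\angles{1}\in\Phi$, and multiplication by $n$ on any motivic spectrum is precisely the action of this element. Now for any $\Phi$-periodic $E'\in\SH(k)$, the element $n\angles{1}$ acts invertibly on $E'$, so $n$ acts invertibly on the spectrum $\Maps(E,E')$ by post-composition, and the fiber sequence
\[
\Maps(E/n,E')\too\Maps(E,E')\xrightarrow{\,n\,}\Maps(E,E')
\]
forces $\Maps(E/n,E')\simeq *$. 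Hence $E/n$ is $\Phi$-nilpotent, which is all that Theorem~\ref{thm:main-excision}(1) requires; it follows that $(E/n)(-)$, and therefore $E(-)/n$, satisfies Milnor excision.

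I do not expect a serious obstacle here: the only point needing attention is the identification of the integer $n$ with the rank-$n$ form $n\angles{1}\in\GW(k)$ — which is exactly where the hypothesis that $n$ is invertible in $k$ is used — and the verification that $\Phi$-nilpotence, rather than the stronger $\Phi$-torsion condition, is genuinely what Theorem~\ref{thm:main-excision} demands. After these observations the argument is entirely formal and Theorem~\ref{thm:main-excision}(1) does all the work.
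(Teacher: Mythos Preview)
Your proposal is correct and follows exactly the paper's approach: the paper's proof is the single sentence ``Apply Theorem~\ref{thm:main-excision}(1) to the motivic spectrum $E/n$,'' and you have simply unpacked the two implicit verifications (that $(E/n)(-)\simeq E(-)/n$ and that $E/n$ is $\Phi$-nilpotent). One tiny quibble: the identification of the integer $n$ with $n\angles{1}\in\GW(k)$ holds unconditionally; the hypothesis that $n$ is invertible in $k$ enters only to ensure $n\angles{1}\in\Phi$.
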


\begin{proof}
	Apply Theorem~\ref{thm:main-excision}(1) to the motivic spectrum $E/n$.
\end{proof}

For $A$ an abelian group, let $HA\in\SH(k)$ be the associated motivic Eilenberg–Mac Lane spectrum, and let $HA(q)=\Sigma^{0,q}HA$.
We denote by
\[
C^*_\mot(-,A(q))\colon \Sch_k^\op\to D(\Z)
\]
the cohomology theory defined by $HA(q)$.
Note that if every element of $A$ is $\Phi$-torsion for some multiplicative subset $\Phi\subset \Z$, then $HA(q)$ is $\Phi$-nilpotent.

\begin{cor}\label{cor:motivic-cohomology}
	Let $k$ be a field and $A$ a torsion abelian group such that the exponential characteristic of $k$ acts invertibly on $A$. For any $q \in\Z$, the presheaf 
	\[
	C^*_\mot(-,A(q))\colon \Sch_k^{\op} \rightarrow \D(\Z)
	\] 
	satisfies cdarc descent. In particular, it satisfies Milnor excision.
\end{cor}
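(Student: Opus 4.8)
The plan is to derive this as a special case of Theorem~\ref{thm:main-excision}(2), applied to the motivic Eilenberg--Mac Lane spectrum $E = HA(q)\in\SH(k)$. The associated spectrum-valued presheaf $E(-)$ is $C^*_\mot(-,A(q))$ with its $\D(\Z)$-linear structure forgotten, and since the forgetful functor $\D(\Z)\to\Spt$ is conservative and preserves limits, cdarc descent for the $\D(\Z)$-valued presheaf follows once we know it for the underlying $\Spt$-valued one. So the task reduces to verifying the two hypotheses needed for Theorem~\ref{thm:main-excision}(2): that $HA(q)$ is $\Phi$-nilpotent, with $\Phi\subset\GW(k)$ the subset of elements whose rank is invertible in $k$, and that $HA(q)$ is truncated in the effective homotopy $t$-structure.

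First I would check $\Phi$-nilpotence. Let $c$ be the exponential characteristic of $k$ and $\Phi_0\subset\Z$ the multiplicative subset of nonzero integers prime to $c$. Every element of $A$ is killed by some element of $\Phi_0$: if $a\in A$ satisfies $ma = 0$ with $m\neq 0$, write $m = c^e m'$ with $m'$ prime to $c$; since multiplication by $c^e$ is invertible on $A$, we get $m'a = 0$. By the remark preceding the statement, $HA(q)$ is therefore nilpotent with respect to the image of $\Phi_0$ under the unit map $\Z\to\GW(k)$. That image consists of elements of rank prime to $c$, hence of rank invertible in $k$, so it lies in $\Phi$; since the $\Phi$-periodic objects form a subclass of the periodic objects for the smaller set $\Phi_0$, nilpotence with respect to the image of $\Phi_0$ implies $\Phi$-nilpotence.

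Next I would check truncatedness. As recalled in the excerpt, $HA(q)$ is $0$-truncated in the effective homotopy $t$-structure if and only if $HA(q)(X)\in\Spt_{\leq 0}$ for every smooth $k$-scheme $X$. Here $\pi_n\bigl(C^*_\mot(X,A(q))\bigr) = H^{-n}_\mot(X,A(q))$, which vanishes for $n>0$: for $q\geq 0$ this is the vanishing of motivic cohomology of a smooth scheme in negative cohomological degrees, and for $q<0$ the motivic cohomology of smooth $k$-schemes vanishes in all degrees. Hence $HA(q)(X)\in\Spt_{\leq 0}$ for all smooth $X$, so $HA(q)$ is $0$-truncated in the effective homotopy $t$-structure. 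Theorem~\ref{thm:main-excision}(2) then gives cdarc descent, and Milnor excision follows, e.g.\ from Theorem~\ref{thm:main-excision}(1). The deduction is entirely formal apart from the input about motivic cohomology vanishing in negative cohomological degrees on smooth schemes, which is classical; accordingly I do not expect a genuine obstacle, the main (minor) point being simply to match the hypotheses of Theorem~\ref{thm:main-excision} with the torsion assumption on $A$.
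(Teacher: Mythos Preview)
Your proposal is correct and takes essentially the same approach as the paper: verify that $HA(q)$ is $\Phi$-nilpotent and truncated in the effective homotopy $t$-structure, then apply Theorem~\ref{thm:main-excision}(2). One point worth sharpening: the vanishing of $H^p_\mot(X,A(q))$ for $p<0$ and smooth $X$ that you call ``classical'' is not elementary for $q\geq 2$ even with torsion coefficients---it relies on Voevodsky's proof of the Beilinson--Lichtenbaum conjecture, which the paper cites explicitly as \cite[Theorem~6.17]{Voevodsky:2008} (and the paper handles $q<0$ separately via Corollary~\ref{cor:truncative}, though your uniform treatment is equally valid).
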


\begin{proof}
	If $q<0$, it follows from Corollary~\ref{cor:truncative} that $C^*_\mot(-,A(q))=0$, so we may assume $q\geq 0$. The motivic spectrum $HA(q)$ is then $0$-truncated in the effective homotopy $t$-structure. Indeed, if $p<0$ and $X\in\Sm_k$, then $H^{p}_\mot(X,A(q))=H^p_\et(X,A(q))=0$ by \cite[Theorem 6.17]{Voevodsky:2008}. Thus we may apply Theorem~\ref{thm:main-excision}(2) to $HA(q)$.
\end{proof}

\begin{rem}
	If $A$ is a $\Z[\tfrac 1c]$-module where $c$ is the exponential characteristic of $k$,
	then $C^*_\mot(-,A(q))\colon\Sch_k^\op\to D(\Z)$ is the cdh sheafification of the finitary extension of the presheaf $\lvert z(\A^q,0)(\Delta^\bullet\times-)\otimes A\rvert$ on $\Sch_k^\fp$. This follows from \cite[Theorem 4.2.9]{sv-cycleschow} and \cite[Theorems 5.1 and 8.4]{CDintegral}.
\end{rem}

\bibliographystyle{alphamod}

\let\mathbb=\mathbf

{\small
\bibliography{references}
}

\parskip 0pt

\end{document}